\newcommand{\R}{\mathbb{R}}
\newcommand{\Z}{\mathbb{Z}}
\newcommand{\D}{\mathbb{D}}
\newcommand{\F}{\mathbb{F}}
\newcommand{\PP}{\mathbb{P}}
\newcommand{\scr}{\mathcal}
\newcommand{\id}{\mathbbm{1}}
\newcommand{\wt}{\widetilde}
\newcommand*\quot[2]{{^{\textstyle #1}\big/_{\textstyle #2}}}
\theoremstyle{plain}
\newtheorem{thm}{Theorem}[section]
\newenvironment{fthm}
{\begin{mdframed}\vspace{0.5em}\begin{thm}}
		{\end{thm}\vspace{0.5em}\end{mdframed}}
\newtheorem{lem}[thm]{Lemma}
\newtheorem{prop}[thm]{Proposition}
\theoremstyle{definition}
\newtheorem{defn}[thm]{Definition}
\newtheorem{cor}[thm]{Corollary}
\newtheorem*{claim*}{Claim}
\theoremstyle{remark}
\newtheorem{rmk}[thm]{Remark}
\title[Dg-algebras, trivalent plane graphs, representations]{Differential graded algebras for trivalent plane graphs and their representations}
\author{Kevin Sackel}
\address{Department of Mathematics and Statistics, University of Massachusetts, Amherst, MA 01003, USA}
\email{ksackel@umass.edu}
\begin{document}

\begin{abstract} To any trivalent plane graph embedded in the sphere, Casals and Murphy associate a differential graded algebra (dg-algebra), in which the underlying graded algebra is free associative over a commutative ring. Our first result is the construction of a generalization of the Casals--Murphy dg-algebra to non-commutative coefficients, for which we prove various functoriality properties not previously verified in the commutative setting. Our second result is to prove that rank $r$ representations of this dg-algebra, over a field $\F$, correspond to colorings of the faces of the graph by elements of the Grassmannian $\operatorname{Gr}(r,2r;\F)$ so that bordering faces are transverse, up to the natural action of $\operatorname{PGL}_{2r}(\F)$. Underlying the combinatorics, the dg-algebra is a computation of the fully non-commutative Legendrian contact dg-algebra for Legendrian satellites of Legendrian 2-weaves, though we do not prove as such in this paper. The graph coloring problem verifies that for Legendrian 2-weaves, rank $r$ representations of the Legendrian contact dg-algebra correspond to constructible sheaves of microlocal rank $r$. This is the first explicit such computation of the bijection between the moduli spaces of representations and sheaves for an infinite family of Legendrian surfaces.
\end{abstract}

\maketitle


\section{Introduction}

In this paper, a \textbf{trivalent plane graph} $G$ will always mean a nonempty connected trivalent planar graph together with an embedding into $S^2$, up to isotopy of embeddings. The adjective \emph{trivalent} indicates that each vertex has exactly three incident edges, whereas the adjective \emph{plane} indicates that such an isotopy class of embedding has been fixed.\footnote{We use the adjective \emph{plane} instead of \emph{planar} intentionally. A planar graph is abstractly embeddable in $S^2$. Meanwhile, the choice of (isotopy class of) embedding is important for our purposes and will affect all constructions which follow.} Once we have fixed an embedding, we obtain a decomposition of $S^2$ into faces by cutting along the edges of the graph. Throughout, we will denote by $V$, $E$, and $F$ the set of vertices, edges, and faces of the graph $G$.

Casals and Murphy \cite{CM_DGA} have constructed for each trivalent plane graph $G$, upon making a number of auxiliary choices, a certain algebraic invariant, in the form of a differential graded algebra (dg-algebra). Our first goal in this paper is to build a souped-up version of their dg-algebra. Whereas their underlying algebra is free associative over the group ring $\Z[H_1(\Lambda_G)]$, where $\Lambda_G$ is topologically a branched double cover of $S^2$ branched over $V$, our underlying algebra is instead free associative over the non-commutative group ring $\Z[\pi_1(\Lambda_G)]$, where the coefficients no longer commute with the generators, and where we must keep track of a choice of base point. Upon abelianization of the coefficient ring and commuting coefficients with generators, our dg-algebra recovers the Casals--Murphy version. It was shown in the appendix to the paper of Casals and Murphy, written by the author of the present article, that their dg-algebra encodes certain graph coloring data. As a second goal, and indeed the initial impetus for this work, we extend this graph-coloring result to the fully non-commutative setting. 

Although this article is mostly presented in the guise of algebraic combinatorics with a few details from algebraic topology, on a deeper layer lies contact geometry, which provides context for the present article, as we now describe.

Associated to a trivalent plane graph is a Legendrian surface, the surface $\Lambda_G$ of the previous paragraph, as was defined by Treumann and Zaslow \cite{TZ}. Associated to any Legendrian (in a nice enough contact space) is a Legendrian contact dg-algebra, as originally discussed by Eliashberg \cite{Eliashberg_invariants} and simultaneously computed for Legendrian knots by Chekanov \cite{Chekanov}, and later formally shown to be well-defined by Ekholm, Etnyre, and Sullivan \cite{EES,EES_PxR}. The dg-algebra constructed by Casals and Murphy, though presented purely combinatorially, is a computation of a certain version of the Legendrian contact dg-algebra attached to $\Lambda_G$. The computation utilizes gradient flow trees, a technique pioneered by Ekholm \cite{Ekholm}. Our fully non-commutative dg-algebra simply keeps track of a little more geometric detail: the homotopy classes (as opposed to the homology classes) of the curves which comprise the boundary of the involved gradient flow trees.

Our graph coloring result also admits a contact-geometric interpretation. With underpinnings arising in the work of Nadler and Zaslow \cite{NZ} and Nadler \cite{Nad} providing a quasi-equivalence between the Fukaya category of a cotangent bundle with a category of constructible sheaves on the base, it was conjectured that representations of the Legendrian contact dg-algebra correspond to certain subclasses of constructible sheaves (see the work of Chantraine, Dimitroglou Rizell, Ghiggini, and Golovko \cite{CDGG} for the higher rank version). Since this article originally appeared on the arXiv, `representations are sheaves' was proved in full generality by Asplund and Ekholm \cite{AE}. Indeed, they proved a conjecture of Ekholm and Lekili \cite{EL} that the Legendrian contact dg-algebra with coefficients given by chains on the based loop space is quasi-equivalent to a certain partially wrapped Floer cohomology $A_{\infty}$-algebra, modules over which were proved by Ganatra, Pardon, and Shende \cite{GPS_microlocal} to correspond to constructible sheaves.
	
This article is more hands-on, continuing a history of explicit computations of this correspondence that are interesting in their own right and difficult in general to produce from the abstract theory alone. For Legendrian knots in $\mathbb{R}^3$, and rank $1$ representations, called augmentations, the correspondence was verified as an $A_{\infty}$-categorical equivalence by Ng, Rutherford, Shende, Sivek, and Zaslow \cite{NRSSZ}. Higher rank representations on Legendrian knots have also been understood for torus knots by Chantraine, Ng, and Sivek \cite{CNS}. Increasing to contact dimension $5$ (and Legendrians of dimension $2$), there have been explicit verifications for the rank $1$ version of the correspondence for knot and link conormals by Gao \cite{Gao_knot,Gao_link}, and for cellular Legendrian surfaces by Rutherford and Sullivan \cite{RS}. The graph coloring result in this paper provides a bijection between representations and sheaves for all Treumann--Zaslow Legendrian surfaces $\Lambda_G$, marking the first explicit nontrivial computation of the higher rank correspondence for an infinite collection of Legendrian surfaces.

We will keep the contact geometry mostly in the background, with two notable exceptions. First, throughout the introduction, we will expand, when appropriate, upon the short description above, primarily for the purposes of providing motivation for the results and constructions we discuss in this article. This includes a contact-geometric description of much of the combinatorial package in Section \ref{ssec:intro_contact}, so that contact geometers should feel satisfied that our dg-algebra is indeed the Legendrian contact dg-algebra, even if it is not fully proved. Second, we fundamentally use the fact that $\Lambda_G$ is a double cover of $S^2$ branched over the vertices $V$ of $G$ to find a family of combinatorial models for the coefficient ring $\Z[\pi_1(\Lambda_G)]$ with respect to different base points, and further, to describe how to transition between the various members of this combinatorial family. Our computation of the Legendrian contact dg-algebra, therefore, is not just a single dg-algebra, but a collection of dg-algebras, together with intertwining dg-isomorphisms which compose in a functorial manner depending upon a number of combinatorial choices. In practice, the reader interested in the algebro-combinatorial content of the paper should be able to read what follows with only this minimal topological input (explicitly described in the statement and proof of Theorem \ref{thm:geometric_coefficient_ring} of Section \ref{ssec:nc-coef_geom}).

\subsection{Generalizing the Casals--Murphy dg-algebra}

Associated to any nonempty trivalent plane graph $G$ is a constant $g \geq 0$ which we call the \textbf{genus}, such that $G$ has $|V| = 2g+2$ vertices, $|E| = 3g+3$ edges, and $|F| = g+3$ faces. It is called the genus because the double cover of the sphere branched over the vertices is a surface of genus $g$. This is more than just a convenience in our situation: there exists a Legendrian surface $\Lambda_{G}$ in the unit cotangent sphere bundle $S^*\R^3$ with its standard contact structure, as constructed by Treumann and Zaslow \cite{TZ}, which is a double cover of the sphere branched over the vertices. In the more modern terminology of $N$-graphs and Legendrian $N$-weaves initially studied by Casals and Zaslow \cite{CZ} and greatly generalizing the Treumann--Zaslow examples, $G$ is a $2$-graph on the sphere, and $\Lambda_G$ is the associated Legendrian $2$-weave. For the rest of the paper, we assume $G$ is connected.

From our graph, one may single out a face at infinity $f_{\infty} \in F$ so that we may draw $G$ as embedded in $\R^2$ via stereographic projection away from a point in the chosen face at infinity. We write $G_{f_{\infty}}$ for this trivalent plane graph embedded in $\R^2$, up to isotopy of embeddings (in $\R^2$). In the contact geometric picture, a choice of face at infinity yields a relatively explicit model for the Legendrian satellite of $\Lambda_G$, which lives in a standard contact Darboux ball $\R^5_{\mathrm{std}}$, as described e.g. in Casals--Zaslow for the case of $N$-weaves which includes our case when $N=2$ \cite{CZ}. By the Legendrian satellite operation, we mean the following. We have that a Legendrian admits a contact germ depending only upon the smooth topology of the Legendrian itself. Since $\Lambda_G$ is constructed in an arbitrarily small neighborhood of a Legendrian $S^2$, one may satellite it along the standard Legendrian unknot $S^2 \subset \R^5_{\mathrm{std}}$. We note that $H_1(\Lambda_G)$ and $\pi_1(\Lambda_G)$ still have meaning when we use the satellite, since the original surface $\Lambda_G$ is diffeomorphic to its satellite, with diffeomorphism determined up to isotopy.

The algebraic invariant of Casals and Murphy \cite{CM_DGA}, from this perspective, is a computation of the Legendrian contact dg-algebra of this Legendrian satellite using Ekholm's gradient flow trees \cite{Ekholm}. Though a proof that the combinatorics matches the geometry does not currently appear in the literature, we will provide some details of the correspondence in Section \ref{ssec:intro_contact}, assigning contact-geometric meaning to the combinatorial package below, providing a sketch of why the object we construct is indeed the Legendrian contact dg-algebra.

Suppose $G_{f_{\infty}}$ is trivalent plane graph with a specified face at infinity. Let $F_{\mathrm{fin}} = F - \{f_{\infty}\}$ be the set of all finite faces, where we will always assume the choice of $f_{\infty}$ has been made clear from the context. Over the coefficient ring $R = \Z[H_1(\Lambda_{G})]$, we form the free associative algebra
$$\scr{A}_{G,f_{\infty}} = R \langle F_{\mathrm{fin}},x,y,z \rangle \cong R\langle f_1,f_2,\ldots,f_{g+2},x,y,z \rangle,$$
where in the last isomorphism, $f_1,\ldots,f_{g+2}$ correspond to some labeling of the $g+2$ finite faces. We turn it into a graded algebra by specifying degrees: elements of $R$ have degree zero, elements of $F_{\mathrm{fin}}$ have degree $1$, and $|x|=|y|=|z|=2$. It is this graded algebra that Casals and Murphy upgrade into a dg-algebra.\\

\begin{defn} \label{defn:dga}
	A \textbf{differential graded algebra (dg-algebra)} is a pair $(A,\partial)$, where $A = \oplus_{i \in \Z} A_i$ is a graded algebra, and $\partial \colon A \rightarrow A$ is a linear map of degree $-1$ (so $\partial(A_i) \subset A_{i-1}$) satisfying
	\begin{itemize}
		\item \textbf{the Leibniz rule:} $\partial(xy) = (\partial x)y + (-1)^{|k|}x(\partial y)$ for $x \in A_k$ and $y \in A$
		\item \textbf{the differential condition:} $\partial^2 = 0$
	\end{itemize}
	A \textbf{dg-homomorphism} of dg-algebras is a morphism of graded algebras intertwining the differentials, i.e. $\phi \colon (A,\partial^A) \rightarrow (B,\partial^B)$ such that $\partial^B \circ \phi = \phi \circ \partial^A$. A \textbf{dg-isomorphism} is a dg-homomorphism which is an isomorphism of graded algebras.
\end{defn}

A dg-algebra has within it the structure of a chain complex, and dg-homomorphisms are a fortiori homomorphisms of chain complexes. We may therefore recall the following notions from the theory of chain complexes.

\begin{defn}	
	A dg-homomorphism $\phi \colon (A,\partial^A) \rightarrow (B,\partial^B)$ is called a \textbf{quasi-isomorphism} if the induced morphism on homology $\phi_* \colon H_*(A,\partial^A) \rightarrow H_*(B,\partial^B)$ is an isomorphism. Two dg-homomorphisms $\phi,\psi \colon (A,\partial^A) \rightarrow (B,\partial^B)$ are said to be \textbf{(linearly/chain) homotopic} if there exists a linear map $H \colon (A,\partial^A) \rightarrow (B,\partial^B)$ of degree $+1$, called a \textbf{(chain) homotopy}, such that $\phi - \psi = H\partial^A + \partial^BH$. In such a case, $\phi$ and $\psi$ induce the same map on homology.
\end{defn}

On a trivalent plane graph $G$, there are a number of auxiliary combinatorial choices which one can make, which we package into the notion of a garden $\Gamma$; this is Definition \ref{defn:garden} below. We refer to Figure \ref{fig:Garden} for an intuitive picture: we choose an orientation of the graph, a point called a `center' in each face, arcs between all adjacent centers and vertices called `threads', and a collection of non-intersecting coherently oriented paths which each pass through a single center, one for each face, called `tines', and which start and end at some specified point at infinity, called a `seed'. We will come back to the contact-geometric meaning of the garden in Section \ref{ssec:intro_contact}. Casals and Murphy use a slightly more restrictive definition, which we call a finite-type garden. Associated to each garden (regardless of whether it is finite-type or not) is a specified face $f_{\Gamma} \in F$ (the one containing the seed), which in the finite-type case we take to be the face at infinity. Casals and Murphy prove that associated to any finite-type garden, there exists a differential which makes the resulting dg-algebra with respect to the corresponding face at infinity an invariant.

\begin{thm}[Casals--Murphy \cite{CM_DGA}] \label{thm:CM}
For any choice of finite-type garden $\Gamma$ on $G$, there exists a differential $\partial^{\scr{A}}_{G,\Gamma}$ yielding a dg-algebra $(\scr{A}_{G,f_{\Gamma}},\partial^{\scr{A}}_{G,\Gamma})$, the dg-isomorphism type of which is independent of any two gardens with the same associated face at infinity.\\
\end{thm}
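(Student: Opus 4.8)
The plan is to treat the two assertions of the theorem --- existence of the differential, and independence of the dg-isomorphism type --- in turn.

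\emph{Existence of $\partial^{\scr{A}}_{G,\Gamma}$.} I would define $\partial^{\scr{A}}_{G,\Gamma}$ on the generators of $\scr{A}_{G,f_\Gamma}$ directly from the combinatorial data packaged in the garden $\Gamma$: it vanishes on $R = \Z[H_1(\Lambda_G)]$, it sends each finite face $f_i$ to the element of $R$ obtained from the deck-transformation monodromy of the branched double cover $\Lambda_G \to S^2$ along the boundary of $f_i$ (with the signs and base-path corrections prescribed by $\Gamma$), and it sends each of $x,y,z$ to the $R$-linear combination of faces read off from the local picture near the face $f_\Gamma$. Since $\scr{A}_{G,f_\Gamma}$ is free associative over $R$, this data extends uniquely to a degree $-1$ map satisfying the Leibniz rule, so the remaining point is $\partial^2 = 0$. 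For this it suffices to check the identity on generators: $\partial^2$ is $R$-linear and is again a derivation, of the even degree $-2$ (the Leibniz cross-terms cancel by the standard sign computation $(-1)^{|a|-1}+(-1)^{|a|}=0$), hence is determined by its values on generators. On $R$ it vanishes by construction; on each $f_i$ it vanishes for degree reasons, because $\partial^2 f_i$ has degree $-1$ whereas $\scr{A}_{G,f_\Gamma}$ is supported in non-negative degrees (indeed $\scr{A}_0 = R$). The genuine content is thus the three identities $\partial^2 x = \partial^2 y = \partial^2 z = 0$ in $R$: writing $\partial w = \sum_j a_j f_j b_j$ for $w \in \{x,y,z\}$, one has $\partial^2 w = \sum_j a_j b_j\,\partial f_j$, and one shows this telescopes to $0$ using the relation in $H_1(\Lambda_G)$ expressing triviality of the product of the monodromy loops around the faces that appear (equivalently, the local relation at each vertex on the relevant boundary). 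Carrying out this cancellation while keeping track of signs and of the coefficient-ring elements is where I expect the main difficulty of this half to lie.

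\emph{Independence of the dg-isomorphism type.} Let $\Gamma,\Gamma'$ be two finite-type gardens with $f_\Gamma = f_{\Gamma'} =: f_\infty$, so that the underlying graded algebras $\scr{A}_{G,f_\infty}$ coincide on the nose and only the differentials $\partial_\Gamma$ and $\partial_{\Gamma'}$ can differ; the task is to produce a dg-isomorphism between $(\scr{A}_{G,f_\infty},\partial_\Gamma)$ and $(\scr{A}_{G,f_\infty},\partial_{\Gamma'})$. I would proceed in the usual two steps. First, using Definition \ref{defn:garden}, show that any two gardens with a common associated face are joined by a finite sequence of elementary moves, each altering exactly one auxiliary choice in the garden (a reordering of the faces, a reversal of an orientation, a modification of a watering path, a sliding of a base point across an edge, and so on). Second, for each elementary move exhibit an explicit graded-algebra automorphism $\phi$ of $\scr{A}_{G,f_\infty}$ --- the identity on the generators unaffected by the move, and a substitution of the shape $f_i \mapsto \alpha f_i \beta$ or $f_i \mapsto f_i + c$ with $\alpha,\beta,c \in R$ (and similarly on $x,y,z$) on the affected ones --- and check on generators that $\phi \circ \partial_\Gamma = \partial_{\Gamma'} \circ \phi$. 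Composing these (tame) dg-isomorphisms along a chain of moves from $\Gamma$ to $\Gamma'$ gives the desired dg-isomorphism.

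Two points deserve care in the second half. One must confirm that the chosen list of elementary moves is exhaustive --- that the set of finite-type gardens over a fixed $f_\infty$ is connected under these moves --- which is a purely combinatorial claim about the definition of a garden. And, as in the existence argument, verifying the intertwining relation $\phi\partial_\Gamma = \partial_{\Gamma'}\phi$ for the moves that change base points or watering paths requires precise bookkeeping of how coefficient-ring elements propagate through the differential. I do not anticipate a conceptual obstacle beyond this sign-and-coefficient bookkeeping, which pervades both halves of the proof.
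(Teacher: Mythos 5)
Your outline is a sound reconstruction of the original Casals--Murphy argument, but it is not the route this paper takes: Theorem \ref{thm:CM} is never proved directly in the commutative setting here. It is instead derived as an immediate corollary of Theorem \ref{thm:nc_CM} (and ultimately Theorem \ref{thm:nc_CM_functoriality}): the paper constructs the fully non-commutative enlarged dg-algebra $(\wt{\scr{B}}^+_G,\wt{\partial}^{\scr{B}^+}_{G,\Gamma})$ over $\Z[\Pi]$, proves $\partial^2=0$ there (Proposition \ref{prop:differential}), establishes invariance via the $\scr{H}(G)$-action (Theorem \ref{thm:nc_CM_enlarged}), descends along $\pi_T$ and destabilizes for finite-type gardens, and only then abelianizes the coefficients and commutes them past the generators to recover $(\scr{A}_{G,f_{\Gamma}},\partial^{\scr{A}}_{G,\Gamma})$. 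Your direct approach buys economy --- no $\pi_1$, no capping paths, no base-point bookkeeping --- while the paper's buys the strictly stronger non-commutative invariant with all of its functoriality, from which the commutative statement falls out for free. Where your mechanisms can be compared with the paper's, they do match the commutative shadow: the telescoping you propose for $\partial^2 x=\partial^2 y=\partial^2 z=0$ is exactly the identity $W(\eta_k)-W(\eta_{k-1})=W(\gamma^s_k)\bigl(\begin{smallmatrix}0&\partial f_k\\0&0\end{smallmatrix}\bigr)W(\gamma^t_k)$ summed over consecutive tines, closed off by the contractibility of the outermost separating loops (this is simpler than Casals--Murphy's original involution on pairs of binary sequences and threads), and your elementary-move scheme is Moves I--V, with Moves I--IV preserving the differential on the nose and Move V acting by an elementary automorphism of the degree-$2$ generators.

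Three points to repair if you carry this out. First, the differentials of $x,y,z$ are not read off from a local picture near $f_{\Gamma}$: each is a sum over \emph{all} tines of the garden, each tine contributing a word recording every edge, thread, and center it crosses. Second, a substitution $f_i\mapsto f_i+c$ with $c\in R$ is not degree-homogeneous ($|f_i|=1$ while $|c|=0$); the automorphisms actually needed send a degree-$2$ generator to itself plus a word in $R$ and the degree-$1$ generators. Third, the connectivity of the set of finite-type gardens under your list of moves, which you rightly flag as a separate combinatorial claim, is genuinely nontrivial: the paper settles it by identifying $\pi_0$ of the space of gardens with a fixed seed as a torsor over the braid group $\mathrm{Br}_{g+3}$, and the kernel of the free-group lift (the braid relations and the full twist) must be shown to act by dg-automorphisms homotopic to the identity. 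None of these is a conceptual obstruction, but the last one in particular is more than bookkeeping.
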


There are many reasonable names for this dg-algebra:
\begin{itemize}
	\item the \textbf{Casals--Murphy dg-algebra}, in reference to the mathematicians who computed it
	\item the invariant dg-algebra of binary sequences, as it was called in the paper of Casals and Murphy \cite{CM_DGA}
	\item the Legendrian contact dg-algebra (of the Legendrian satellite), in recognition of the contact-geometric picture
	\item the Chekanov-Eliashberg dg-algebra, which is synonymous with the Legendrian contact dg-algebra
\end{itemize}
In this paper, we will use the first of this list, because the second is rather cumbersome, and the third and fourth invoke the underlying contact geometry which we will largely ignore.

There are a few versions of the Legendrian contact dg-algebra one could reasonably define, depending upon how much structure one wishes to encode. The Casals--Murphy dg-algebra corresponds to one such choice; instead, we use a fully non-commutative version, which on the level of the graded algebra is realized by removing commutativity using the following two-step procedure:
\begin{itemize}
	\item First, we do not need that the elements of the coefficient ring commute with the other generators. We may instead consider $\Z$-linear combinations of words of the form $n_1 w_1 n_2 w_2 \cdots n_k w_k$ where $n_i \in R$ and $w_i$ is a word in the other variables.
	\item We may change our coefficient ring from $R = \Z[H_1(\Lambda_{G})]$ to $R^{nc} = \Z[\pi_1(\Lambda_{G})]$ instead, with respect to some base point. The marker $nc$ stands for \emph{non-commutative}.
\end{itemize}
Because of the definition of the differential of a Legendrian contact dg-algebra, in order to apply the second step, we must have also applied the first, so our use of non-commutative coefficients also necessitates removing commutativity between the coefficients and the generators. The reader interested in a contact-geometric explanation of this point may consult e.g. Appendix A of an article of Chantraine, Dimitroglou Rizell, Ghiggini, and Golovko \cite{CDGG}.

Let us be precise about our base points. If we fix a vertex $v \in V$, then since $\Lambda_G$ is branched at $v$, there is a unique point $\wt{v} \in \Lambda_{G}$ lying over $v$, which we use as a base point for our computations. Let $\scr{B}_{G,f_{\infty},v}$ be the resulting graded algebra; we may write it as a free product of algebras
$$\scr{B}_{G,f_{\infty},v} = \Z[\pi_1(\Lambda_{G},\wt{v})] * \Z\langle F_{\mathrm{fin}},x,y,z \rangle,$$
where again the elements of $\pi_1(\Lambda_{G},\wt{v})$ are in degree zero, the elements of $F_{\mathrm{fin}}$ are in degree $1$, and $|x|=|y|=|z|=2$.

In order to obtain nice combinatorial models for $\pi_1(\Lambda_G,\wt{v})$ for $v \in V$, it is convenient to specify a set of generators and relations. This can be achieved by picking a tree $T \subset G$ spanning all of the vertices in $V$ except $v$ (recalling our assumption that $G$ is connected so that such $T$ exists). In fact, a choice of such a tree is already performed by Casals and Murphy for essentially the same reason: the Poincar\'e duals to the cycles in $\Lambda_G$ lying over $T$ yield a convenient set of generators of $H_1(\Lambda_G)$. We denote the vertex missed by $T$ as $v_T \in V$, and for the lift $\wt{v}_T$ we use the slightly more convenient notation $*_T \in \Lambda_G$. Accordingly, we build a collection of groups $\Pi_T$ with explicit generators and relations which come with canonical isomorphisms $\Pi_T \cong \pi_1(\Lambda_G, *_T)$. As these models are purely combinatorial, we may therefore instead use the graded algebras
$$\scr{B}_{G,f_{\infty},T} = \Z[\Pi_T] * \Z\langle F_{\mathrm{fin}},x,y,z \rangle$$
with the understanding that if $T$ and $T'$ miss the same vertex $v_T = v_{T'}$, then there are canonical isomorphisms
$$\scr{B}_{G,f_{\infty},T} \cong \scr{B}_{G,f_{\infty},v_T} = \scr{B}_{G,f_{\infty},v_{T'}} \cong \scr{B}_{G,f_{\infty},T'}.$$

There is good reason for having these graded algebras depend upon $T$ (and not just $*_T$) aside from just that they are combinatorially convenient. Unlike in the version with homology coefficients, in order to obtain the Legendrian contact differential, each gradient flow tree counted in the differential must be connected to the base point $*$ in order to obtain an element in $\pi_1(\Lambda_G,*)$. In contact geometric parlance, such choices are referred to as capping paths. Conveniently, our trees $T$ spanning the vertices $V - \{v_T\}$ yield natural choices of capping paths, and hence fix the differentials. However, even if $v_T = v_{T'}$, the capping paths will not be the same, and hence a dg-isomorphism between the models will not in general be realized by the natural canonical isomorphisms $\scr{B}_{G,f_{\infty},T} \cong \scr{B}_{G,f_{\infty},T'}$ of the previous paragraph, since this will not in general intertwine the differentials. Nonetheless, we have the following, the first main theorem of this paper.\\

	\begin{fthm} \label{thm:nc_CM}
		Let $G$ be a trivalent plane graph. For any choice of finite-type garden $\Gamma$ on $G$ and tree $T \subset G$ spanning all but one vertex, there exists a differential $\partial^{\scr{B}}_{G,\Gamma,T}$ on $\scr{B}_{G,f_{\Gamma},T}$, yielding a dg-algebra\\
		$$(\scr{B}_{G,f_{\Gamma},T},\partial^{\scr{B}}_{G,\Gamma,T}),$$\\
		which, under the map given by abelianizing coefficients and commuting ring elements with generators, induces a dg-homomorphism\\
		$$\mathrm{Ab} \colon (\scr{B}_{G,f_{\Gamma},T},\partial^{\scr{B}}_{G,\Gamma,T}) \rightarrow (\scr{A}_{G,f_{\Gamma}},\partial^{\scr{A}}_{G,\Gamma}).$$
		If $\Gamma$ and $\Gamma'$ are two finite-type gardens and $T$ and $T'$ are trees in $G$ spanning all but one vertex, then the resulting dg-algebras for the pairs $(\Gamma,T)$ and $(\Gamma',T')$ are dg-isomorphic.
	\end{fthm}

Theorem \ref{thm:CM} follows as an immediate corollary from Theorem \ref{thm:nc_CM}.

\subsection{Functoriality for the non-commutative Casals--Murphy dg-algebra}

We may further refine the dg-isomorphisms of Theorem \ref{thm:nc_CM}, as in Theorem \ref{thm:nc_CM_functoriality} below. The refined version provides an understanding of two further questions:
\begin{enumerate}
	\item What properties do the dg-isomorphisms between the dg-algebras associated to $(\Gamma,T)$ and $(\Gamma',T')$ satisfy?
	\item What combinatorial data is required to specify a \emph{canonical} dg-isomorphism for different pairs $(\Gamma,T)$ and $(\Gamma',T')$?
\end{enumerate}

Let us begin with the first of these questions. In the setting of commutative coefficients, it is well known that the Legendrian contact dg-algebra is invariant up to a notion known as stable tame (dg-)isomorphism, c.f. \cite{EES}. In our setting, one can easily deduce that this is the case for the commutative coefficient setting by carefully studying the arguments of Casals and Murphy \cite{CM_DGA}. It was noted previously that there should be a notion of stable tame (dg-)isomorphism in the noncommutative setting \cite{CDGG}, though formal definitions did not appear. We make those definitions under the assumption of non-negative grading, which will be the case throughout this paper (all of our generators will have degree $1$ or $2$).

\begin{defn}
	Let $R$ be any unital ring, $S$ any set, and $\deg \colon S \rightarrow \Z_{\geq 0}$ a collection of (non-negative) degrees for elements of $S$. Let
	$$A_S = R * \Z \langle S \rangle$$
	be the fully non-commutative associative graded algebra freely generated by $R$ and elements of $S$, where the grading of elements of $R$ is $0$ and the grading of elements of $S$ matches the degree. The algebra $A_S$ together with its explicit set of generators $S$ is known as a \textbf{semi-free graded algebra}. If $A_S$ comes with a differential $\partial$ (of degree $-1$), then we call $(A_S,\partial)$ a \textbf{semi-free dg-algebra}. Note that $\partial r = 0$ for all $r \in R$ for degree reasons.
	
	An \textbf{elementary automorphism} of a semi-free graded algebra $A_S$ is an automorphism $\Phi \colon A_S \rightarrow A_S$ which is the identity on elements of $R$ and $S \setminus \{s\}$ for some $s \in S$, and satisfies
	$$\Phi(s) = s + \alpha$$
	for some $\alpha \in A_{S \setminus \{s\}}$. A \textbf{tame automorphism} of $A_S$ is a composition of elementary automorphisms.
	
	A \textbf{regeneration} of $A_S$ is a graded algebra automorphism of the form $\Phi(s) = u_s\cdot \sigma(s) \cdot v_s$ on all generators $s \in S$, where $u_s,v_s \in R$ are invertible elements and $\sigma$ is a permutation on $S$.
	
	If $(A_S,\partial)$ is a semi-free dg-algebra, then any graded algebra automorphism $\Phi \colon A_S \rightarrow A_S$ induces a unique differential $\partial'$ on $A_S$ so that $\Phi \colon (A_S,\partial) \rightarrow (A_S,\partial')$ is a dg-isomorphism. If $\partial$ is given, we will often conflate $\Phi$ as a graded algebra automorphism with $\Phi$ as a dg-isomorphism. (For example, we may call a tame automorphism a tame dg-isomorphism.)
		
	The \textbf{stabilization} of a semi-free dg-algebra $(A_S,\partial)$ is the semi-free dg-algebra $(A_{S^+},\partial^+)$, where $S^+ = S \sqcup \{x,y\}$ with $\deg(y) = \deg(x)+1$, $\partial^+$ matching $\partial$ on $S$, and
	$$\partial^+y = x, \qquad \partial^+x = 0.$$
	The natural inclusion $A_{S} \hookrightarrow A_{S^+}$ is a dg-homomorphism, called the \textbf{stabilization (dg-homomorphism)}. Conversely, $A_S$ is called the \textbf{destabilization} of $A_{S^+}$ (with respect to $x,y \in S$), and the dg-homomorphism $A_{S^+} \rightarrow A_{S}$ which kills $x$ and $y$ but preserves the other generators and elements of $R$ is called the \textbf{destabilization (dg-homomorphism)}.
	
	A \textbf{stable regenerative tame dg-homomorphism} is a composition of stabilizations, destabilizations, regenerations, and elementary dg-automorphisms. Any of the adjectives stable, regenerative, or tame may be removed one at a time if there are no stabilizations or destabilizations, no regenerations, or no tame dg-homomorphisms. We may similarly replace the word dg-homomorphism with dg-automorphism, where appropriate.
\end{defn}

It is a fact that every stable regenerative tame dg-homomorphism is a quasi-isomorphism, as is proved in Proposition \ref{prop:dg-homomorphism_is_quasi-isomorphism}.

Because we allow for stabilizations and destabilizations, we work in a slightly more symmetric set-up. Whereas the finite-type gardens considered by Casals and Murphy avoid considering the face at infinity, we may stabilize, including the face at infinity as an extra degree 1 generator at the cost of also introducing a degree $2$ generator $w$ which kills it (up to an elementary dg-homomorphism). We will use the superscript $+$ for this enlargement. For any garden, not just those of finite-type, we form the graded algebra
$$\scr{B}^+_{G,T} := \Z[\Pi_T] * \Z \langle F,x,y,z,w \rangle.$$
This allows us to treat all of the faces together in a slightly cleaner manner. As per Proposition \ref{prop:dg-homomorphism_is_quasi-isomorphism}, when we put the proper differential on this graded algebra, we obtain a model quasi-isomorphic to the version without the $+$ enlargement. Contact geometrically, these two extra generators can be produced by a specific Legendrian isotopy of the satellite of $\Lambda_G$, and so it is no surprise that we obtain a quasi-isomorphic model.

The second question is partly motivated by the fact that the constructed dg-isomorphisms seem to require a number of choices, even in the commutative coefficient setting of Casals and Murphy \cite{CM_DGA} (though they leave this point implicit in their constructions). We work to explain the extra underlying algebraic structure.

One piece of the story is relatively simple. A garden comes with a choice of orientation of the edges of $G$. If $E$ are the edges of $G$, then there is always a natural $(\Z_2)^E$ action on gardens given by flipping edge orientations. For two gardens which are the same except for the orientation of edges on $G$, the underlying graded algebras are the same, and we will see that the dg-isomorphisms are induced by ring automorphisms of $\Z[\Pi_T]$. Other changes of the garden are more complicated.

Aside from changing the garden, we may also change the tree. We have already pointed out that even if $*_T = *_{T'}$, then for fixed $\Gamma$, the canonical identification $\scr{B}_{G,f_{\Gamma},T} \cong \scr{B}_{G,f_{\Gamma},T'}$ given by the association $\Pi_T \cong \pi_1(\Lambda_G,*_T) = \pi_1(\Lambda_G,*_{T'}) \cong \Pi_{T'}$ will not induce a dg-isomorphism, because the induced capping paths are not the same. More generally, if $*_T \neq *_{T'}$, then to even compare $\pi_1(\Lambda_G,*_T)$ and $\pi_1(\Lambda_G,*_{T'})$ in the first place, we must choose a non-canonical homotopy class of path $\gamma$ from $*_T$ to $*_{T'}$, yielding the isomorphism $\scr{C}(\gamma) \colon \pi_1(\Lambda_G,*_T) \rightarrow \pi_1(\Lambda_G,*_{T'})$ defined by
$$\scr{C}(\gamma)(\eta) := \gamma^{-1} * \eta * \gamma.$$
We recall here that the composition $*$ means the following: if $\eta_1$ and $\eta_2$ are two homotopy classes of paths from $p$ to $q$ and $q$ to $r$ respectively, then $\eta_1 * \eta_2$ is the homotopy class of paths from $p$ to $r$ with representative given by first following a representative of $\eta_1$ and then following a representative of $\eta_2$. Finally, let $\scr{C}_T^{T'}(\gamma) \colon \Pi_T \rightarrow \Pi_{T'}$ be the induced isomorphism fitting into the commutative diagram
$$\xymatrix{\Pi_T \ar[r]^{\scr{C}_T^{T'}(\gamma)} \ar[d]_[left]{\sim} & \Pi_{T'} \ar[d]^[right]{\sim} \\
	\pi_1(\Lambda_G,*_T) \ar[r]_{\scr{C}(\gamma)} & \pi_1(\Lambda_G,*_{T'})},$$
which in turn induces graded algebra isomorphisms $\scr{C}_{T}^{T'}(\gamma) \colon \scr{B}_{G,T}^+ \rightarrow \scr{B}_{G,T'}^+.$\\

\begin{fthm} \label{thm:nc_CM_functoriality}
	Let $G$ be a nonempty connected trivalent plane graph of genus $g$. For any garden $\Gamma$ and tree $T$, there exists a canonical differential $\partial^{\scr{B}^+}_{G,\Gamma,T}$ on $\scr{B}^+_{G,T}$ yielding a dg-algebra. If two gardens $\Gamma$ and $\Gamma'$ on $G$ are homotopic and $T=T'$, then their associated dg-algebras are canonically identified via the identity on the underlying graded algebras.
	
	The group
	$$\scr{H}(G) := (\Z_2)^E \times F_{g+2} \times (\Z_2 * \Z_3)$$
	naturally acts transitively on the set of homotopy classes of gardens on $G$, where $E$ is the set of edges of $G$, and where $F_{g+2}$ is the free group on $g+2$ generators. Suppose $\zeta \in \scr{H}(G)$ with
	$$\Gamma' = \zeta \cdot \Gamma$$
	and suppose $\gamma$ is a homotopy class of paths from $*_T$ to $*_{T'}$ in $\Lambda_G$. Then there exist canonical dg-isomorphisms
	$$\Phi^{\Gamma',T'}_{\Gamma,T}(\zeta,\gamma) \colon (\scr{B}^+_{G,T},\partial^{\scr{B}^+}_{G,\Gamma,T}) \rightarrow (\scr{B}^{+}_{G,T'},\partial^{\scr{B}^{+}}_{G,\Gamma',T'})$$
	which satisfy the following functoriality properties:
	
	\begin{itemize}
		\item \textbf{Composition property:} The dg-isomorphisms compose naturally, in the sense that
		$$\Phi_{\Gamma',T'}^{\Gamma'',T''}(\zeta,\gamma_2) \circ \Phi_{\Gamma,T}^{\Gamma',T'}(\theta,\gamma_1) = \Phi_{\Gamma,T}^{\Gamma'',T''}(\zeta\theta,\gamma_1*\gamma_2).$$
		\item \textbf{Orientation changes:} The action of the factor $(\Z_2)^E$ (with $\gamma = \id$) is induced by ring automorphisms of $\Z[\Pi_T]$.
		\item \textbf{Fixing the trees and capping paths; modifying the garden:} If $T = T'$ and $\gamma = \id$ is the constant path at $*_T$, then for any $\zeta \in F_{g+2} \times (\Z_2 * \Z_3)$, the map $\Phi_{\Gamma,T}^{\Gamma',T}(\zeta,\id)$ is a regenerative tame dg-isomorphism. Furthermore, if $\zeta$ stabilizes $\Gamma$, i.e. $\zeta \cdot \Gamma = \Gamma$, then $\Phi_{\Gamma,T}^{\Gamma,T}(\zeta,\id)$ is homotopic to the identity, and hence acts by the identity in homology.
		\item \textbf{Fixing the garden; modifying the trees and capping paths:}	For any fixed garden $\Gamma$, any two trees $T$ and $T'$, and any homotopy class of path $\gamma$ from $*_T$ to $*_{T'}$, we have,
		$$\Phi_{\Gamma,T}^{\Gamma,T'}(1,\gamma) = \scr{R}_{T}^{T'}(\gamma) \circ \scr{C}_T^{T'}(\gamma)$$
		where $\scr{C}_T^{T'}(\gamma)$ acts on the coefficient ring (as discussed just before this theorem statement) and $\scr{R}_T^{T'}(\gamma)$ is a regeneration independent of the garden acting on each generator by conjugation by an element of $\Pi_{T'}$ (not necessarily the same for each generator). If $T = T'$, then $R_{T}^{T}(\id) = \id$.
		\item \textbf{Finite-type gardens:} If $\Gamma$ is a finite-type garden, then the dg-algebra $(\scr{B}_{G,f_{\Gamma},T},\partial^{\scr{B}}_{G,\Gamma,T})$ of Theorem \ref{thm:nc_CM} is obtained from $(\scr{B}^+_{G,T},\partial^+_{G,\Gamma,T})$ via a canonical composition of an elementary automorphism of $\scr{B}^+_{G,T}$ followed by a destabilization.
	\end{itemize}
\end{fthm}

Notice that we obtain significantly more functorial data than was originally obtained in the commutative coefficient setting by Casals and Murphy \cite{CM_DGA}. By functoriality, we mean that we may build for each connected nonempty trivalent graph $G$ a category whose objects are pairs $(\Gamma,T)$ and whose morphisms are pairs $(\zeta,\gamma)$; Theorem \ref{thm:nc_CM_functoriality} yields a functor from this category to the category of dg-algebras and dg-isomorphisms.

Any specified faces $f,f' \in F$ may arise as $f_{\Gamma},f_{\Gamma'}$ for some finite-type gardens $\Gamma,\Gamma'$, and accordingly, for any $\zeta \in \scr{H}(G)$ with $\zeta \cdot \Gamma = \Gamma'$ and homotopy class of path $\gamma$ from $*_T$ to $*_{T'}$, we obtain a canonical quasi-isomorphism from $(\scr{B}_{G,f_{\Gamma},T},\partial_{G,\Gamma,T})$ to $(\scr{B}_{G,f_{\Gamma'},T'},\partial_{G,\Gamma',T'})$ given by the composition
$$\scr{B}_{G,f_{\Gamma},T} \xrightarrow{\mathrm{stab}} \scr{B}^+_{G,T} \xrightarrow{\Phi_{\Gamma,T}^{\Gamma',T'}(\zeta,\gamma)} \scr{B}^+_{G,T'} \xrightarrow{\mathrm{destab}} \scr{B}_{G,f_{\Gamma},T'},$$
where the stabilization and destabilization maps include an extra elementary automorphism of $\scr{B}_{G,T}^+$ and $\scr{B}_{G,T'}^+$, respectively. Theorem \ref{thm:nc_CM} follows from Theorem \ref{thm:nc_CM_functoriality} so long as one proves that the above maps are dg-isomorphisms. Partial abelianization then yields all functoriality properties satisfied by the original Casals--Murphy dg-algebras $(\scr{A}_{G,f_{\Gamma},T},\partial^{\scr{A}}_{G,\Gamma,T})$.

Our construction of the differentials occuring in Theorem \ref{thm:nc_CM_functoriality} borrows from an idea of Casals and Murphy \cite{CM_DGA}. Instead of working with the coefficient ring $\Z[\Pi_T]$, we may work with instead a coefficient ring $\Z[\Pi]$, where $\Pi$ is a group coming with projections $\pi_T \colon \Pi \rightarrow \Pi_T$ and inclusions $i_T \colon \Pi_T \hookrightarrow \Pi$ with $\pi_T \circ i_T = \id$. In other words, we separate the dependence on the tree $T$, forming the dg-algebra
$$\wt{\scr{B}}^+_G := \Z[\Pi]*\Z \langle F,x,y,z,w \rangle,$$
and accordingly finding differentials $\wt{\partial}^{\scr{B}^+}_{G,\Gamma}$ for each garden $\Gamma$. (One may of course work without the $+$ enlargement, at the cost of restricting to finite-type gardens.) The differentials over this enlarged ring then induce the differentials $\partial^{\scr{B}^+}_{G,\Gamma,T}$ of Theorem \ref{thm:nc_CM_functoriality}, and the functoriality properties follow from a careful consideration of the algebraic structure of $\Pi$ with its various projections $\pi_T$. The functoriality properties for the dg-algebras $(\wt{\scr{B}}^+_G,\wt{\partial}^{\scr{B}^+}_{G,\Gamma})$ are given by Theorem \ref{thm:nc_CM_enlarged}.

\subsection{Representations and colors}

If $R$ is a ring, which we consider as a graded $\Z$-algebra concentrated in degree zero, then we obtain a dg-algebra $(R,0)$.
\begin{defn}
	An \textbf{$R$-augmentation} of a dg-algebra $(A,\partial)$  is a dg-homomorphism $\epsilon \colon (A,\partial) \rightarrow (R,0)$. In the case that $R = \mathrm{Mat}_r(\F)$ for a field $\F$, we call the corresponding $R$-augmentation a \textbf{rank $r$ representation over $\F$}. A rank $1$ representation is sometimes just called an \textbf{augmentation}. We denote the set of representations of rank $r$ by
	$$\mathrm{Rep}_r(A,\partial;\F) := \{\mathrm{rank~}r\mathrm{~representations~of~}(A,\partial)\mathrm{~over~}\F\}.$$
\end{defn}

Notice that there is a conjugation action of $\mathrm{GL}_r(\F)$ on $\mathrm{Rep}_r(A,\partial;\F)$: if $\epsilon$ is a rank $r$ representation and $g \in \mathrm{GL}_r(\F)$, then
$$(g \cdot \epsilon)(x) := g \cdot \epsilon(x) \cdot g^{-1}.$$
We shall prefer to consider two rank $r$ representations to be equivalent if they are related by the conjugation action.
\begin{defn}
	The \textbf{moduli space of rank $r$ representations over $\F$} of a dg-algebra $(A,\partial)$ is just the set of representations modulo equivalence by this conjugation action, denoted by
	$$\scr{M}^{\mathrm{Rep}}_r(A,\partial;\F) := \quot{\mathrm{Rep}_r(A,\partial;\F)}{\mathrm{GL}_r(\F)}.$$
\end{defn}
The conjugation $\mathrm{GL}_r(\F)$-action descends to a $\mathrm{PGL}_r(\F)$-action, so we could alternatively have written the quotient with respect to $\mathrm{PGL}_r(\F)$ instead. In the case $r=1$, since $\mathrm{PGL}_1(\F)$ is trivial, we therefore have
$$\scr{M}^{\mathrm{Rep}}_1(A,\partial;\F) = \mathrm{Rep}_1(A,\partial;\F).$$
For higher $r$, there is in general no such equality except for highly degenerate situations.

There is a general correspondence between the moduli space of rank $r$ representations of a Legendrian contact dg-algebra and a certain moduli space of constructible sheaves of microlocal rank $r$ depending upon the Legendrian.\footnote{In our case, representations are homotopic if and only if they are equal for degree reasons; the more general conjecture requires taking representations modulo homotopies.} As such, since we have stated throughout this introduction that the dg-algebras we are constructing are combinatorial models for the Legendrian contact dg-algebra associated to $\Lambda_G$, in order to verify this correspondence, we need also to understand the corresponding moduli space of constructbile sheaves. See work of Shende, Treumann, Williams, and Zaslow \cite{STWZ} for a precise definition of the desired constructible sheaves of microlocal rank $r$ and Treumann and Zaslow \cite{TZ} for a description in the case $r=1$ for the case we are considering. From these references, one may easily deduce a clean combinatorial description of this moduli space of sheaves purely in terms of the trivalent plane graph $G$, which we now describe.

Suppose we are given a trivalent plane graph $G$, and let $F$ be the faces of $G$. Consider the Grassmannian $\mathrm{Gr}(r,2r;\F)$ consisting of subspaces of $\F^{2r}$ of dimension $r$.
\begin{defn}
	A \textbf{rank $r$ face coloring of $G$ over $\F$} is an assignment
	$$\Phi \colon F \rightarrow \mathrm{Gr}(r,2r;\F)$$
	such that if $f$ and $g$ share an edge, then $\Phi(f)$ and $\Phi(g)$ are transverse. We denote the set of such rank $r$ face colorings by
	$$\mathrm{Col}_r(G;\F):= \{\mathrm{rank~}r~\mathrm{face~colorings~of~}G\mathrm{~over~}\F\}.$$
\end{defn}

In the case that $r=1$, this recovers the usual notion of a face coloring of the graph $G$ (or equivalently a vertex coloring of the dual graph $G^*$) by elements of the projective line $\F\PP^1$. We note that in the case that $\F = \F_q$ is a finite field of order $q$, then the number of face colorings of rank $1$ is therefore
$$|\mathrm{Col}_r(G;\F_q)| = \chi_{G^*}(q+1),$$
where $\chi_{G^*}$ is the famous chromatic polynomial associated to a trivalent plane graph. Here, we use the fact that $|\F_q\PP^1| = q+1$.

Notice that $\mathrm{PGL}_{2r}(\F)$ acts on $\mathrm{Gr}(r,2r;\F)$, and if $\Phi \in \mathrm{PGL}_{2r}(\F)$ and $X,Y \in \mathrm{Gr}(r,2r;\F)$ are transverse, then $\Phi(X)$ and $\Phi(Y)$ are also transverse. Therefore, $\mathrm{Col}_r(G;\F)$ is an invariant set for the $\mathrm{PGL}_{2r}(\F)$-action.

\begin{defn}
	The \textbf{moduli space of rank $r$ colorings over $\F$} is the set given by $$\scr{M}^{\mathrm{Col}}_r(G;\F) := \quot{\mathrm{Col}_r(G;\F)}{\mathrm{PGL}_{2r}(\F)}.$$
\end{defn}

In the case that $r=1$, so long as the trivalent plane graph $G$ is non-empty, the action of $\mathrm{PGL}_{2}(\F)$ is free. In particular, in the case that $\F = \F_q$ has order $q$, we have
$$|\scr{M}_1^{\mathrm{Col}}(G;\F_q)| = \frac{\chi_{G^*}(q+1)}{q^3-q}.$$
This description of $\scr{M}_1^{\mathrm{Col}}(G;\F_q)$ appeared in the work of Treumann and Zaslow \cite{TZ}. In the appendix to the paper of Casals and Murphy \cite{CM_DGA}, written by the present author, the `augmentations are sheaves' result was explicitly verified.
\begin{thm}[{S \cite[Appendix A]{CM_DGA}} ] \label{thm:augs_sheaves_finite_type}
	For any field $\F$, there is a natural bijection $$\scr{M}^\mathrm{Rep}_1(\scr{A}_{G,f_{\Gamma},T}, \partial^{\scr{A}}_{G,\Gamma,T};\F) \cong \scr{M}_1^{\mathrm{Col}}(G;\F).$$
\end{thm}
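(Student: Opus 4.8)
The plan is to make both sides of the claimed bijection completely explicit and then exhibit mutually inverse maps, equivariant with respect to the relevant group actions. On the representation side, a rank $1$ representation $\epsilon \colon (\scr{A}_{G,f_\Gamma,T}, \partial^{\scr{A}}_{G,\Gamma,T}) \to (\F,0)$ is determined by its values on the generators: each finite face $f_i$ gets a scalar $\epsilon(f_i) \in \F$, each of $x,y,z$ gets a scalar $\epsilon(x),\epsilon(y),\epsilon(z) \in \F$, and each generator of the coefficient ring $H_1(\Lambda_G)$ gets a unit in $\F^\times$; the constraint is that $\epsilon \circ \partial^{\scr{A}}_{G,\Gamma,T} = 0$, which must hold on all generators and hence gives one polynomial equation per degree-$1$ generator (coming from $\partial x, \partial y, \partial z$ and $\partial f_i$) together with the requirement that the $x,y,z$-variables and coefficient units be consistent with the local structure at vertices. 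The first step is therefore to write down $\partial^{\scr{A}}_{G,\Gamma,T}$ explicitly enough — following Casals--Murphy and the appendix — to see which equations the relation $\epsilon\partial = 0$ imposes. The key observation will be that the degree-$2$ generators $x,y,z$ and the coefficient units $\epsilon$ assigns are genuinely free parameters (subject only to being nonzero where required), while the equations $\epsilon(\partial f_i) = 0$ are precisely the transversality-type conditions at the edges bordering $f_i$.

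The second step is to set up the coloring side concretely. A rank $1$ face coloring is a map $F \to \F\PP^1$ with bordering faces distinct; modding out by $\mathrm{PGL}_2(\F)$, and using that this action is free on nonempty $G$, I can normalize — for instance, fix the colors of the three faces meeting at a chosen vertex (or the face at infinity and two neighbors) to be $0, \infty, 1 \in \F\PP^1$. This pins down a unique representative in each $\mathrm{PGL}_2$-orbit, and the remaining data is a tuple of elements of $\F\PP^1$, one per remaining face, each avoiding a prescribed finite set of "forbidden" values determined by the edges. The goal is to match this normalized coloring datum with the normalized representation datum. The natural dictionary is: the scalar $\epsilon(f_i)$ should encode (a coordinate expression for) the color of $f_i$ relative to the neighboring colors, mimicking the rank-$1$ transversality as "difference is invertible"; the coefficient units $\epsilon$ assigns along $T$-cycles are the cross-ratio-type multipliers forced by the branched-cover geometry; and $\epsilon(x),\epsilon(y),\epsilon(z)$ are the auxiliary parameters that disappear upon passing to the $\mathrm{PGL}_2$-quotient on one side and are gauge-fixed on the other.

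Concretely, I would (i) define a map $\mathrm{Col}_1(G;\F) \to \mathrm{Rep}_1(\scr{A};\F)$ by choosing, for each coloring, lifts of the face-colors to $\F^2$ and reading off the comparison scalars across each edge and the monodromy units around $T$-cycles, then check $\epsilon\partial = 0$ by a local computation at each vertex and each face (this is where the explicit form of $\partial^{\scr{A}}$ from the appendix is invoked); (ii) define the reverse map by reconstructing the colors from the scalars $\epsilon(f_i)$ and the coefficient units, reading the recursion that propagates a color from one face to an adjacent face across an edge; (iii) verify these are mutually inverse up to the $\mathrm{GL}_1 = \F^\times$ rescaling of lifts, hence descend to well-defined mutually inverse maps on moduli spaces; and (iv) check $\mathrm{PGL}_2(\F)$-equivariance so the bijection is natural (in particular independent of the normalization chosen). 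Finally, naturality in $\F$ is automatic since every construction is polynomial.

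The main obstacle I expect is step (i) — verifying $\epsilon \circ \partial^{\scr{A}}_{G,\Gamma,T} = 0$ precisely when the assignment comes from a genuine face coloring — because it requires unwinding the combinatorial definition of the differential (the gradient-flow-tree / binary-sequence bookkeeping, the garden orientations, the spanning tree $T$ and its induced capping paths) and matching each term of $\partial f_i$, $\partial x$, $\partial y$, $\partial z$ to a transversality condition or a consistency condition at a vertex. The garden- and tree-dependence is a red herring for the \emph{moduli} statement — Theorem~\ref{thm:CM} (equivalently Theorem~\ref{thm:nc_CM} after abelianization) guarantees the dg-isomorphism type is independent of these choices, so it suffices to prove the bijection for one convenient $(\Gamma, T)$ and transport it — but one still has to do the bookkeeping carefully for that one choice. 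A secondary subtlety is handling faces and edges incident to the face at infinity $f_\Gamma$ (which is not among the generators), and making sure the normalization on the coloring side is compatible with the absence of an $f_\infty$ generator on the representation side; this is exactly the role played by the stabilization/destabilization and the extra generator $w$ in the more symmetric $+$-setup, so one option is to prove the statement first in the $+$-enlarged model where all faces are generators and then destabilize.
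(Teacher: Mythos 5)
Your overall architecture---explicit mutually inverse maps, normalizing a coloring by pinning the three pairwise-transverse colors at a chosen vertex via the $\mathrm{PGL}_2(\F)$-action, reconstructing colors by propagating across edges, reducing to one convenient $(\Gamma,T)$, and optionally working in the $+$-enlarged model before destabilizing---is exactly the route the paper takes (Section 7, specialized from rank $r$). But your description of the representation side contains an error that would derail steps (i)--(iii) as written. A rank $1$ representation is a dg-homomorphism $\epsilon \colon (\scr{A}_{G,f_{\Gamma},T},\partial^{\scr{A}}_{G,\Gamma,T}) \rightarrow (\F,0)$ whose target is concentrated in degree $0$; since dg-homomorphisms are degree-preserving morphisms of graded algebras, $\epsilon$ must annihilate every generator of positive degree. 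Hence $\epsilon(f_i)=\epsilon(x)=\epsilon(y)=\epsilon(z)=0$: these are not free parameters, and in particular the color of a face cannot be stored in the scalar $\epsilon(f_i)$ as your dictionary proposes. Likewise the equations $\epsilon(\partial x)=\epsilon(\partial y)=\epsilon(\partial z)=0$ are vacuous, since every term of $\partial x$, $\partial y$, $\partial z$ contains a degree-$1$ face generator, which $\epsilon$ kills; they impose no conditions at all.

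The entire content of a representation is therefore the induced character $H_1(\Lambda_G) \rightarrow \F^{\times}$ (equivalently, a unit attached to each edge of $T$), subject to the single family of nontrivial equations $\epsilon(\partial f_i)=\sum_{v \in f_i}\epsilon(H_{f_i}(v))=0$. The colors must be manufactured entirely out of these edge units: the mechanism in the paper (and in the appendix-A antecedent) is to parallel-transport an element of $\mathrm{GL}_2(\F)$ from a base triangle across edges and threads using the evaluated $2\times 2$ matrices of words, with the relations $\epsilon(\partial f_i)=0$ guaranteeing path-independence of the transport, and then to take the line spanned by the first column; transversality across an edge is automatic from invertibility of the off-diagonal entries. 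Your instinct to ``propagate a color from one face to an adjacent face across an edge'' is correct, but the transition data for that propagation is the coefficient character, not the (identically vanishing) face scalars. Once the data is relocated there, the rest of your plan goes through and coincides with the paper's argument.
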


It is worth noting that in the case that $\F = \F_3$ is the field of order $3$, then the Four Color Theorem, famous for its eventual solution by Appel and Haken \cite{AH1,AH2} in the 1970s using computers, as well as the current non-existence of a non-computer-based proof, is the assertion that $\scr{M}^{\mathrm{Col}}_1(G;\F_3)$ is non-empty whenever $G$ is bridgeless, meaning there is no face which borders itself. Therefore, we obtain the equivalent result that for bridgeless $G$, the Legendrian $\Lambda_G$ admits an $\F_3$-augmentation.

Because $\mathrm{GL}_1(\F) = \F^*$ is commutative, we notice that there is a canonical identification
$$\scr{M}^{\mathrm{Rep}}_1(\scr{A}_{G,f_{\Gamma},T}, \partial^{\scr{A}}_{G,\Gamma,T};\F) \cong \scr{M}^{\mathrm{Rep}}_1(\scr{B}_{G,f_{\Gamma},T}, \partial^{\scr{B}}_{G,\Gamma,T};\F).$$
In particular, the previous theorem is a special case of `representations are sheaves' for the non-commutative setting, which we verify, along with a number of functoriality properties.

With regards to functoriality, notice that if we have a dg-homomorphism $\Phi \colon (A,\partial) \rightarrow (A',\partial')$, then the pullback maps
$$\Phi^* \colon \mathrm{Rep}_r(A',\partial';\F) \rightarrow \mathrm{Rep}_r(A,\partial;\F)$$
intertwine the conjugation action. Hence $\Phi^*$ descends to the moduli spaces
$$\Phi^* \colon \scr{M}^{\mathrm{Rep}}_r(A',\partial';\F) \rightarrow \scr{M}^{\mathrm{Rep}}_r(A,\partial;\F).$$
In our situation, $\Phi_{\Gamma,T}^{\Gamma',T'}(\zeta,\gamma)$ are all dg-isomorphisms yielding pullback isomorphisms
$$(\Phi_{\Gamma,T}^{\Gamma',T'}(\zeta,\gamma))^* \colon \mathrm{Rep}_r(\scr{B}^+_{G,T'},\partial^{\scr{B}^+}_{G,\Gamma',T'};\F) \rightarrow \mathrm{Rep}_r(\scr{B}^+_{G,T},\partial^{\scr{B}^+}_{G,\Gamma,T};\F).$$
which intertwine the conjugation action of $\mathrm{GL}_r(\F)$, and hence also induce isomorphisms at the level of the moduli space of rank $r$ representations.\\
\begin{fthm} \label{thm:reps_are_sheaves}
	Let $G$ be a trivalent plane graph, $\F$ a field, and $r$ a positive integer. For each garden $\Gamma$ and subtree $T \subset G$ spanning all but one vertex, there is a canonical map
	$$\Psi_{G,\Gamma,T} \colon \mathrm{Rep}_r(\scr{B}^+_{G,T},\partial^{\scr{B}^+}_{G,\Gamma,T};\F) \rightarrow \scr{M}^{\mathrm{Col}}_r(G;\F)$$
	with the following properties:
	\begin{itemize}
		\item \textbf{Functoriality:} We have $$\Psi_{G,\Gamma,T} \circ (\Phi_{\Gamma,T}^{\Gamma',T'}(\zeta,\gamma))^* = \Psi_{G,\Gamma',T'}.$$
		\item \textbf{Representations are sheaves:} $\Psi_{G,\Gamma,T}$ is invariant under the conjugation action, and induces a bijection
		$$\scr{M}_r^{\mathrm{Rep}}(\scr{B}^+_{G,T},\partial^{\scr{B}^+}_{G,\Gamma,T};\F) \cong \scr{M}^{\mathrm{Col}}_r(G;\F).$$
	\end{itemize}
\end{fthm}

If one wishes to work with finite-type gardens, as was done in the original commutative setting, one simply notices that, so long as all generators are in positive degree, the pullbacks of stabilization and destabilization maps are inverses on the sets of representations; see Proposition \ref{prop:stabilize_reps} below. It follows immediately that 
$$\mathrm{Rep}_r(\scr{B}^+_{G,T},\partial^{\scr{B}^+}_{G,\Gamma,T};\F) \cong \mathrm{Rep}_r(\scr{B}_{G,f_{\Gamma},T}, \partial^{\scr{B}}_{G,\Gamma,T};\F)$$
since the dg-algebras are related by a composition of (elementary) dg-isomorphisms together with stabilization/destabilization, where all generators involved are of positive degree. Furthermore, since pullbacks of dg-homomorphisms intertwine the conjugation action, we furthermore have
$$\scr{M}^{\mathrm{Rep}}_r(\scr{B}^+_{G,T},\partial^{\scr{B}^+}_{G,\Gamma,T};\F) \cong \scr{M}^{\mathrm{Rep}}_r(\scr{B}_{G,f_{\Gamma},T}, \partial^{\scr{B}}_{G,\Gamma,T};\F).$$
In the case $r=1$, one recovers Theorem \ref{thm:augs_sheaves_finite_type} from combining this observation with Theorem \ref{thm:reps_are_sheaves}.

\subsection{Contact geometry} \label{ssec:intro_contact}

We now give a few details as to why one might expect that the dg-algebras $(\scr{B}^{+}_{G,T},\partial^{\scr{B}^+}_{G,\Gamma,T})$ are indeed models for a Legendrian contact dg-algebra. To begin, we will explain why $\scr{B}^+_{G,T}$ is the correct graded algebra, which requires identifying the generators as the Reeb chords of the Legendrian. We will then partially discuss the differential, including a description of the types of objects which are counted. Upon reading Section \ref{ssec:enlarged}, the contact-geometric reader may wish to come back to this section to check their understanding of how each binary sequence (Definition \ref{defn:binary_seq}) corresponds to a term in the differential, which essentially finishes the translation of the rest of the article back into the language of contact geometry. A full proof of this correspondence would require also a description of the signs, which we do not discuss at all, as well as an understanding of gradient flow trees near the trivalent vertices, which requires a perturbation of certain singularities in the front.

Recall that $\R^5_{\mathrm{std}}$ comes with a so-called front projection $\pi_{f} \colon \R^5 \rightarrow \R^3$, and that for a generic Legendrian surface $\Lambda \subset \R^5$, it may be recovered from its projection to $\pi_{f}(\Lambda) \subset \R^3$, which is almost everywhere immersed except along some mild singular strata. More specifically, with respect to the typical coordinates $x_1,x_2,z$ for $\R^3$, this projection is never tangent to the vertical vector $\partial_z$, and the remaining two coordinates are determined by the slopes of the tangent planes to $\pi_{f}(\Lambda)$, i.e. $y_j = \frac{\partial z}{\partial x_j}$. The underlying graded algebra of the Legendrian contact dg-algebra is then of the form
$$\Z[\pi_1(\Lambda)] * \Z\langle \scr{R}\rangle,$$
where $\scr{R}$ is the collection of Reeb chords. From the perspective of the front projection, these consist of pairs of points $(p,q) \in \pi_{f}(\Lambda)$ such that $q$ lies directly above $p$ in the $z$-direction, and $T_p(\pi_{f}(\Lambda))$ and $T_q(\pi_{f}(\Lambda))$ are parallel, so that the $y_j$-coordinates of the corresponding points on $\Lambda$ are equal.

In order to verify that we have the correct underlying graded algebra, then, we need an explicit model where we identify $\scr{R} \cong F \sqcup \{w,x,y,z\}$. A front projection for the Legendrian satellite of $N$-weaves appears in work of Casals and Zaslow \cite{CZ}, though a little more care is needed to identify the Reeb chords. Assuming we have chosen a face at infinity $f_{\infty}$, we take the front as follows; see Figure \ref{fig:contact_front} for reference. First, we take an immersion of the double branched cover of a large disk containing $G_{f_{\infty}}$, branched at the vertices of $G$, and where the projection map $\R^3_{x_1,x_2,z} \rightarrow \R^2_{x_1,x_2}$ recovers the branching map, and where the immersion crosses itself along the edges and is given by a so-called $D_4^-$ singularity at vertices. We take this branched cover to be relatively flat, so that the corresponding portion of the Legendrian in $\R^{5}_{\mathrm{std}}$ has relatively small $y_1$ and $y_2$ coordinates, and accordingly, we call this the flat portion of our model. The boundary of the flat portion consists of two circles. We then glue in two more disk pieces, sewn together along the two boundary circles along cusp edges so that the projection lifts to a smooth Legendrian. We take these two disks to be $C^1$-close to each other but not intersect each other, and to be cambered upwards so that they are bell-shaped, rising mostly well above the flat portion, except that the bottom disk will pass the top sheet of the flat portion near the seam. If the lower disk is the graph of a bell-shaped function $f \colon \R^2_{x_1,x_2} \rightarrow \R_z$, then we may take the upper disk to be given by the graph of the nearby bell-shaped function $f + \epsilon + \epsilon^2 x_1$ for small enough $\epsilon > 0$.

Let us now find the Reeb chords. By construction, we see that there are no Reeb chords between the two cambered disks. For the flat portion, we find that for each finite face, there is a unique Reeb chord between the two sheets, given at the point where the two sheets are of maximal distance away. Because of the positioning of the seams according to the functions $f$ and $f+\epsilon+\epsilon^2x_1$, we will also find that there must be a Reeb chord appearing for the face at infinity, since the two sheets are not at maximal distance along the boundary. Finally, between the flat and cambered portions, we can choose $f$ so that there are no Reeb chords near the seams, and so the only other Reeb chords which occur are near the maximum of the function $f$, where we obtain $4=2\times 2$ more Reeb chords, given by picking one of the two cambered disks, and one of the two sheets below. The seed of the garden is meant to codify the approximate position of these Reeb chords.

\begin{figure}[h]
	\centering
	\includegraphics[width=.6\textwidth]{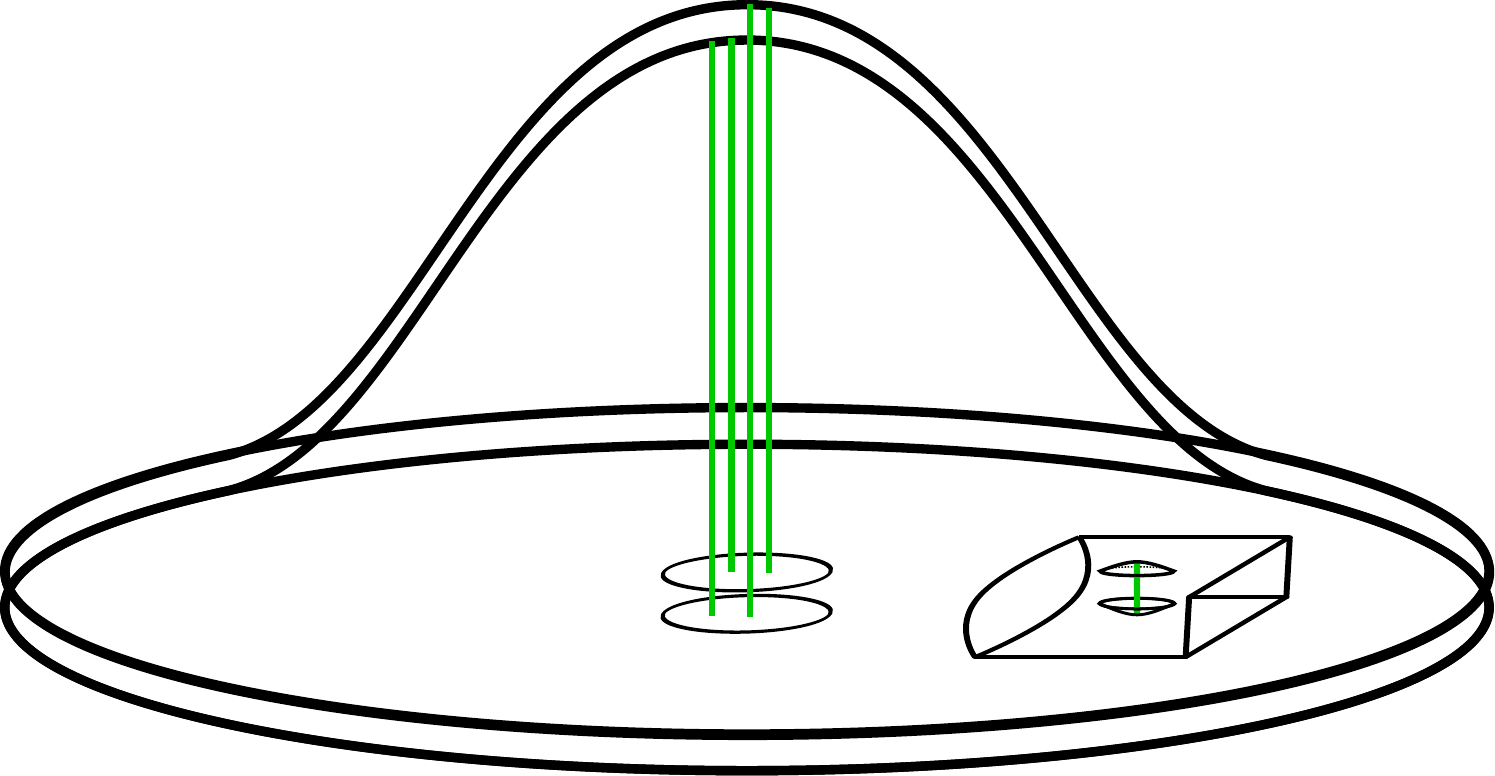}
	\caption{We have a sketch of the front of the Legendrian satellite of $\Lambda_G$. We have not drawn the details of the flat portion for simplicity, though we sketch in green one of the Reeb chords corresponding to a center of the garden inside one of the faces of the graph $G$. We have also drawn the four Reeb chords corresponding to the generators $w,x,y,z$.}
	\label{fig:contact_front}
\end{figure}

The grading of the generators in $\scr{R}$ may be computed using a formula of Ekholm, Etnyre, and Sullivan \cite[Lemma 3.4]{EES_noniso}. Namely, suppose $r \in \scr{R}$ is a Reeb chord from $r_-$ to $r_+$, lying on sheets in the front projection which are locally  the graphs of functions $f_-$ and $f_+$. Pick a generic path $\gamma$ on $\Lambda$ from $r_+$ to $r_-$. Let $U(\gamma)$ and $D(\gamma)$ be the number of times $\gamma$ crosses the cusp edges (where the cambered disks are sewn to the flat sheets) upwards or downwards, respectively. Let $\mathrm{Morse}(r)$ be the Morse index of the critical point of $f_+-f_-$ at the point over which the Reeb chord $r$ lies. Then the degree of $r$ is
$$|r| = D(\gamma)-U(\gamma) + \mathrm{Morse}(r)-1.$$
In our case, we see that $\mathrm{Morse}(r) = 2$ for all of the Reeb chords involved. For those Reeb chords in $F$, we may take $\gamma$ to stay within the flat portion, hence $D(\gamma) = U(\gamma) = 0$, and hence $|f| = 1$. For the Reeb chords $w,x,y,z$, we see that we can take a curve $\gamma$ which crosses one cusp downwards, so that $D(\gamma) = 1$ and $U(\gamma) = 0$, from which we find that $|w|=|x|=|y|=|z|=2$. We have therefore recreated the graded algebra.

As for the differential, we appeal to the gradient flow tree technology of Ekholm \cite{Ekholm}. Each sheet of $\pi_{f}(\Lambda)$ may be thought of as the graph of a function $\R^2_{x_1,x_2} \rightarrow \R_{z}$. The gradient flow trees counted by Ekholm may be thought of as certain trees in $\R^2_{x_1,x_2}$ such that each edge of the tree is labelled by two sheets, say sheets $i$ and $j$ given by graphs of functions $f_i < f_j$, and where the edges are tangent to and oriented with $-\nabla (f_j-f_i)$. The vertices of the tree are of only a handful of specified types. For example, there are $Y_0$-vertices, involving three sheets which are the graphs of functions $f_i < f_j < f_k$. Near such a vertex, the tree has one input edge directed as $-\nabla(f_k-f_i)$, and two output edges directed as $-\nabla(f_j-f_i)$ and $-\nabla(f_k-f_j)$. The differential of a Reeb chord corresponds to counts of gradient flow trees which emanate from the Reeb chord, i.e. such that the tree has a positive 1-valent puncture, meaning a node corresponding to the Reeb chord and with an outgoing edge oriented along $-\nabla(f_+-f_-)$ for the functions $f_- < f_+$ whose graphs are the sheets containing the endpoints of the Reeb chord. Many such flow trees exist, but only a finite number of them have the property that they cannot be deformed. It is these rigid flow trees which are counted in the differential.

For a Reeb chord corresponding to a face, there is a single gradient flow line to each of the vertices, and upon perturbing the front near the vertex (since the $D_4^-$ singularity is not allowed in Ekholm's technology), one finds each of these contributes precisely one term to the differential (though this requires additional proof via an understanding of the perturbation of the $D_4^-$ singularity, which we do not provide). These flow lines are codified as the threads in our garden, and they are easily seen to be the only terms which can possibly contribute to the differential of the Reeb chords at the faces.

Meanwhile, for each of the four Reeb chords between the cambered and flat portions, because the slopes of the cambered portion are $C^1$ close and the slopes in the flat portion are $C^1$ close and small, the gradient trajectories emanating from these Reeb chords are all essentially just radially outward from where these Reeb chords are positioned, approximately following the trajectory of the vector field $-\nabla f$ where $f$ is our bell-shaped function. The rigid flow trees starting at one of these four generators must pass over the face Reeb chords for degree reasons (as a 2-valent negative puncture in Ekholm's language), and there is one such trajectory of $-\nabla f$ for each face. These trajectories are codified combinatorially by the tines of the garden. But this is not the whole picture: for example, certain $Y_0$ vertices are allowed where the tines cross the threads. Indeed, suppose we have a gradient flow line along the tine, where the two sheets involved are the lower flat edge and one of the cambered edges. Then where the tine intersects the thread, this trajectory may split: one edge will still go along the tine, but following the upper flat edge (and the same cambered edge) instead, and the other edge will go along a thread until it ends at a $D_4^-$ vertex as we saw when looking at the differential of the chords at faces.

It is not hard to then enumerate all possible rigid gradient flow trees: essentially they are encoded by whether or not there is a $Y_0$-vertex where the tine crosses a thread. Figure \ref{fig:contact_flow_tree} demonstrates an example of a rigid flow tree which counts towards the differential of the generator $y$, which corresponds to the Reeb chord from the top sheet of the flat portion to the lower disk of the cambered portion. The figure also has labels $0,1$ corresponding to the fact that for each edge of the tree which is along the tine, one of the corresponding sheets is either the upper or lower of the flat sheets. Combinatorially, this data is encoded in the notion of a binary sequence, as in Definition \ref{defn:binary_seq}, with each binary sequence giving precisely one term of the differential. Verifying that these are gives all terms in the differential is not difficult, assuming again that we have understood how threads terminate at a perturbation of the $D_4^-$ singularities.

\begin{figure}[h]
	\centering
	\includegraphics[width=.6\textwidth]{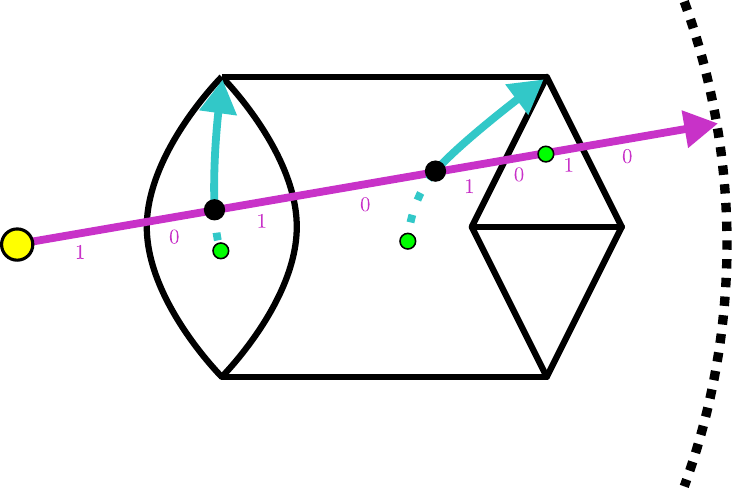}
	\caption{A sample rigid gradient flow tree which contributes to the differential of the degree 2 generator $y$. Each edge of the tree is either a thread or a tine. In Ekholm's language, the black dots are $Y_0$-vertices, the green dot contained in the tree is a $2$-valent (negative) puncture, the arrowheads occur at ends (after perturbing the $D_4^-$ singularities), and the yellow point is a $1$-valent (positive) puncture, which is just the Reeb chord $y$ in this case. Each edge of the tree may be labelled with the corresponding sheets for which it is following the negative gradient flow. Along the threads, the edge is labelled by the two flat sheets. Along the tine, each edge is labelled by one of the flat sheets, either $0$ or $1$ for the lower or upper flat sheet, and by the lower cambered sheet.}
	\label{fig:contact_flow_tree}
\end{figure}

\subsection{Acknowledgements}
The author, who was partially supported by the National Science Foundation under grant DMS-1547145, is also grateful to a number of people who influenced this paper. Emmy Murphy initially suggested this project, and provided helpful explanations of the underlying contact-geometric picture for the commutative case, especially in the early stages of this work. Discussions about Legendrian weaves with Roger Casals and Honghao Gao have influenced the expositional structure of this paper. Furthermore, encouragement from Honghao Gao helped to prevent this paper from entering development limbo. Almost all simplifications, clarifications, and corrections made from the first version of this article, as well as the inclusion of Section \ref{ssec:intro_contact}, were precipitated by the comments of two anonymous referees, whom I thank for reading the paper closely.

\section{Preliminaries on stable regenerative tame dg-homomorphisms} \label{sec:stable_tame}

In this short section, we prove a few algebraic preliminaries mentioned in the introduction about stable regenerative tame dg-homomorphisms. These proofs are somewhat technical, with techniques orthogonal to the main ideas of the paper, so we encourage the impatient reader to skip this section in a first read-through. They are presented for completeness.

We recall the following notions which may be found in the commutative coefficient setting in work of K\'{a}lm\'{a}n \cite{Kalman}, extended here to non-commutative coefficients.

\begin{defn}
	Suppose $(A,\partial^A)$ and $(B,\partial^B)$ are dg-algebras and $\phi,\psi \colon (A,\partial^A) \rightarrow (B,\partial^B)$ are dg-homomorphisms. A \textbf{$(\phi,\psi)$-derivation} is a $\Z$-linear map of degree $+1$, $K \colon (A,\partial^A) \rightarrow (B,\partial^B)$, such that if $x \in A_k$ and $y \in A$, then
	$$K(xy) = K(x)\psi(y) + (-1)^{k}\phi(x)K(y).$$
\end{defn}

\begin{lem}[{c.f. \cite[Lemma 2.18]{Kalman}}] \label{lem:extend_to_derivation}
	Suppose $(A = R*\Z\langle S \rangle,\partial^A)$ is a semi-free dg-algebra, $(B,\partial^B)$ is a dg-algebra, and $\phi,\psi \colon (A,\partial^A) \rightarrow (B,\partial^B)$ are dg-homomorphisms. If $K \colon S \rightarrow B$ is any map of degree $+1$, then there exists a unique $(\phi,\psi)$-derivation $\wt{K} \colon (A,\partial^A) \rightarrow (B,\partial^B)$ satisfying
	$$\wt{K}|_R = 0\qquad\mathrm{and}\qquad\wt{K}|_S = K.$$
\end{lem}

\begin{proof}
	All monomials in $A$ are of the form $r_0s_1r_1s_2r_2\cdots s_kr_k$ for some $k \geq 0$, $r_0,\ldots,r_k \in R$, and $s_1,\ldots,s_k \in S^+$. Because the derivation property gives $\wt{K}$ on products and by we wish $\wt{K}|_R = 0$ and $\wt{K}|_S = K$, we may inductively prove that if such a derivation exists, on monomials, it must satisfy
	$$\wt{K}(r_0s_1r_1s_2r_2\cdots s_kr_k) := \sum_{\ell=1}^{k}(-1)^{|s_1\cdots s_{\ell-1}|}\phi(r_0s_1r_1\cdots s_{j-1}r_{j-1})K(s_j)\psi(r_js_{j+1}r_{j+1}\cdots s_kr_k).$$
	This in turn determines $\wt{K}$ uniquely on all of $A$ upon extending by $\Z$-linearity, and is well-defined since the $\Z$-linear relations between these monomials are generated by those of the form
	$$r_0s_1r_1 \cdots s_{j}r_js_{j+1} \cdots s_kr_k+r_0s_1r_1 \cdots s_{j}r'_js_{j+1} \cdots s_kr_k=r_0s_1r_1 \cdots s_{j}(r_j+r'_j)s_{j+1} \cdots s_kr_k.$$
	To check this formula does indeed determine a $(\phi,\psi)$-derivation, it suffices by $\Z$-linearity to check the derivation equation on monomials, i.e. with $x = r_0s_1r_1s_2r_2\cdots s_kr_k$ and $y=r'_0s'_1r'_1s'_2r'_2\cdots s'_{k'}r'_{k'}$, which we leave for the reader to verify.
\end{proof}

\begin{lem}[{c.f. \cite[Lemma 2.18]{Kalman}}] \label{lem:htpy_of_derivation}
	Suppose $(A = R*\Z\langle S \rangle,\partial^A)$ is a semi-free dg-algebra, $(B,\partial^B)$ is a dg-algebra, $\phi,\psi \colon (A,\partial^A) \rightarrow (B,\partial^B)$ are dg-homomorphisms, and $K \colon (A,\partial^A) \rightarrow (B,\partial^B)$ is a $(\phi,\psi)$-derivation. Then the equation
	$$\phi-\psi = K\partial^A+\partial^BK$$
	is true if and only if it holds on the restriction to both $R$ and $S$.
\end{lem}
\begin{proof}
	Suppose that the equation is satisfied for elements $x \in A_k$ and $y \in A$, i.e.
	$$\phi(x)-\psi(x) = K\partial^Ax + \partial^BKx \qquad \mathrm{and} \qquad \phi(y)-\psi(y) = K\partial^Ay + \partial^BKy.$$
	Then the equation also holds for the product $xy$ by the following computation:
	\begin{align*}
	K\partial^A(xy) + \partial^BK(xy) & =K\left[(\partial^Ax)y + (-1)^kx(\partial^A y)\right] + \partial^B\left[K(x)\psi(y)+(-1)^k\phi(x)K(y)\right] \\
			&=K(\partial^Ax)\psi(y) - (-1)^{k}\phi(\partial^Ax)K(y) \\
			& \quad + (-1)^kK(x)\psi(\partial^Ay) + \phi(x)K(\partial^A y) \\
			& \quad \quad + \partial^BK(x)\psi(y) - (-1)^{k}K(x)\partial^B\psi(y) \\
			& \quad \quad \quad +(-1)^k(\partial^B\phi(x))K(y) + \phi(x)\partial^BK(y) \\
			& =(K\partial^A+\partial^BK)(x)\psi(y) + \phi(x)(K\partial^A+\partial^BK)(y)\\
			& \quad + (-1)^kK(x)(\psi\partial^A-\partial^B\psi)(y) - (-1)^{k}(\phi\partial^A - \partial^B\phi)(x)K(y) \\
			& =(\phi(x)-\psi(x))\psi(y) + \phi(x)(\phi(y)-\psi(y)) \\
			& =\phi(x)\psi(y)-\psi(xy)+\phi(xy)-\phi(x)\psi(y) \\
			& =\phi(xy)-\psi(xy).
	\end{align*}
	Since every element of $A$ can be written as a $\Z$-linear combination of products of elements of $R$ and $S$ (which are homogeneous), the result follows.
\end{proof}

We arrive now at the main results of this section.

\begin{prop}[c.f. {\cite[Corollary 3.11]{ENS}}] \label{prop:dg-homomorphism_is_quasi-isomorphism}
	A stable regenerative tame dg-homomorphism is a quasi-isomorphism.
\end{prop}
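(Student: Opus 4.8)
The plan is to verify that each of the four building blocks of a stable regenerative tame dg-homomorphism---stabilizations, destabilizations, regenerations, and elementary dg-automorphisms---induces an isomorphism on homology, and then conclude by functoriality of $H_*$ that any composition does as well. Two of these are essentially immediate: elementary dg-automorphisms and regenerations are dg-\emph{isomorphisms} of the underlying graded algebra (the former by definition, the latter because $s \mapsto u_s s v_s$ with $u_s, v_s$ invertible is invertible), hence trivially quasi-isomorphisms. So the content is entirely in the stabilization and destabilization maps.

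For the stabilization $A_S \hookrightarrow A_{S^+}$, where $S^+ = S \sqcup \{x,y\}$ with $\partial^+ y = x$, $\partial^+ x = 0$, the key point is that $(A_{S^+}, \partial^+)$ deformation retracts onto $(A_S, \partial)$ as a chain complex. Concretely, I would exhibit an explicit chain homotopy: since every element of $A_{S^+}$ can be written uniquely as a $\Z$-linear combination of alternating words in elements of $R$ and the generators $S^+$, one can set up a filtration by the number of occurrences of $x$ and $y$, or more directly build a contracting homotopy $H$ on the "acyclic part" generated by $x$ and $y$ using the standard model for the acyclic complex $(\Z\langle x, y\rangle, \partial y = x)$, and extend it over words by the Leibniz-type bookkeeping. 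One shows $\mathrm{id} - \iota \circ \pi = \partial^+ H + H \partial^+$ where $\pi \colon A_{S^+} \to A_S$ is the destabilization (which kills $x, y$) and $\iota$ is the inclusion; this simultaneously proves that \emph{both} the stabilization and destabilization maps are quasi-isomorphisms (indeed $\pi \circ \iota = \mathrm{id}$ on the nose, and $\iota \circ \pi$ is chain homotopic to the identity). The cited result \cite[Corollary 3.11]{ENS} presumably gives exactly this in the relevant generality, so I would either invoke it directly or reproduce the short homotopy argument adapted to our non-commutative, semi-free, non-negatively-graded setting.

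Finally, assemble: a stable regenerative tame dg-homomorphism is by definition a finite composition $\phi = \phi_n \circ \cdots \circ \phi_1$ of maps of the four listed types; each $\phi_i$ induces an isomorphism $(\phi_i)_*$ on homology by the above; and $(\phi)_* = (\phi_n)_* \circ \cdots \circ (\phi_1)_*$ is then a composition of isomorphisms, hence an isomorphism. One subtlety to address carefully: when we compose, say, a stabilization followed by an elementary automorphism of the \emph{stabilized} algebra, the intermediate differentials change, so I must make sure the homotopy argument for stabilization is stated for an arbitrary semi-free $\partial$ on $A_S$ (not a fixed one), which the construction above does handle since the retraction only uses $\partial^+ y = x$ and is natural in $\partial$.

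The main obstacle I anticipate is purely bookkeeping: writing down the contracting homotopy $H$ for the stabilization cleanly in the fully non-commutative setting, where one must track on which side of a word the new generators $x, y$ appear and handle the signs from the Leibniz rule correctly. This is routine but error-prone; the conceptual content is minimal, which is presumably why the proposition is relegated to a preliminary section with a pointer to \cite{ENS}.
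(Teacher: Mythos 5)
Your proposal follows essentially the same route as the paper: elementary automorphisms and regenerations are dg-isomorphisms hence trivially quasi-isomorphisms, and the real work is showing stabilization and destabilization are chain-homotopy inverses via an explicit contracting homotopy built from $\Psi(x)=y$ and extended over words. The one bookkeeping point you flag is indeed where the care goes --- the paper's extension $\wt{\Psi}$ inserts $\Psi$ at each occurrence of $x$ or $y$ in a word and normalizes by $\tfrac{1}{\#\{s_j\notin S\}}$ to avoid overcounting, which is exactly the "Leibniz-type bookkeeping" you anticipate.
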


\begin{proof}
	Since elementary dg-automorphisms and regenerations are dg-isomorphisms, they obviously act by quasi-isomorphisms. It suffices to check that stabilizations and destabilizations are also quasi-isomorphisms. We will check that they are inverses up to homotopy, and hence inverses on homology.
	
	Consider a stabilization and destabilization. The composition $A_S \rightarrow A_{S^+} \rightarrow A_{S}$ is just the identity. It suffices to prove that the corresponding dg-homomorphism $\Phi \colon A_{S^+} \rightarrow A_{S} \rightarrow A_{S^+}$ induces the identity on homology. By definition, $\Phi$ is just the identity on $R$ and elements of $S$, but $\Phi(x) = 0 = \Phi(y)$. We aim now to build a chain homotopy between $\id$ and $\Phi$, i.e. a $\Z$-linear map $\wt{\Psi} \colon A_{S^+} \rightarrow A_{S^+}$ of degree $+1$ such that
	$$\id - \Phi = \wt{\Psi}\partial^+ + \partial^+\wt{\Psi}.$$
	Consider the map $\Psi \colon S^+ \rightarrow A_{S^+}$ given by
	$$\Psi(s) = \left\{\begin{matrix}0, & s \neq x \\ y, & s = x\end{matrix}\right.$$
	By Lemma \ref{lem:extend_to_derivation}, this extends to a $(\id,\Phi)$-derivation $\wt{\Psi}$. The chain homotopy equation is then easily verified on elements of $R$ and $S^+$, and so by Lemma \ref{lem:htpy_of_derivation}, $\wt{\Psi}$ is a chain homotopy as desired.
\end{proof}

\begin{prop} \label{prop:condition_htpc_identity}
	Suppose we have a dg-automorphism $\Phi \colon (A_S,\partial) \rightarrow (A_S,\partial)$ satisfying the following properties:
	\begin{itemize}
		\item $\Phi$ fixes the underlying ring $R$.
		\item For each $s \in S$, we have $\partial s \in A_{S'}$, where $S' = \{s \in S \mid \Phi(s)=s\}$.
		\item For each $s \in S$, $\Phi(s) - s$ is exact.
	\end{itemize}
	Then $\Phi$ is homotopic to the identity, and hence induces the identity on homology.
\end{prop}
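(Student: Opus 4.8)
The goal is to build an explicit chain homotopy $\widetilde{\Psi} \colon A_S \to A_S$ of degree $+1$ with $\mathrm{id} - \Phi = \widetilde{\Psi}\partial + \partial\widetilde{\Psi}$. The natural approach mimics the construction in the proof of Proposition \ref{prop:dg-homomorphism_is_quasi-isomorphism}: first define $\widetilde{\Psi}$ on generators, then extend by a Leibniz-type rule over words, then check the homotopy equation word-by-word. For each $s \in S$, the third hypothesis gives us some $\sigma_s \in A_S$ with $\partial \sigma_s = \Phi(s) - s$; fix such a choice, with $\sigma_s = 0$ whenever $s \in S'$ (i.e. whenever $\Phi(s) = s$). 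On a monomial $w = r_0 s_1 r_1 \cdots s_k r_k$ I would set $\widetilde{\Psi}(w) = 0$ if every $s_j \in S'$, and otherwise average over the positions $\ell$ with $s_\ell \notin S'$, inserting $\sigma_{s_\ell}$ in place of $s_\ell$ with the appropriate Koszul sign $(-1)^{|s_1 \cdots s_{\ell-1}|}$ and dividing by $\#\{j : s_j \notin S'\}$, exactly parallel to the earlier formula. Then extend $\Z$-linearly.

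**Key steps.** (1) Verify $\widetilde{\Psi}$ is well-defined and degree $+1$: inserting $\sigma_{s_\ell}$ raises degree by $|\sigma_{s_\ell}| - |s_\ell| = 1$ since $\partial\sigma_{s_\ell}$ has degree $|s_\ell| - 1$... wait, more carefully, $\partial \sigma_s = \Phi(s)-s$ has degree $|s|$, so $\sigma_s$ has degree $|s|+1$, and the substitution raises total degree by $1$. (2) Check the homotopy equation on monomials. For $w$ with all $s_j \in S'$: here $\widetilde{\Psi}(w) = 0$, and by the second hypothesis $\partial s_j \in A_{S'}$ for all $j$, so $\partial w$ is again a sum of monomials in $S'$ (using that the letters appearing in $\partial s_j$ all lie in $S'$), whence $\widetilde{\Psi}\partial w = 0$; meanwhile $\Phi(w) = w$ since $\Phi$ fixes $R$ and each $s_j \in S'$, so both sides vanish. (3) For $w$ with at least one $s_j \notin S'$: expand $\partial\widetilde{\Psi}(w)$ and $\widetilde{\Psi}\partial(w)$ as in the earlier proof. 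The cross terms — where $\partial$ hits some $s_\ell$ at a position different from the one where $\sigma$ was inserted — cancel in pairs between the two expressions, provided one checks that the averaging constant $\#\{j : s_j \notin S'\}$ is unchanged: this holds because for $s_\ell \in S'$ the second hypothesis forces $\partial s_\ell \in A_{S'}$ (no new non-$S'$ letters), and for $s_\ell \notin S'$ the term $\sigma_{s_\ell}$ has already been substituted so $\partial$ does not act on an $S'$-vs-non-$S'$ ambiguous letter. What survives is $\sum_j r_0 s_1 \cdots r_{j-1}\big((\widetilde{\Psi}\partial + \partial\widetilde{\Psi})(s_j)\big) r_j \cdots s_k r_k$ divided by the constant, and on a single generator $(\widetilde{\Psi}\partial + \partial\widetilde{\Psi})(s) = \widetilde{\Psi}(\partial s) + \partial \sigma_s$; the first term is $0$ because $\partial s \in A_{S'}$ by hypothesis two, and $\partial\sigma_s = \Phi(s) - s$, giving $(\widetilde{\Psi}\partial + \partial\widetilde{\Psi})(s) = \Phi(s) - s$. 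Summing the telescoping substitutions recovers $\Phi(w) - w$ on the nose (after the averaging cancels), so $(\widetilde{\Psi}\partial + \partial\widetilde{\Psi})(w) = (\mathrm{id} - \Phi)(w)$... but with the right sign; one must track that $\mathrm{id} - \Phi$ appears, matching the claim.

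**Main obstacle.** The genuinely delicate point, and the one I would spend the most care on, is step (3): controlling the cross terms and the averaging denominator. In the $x,y$ case of Proposition \ref{prop:dg-homomorphism_is_quasi-isomorphism} the author could exploit that $\partial x = 0$ and $\partial y = x$ keep the count of "special" letters invariant in a transparent way; here the analogous fact is precisely the content of the second hypothesis ($\partial s \in A_{S'}$ for all $s$), which guarantees that differentiating an $S'$-letter produces only $S'$-letters, so the set $\{j : s_j \notin S'\}$ — and hence the normalizing constant — is stable under the operations appearing in $\widetilde{\Psi}\partial(w)$. Verifying the sign bookkeeping in the pairwise cancellation of cross terms is routine but error-prone, exactly as in the model proof; I would organize it by writing $\partial\widetilde{\Psi}(w)$ and $\widetilde{\Psi}\partial(w)$ each as a sum of three pieces (a "$\partial$ left of $\sigma$" piece, a "$\partial$ on $\sigma$ / $\widetilde{\Psi}$ on $\partial s$" piece, and a "$\partial$ right of $\sigma$" piece) and matching the left/right pieces across the two expressions, leaving only the middle pieces, which combine to $\sum_j (\cdots)(\Phi(s_j) - s_j)(\cdots)$. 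One final subtlety: one should note $\partial r = 0$ for $r \in R$, so the $r_i$'s are inert under $\partial$ and play no role beyond carrying signs, which is why the computation closes.
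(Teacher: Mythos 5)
There is a genuine gap, and it lies exactly where you did not look: the homotopy operator you propose is too small to produce $\Phi(w)-w$. Because $\Phi$ is an \emph{algebra} automorphism, on a word $w = a_0 s_1 a_1 \cdots s_k a_k$ (with $s_j \notin S'$) one has
$$\Phi(w) - w \;=\; \sum_{\emptyset \neq I \subset \{1,\ldots,k\}} a_0 t_1^I a_1 \cdots t_k^I a_k, \qquad t_j^I = \begin{cases} \partial\beta_{s_j}, & j \in I\\ s_j, & j \notin I\end{cases},$$
i.e.\ a sum over \emph{all} nonempty subsets of positions at which the substitution $s_j \mapsto \partial\beta_{s_j}$ is made. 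Your single-insertion operator (replace one $s_\ell$ by $\beta_{s_\ell}$, averaged over $\ell$) can only ever produce, after the cross-term cancellation you describe, the multiple $\tfrac{1}{\#}\sum_{\ell} a_0 s_1 \cdots (\partial\beta_{s_\ell}) \cdots s_k a_k$ of the $|I|=1$ terms: already for $w = s_1 s_2$ with both letters moved by $\Phi$, you get $\tfrac12\bigl[(\partial\beta_{s_1})s_2 + s_1(\partial\beta_{s_2})\bigr]$, whereas $\Phi(w)-w$ also contains $(\partial\beta_{s_1})(\partial\beta_{s_2})$ and has no factor of $\tfrac12$. The averaging trick from Proposition \ref{prop:dg-homomorphism_is_quasi-isomorphism} worked there only because the surviving diagonal term was $(\wt{\Psi}\partial^+ + \partial^+\Psi)(s_j) = s_j$, so all $\#$ summands reproduced the \emph{same} word $w$ and the division cancelled; here the summands are all different and nothing cancels. (The division by an integer is also suspect over $\Z$, but that is secondary.)

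The repair is to replace single insertions by a subset sum, which is what the paper does: define
$$\Psi(w) = \sum_{\emptyset \neq I \subset \{1,\ldots,k\}} \sigma_{\min(I)}\, a_0 s^I_1 a_1 \cdots a_{k-1} s^I_k a_k, \qquad s^I_j = \begin{cases} s_j, & j \notin I\\ \beta_{s_j}, & j = \min(I)\\ \partial\beta_{s_j}, & j \in I,\ j \neq \min(I)\end{cases},$$
with $\sigma_m$ the Koszul sign of the prefix. Applying $\partial$ to the distinguished slot $\min(I)$ produces exactly the multi-substitution expansion of $\Phi(w)-w$, the cross terms cancel against $\Psi\partial(w)$ as you anticipated (and here your correct observation that $\partial s_j \in A_{S'}$ keeps the non-$S'$ positions stable is genuinely used, since it lets $\Psi$ "skip over" $\partial s_j$), and the terms where $\partial$ hits a slot $j \in I$, $j \neq \min(I)$ die because $\partial(\partial\beta_{s_j}) = 0$. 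Your identification of the role of the hypotheses is right; the operator itself must be the subset-sum one, not the averaged single-insertion one.
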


\begin{proof}
	To fix notation, suppose for each generator $s \in S \setminus S'$ that $s-\Phi(s) = \partial \beta_s$ for some $\beta_s \in A_S$. Define a map $K \colon S \rightarrow A_{S}$ of degree $+1$ by
	$$K(s) = \begin{cases}\beta_s, & s \notin S' \\ 0, & s \in S'\end{cases}$$
	Then by Lemma \ref{lem:extend_to_derivation}, $K$ extends uniquely to an $(\id,\Phi)$-derivation $\wt{K} \colon A_S \rightarrow A_S$ with $\wt{K}|_{R} = 0$. Inductively from the defining equation for a derivation, one sees that $\wt{K}|_{A_{S'}}=0$. Therefore, for every $s \in S$, since $\partial s \in A_{S'}$, we have
	$$(\wt{K}\partial+\partial \wt{K})(s) = \wt{K}(\partial s) + \partial K(s) = 0+\partial \beta_s = (\id - \Phi)(s).$$
	We also have $(K\partial + \partial K)(r) = 0 = (\id-\Phi)(r)$ for every $r \in R$. By Lemma \ref{lem:htpy_of_derivation}, it follows that
	$$\id - \Phi = \wt{K}\partial + \partial \wt{K},$$
	and so $\wt{K}$ yields a chain homotopy between $\id$ and $\Phi$.
\end{proof}

\begin{prop} \label{prop:stabilize_reps}
	Suppose $(A_S,\partial)$ is a semi-free dg-algebra over a ring $R$ with stabilization $(A_{S^+},\partial^+)$, with the property that all generators of $S^+$ (and hence also of $S$) have strictly positive degree. Then for any positive integer $r$ and field $\F$, the stabilization and destabilization maps induce inverse bijections
	$$\mathrm{Rep}_r(A_S,\partial;\F) \cong \mathrm{Rep}_r(A_{S^+},\partial^+;\F)$$
	via pullback.
\end{prop}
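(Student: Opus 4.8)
The plan is to parametrize both representation sets by a single datum on the coefficient ring $R$, and then to observe that the stabilization and destabilization pullbacks both act as the identity on that datum. The key general observation is the following: if $(A_T,\partial)$ is any semi-free dg-algebra over $R$ all of whose generators have strictly positive degree, then a rank $r$ representation $\epsilon\colon (A_T,\partial)\to(\mathrm{Mat}_r(\F),0)$ is in particular a degree-$0$ morphism of graded algebras into an algebra concentrated in degree $0$, so $\epsilon(A_i)=0$ for $i\neq 0$; hence $\epsilon$ kills every generator $t\in T$ and is completely determined by the ring homomorphism $\rho:=\epsilon|_R\colon R\to\mathrm{Mat}_r(\F)$. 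Conversely, every ring homomorphism $\rho$ extends uniquely to a degree-$0$ graded algebra homomorphism $\epsilon_\rho\colon A_T\to\mathrm{Mat}_r(\F)$ killing all generators, and it remains only to determine which $\rho$ produce a chain map.

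I would carry out this last step as follows. Since $\partial$ has degree $-1$ and $\epsilon_\rho$ vanishes in all positive degrees, $\epsilon_\rho\circ\partial$ is automatically zero on $A_i$ for $i\neq 1$, so it suffices to check vanishing on $A_1$. Because all generators have degree $\geq 1$, we have $A_0=R$, and $A_1$ is spanned by words $r_0 t r_1$ with $r_0,r_1\in R$ and $t\in T$ of degree $1$; using $\partial r_i=0$ and $\partial t\in A_0=R$ one computes $\epsilon_\rho(\partial(r_0 t r_1))=\rho(r_0)\,\rho(\partial t)\,\rho(r_1)$, which vanishes for all $r_0,r_1$ exactly when $\rho(\partial t)=0$. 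Hence $\epsilon\mapsto\epsilon|_R$ gives a bijection
$$\mathrm{Rep}_r(A_T,\partial;\F)\ \cong\ \{\,\rho\colon R\to\mathrm{Mat}_r(\F)\ \text{ring homomorphism}\ \mid\ \rho(\partial t)=0\ \text{for all degree-}1\ \text{generators}\ t\in T\,\}.$$

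Applying this with $T=S$ and with $T=S^+=S\sqcup\{x,y\}$ finishes the proof. The degree-$1$ generators of $S^+$ are precisely those of $S$, together with $x$ in the case $\deg x=1$; but $\partial^+x=0$, so $x$ imposes no constraint, and $\deg y=\deg x+1\geq 2$ imposes none either. Thus the right-hand set above is literally the same for $S$ and for $S^+$, and both representation sets are identified with it via $\epsilon\mapsto\epsilon|_R$. The stabilization dg-homomorphism $A_S\hookrightarrow A_{S^+}$ is the inclusion, so its pullback restricts a representation and leaves $\epsilon|_R$ unchanged; the destabilization dg-homomorphism $A_{S^+}\to A_S$ fixes $R$ and kills $x,y$, so its pullback sends $\epsilon_\rho$ to the unique extension of $\rho$ over $A_{S^+}$ killing all generators. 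In both cases the pullback acts as the identity on the parametrizing datum $\rho$, and since $\mathrm{destab}\circ\mathrm{stab}=\mathrm{id}_{A_S}$, they are mutually inverse bijections. The only point requiring care (rather than real difficulty) is the reduction of the chain-map condition to degree-$1$ generators, which genuinely uses strict positivity of the degrees, together with the attendant borderline case $\deg x=1$, where the new degree-$1$ generator $x$ of $S^+$ contributes no equation because $\partial^+x=0$.
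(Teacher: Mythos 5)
Your proof is correct and rests on the same key observation as the paper's: because all generators of $S^+$ have strictly positive degree, a representation kills every generator and is determined by (and only nontrivial on) its restriction to the coefficient ring $R$. The paper concludes slightly more quickly — $\pi\circ i = \mathrm{id}$ gives $i^*\circ\pi^*=\mathrm{id}$, and $\pi^*\circ i^*=\mathrm{id}$ follows because $i\circ\pi$ agrees with the identity on $R$ — while your explicit parametrization of both representation sets by ring homomorphisms $\rho$ with $\rho(\partial t)=0$ for degree-$1$ generators is a correct, somewhat more detailed elaboration of the same idea.
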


\begin{proof}
	Let $i \colon (A_S,\partial) \rightarrow (A_{S^+},\partial^+)$ and $\pi \colon (A_{S^+},\partial^+) \rightarrow (A_S,\partial)$ be the stabilization and destabilization dg-homomorphisms. Since $\pi \circ i$ is the identity, we have
	$$i^* \circ \pi^* \colon \mathrm{Rep}_r(A_S,\partial;\F) \rightarrow \mathrm{Rep}_r(A_S,\partial;\F)$$
	is the identity. On the other hand, $i \circ \pi$ is the identity on the coefficient ring, and representations are only nontrivial on the coefficient ring since all of the generators of $S^+$ have positive degree. Hence, we also have that its pullback
	$$\pi^* \circ i^* \colon \mathrm{Rep}_r(A_{S^+},\partial^+;\F) \rightarrow \mathrm{Rep}_r(A_{S^+},\partial^+;\F)$$
	is the identity.
\end{proof}

\section{The coefficient ring} \label{sec:coef}

Recall from the introduction that, in generalizing the Casals--Murphy graded algebra (even before constructing the differential), there were two algebraic steps to perform, the first introducing noncommutativity between the coefficient ring and the other generators, and the second in introducing non-commutativity to the coefficient ring itself. The first will be discussed when we define the differential. But before we can even write the differential, we need a combinatorial model for the coefficient ring. We will develop a family of such combinatorial models, depending upon a choice of tree $T \subset G$ spanning all but one vertex which we will use as a base point. We will then see how to relate these models using homotopy classes of paths between the base points, a step which should feel well-motivated by the statement of Theorem \ref{thm:nc_CM_functoriality}.

\begin{rmk}
	Throughout this section, we will assume we have fixed a garden (to be defined later), and hence remove the corresponding decorations from our dg-algebras. Indeed, we are interested in the coefficient ring in this section, so it is better to keep the notation free from unnecessary clutter.
\end{rmk}

\subsection{The commutative case} \label{ssec:comm_coef}

To begin, we consider the commutative case, as in the work of Casals and Murphy \cite{CM_DGA}. Recall from the introduction that in the commutative case, we work over the coefficient ring $R = \Z[H_1(\Lambda_G)]$. The surface $\Lambda_G$ is a genus $g$ surface, so $H_1(\Lambda_G)$ is non-canonically isomorphic to $\Z^{2g}$. In order to have a combinatorial description for $R$, we therefore need to choose an isomorphism of $H_1(\Lambda_G)$ with $\Z^{2g}$. To do this, we make a choice of a tree $T \subset G$ spanning all but one of the vertices, as in Figure \ref{fig:Tree}.
\begin{figure}[h]
\centering
\includegraphics[width=.6\textwidth]{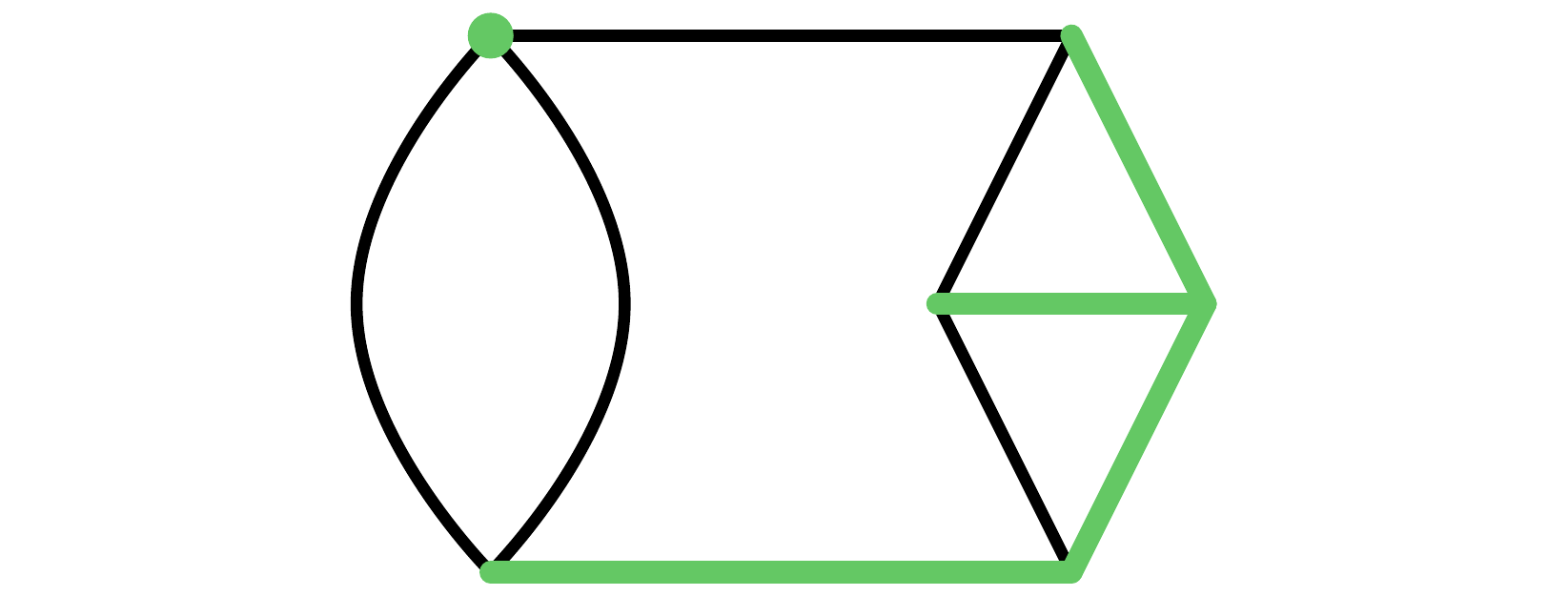}
\caption{The green edges form a tree spanning all but one vertex, singled out also in green.}
\label{fig:Tree}
\end{figure}
Such a tree automatically contains $2g$ edges. Let $\Z^T$ be the free abelian group on the set of edges of $T$. Each edge in the graph $G$ has preimage in $\Lambda_G$ given by a circle, and that circle represents a corresponding $1$-cycle in $H_1(\Lambda_G)$. (There is a minor issue of orientations which we ignore, as the geometry of $\Lambda_G$ makes this choice canonical.) Furthermore, one may check that for a tree as above, these $1$-cycles form a basis for homology. One may then take the Poincar\'e dual basis of $H_1(\Lambda_G)$. Hence we obtain for each edge $e \in T$ the unique primitive cycle in $H_1(\Lambda_G)$ which intersects $e$ once (upon fixing orientations properly) and all other edges of the tree zero times. Thinking of this association as a map from $T$ to $H_1(\Lambda_G)$, we obtain, from the universal property of free abelian groups, a group isomorphism
$$\varphi_T \colon \Z^T \xrightarrow{\sim} H_1(\Lambda_G).$$
Setting $R_T = \Z[\Z^T]$, upon applying the group ring functor, we obtain ring isomorphisms
$$\varphi_T \colon R_T \xrightarrow{\sim} R,$$
denoted by the same symbol by abuse of notation. Hence, for the underlying Casals--Murphy graded algebra $\scr{A}$, we have for each such tree $T$ a combinatorial model $\scr{A}_T$ where
$$\scr{A}_T = R_T \langle F_{\mathrm{fin}},x,y,z \rangle.$$
The isomorphisms at the level of rings hence induce isomorphisms at the level of graded algebras
$$\varphi_T \colon \scr{A}_T \xrightarrow{\sim} \scr{A},$$
again notated by the same symbol by abuse of notation. The differential $\partial^{\scr{A}}$ may be computed in any of these models to yield dg-algebras $(\scr{A}_T,\partial^{\scr{A}}_T)$, and we hence obtain canonical dg-isomorphisms
$$\varphi_T \colon (\scr{A}_T,\partial^{\scr{A}}_T) \xrightarrow{\sim} (\scr{A},\partial^{\scr{A}}).$$
In other words, we may choose any of these combinatorial models as defining the same object. In order to change between combinatorial models, we simply apply the transition maps $\Phi_T^{T'} = \varphi_{T'}^{-1} \circ \varphi_T$.

The perspective of Casals and Murphy \cite{CM_DGA} is purely combinatorial, and hence, they work by constructing the various $(\scr{A}_T,\partial^{\scr{A}}_T)$ and proving that they may all be canonically identified via canonical isomorphisms. This is surprisingly tricky, as the transition maps between $R_T$ and $R_{T'}$ look somewhat complicated, and accounts for quite a bit of effort to achieve. In the non-commutative setting, we are automatically confronted with this issue; as described in the introduction, different choices of trees yield different choices of capping paths, and as is clear from the statement of Theorem \ref{thm:nc_CM_functoriality}, this makes the naturality of the corresponding transition dg-isomorphisms a bit harder. We follow the Casals--Murphy approach of working with a slightly larger dg-algebra, from which the models for each $T$ are induced naturally. Transitioning between different choices of $T$ in the non-commutative setting is then discussed in Section \ref{ssec:transition}.

To this end, consider the collection $E$ of all of the edges of the graph. Then we may form the free abelian group $\Z^E$ and consider the group ring $\wt{R} := \Z[\Z^E].$ We have natural projection maps
$$\pi_T \colon \Z^E \twoheadrightarrow \Z^T,$$
which upon applying the group ring functor yields projections
$$\pi_T \colon \wt{R} \twoheadrightarrow R_T.$$
If we fix an explicit indexing of $E$ such that the edges of $T$ are listed first, this last map is just the map
$$\Z[e_1^{\pm 1},\ldots,e_{3g+3}^{\pm 1}] \twoheadrightarrow \Z[e_1^{\pm 1},\ldots,e_{2g}^{\pm 1}]$$
which evaluates a given Laurent polynomial at $e_{2g+1} = \cdots = e_{3g+3} = 1$.

The strategy of Casals and Murphy \cite{CM_DGA} is to work instead over the coefficient ring $\wt{R}$ and use the projections $\pi_T$ to understand the transition maps. That is, on the graded algebra
$$\wt{\scr{A}} = \wt{R} \langle F_{\mathrm{fin}},x,y,z \rangle,$$
they find a differential $\wt{\partial}^{\scr{A}}$. Reduction of coefficients then yields a graded algebra homomorphism
$$\pi_T \colon \wt{\scr{A}} \rightarrow \scr{A}_T$$
given by projecting the coefficients along $\pi_T \colon \wt{R} \rightarrow R_T$. This allows them to construct the differentials $\partial^{\scr{A}}_T$ via the following easy but important lemma.

\begin{lem}\label{lem:induced_morphisms}
	Suppose $A_1$, $A_2$, $B_1$, and $B_2$ are objects in a category, with morphisms $\Phi^A \colon A_1 \rightarrow A_2$, $p_1 \colon A_1 \rightarrow B_1$, and $p_2 \colon A_2 \rightarrow B_2$. If $p_1$ is an epimorphism, then there exists at most one morphism $\Phi^B \colon B_1 \rightarrow B_2$ making the following diagram commute:
	$$\xymatrix{A_1 \ar@{->>}[d]_-{p_1} \ar[r]^{\Phi^A} & A_2 \ar[d]^-{p_2} \\ B_1 \ar@{-->}[r]^{\Phi^B} & B_2}$$
	If $p_1$ has a right inverse $i_1 \colon B_1 \rightarrow A_1$ (i.e. $p_1 \circ i_1 = \mathrm{id}_{B_1}$), then such $\Phi^B$ exists if and only if
	$$p_2 \circ \Phi^A \circ i_1 \circ p_1 = p_2 \circ \Phi^A,$$
	in which case $\Phi^B = p_2 \circ \Phi^A \circ i_1$.
\end{lem}

\begin{proof}
	Uniqueness of $\Phi^B$ follows from the epimorphism property of $p_1$. Assuming $p_1$ has a right inverse, then we obtain a formula for such $\Phi^B$, should it exist, as
	$$\Phi^B = \Phi^B \circ p_1 \circ i_1 = p_2 \circ \Phi^A \circ i_1.$$
	Commutativity of the diagram is just the desired equation.
\end{proof}

As a special case, let us use the category of graded algebras to see how a differential may descend. Suppose $(A,\partial^A)$ is a dg-algebra, $B$ is a graded algebra, $p \colon A \rightarrow B$ is a surjective morphism of graded algebras, and $i \colon B \rightarrow A$ is a right inverse for $p$. Then so long as $p \circ \partial^A \circ i \circ p = p \circ \partial^A$, by  applying Lemma \ref{lem:induced_morphisms} we obtain $\partial^B = p \circ \partial^A \circ i$ on $B$ fitting into the following diagram:
$$\xymatrix{A \ar@{->>}[d]_-{p} \ar[r]^{\partial^A} & A \ar[d]^-{p} \\ B \ar[r]^{\partial^B} & B}$$
We claim that $(B,\partial_B)$ is a dg-algebra, so that $p \colon (A,\partial^A) \rightarrow (B,\partial^B)$ is a dg-homomorphism. The differential property follows because
$$(\partial^B)^2 = p \circ \partial^A \circ i \circ p \circ \partial^A \circ i = p \circ (\partial^A)^2 \circ i = 0,$$
where we have used that $p \circ \partial^A \circ i \circ p = p \circ \partial^A$. The Leibniz rule is easily checked from the fact that $p$ and $i$ are homomorphisms of graded algebras with $p \circ i = \mathrm{id}$.

In the Casals--Murphy setting, one obtains the desired differentials $\partial^{\scr{A}}_T$ uniquely descending from $\wt{\partial}^{\scr{A}}$ as above, so that we have induced dg-homomorphisms
$$\pi_T \colon (\wt{\scr{A}},\wt{\partial}^{\scr{A}}) \rightarrow (\scr{A}_T,\partial^{\scr{A}}_T).$$
To see this, one upgrades the natural right inverse $i_T \colon \Z^T \hookrightarrow \Z^E$ to $\pi_T \colon \Z^E \rightarrow \Z^T$ to the graded algebra level and checks that Lemma \ref{lem:induced_morphisms} applies. In this manner, $\partial^{\scr{A}}_T$ is actually very easy to compute. One simply looks at the combinatorial formulas for $\wt{\partial}^{\scr{A}}$ (which are very explicit), and ignores any edge not in $T$ which appears as an output, replacing it with the value $1$.

\subsection{The non-commutative case} \label{ssec:nc-coef}

In the non-commutative case, we follow the same procedure. Hence, we will need to define the ring $\wt{R}^{nc}$ as well as the rings $R^{nc}_T$ for each choice of tree $T$ spanning all but one vertex $v_T$ of $G$. We will prove in Section \ref{ssec:nc-coef_geom} that each of our rings $R^{nc}_T$ is canonically isomorphic to the group ring $R^{nc}_v = \Z[\pi_1(\Lambda_G, \wt{v})]$ where $v = v_T$ is the vertex missed by $T$.

Notice, in this case, that it is not really correct to talk about just one ring $R^{nc} = \Z[\pi_1(\Lambda_G)]$, since we need to choose a base point in order to make this a true object. Indeed, there is no canonical isomorphism between the various $R^{nc}_v$ as $v$ varies in $V$. Although $R$ has no non-commutative analogue, we will find that $\wt{R}$ does. Herein lies our point of entry into the theory.

Let $\wt{E}$ be the set of all edges together with an orientation. We denote an element by $(e,\mathfrak{o}) \in \wt{E}$, where $e \in E$, and $\mathfrak{o}$ is an orientation on $e$. We denote the opposite orientation by $\overline{\mathfrak{o}}$. Consider the free group
$$*^{\wt{E}}\Z = \underbrace{\Z * \Z * \cdots * \Z}_{|\wt{E}| = 6g+6}$$
generated by elements of $\wt{E}$. For each oriented edge $(e,\mathfrak{o}) \in \wt{E}$, we denote by $\lambda_{e,\mathfrak{o}} \in *^{\wt{E}}\Z$ the corresponding generator.

It is worthwhile to make a visual picture for the generators $\lambda_{e,\mathfrak{o}}$. For every edge, we draw four oriented transversals, two oriented each direction, and where we think of them either from going from a state $+$ to a state $-$ or vice versa. Our $\lambda_{e,\mathfrak{o}}$ generators correspond to a transversal going from $+$ to $-$, and oriented via a right-hand rule, so that the transversal orientation followed by the edge orientation give a positive orientation with respect to the sphere $S^2$. This convention is easier illustrated than stated, as is done in Figure \ref{fig:Generator_Convention}.

\begin{figure}[h]
\centering
\includegraphics[width=.6\textwidth]{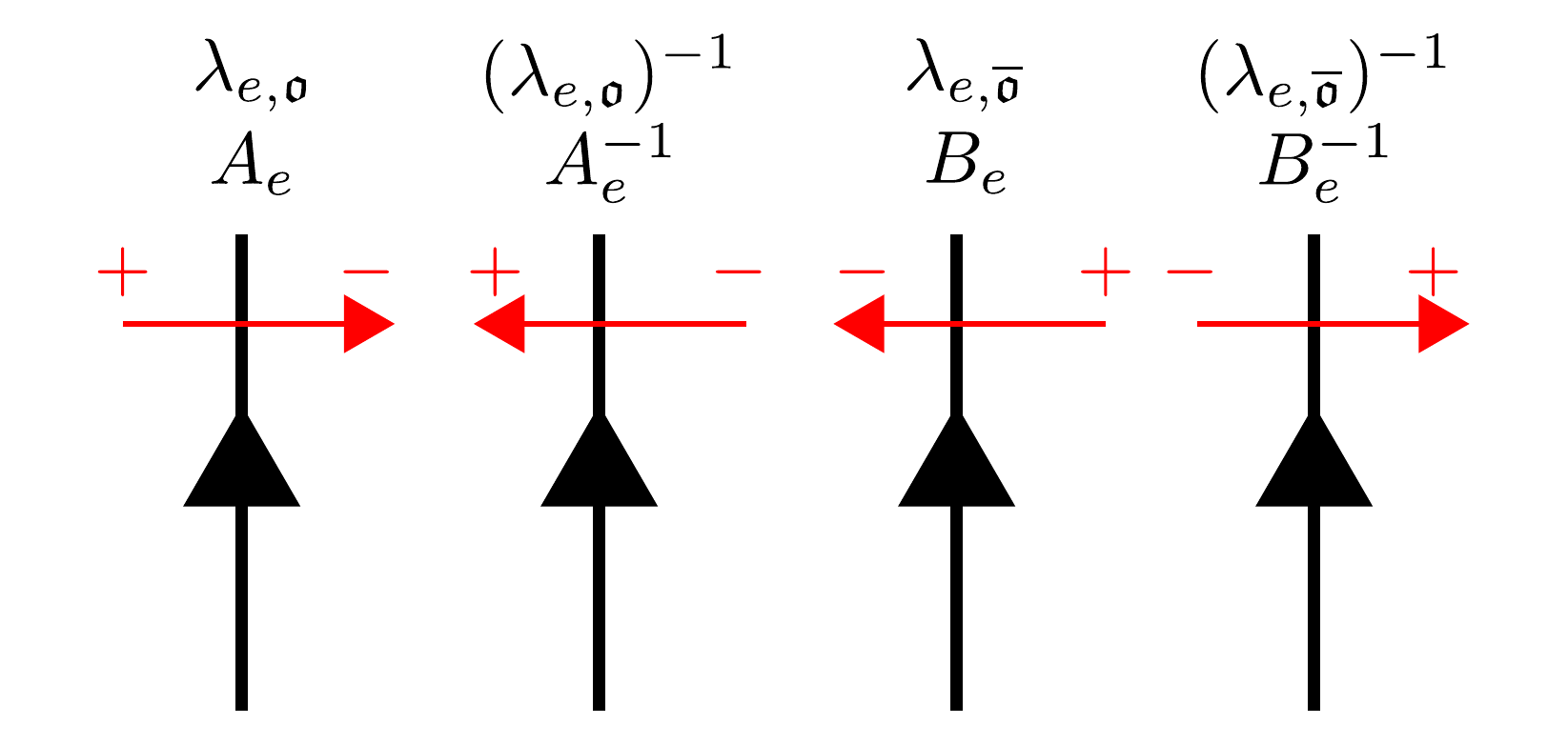}
\caption{A pictorial representation of the generators $\lambda_{e,\mathfrak{o}}$ and their inverses, in red, where the orientation $\mathfrak{o}$ on the edge $e$ is indicated by the upwards arrow. We have also written these generators in terms of $A_e$ and $B_e$, if we assume $\mathfrak{o}$ is a constituent of a fixed orientation on the entire graph.}
\label{fig:Generator_Convention}
\end{figure}

Elements of $*^{\wt{E}}\Z$ are just arbitrary words in the various $\lambda_{e,\mathfrak{o}}^{\pm 1}$, though for the most part, we only care about words formed from paths on $S^2$ as in the following definition.

\begin{defn}
	A \textbf{transverse signed path} with respect to a trivalent plane graph $G$ on $S^2$ is a pair $(\gamma,\sigma)$ consisting of a curve $\gamma \colon [0,1] \rightarrow S^2$ together with a sign $\sigma \in \{+,-\}$ such that:
	\begin{itemize}
		\item the image of $\gamma$ avoids the set of all vertices $V$
		\item if $\gamma(t) \in E$ lies on an edge, then $t \neq 0,1$ and the path passes through the edge transversely
	\end{itemize}
\end{defn}

A transverse signed path $(\gamma,\sigma)$ defines a unique lower-semicontinuous map $\wt{\sigma} \colon [0,1] \rightarrow \{+,-\}$ (we take $- < +$) with initial value $\wt{\sigma}(0) = \sigma$ and which is:
\begin{itemize}
	\item locally constant away from values of $t$ where $\gamma$ intersects an edge
	\item discontinuous at values of $t$ where $\gamma$ intersects an edge, alternating either from $+$ to $-$ or from $-$ to $+$
\end{itemize}
In other words, we may label the path of $\gamma$, starting with the label $\sigma$, so that every time $\gamma$ crosses an edge, it alternates its label. Given a transverse signed path, the map $\wt{\sigma}$ in turn can be used to obtain an element of $*^{\wt{E}}\Z$ given by reading out, in order, the generators corresponding to every time we pass through an edge, using the convention as in Figure \ref{fig:Generator_Convention}.

\begin{defn}
	Any element of $*^{\wt{E}}\Z$ obtained in this way from a transverse signed path will be called a \textbf{geometric element}.
\end{defn}

We provide not only a clarifying example, but a necessary example. Suppose that $v$ is a vertex of the graph $G$, and let $e_1$, $e_2$, and $e_3$ be the incident edges, labelled cyclically counter-clockwise, as in Figure \ref{fig:Vertex_Relation}.
\begin{figure}[h]
\centering
\includegraphics[width=.6\textwidth]{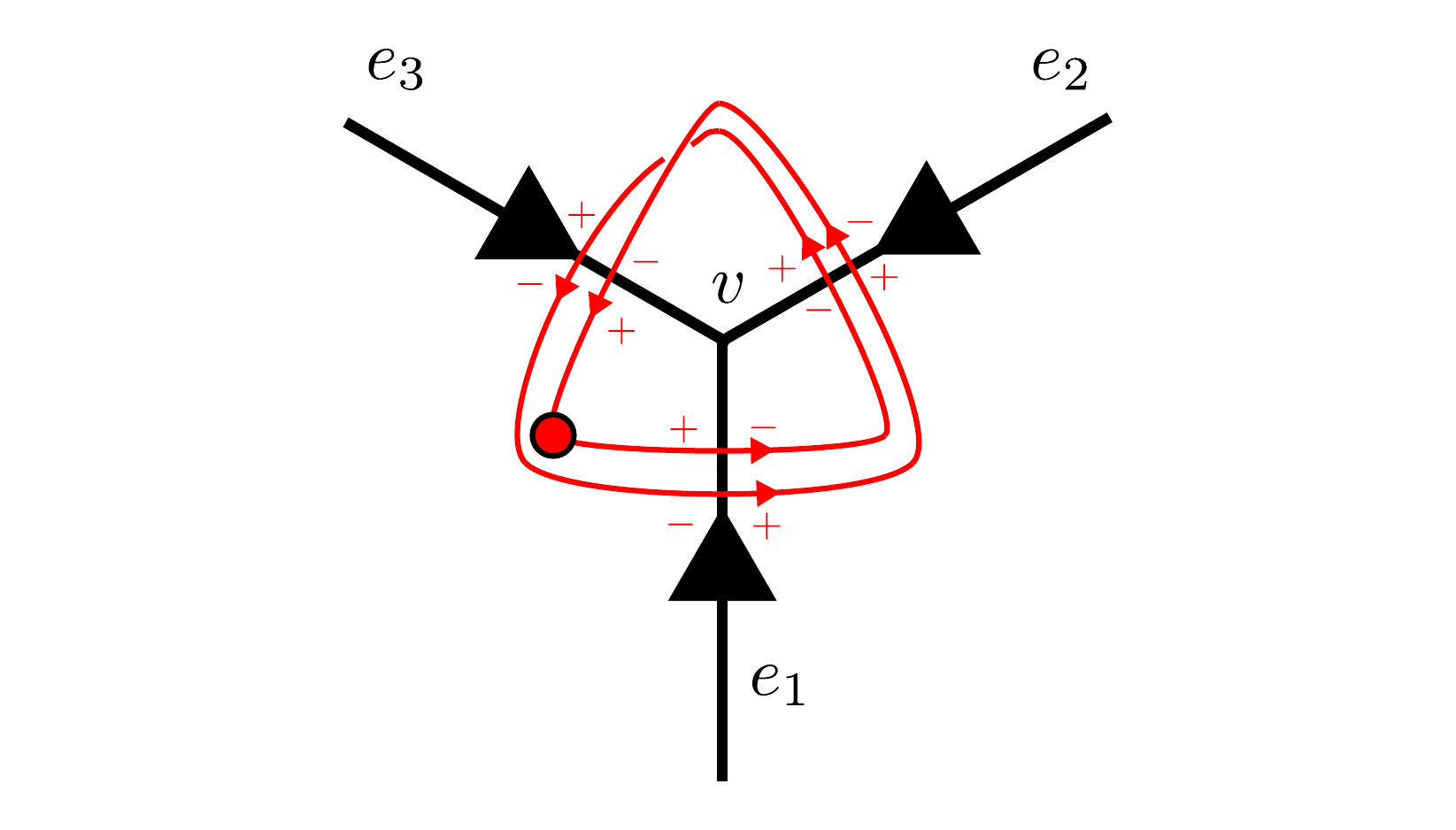}
\caption{The loop around a vertex which corresponds to the word $w_v$. If the graph is oriented as indicated by the arrows, this word is $w_v = A_{e_1}B_{e_2}^{-1}A_{e_3}B_{e_1}^{-1}A_{e_2}B_{e_3}^{-1}$.}
\label{fig:Vertex_Relation}
\end{figure}
For each $e_i$, let $\mathfrak{o}_i$ be the orientation along each edge pointing towards the vertex. (Note that it may be the case that $e_1 = e_2$, for example, in which case automatically $\mathfrak{o}_1 = \overline{\mathfrak{o}_2}$ since every edge has one source and one target.) Consider for each vertex the conjugacy class $\omega_v = [w_v]$ of the word
$$w_v := \lambda_{e_1,\mathfrak{o}_1} \lambda_{e_2,\overline{\mathfrak{o}_2}}^{-1}\lambda_{e_3,\mathfrak{o}_3} \lambda_{e_1,\overline{\mathfrak{o}_1}}^{-1}\lambda_{e_2,\mathfrak{o}_2} \lambda_{e_3,\overline{\mathfrak{o}_3}}^{-1},$$
which corresponds to a path going counterclockwise around the vertex twice. We see that changing our starting basepoint will change the word, but only by cyclic permutation of its six constituent letters, and so we will not change the conjugacy class $\omega_v$. Let $N$ be the normal subgroup of $*^{\wt{E}}\Z$ generated by these conjugacy classes over all of the vertices. We hence obtain a group
$$\Pi := \quot{\left(*^{\wt{E}}\Z\right)}{N}.$$
In other words, $\Pi$ has a presentation as $\Pi = \langle \{\lambda_{e,\mathfrak{o}}\} \mid \{w_v\} \rangle$, with $6g+6$ generators, one for each element of $\wt{E}$, and $2g+2$ relations, one for each vertex $v \in V$. The image of the geometric elements of $*^{\wt{E}}\Z$ in $\Pi$ will be called the \textbf{geometric elements} of $\Pi$.

\begin{rmk} There is nothing special about taking our word $w_v$ counter-clockwise. We could have instead considered the word corresponding to the composable loop going clockwise twice around the vertex
$$w'_v = \lambda_{e_1,\mathfrak{o}_1}^{-1} \lambda_{e_3,\overline{\mathfrak{o}_3}}\lambda_{e_2,\mathfrak{o}_2}^{-1} \lambda_{e_1,\overline{\mathfrak{o}_1}}\lambda_{e_3,\mathfrak{o}_3}^{-1} \lambda_{e_2,\overline{\mathfrak{o}_2}}.$$
In such a case, we have $(w'_v)^{-1} \in \omega_v$, so we could use any $w'_v$ in place of $w_v$ in the presentation of the group $\Pi$ since the same normal subgroup will be generated.
\end{rmk}

Finally, taking the group ring, we arrive at our coefficient ring $\wt{R}^{nc} := \Z[\Pi].$ There is a natural forgetful map
$$U \colon *^{\wt{E}}\Z \twoheadrightarrow \Z^E,$$
given by abelianizing and forgetting about the orientation. We note that the word $w_v$ is killed by $U$, yielding an induced surjective group homomorphism
$$U \colon \Pi \twoheadrightarrow \Z^E$$
also denoted $U$ by abuse of notation. At the level of rings, we arrive at a homomorphism
$$U \colon \wt{R}^{nc} \twoheadrightarrow \wt{R}.$$

Now suppose that $T \subset G$ is a tree spanning all but one vertex, and let $\wt{T}$ be the collection of edges of the tree together with orientations. We have natural group homomorphisms
$$\pi_T \colon *^{\wt{E}}\Z \twoheadrightarrow *^{\wt{T}}\Z.$$
The map $\pi_T$ simply sends $\lambda_{e,\mathfrak{o}}$ to the identity if $e \notin T$. Let $N_T$ be $\pi_T(N)$, which is automatically a normal subgroup of $*^{\wt{T}}\Z$ since $\pi_T$ is surjective. We set
$$\Pi_T := \quot{*^{\wt{T}}\Z}{N_T}$$
which has a presentation of the form $\Pi_T = \langle \{\lambda_{e,\mathfrak{o}}\} \mid \{\pi_T(w_v)\} \rangle$, with $4g$ generators corresponding to the elements $(e,\mathfrak{o}) \in \wt{T}$, and $2g+1$ relations, one for each vertex $v \in V - \{v_T\}$. (Notice that $\pi_T(w_{v_T})) = 1$, which explains why we have not included this trivial relation.) Setting $R_T = \Z[\Pi_T]$, we therefore have induced maps
$$\pi_T \colon \wt{R}^{nc} \twoheadrightarrow R^{nc}_T.$$
As before, forgetting the orientations and abelianizing gives a surjective group homomorphism
$$U_T \colon \Pi_T \twoheadrightarrow \Z^T$$
inducing a surjective ring homomorphism
$$U_T \colon R^{nc}_T \twoheadrightarrow R_T.$$
It is clear that these forgetful maps commute with the projection maps, as per the following commutative diagram.
$$\xymatrix{ \Pi \ar@{->>}[r]^{\pi_T} \ar@{->>}[d]_{U} & \Pi_T \ar@{->>}[d]_{U_T} \\  \Z^E \ar@{->>}[r]^{\pi_T} & \Z^T & }$$

\begin{rmk}
We mentioned that we are mostly interested in the geometric elements of $\Pi$. The main reason is that every element in $\Pi_T$ is a geometric element, in the sense that it lies in the image of the geometric elements of $\Pi$ under the map $\pi_T \colon \Pi \rightarrow \Pi_T$. This point is further elucidated by Section \ref{ssec:nc-coef_geom} and especially Theorem \ref{thm:geometric_coefficient_ring}, upon the realization that transverse signed paths lift uniquely to paths on $\Lambda_G$. It is probably more mathematically optimal to use a certain path algebra instead of $\Pi$, where non-geometric elements are killed, but it is not necessary for our purposes.
\end{rmk}

Admittedly, the notation $\lambda_{e,\mathfrak{o}}$ is clumsy, but is useful in that it indicates that $\wt{R}^{nc}$ is completely independent of choices. However, in the case that our graph has an orientation chosen on each of its edges, which will be part of the data of a garden for other reasons, there is simpler notation which is useful for computations. Suppose each edge $e$ of $E$ comes with a choice of orientation $\mathfrak{o}_e$. We obtain a bijection
$$\wt{E} \cong E_+ \sqcup E_-$$
where $E_+ \cong E$ consists of those edges matching the given orientation and $E_- \cong E$ consists of those edges which go against the given orientation. We shall write
$$A_e := \lambda_{e,\mathfrak{o}_e},~~~~~~~~~~B_e := \lambda_{e,\overline{\mathfrak{o}_e}}.$$
We have traded the clumsiness of the notation $\lambda_{e,\mathfrak{o}}$ for the clumsiness of the asymmetry between $A_e$ and $B_e$ which is very much dependent upon the orientation chosen on each edge of $G$. Swapping the orientation of a single edge interchanges the roles of $A$ and $B$. See Figure \ref{fig:Generator_Convention} or Figure \ref{fig:Vertex_Relation} with its caption for examples. Note that the words $w_v$ no longer necessarily alternate between $A$ and $B$ terms: this occurs only if all the edges are oriented away from the vertex (a source) or towards the vertex (a sink); except for very special graphs, it is impossible to choose an orientation so that every vertex is either a source or a sink.

\subsection{Matching the coefficients to the geometry} \label{ssec:nc-coef_geom}

We aim now to fill in the underlying geometric details of the combinatorics outlined in Section \ref{ssec:nc-coef}. The only geometric input we need from the contact picture is that we have a double cover
$$p \colon \Lambda_G \rightarrow S^2$$
branched over the set of vertices $V$ such that for each (open) face $f \in F$, the two connected components of $p^{-1}(f)$ are canonically labelled $+$ and $-$ respectively, with the property that if $\gamma$ is a curve on $\Lambda_G$ such that the curve $p \circ \gamma$ crosses precisely one edge transversely, then $\gamma$ either goes from a region labelled $+$ to a region labelled $-$, or exactly the opposite, from $-$ to $+$.

It is clear from this description that transverse signed paths are identified canonically with paths on $\Lambda_G$ by lifting along $p$ and matching the starting label $\sigma$. Conversely, if we have a path in $\Lambda_G$ which avoids the preimages of the vertices and passes transversely through the preimages of the edges, then its projection together with its initial label yield a transverse signed path. We call such a path on $\Lambda_G$ \emph{generic}, so that transverse signed paths correspond precisely to generic paths on $\Lambda_G$.

On the other hand, transverse signed paths give the geometric elements of $\Pi$. Hence, we may think of the geometric elements as those which arise from generic paths on $\Lambda_G$. Suppose now that two generic paths with the same endpoints are homotopic to each other. We may ask if they define the same geometric element in $\Pi$. The point is that a homotopy from a generic path to another generic path may in general pass through non-generic paths. However, so long as the homotopy is itself generic, it will only pass through non-generic paths a finite number of times, and the non-generic behavior is quite mild. Either:
\begin{itemize}
	\item The family passes through a vertex $\wt{v} \in \Lambda_G$ (corresponding to $v \in V$). In this case, so long as the homotopy is generic, this has the effect of changing the geometric element by the vertex relation $w_{v}$, which is trivial in $\Pi$, and hence there is actually no effect.
	\item The family passes through a tangency to an edge. A good picture of the behavior for a generic homotopy is given by Move II in Figure \ref{fig:Moves_I-IV}, but where the purple curve is thought of as our path at the critical moment (instead of a \emph{tine}). In this case, the word is modified by insertion of a product $\lambda_{e,\mathfrak{o}}\lambda_{e,\mathfrak{o}}^{-1}$, which is just the identity, and hence the geometric element remains unchanged.
\end{itemize}

We have therefore proved the following.

\begin{thm} \label{thm:geometric_elements}
	Suppose $q,q' \in \Lambda_G$ are points with projection to $S^2$ lying in an open face of $G$. Suppose $\gamma_1$ and $\gamma_2$ are two generic paths in $\Lambda_G$ starting at $q$ and ending at $q'$ such that $[\gamma_1] = [\gamma_2] \in \pi_1(\Lambda_G,q,q')$, the homotopy classes of paths from $q$ to $q'$. Then the geometric elements $w_1, w_2 \in \Pi$ corresponding to $\gamma_1$ and $\gamma_2$ are equal, $w_1 = w_2$. We therefore obtain canonical group homomorphisms
	$$\Psi_{q,q'} \colon \pi_1(\Lambda_G,q,q') \rightarrow \Pi.$$
	Furthermore, if $\gamma \in \pi_1(\Lambda_G,q,q')$ and $\gamma' \in \pi_1(\Lambda_G,q',q'')$, then
	$$\Psi_{q,q''}(\gamma * \gamma') = \Psi_{q,q'}(\gamma) \cdot \Psi_{q',q''}(\gamma').$$
\end{thm}

\begin{rmk}
	When $q=q'$, we write simply $\Psi_q := \Psi_{q,q}$.
\end{rmk}

\begin{rmk}
	Suppose $q_1$ and $q_2$ are connected by a path in $\Lambda_G$ which remains completely in the preimage of an open face $f \in F$, i.e. they lie in the same connected component of the preimage of $f$ in $\Lambda_G$. Since this component is just one of the sheets lying over $f$, it is a topological disk, the groups $\pi_1(\Lambda_G,q_1,q')$ and $\pi_1(\Lambda_G,q_2,q')$ are canonically identified, and under this identification, $\Psi_{q_1,q'} = \Psi_{q_2,q'}$. Similarly, we have an equality $\Psi_{q,q_1'} = \Psi_{q,q_2'}$ upon identifying the domains of these maps.
\end{rmk}

Theorem \ref{thm:geometric_elements} has given us an algebraic output in $\Pi$ for paths on $\Lambda_G$. We will now find a natural way to recognize $\Pi_T$ as identified with the fundamental group of $\Lambda_G$, with basepoint given by the unique preimage $*_T = p^{-1}(v_T)$ in $\Lambda_G$ singled about the vertex $v_T$. Towards this purpose, notice that $p^{-1}(S^2 \setminus T)$ is a branched double cover of a topological disk, branched at a single point, so $p^{-1}(S^2 \setminus T)$ is a topological disk. Therefore, if $q \in p^{-1}(S^2 \setminus T)$, then there is a unique homotopy class of path in $\gamma_{q,T} \in \pi_1(\Lambda_G,*_T,q)$ which admits a representative whose projection avoids $T$. With this notation, we have the following theorem.

\begin{thm} \label{thm:geometric_coefficient_ring}
There is a canonical isomorphism
$$\varphi_T \colon \Pi_T \xrightarrow{\sim} \pi_1(\Lambda_G,*_T)$$
such that for any $q,q' \in \Lambda_G$ lying above an open face of $G$, the composition
$$\varphi_T \circ \pi_T \circ \Psi_{q,q'} \colon \pi_1(\Lambda_G,q,q') \rightarrow \pi_1(\Lambda_G,*_T)$$
is the isomorphism given by
$$[\gamma] \mapsto [\gamma_{q,T} * \gamma * (\gamma_{q',T})^{-1}].$$
In particular, for $\lambda_{e,\mathfrak{o}} \in \Pi$, writing $\lambda_{e,\mathfrak{o}} = \Psi_{q_f^+,q_g^-}(\gamma_{e,\mathfrak{o}})$ for a short path $\gamma_{e,\mathfrak{o}}$ starting at $q_{f}^+$ on the $+$ sheet above face $f$, ending at $q_{g}^-$ on the $-$ sheet above face $g$, and with projection crossing only the edge $e$ once, we have
$$(\varphi_T\circ \pi_T)(\lambda_{e,\mathfrak{o}}) = [\gamma_{q_{f}^+,T} * \gamma_{e,\mathfrak{o}} * (\gamma_{q_g^-,T'})^{-1}].$$
\end{thm}

\begin{proof}
For notational convenience, around each vertex $v \in V$ (including $v_T$), we draw a small open disk $D_v$ in $S^2$, all of which are disjoint, and we pick any point $q_v \in \partial \overline{D_v}$ not lying on an edge of $G$. Let $\nu(V) = \bigsqcup D_v$ be the union of these disks. Also for notational convenience, we fix on the edges of $T$ an orientation. To compute the fundamental group $\pi_1(\Lambda_G,*_T)$, we proceed in three steps.

\noindent\emph{Step 1:} First, we compute the fundamental group $\pi_1(X,q_{v_T})$ where $X := S^2 \setminus \nu(V)$.

The space $X$ is just a copy of $S^2$ with $2g+2$ disks removed, which deformation retracts to a wedge of $2g+1$ circles, so $\pi_1(X;q_{v_T})$ is isomorphic (non-canonically) to a free group of rank $2g+1$. We aim to find a convenient system of generators. Since $T$ is contractible, for each edge $e \in T$, we may pick a loop $\overline{A}_e \in \pi_1(X,q_{v_T})$ which intersects the tree only along the edge $e$ transversely so that it is oriented in the same way that we described for $\lambda_{e,\mathfrak{o}} = A_e$ with respect to the chosen orientation on $e$ (see again Figure \ref{fig:Generator_Convention}). Also, let $\gamma \in \pi_1(X;q_{v_T})$ be the loop which follows $\partial \overline{D_{v_T}}$ around once, say in the counter-clockwise direction (although this choice will end up being irrelevant). These $\overline{A_e}$, together with $\gamma$, form the generators we are after.

\begin{lem}
The collection $\{\overline{A}_e \mid e \in T\} \cup \{\gamma\}$ forms a free system of generators for $\pi_1(X;q_{v_T})$.
\end{lem}

Though the choice of $\overline{A_e}$ is not canonical, it is easy to encode the ambiguity in this choice, and this ambiguity will happily disappear in the end of our computation of $\pi_1(\Lambda_G,*_T)$. The following lemma, which makes this ambiguity explicit, follows easily from the fact that $S^2 \setminus (T \sqcup \{v_T\})$ is a punctured disk with fundamental group generated by the loop $\gamma$.

\begin{lem}
Suppose $\overline{A}_e$ and $\overline{A}'_e$ are two choices of paths transversely intersecting $T$ only along the edge $e$ with the same orientation. Then there exist (unique) integers $m,n \in \Z$ such that
$$\overline{A}_e = \gamma^{m}\overline{A}'_e\gamma^{n}.$$
\end{lem}

\noindent\emph{Step 2:} We compute $\pi_1(Y,q_{v_T}^+)$, where $Y = p^{-1}(X)$ and $q_{v_T}^+$ is the lift of $q_{v_T}$ to the face labelled $+$.

The projection $p \colon Y \rightarrow X$ induces a map
$$p_* \colon \pi_1(Y,q_{v_T}^+) \rightarrow \pi_1(X,q_{v_T}).$$
Since $p \colon Y \rightarrow X$ is a connected double cover, we automatically have a short exact sequence of groups
$$0 \rightarrow \pi_1(Y,q_{v_T}^+) \xrightarrow{p_*} \pi_1(X,q_{v_T}) \xrightarrow{\mathrm{par}} \Z_2 \rightarrow 0.$$
For each class in $\pi_1(X,q_{v_T})$, a generic representative will intersect the edges transversely, and the map $\mathrm{par}$ is simply the parity of the number of edges that the curve intersects.

Recall the following facts.
\begin{thm}
Suppose $G$ is a finitely generated group, and $H \leq G$ is a finite index subgroup. Then:
\begin{itemize}
	\item (\textbf{Schreier Subgroup Lemma}) Consider a right inverse $\sigma \colon H\backslash G \rightarrow G$ of the quotient $\pi \colon G \rightarrow H \backslash G$. Let $S$ be a set of generators for $G$. Then $H$ is generated by the $|S| \cdot [G \colon H]$ elements
	$$g_{c,s} := \sigma(c) \cdot s \cdot (\sigma(\pi(\sigma(c) \cdot s)))^{-1}$$
	for pairs $(c,s) \in H \backslash G \times S$.
	\item (\textbf{Nielsen-Schreier Theorem}) If $G$ is a free group, then $H$ is a free group. Furthermore, if $G$ has rank $n$ and $H$ is of finite index $k$, then $H$ has rank $1+k(n-1)$.
\end{itemize}
\end{thm}
Since $\pi_1(X,q_{v_T})$ is a free group, these two results automatically apply to our setting. By the Nielsen-Schreier Theorem, we see that $\pi_1(Y,q_{v_T}^+)$ is a free group of rank $1+2((2g+1)-1) = 1+4g$. On the other hand, we may use the generating set $S = \{\overline{A}_e \mid e \in T\} \cup \{\gamma\}$ and the section
$$\sigma \colon \Z_2 \rightarrow \pi_1(X,q_{v_T})$$
with $$\sigma(0) = e,~~~~~~~~\sigma(1) = \gamma$$
as input into the Schreier subgroup lemma. If we let $m_e := \mathrm{par}(\overline{A}_e) \in \{0,1\}$ be the parity of the loops $\overline{A}_e$ that we have chosen, then
\begin{empheq}[left=\empheqlbrace]{align*}
g_{0,\overline{A}_e} &= \overline{A}_e \cdot \gamma^{-m_e} \\
g_{0,\gamma} &= \gamma \cdot \gamma^{-1} = e\\
g_{1,\overline{A}_e} &= \gamma \cdot \overline{A}_e \cdot \gamma^{m_e-1}  \\
g_{1,\gamma} &= \gamma^2
\end{empheq}
Forgetting the non-identity element, we see that we are left with precisely $4g+1$ generators. As this matches the rank of the free group it generates, there are no relations between these generators by a result of Nielsen \cite{Nielsen} (or its English translation \cite{Nielsen_trans}), c.f. work of Federer and J\'onsson \cite[Theorem 3.13]{FJ}.

Of the two generators $\overline{A}_e \cdot \gamma^{-m_e}$ and $\gamma \cdot \overline{A}_e \cdot \gamma^{m_e-1}$, precisely one of them crosses the edge $e$ in such a way that it goes from a $+$ region to a $-$ region. We let $\wt{A}_e$ be this generator, and we let the other be $\wt{B}_e^{-1}$ (which matches our intuition from Figure \ref{fig:Generator_Convention}). We set $\wt{\gamma} = \gamma^2$. Hence, we have
$$\pi_1(Y,q_{v_T}^+) = \langle \wt{\gamma}, \{\wt{A}_e \mid e \in T\}, \{\wt{B}_e \mid e \in T\} \rangle.$$
Again, $\wt{A}_e$ and $\wt{B}_e$ are not canonically defined, since our choice of $\overline{A}_e$ was only defined up to multiplying on the left and right by powers of $\gamma$. Nonetheless, it is easy to trace through how these are allowed to change. The following lemma is an easy exercise for the reader if it is not obvious.

\begin{lem}
For two different choices $\overline{A}_e$ and $\overline{A}'_e$, the corresponding generators $\wt{A}_e$ and $\wt{A}'_e$ are related by
$$\wt{A}_e = \wt{\gamma}^m\wt{A}'_e \wt{\gamma}^{n}$$
for (unique) integers $m,n \in \Z$, and similarly for $\wt{B}_e$ and $\wt{B}'_e$.
\end{lem}

We finish Step 2 by noting not only that we have a nice presentation for $\pi_1(Y,q_{v_T}^+)$, but also that for a given element in this fundamental group, it is quite easy to read off, up to powers of $\wt{\gamma}$, the corresponding word in these generators simply by looking at the projection to $X$ of a generic representative. In particular, for a generic representative, this projection will cross the edges of $G$ transversely and be disjoint from all of the vertices. From this, we have that our loop starts on the $+$ sheet, and every time it crosses an edge, it swaps between the $+$ and $-$ sheets. Suppose this loop crosses the edges of $T$ in the order $e_1, e_2, \ldots, e_n$. Then the corresponding word representing this loop is given by
$$\wt{\gamma}^{m_1} \cdot g_1 \cdot \wt{\gamma}^{m_2} \cdot g_2 \cdots \wt{\gamma}^{m_n} \cdot g_n \cdot \wt{\gamma}^{m_{n+1}}$$
for some integers $m_i \in \Z$ and where each $g_i \in \{\wt{A}_{e_i},\wt{A}_{e_i}^{\pm 1}, \wt{B}_{e_i}, \wt{B}_{e_i}^{\pm 1}\}$ is chosen according to the convention of Figure \ref{fig:Generator_Convention}.

\noindent\emph{Step 3:} Finally, we glue back in $p^{-1}(\nu(V))$ and compute $\pi_1(\Lambda_G,*_T)$.

Each $p^{-1}(D_v)$ is itself a disk, so we may attach them one by one and apply the Seifert-van Kampen theorem to compute the fundamental group. We begin with the disk $p^{-1}(D_{v_T})$, which has boundary given by the loop $\wt{\gamma}$, so attaching this disk simply kills this generator. In particular, we have that all of the choices $\wt{A}_e$ and $\wt{B}_e$ already correspond to well-defined classes $A_e$ and $B_e$. So at this point, we have
$$\pi_1(Y \cup p^{-1}(D_{v_T}), q_{v_T}^+) = \langle \{A_e \mid e \in T\}, \{B_e \mid e \in T\} \rangle$$
In fact, we see already from the description at the end of Step 2 that it is quite easy to read off the corresponding word from the projection of the loop to $X \cup D_{v_T}$, with no ambiguity in choices since we have $\wt{\gamma} = 1$.

Now suppose we glue in a single disk $p^{-1}(D_v)$. This corresponds to killing the loop $\partial \overline{D_v}$, which, we associate with an element of $\pi_1(Y \cup p^{-1}(D_{v_T}), q_{v_T}^+)$ by choosing an arbitrary path from $q^+_{v_T}$ to $q^+_v$ which does not intersect the tree. We see that the word $w_v$ (appropriately evaluated so that all non-tree edges are given the value $1$) is precisely the word generating this loop. Therefore,
$$\pi_1(\Lambda_G,q_{v_T}^+) = \langle \{A_e \mid e \in T\}, \{B_e \mid e \in T\} \mid \{\pi_T(w_v)\}\rangle,$$
which is precisely the group $\Pi_T$ we described, i.e. $\Pi_T \cong \pi_1(\Lambda_G,q_{v_T}^+)$. Finally, all paths between $*_T$ and $q_{v_T}^+$ completely contained in $p^{-1}(D_{v_T})$ are homotopic, and this allows us to move our base point from $q_{v_T}^+$ to $*_T$ canonically.

Finally, to prove the expression for $\varphi_T \circ \pi_T \circ \Psi_{q,q'}$, we note that by the composition property of Theorem \ref{thm:geometric_elements}, it suffices to prove the result for the short generators $\gamma_{e,\mathfrak{o}}$. But this is given by tracing through the proof and identifying the generators $A_e$ and $B_e$ with these explicit loops.
\end{proof}

\begin{cor}\label{cor:right_inverse}
	There exists a group homomorphism $i_T \colon \Pi_T \rightarrow \Pi$ which is right inverse to $\pi_T \colon \Pi \rightarrow \Pi_T$.
\end{cor}

\begin{proof}
	Consider any point $q \in \Lambda_G$ lying above an open face of $G$. We obtain an identification $\scr{C}(\gamma_{q,T})\pi_1(\Lambda_G,*_T) \cong \pi_1(\Lambda_G,q)$ by conjugation by the path $\gamma_{q,T} \in \pi_1(\Lambda_G,*_T,q)$. Set
	$$i_T := \Psi_q \circ \scr{C}(\gamma_{q,T}) \circ \varphi_T \colon \Pi_T \rightarrow \Pi.$$
	This is a composition of group homomorphisms, hence a group homomorphism itself, and satisfies
	\begin{align*}
		\pi_T \circ i_T &= \pi_T \circ \Psi_q \circ \scr{C}(\gamma_{q,T}) \circ \varphi_T \\
			&= \varphi_T^{-1} \circ (\varphi_T \circ \pi_T \Psi_q) \circ \scr{C}(\gamma_{q,T}) \circ \varphi_T \\
			&= \varphi_T^{-1} \circ \scr{C}(\gamma_{q,T})^{-1} \circ \scr{C}(\gamma_{q,T}) \circ \varphi_T \\
			&= \id,
	\end{align*}
	where we have used Theorem \ref{thm:geometric_coefficient_ring} to identify $\varphi_T \circ \pi_T \circ \Psi_q$ with $\scr{C}(\gamma_{q,T})^{-1}$.
\end{proof}

To end the section, we discuss how Theorem \ref{thm:geometric_coefficient_ring} relates to the commutative coefficient setting. From the universal property for abelianization of groups, the map $U_T \circ \varphi_T^{-1} \colon \pi_1(\Lambda_G,*_T) \twoheadrightarrow \Z^T$ must factor through the abelianization map $\pi_1(\Lambda_G,*_T) \twoheadrightarrow H_1(\Lambda_G)$, yielding a surjective morphism $H_1(\Lambda_G) \twoheadrightarrow \Z^T$, as in the following commutative diagram:
$$\xymatrix{\Pi_T \ar[r]^-{\varphi_T}_-{\sim} \ar@{->>}_{U_T}[d] & \pi_1(\Lambda_G,*_T) \ar@{->>}[d]^{\mathrm{Ab}} \\ \Z^T & H_1(\Lambda_G) \ar@{-->>}[l]}$$
But this surjective morphism is between two free abelian groups of the same rank $2g$, and hence is automatically an isomorphism. In other words, $\Z^T$ is itself the abelianization of $\Pi_T$, $U_T$ is the abelianization map, and $\varphi_T$ descends to the isomorphism $\varphi_T \colon \Z^T \rightarrow H_1(\Lambda_G)$ which we used in the commutative case. We leave it to the reader to check that this definition of the isomorphism $\varphi_T \colon \Z^T \xrightarrow{\sim} H_1(\Lambda_G)$ matches our original description at the beginning of Section \ref{ssec:comm_coef}.

\subsection{Transition maps} \label{ssec:transition} 

In order to define the Casals--Murphy dg-algebra, it sufficed to perform the following steps:
\begin{itemize}
	\item define the tilde-version $(\wt{\scr{A}},\wt{\partial}^{\scr{A}})$
	\item plug the tilde-version into Lemma \ref{lem:induced_morphisms} by using the maps $\pi_T$ and $i_T$ to obtain the dg-algebras $(\scr{A}_T,\partial_T^{\scr{A}})$
	\item verify that the various versions with respect to different trees $T$ are naturally identified together
\end{itemize}
The first two steps hold over in exactly the same manner, with no changes in the non-commutative setting. It is the last step which we have not yet discussed. We will work with the (only slightly more difficult) non-commutative setting since the strategies are essentially the same; see Remark \ref{rmk:transition_commutative} below for a discussion of the commutative case.

Suppose $\gamma$ is a homotopy class of path on $\Lambda_G$ from $*_T$ to $*_{T'}$. Then there is an induced isomorphism $\scr{C}(\gamma) \colon \pi_1(\Lambda_G,*_T) \xrightarrow{\sim} \pi_1(\Lambda_G,*_{T'})$ given by
$$\scr{C}(\gamma)(\eta) := \gamma^{-1} * \eta * \gamma.$$
We aim to find, for each pair of trees $T$ and $T'$ and homotopy class of path $\gamma$ from $*_T$ to $*_{T'}$, an isomorphism $\wt{\scr{C}}_T^{T'}(\gamma) \colon \Pi \rightarrow \Pi$ fitting into the following commutative diagram:
$$\xymatrix{\Pi \ar[d]_{\pi_T} \ar[r]^{\wt{\scr{C}}_{T}^{T'}(\gamma)} & \Pi \ar[d]^{\pi_{T'}}\\ \Pi_T \ar@{-->}[r]^{\scr{C}_{T}^{T'}(\gamma)} \ar[d]_-{\varphi_T}  & \Pi_{T'} \ar[d]^-{\varphi_{T'}}\\ \pi_1(\Lambda_G,*_T) \ar[r]^{\scr{C}(\gamma)}  & \pi_1(\Lambda_G,*_{T'}) }$$
The middle arrow is dashed to indicate that it arises from the bottom square automatically as $\scr{C}_T^{T'}(\gamma) := \varphi_{T'}^{-1} \circ \scr{C}(\gamma) \circ \varphi_T$, since all morphisms in this composition are isomorphisms. We see then that by Lemma \ref{lem:induced_morphisms}, the isomorphism $\scr{C}_T^{T'}(\gamma)$ is the unique one fitting into the top square.

Let us define $\wt{\scr{C}}_T^{T'}(\gamma)$ on generators. Suppose we have some element $\lambda_{e,\mathfrak{o}} \in \Pi$ for $e \in E$. As in the statement of Theorem \ref{thm:geometric_coefficient_ring}, it is identified with a short transverse path $\gamma_{e,\mathfrak{o}}$ from $q_f^+$ on the $+$ sheet over the face $f$ to $q_g^-$ on the $-$ sheet over $g$, with projection crossing the edge $e$, i.e. $\lambda_{e,\mathfrak{o}} = \Psi_{q_{f}^+,q_g^-}(\gamma_{e,\mathfrak{o}})$. For any pairs of trees $T$ and $T'$, and homotopy class of path $\gamma$ from $*_T$ to $*_{T'}$, let
$$\beta_{T}^{T'}(f,\gamma,\pm):=\Psi_{q_{f}^{\pm}}\left[(\gamma_{q_f^{\pm},T'})^{-1} * \gamma^{-1} * \gamma_{q_f^{\pm},T}\right]$$
be the geometric element in $\Pi$ corresponding to the homotopy class of the loop $(\gamma_{q_f^{\pm},T'})^{-1} * \gamma^{-1} * \gamma_{q_f^{\pm},T}$. We define
\begin{align*}
	(\wt{\scr{C}}_T^{T'}(\gamma))(\lambda_{e,\mathfrak{o}}) &:= \beta_T^{T'}(f,\gamma,+) \cdot \lambda_{e,\mathfrak{o}} \cdot  \beta_T^{T'}(g,\gamma,-)^{-1} \\
	&= \Psi_{q_f^+,q_g^-}(\gamma_{q_f^+,T'}^{-1} * \gamma^{-1} * \gamma_{q_f^+,T} * \gamma_{e,\mathfrak{o}} *  \gamma_{q_g^-,T}^{-1} * \gamma * \gamma_{q_g^-,T'}).
\end{align*}
In order to check this is well-defined, we must check that it preserves the normal subgroup $N$ of $*^{\wt{E}}\Z$ generated by the words $w_v$. But we see that because $w_v$ is given as a geometric element, the inner factors of $\beta$ come in pairs $\beta^{-1} \cdot \beta$ which cancel, and we are left with a conjugate of $w_v$. Hence, this is a well-defined group homomorphism of $\Pi$.

\begin{lem} \label{lem:coefficient_tree_change}
	The diagram
	$$\xymatrix{\Pi \ar[d]_{\varphi_T \circ \pi_T} \ar[r]^{\wt{\scr{C}}_{T}^{T'}(\gamma)} & \Pi \ar[d]^{\varphi_{T'} \circ \pi_{T'}}\\ \pi_1(\Lambda_G,*_T) \ar[r]^{\scr{C}(\gamma)}  & \pi_1(\Lambda_G,*_{T'}) }$$
	is commutative.
\end{lem}
\begin{proof}
	It suffices to check the result for generators. Suppose we have $\lambda_{e,\mathfrak{o}}$, corresponding to the short transverse signed path $\gamma_{e,\mathfrak{o}}$ from $q_f^+$ to $q_g^-$. Using Theorem \ref{thm:geometric_coefficient_ring} to compute $\varphi_T \circ \pi_T$ and $\varphi_{T'} \circ \pi_{T'}$, we have
	\begin{align*}
		(\varphi_{T'} \circ \pi_{T'} \circ \wt{\scr{C}}_T^{T'}(\gamma))(\lambda_{e,\mathfrak{o}}) &=(\varphi_{T'} \circ \pi_{T'})\left(\Psi_{q_f^+,q_g^-}(\gamma_{q_f^+,T'}^{-1} * \gamma^{-1} * \gamma_{q_f^+,T} * \gamma_{e,\mathfrak{o}} *  \gamma_{q_g^-,T}^{-1} * \gamma * \gamma_{q_g^-,T'})\right) \\
		&=\gamma_{q_f^+,T'} *\gamma_{q_f^+,T'}^{-1} * \gamma^{-1} * \gamma_{q_f^+,T} * \gamma_{e,\mathfrak{o}} *  \gamma_{q_g^-,T}^{-1} * \gamma * \gamma_{q_g^-,T'} * \gamma_{q_g^-,T'}^{-1} \\
		&=\gamma^{-1} * \gamma_{q_f^+,T} * \gamma_{e,\mathfrak{o}} *  \gamma_{q_g^-,T}^{-1} * \gamma\\
		&= \scr{C}(\gamma) (\gamma_{q_f^+,T} * \gamma_{e,\mathfrak{o}} * \gamma_{q_g^-,T}^{-1})\\
		&=(\scr{C}(\gamma) \circ \varphi_T \circ \pi_T)(\lambda_{e,\mathfrak{o}}).
	\end{align*}
	Hence, the diagram does indeed commute.
\end{proof}

Recalling the various functoriality properties of Theorem \ref{thm:nc_CM_functoriality}, we will require the following lemma, where we recall again that without a tilde, $\scr{C}_T^{T'}(\gamma) = \varphi_{T'}^{-1} \circ \scr{C}(\gamma) \circ \varphi_T$.

\begin{lem}
	The following two properties hold:
	\begin{itemize}
		\item For all trees $T$ and constant paths $\id$ ad $*_T$, the maps $\wt{C}_T^T(\id)$ and $C_T^T(\id)$ are the identity on $\Pi$ and $\Pi_T$ respectively.
		\item For any triples of trees $T$, $T'$, and $T''$ and homotopy classes of paths $\gamma$ from $*_T$ to $*_{T'}$ and $\gamma'$ from $*_{T'}$ to $*_{T''}$, we have
			$$\wt{\scr{C}}_{T}^{T''}(\gamma * \gamma') = \wt{\scr{C}}_{T'}^{T''}(\gamma') \circ \wt{\scr{C}}_{T}^{T'}(\gamma) \qquad \mathrm{and} \qquad \scr{C}_{T}^{T''}(\gamma * \gamma') = \scr{C}_{T'}^{T''}(\gamma') \circ \scr{C}_{T}^{T'}(\gamma).$$
	\end{itemize}
\end{lem}
\begin{proof}
	The versions without the tildes follow from the ones with the tildes by Lemma \ref{lem:induced_morphisms}. The first property is clear by definition. The second property follows from the fact that
	$$\beta_{T'}^{T''}(f,\gamma',\pm) \cdot \beta_T^{T'}(f,\gamma,\pm) = \beta_{T}^{T''}(f,\gamma,\pm)$$
	because the corresponding paths are homotopic:
	$$\left[(\gamma_{q,T''})^{-1} * (\gamma')^{-1} * \gamma_{q,T'}\right] * \left[(\gamma_{q,T'})^{-1} * \gamma^{-1} * \gamma_{q,T}\right] \sim (\gamma_{q,T''})^{-1} * (\gamma * \gamma')^{-1} * \gamma_{q,T}$$
	where $q = q_f^{\pm}$.
\end{proof}

Unfortunately, it is not the case that the naively induced graded algebra morphisms $\wt{\Phi}_{T}^{T'}(\gamma) \colon \wt{\scr{B}} \rightarrow \wt{\scr{B}}$ preserve the differential $\wt{\partial}^{\scr{B}}$. We will need to further act on generators via regenerations (as suggested by Theorem \ref{thm:nc_CM_functoriality}). However, since we have not yet constructed the differentials, it is too early to discuss such matters. We relegate a treatment to Section \ref{ssec:tree_time}.

\begin{rmk} \label{rmk:transition_commutative}
	The story is much the same in the commutative case, and can be recovered from the non-commutative case upon abelianization. Namely, one finds a collection of isomorphisms
	$$\wt{\Phi}_T^{T'} \colon \Z^E \rightarrow \Z^E$$
	with $\wt{\Phi}_T^T = \id$ and $\wt{\Phi}_T^{T'} \circ \wt{\Phi}_{T'}^{T''} = \Phi_{T}^{T''}$ and fitting into the following commutative diagram:
	$$\xymatrix{\Z^E \ar[d]_{\pi_T} \ar[r]^{\wt{\Phi}_T^{T'}} & \Z^E \ar[d]^{\pi_{T'}} \\ \Z^T \ar@{-->}[r]^{\Phi_T^{T'}} \ar[d]_-{\varphi_T} & \Z^{T'} \ar[d]^-{\varphi_{T'}}\\ H_1(\Lambda_G) \ar@{=}[r]  & H_1(\Lambda_G) }$$
	Our construction of a combinatorial model for $\Phi_T^{T'}$ is a little different from the one implicitly presented in Casals and Murphy, who use a group action of $\Z^F$ on $\wt{\scr{A}}[t^{\pm 1}]$ to obtain $(\scr{A},\partial^{\scr{A}})$ as a GIT quotient. Along the way, they construct maps $s_T \colon \Z^T \rightarrow \Z^E$ \cite[Proof of Theorem 6.6]{CM_DGA} such that one may take $\Phi_T^{T'} = \pi_{T'} \circ s_T$. It is not much harder to construct the lift $\wt{\Phi}_T^{T'}$ as well, though this is not done in their paper.
\end{rmk}

\section{The enlarged non-commutative Casals--Murphy dg-algebra}

In this section, we will define a dg-algebra associated to a trivalent plane graph $G$ and a choice of garden $\Gamma$. This dg-algebra is enlarged from the version appearing in Theorem \ref{thm:nc_CM_functoriality}. The functoriality properties of this enlarged dg-algebra are then proved as Theorem \ref{thm:nc_CM_enlarged} in Section \ref{sec:functoriality_enlarged}.

\subsection{Gardens, binary sequences, and associated words}

In order to define our dg-algebra, we first need to make a number of auxiliary choices, which are grouped together in the following convenient definition.

\begin{defn} \label{defn:garden} Suppose $G$ is a trivalent plane graph. A \textbf{garden} on $G$ consists of the data of
	\begin{itemize}
		\item an \textbf{orientation}, by which we mean an orientation of all the edges of $G$
		\item a \textbf{centering}, consisting of, for each $f \in F$, a point $c_f$ contained in the interior of the face $f$, called the \textbf{center} of $f$
		\item a \textbf{web}, consisting of, for each face-vertex adjacency $(f,v)$, a \textbf{thread} $\tau_f(v)$, which is a connected embedded arc oriented from $c_f$ to $v$ completely contained in $f$ except for its endpoint at $v$, such that no two threads intersect except at their endpoints at the centers and vertices, and such that their tangent vectors at $c_f$ are all distinct
		\item a \textbf{seed}, given by a covector in the unit cotangent bundle $\xi \in S_s^*S^2$ for a \textbf{base point} $s \in S^2$
		\item a \textbf{rake}, consisting of oriented embedded loops $\gamma_f$, called \textbf{tines}, one for each $f \in F$, which start and end at the base point $s$, pass through $c_f$ and no other center, have initial and terminal tangent vectors positive with respect to $\xi$, and intersect each other only at the base point $s$
	\end{itemize}
The face $f_{\Gamma} \in F$ to which the base point $s$ belongs is called the \textbf{face at infinity with respect to $\Gamma$}.

An oriented path $\gamma \colon [a,b] \rightarrow S^2$ is said to be \textbf{non-degenerate} with respect to a garden $\Gamma$ if:
\begin{itemize}
	\item its endpoints do not lie on a vertex, edge, center, or thread
	\item it passes through edges and threads transversely at interior points
	\item it passes through centers so that it is not tangent to any thread exiting that center
\end{itemize}
The garden is said to be \textbf{non-degenerate} if the tines of the rake are all non-degenerate, and \textbf{degenerate} otherwise.

A \textbf{homotopy} of gardens is a smoothly varying path of gardens $\Gamma_t$, $0 \leq t \leq 1$. We always allow our homotopies to pass through degenerate gardens in which the tines may be degenerate; on the other hand, homotopies must still be through gardens. Hence, for example, the base point of the seed cannot pass through a thread or edge, since then it would not be a garden at that time. Similarly, a tine $\gamma_f$ cannot homotope to nontrivially intersect another tine nor to pass through a face other than $c_f$.
\end{defn}

\begin{rmk} \label{rmk:finite-type}
	Our definition of a garden is slightly different from the version of Casals--Murphy \cite{CM_DGA}; their version is essentially the same, except they take the base point $s$ of the seed to lie in a chosen face at infinity, and they exclude a tine through the face at infinity. We will handle this slightly differently when we define finite-type gardens in Definition \ref{defn:finite-type_garden}. Our definition is essentially equivalent to theirs, but places it in our context in which all of the faces are treated equally, and where the base-point is free to move around.
\end{rmk}

See Figure \ref{fig:Garden} for an example of a non-degenerate garden.
\begin{figure}[h]
	\centering
	\includegraphics[width=\textwidth]{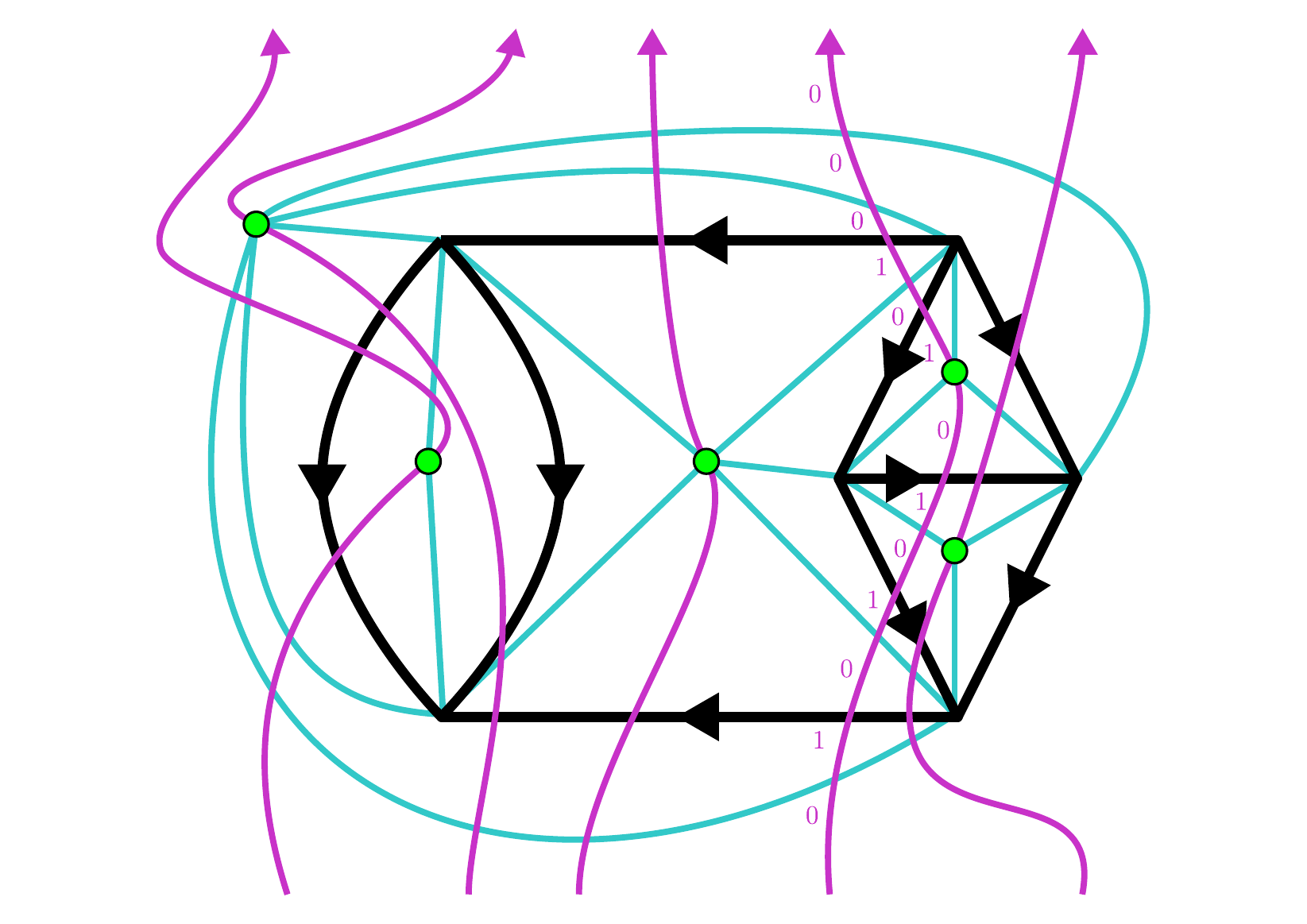}
	\caption{A non-degenerate garden. We have elected, for simplicity, to stereographically project from the base point so that $s$ is the point at infinity; the seed $\xi$ is chosen so that all tines, represented by the magenta lines are oriented from the bottom of the picture to the top. (See Figure \ref{fig:Moves_VI-VII} for how we picture the seed, when needed.) The graph, together with the orientations of the edges, are notated in black. The green circles are the centers, and the web is given by the blue threads. Since the threads are always oriented from the centers to the vertices, we exclude their orientations. On the fourth tine is a sample binary sequence from $0$ to $0$, which contributes a term to the differential of $z$ of the form $(H_1)(A_2)(H_3)(-B_4)(-H_5)(-B_6)(f_7)(A_8)(H_9)(A_{10})$, where the $A$ and $B$ terms correspond to where the tine passes an edge and are short for the corresponding elements $A_e$ and $B_e$ in $W$, the $H$ terms are shorthand for the element $H_f(v)$ for where the tine switches at a thread, and $f_7$ is the face through which the tine passes.}
	\label{fig:Garden}
\end{figure}

Up to homotopy, any two non-degenerate gardens are equivalent via swapping the orientation on edges together with finite sequences of seven classes of moves, Moves I-VII. The first five, Moves I-V, were studied by Casals and Murphy \cite{CM_DGA} (our version of Move V is presented in a slightly modified form). Remark \ref{rmk:finite-type} explains why Moves VI and VII appear here but not in the original study of Casals and Murphy.

Moves I-IV are illustrated in Figure \ref{fig:Moves_I-IV}. Move I forms or removes a bigon between a tine and a thread, whereas Move II likewise forms or removes a bigon between a tine and an edge. Move III passes a tine through a vertex of the graph. Move IV swings the tangent vector to a thread at a vertex past the tangent vector the tine. If $\Gamma$ and $\Gamma'$ are any two homotopic non-degenerate gardens, then one may always find a homotopy between them which is a composition of isotopies of the garden on the graph and a composition of Moves I-IV.

\begin{figure}[h]
	\centering
	\includegraphics[width=\textwidth]{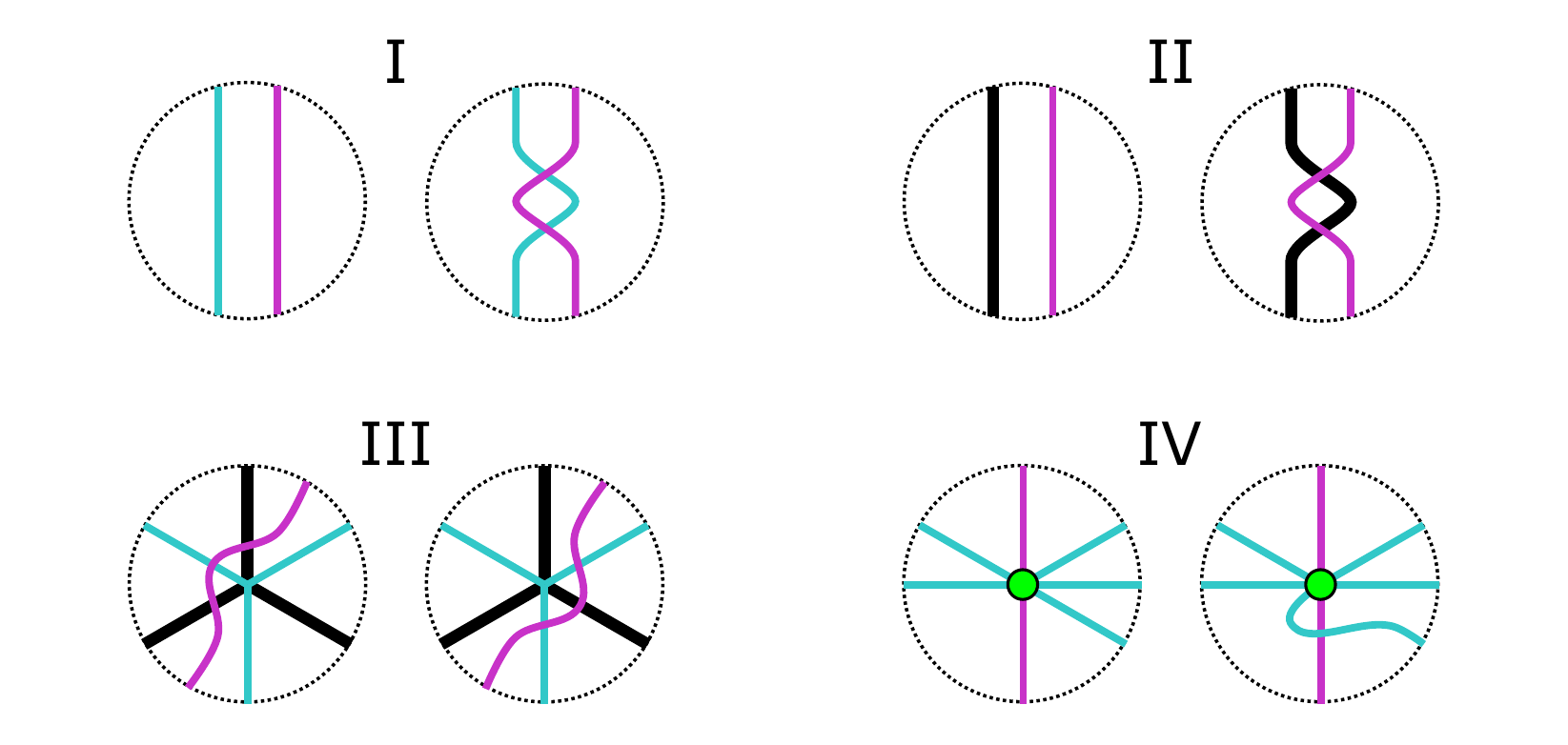}
	\caption{Moves I-IV. Implicit are all possible orientations. For example, for Move IV, we can also swing the tangent vector to a thread around in the clockwise direction as opposed to the counter-clockwise direction drawn.}
	\label{fig:Moves_I-IV}
\end{figure}

Move V is demonstrated in Figure \ref{fig:Move_V}. Move V interchanges centers along a generic curve connecting the two centers. A single application of Move V will change the homotopy type of the garden, though there are also non-trivial sequences of applications of Move V which preserve the homotopy type. These matters will be discussed in great detail later.

\begin{figure}[h]
	\centering
	\includegraphics[width=\textwidth]{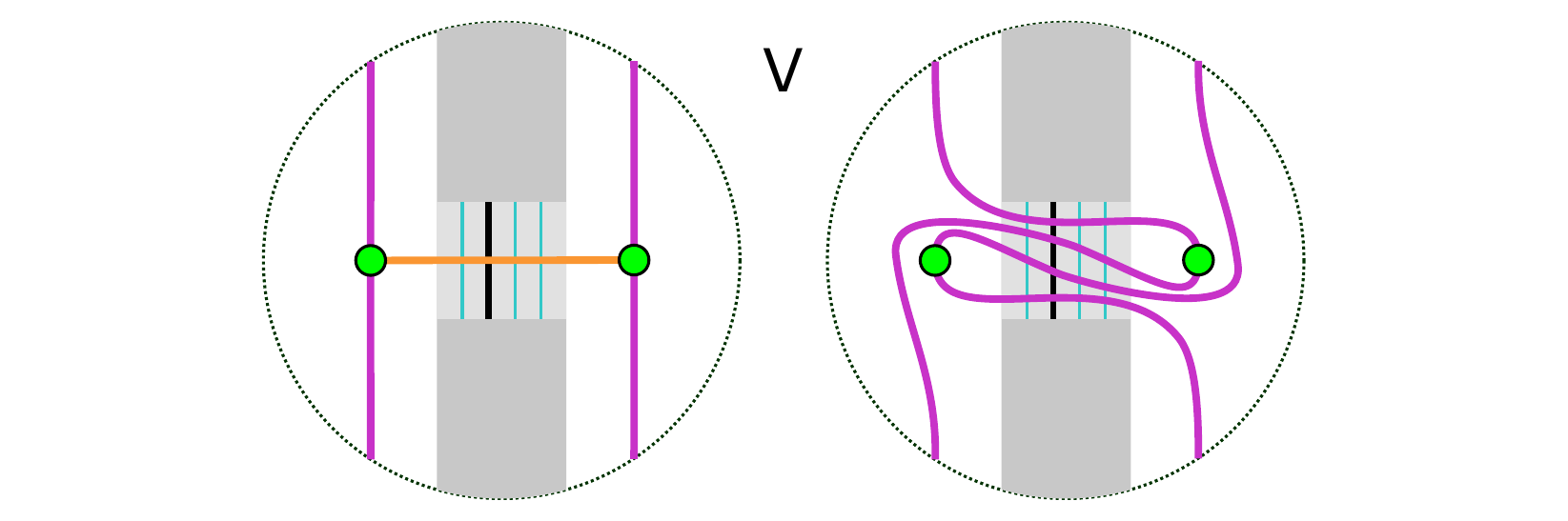}
	\caption{Move V. Suppose we have two adjacent tines, and consider a curve from one center to the next which intersects no tines, but may of course intersect edges or threads. This is depicted as an orange arc in the left diagram. Note that in the dark gray region (and outside of the local picture), the garden may be arbitrary. We have removed the threads emanating from the centers for simplicity: they may be moved arbitrarily by Move IV. Then we may take the tines and perform a positive half-twist of the two centers supported in a neighborhood of a connecting curve. In the diagram, the tines tunnel through the light gray rectangle so that they swap centers.}
	\label{fig:Move_V}
\end{figure}

Finally, Moves VI and VII are illustrated in Figure \ref{fig:Moves_VI-VII}. They involve passing a seed through a thread or an edge. Again, a single application of Move VI or VII will in general change the homotopy type of the garden, as such a homotopy restricts the base point to lie in the same region of $S^2$ bounded by the graph and the web (just a triangle). However, a nontrivial sequence of applications of Moves VI and VII may in general preserve a garden, as will also be discussed in great detail later.

\begin{figure}[h]
	\centering
	\includegraphics[width=\textwidth]{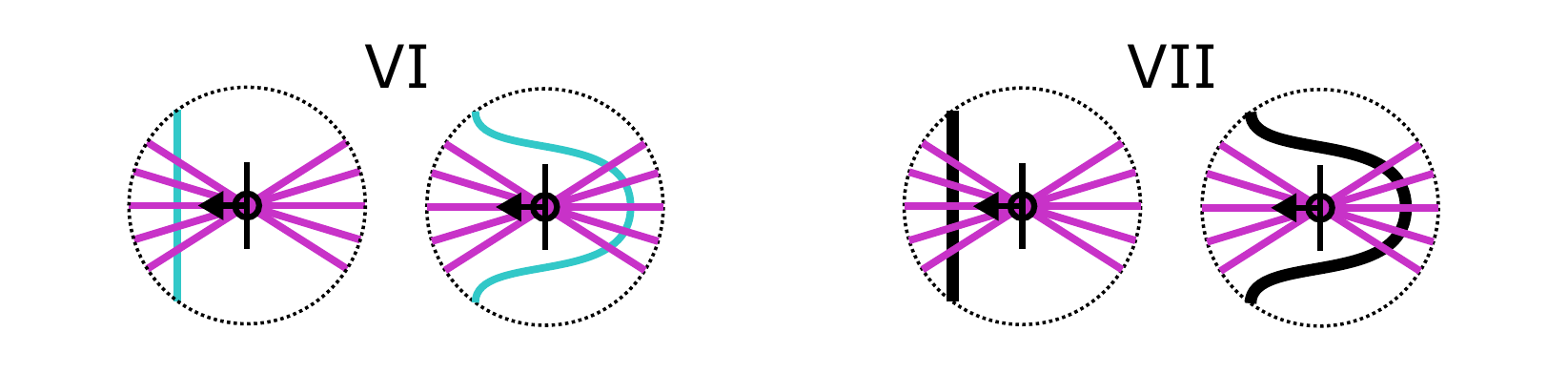}
	\caption{Moves VI and VII. We pass the seed of the garden through a thread or an edge, respectively. The base point of the seed is located at the center of each diagram, and the seed itself is represented by the small vertical line and arrow, representing the kernel of the seed $\xi$ and its coorientation. Hence, in the diagrams presented, all the tines are oriented from right to left.}
	\label{fig:Moves_VI-VII}
\end{figure}

Every non-degenerate path, has three types of critical behaviors, passing through either a center $c_f$, an edge of the graph, or the interior of a thread. Note that we are not interested in intersections of such paths with the tines, which do not appear in the definition of a non-degenerate path. Also notice that the tines of a non-degenerate garden are themselves non-degenerate paths. For each type of intersection point, we study a certain type of combinatorial interaction by way of binary sequences. The following definition appears in the commutative setting of Casals and Murphy \cite{CM_DGA} where it was originally defined only for tines. We make the same definition more generally for non-degenerate paths.

\begin{defn} \label{defn:binary_seq}
	Suppose $\gamma$ is an embedded path in $S^2$ which is non-degenerate with respect to a trivalent plane graph $G$ and garden $\Gamma$. A \textbf{binary sequence along $\gamma$} is a lower semicontinuous function $B \colon [a,b] \rightarrow \{0,1\}$ with the properties that:
	\begin{itemize}
		\item $B$ is locally constant away from the three types of intersection points
		\item at a center, $B$ is discontinuous, and must switch from $0$ to $1$
		\item at an edge, $B$ is discontinuous and must switch, either from $0$ to $1$ or from $1$ to $0$
		\item at the interior of a thread, $B$ is either locally constant or switches from $0$ to $1$
	\end{itemize}
	For $i,j \in \{0,1\}$, the set of binary sequences along $\gamma$ starting at $i$ and ending at $j$ is denoted by $\scr{BS}_{i,j}(\gamma)$.
\end{defn}

To each binary sequence $B$ along $\gamma$, we encode a monomial
$$w_B \in \wt{R}^{nc} * \Z\langle F \rangle = \Z[\Pi] * \Z\langle F \rangle,$$
with coefficient $\pm 1$, determined as follows. To determine $w_B$, we consider the set $D_B$ of discontinuities of the binary sequence, labelled in order that they appear on the tine. Each element of $D_B$ corresponds either to an edge, a thread, or a face. Note that $D_B$ does \emph{not} in general contain every thread the path $\gamma$ intersects, but only those where the binary sequence is discontinuous. To each element of $d \in D_B$, we determine a monomial in $\wt{R}^{nc}*\Z\langle F\rangle$ with coefficient $\pm 1$, described in the next paragraphs, and set $w_B$ to be the product of these monomials in order. Let us now describe the monomial associated to each discontinuity. As we proceed, the reader should verify that our conventions match with the example in Figure \ref{fig:Garden}, which shows a sample binary sequence along a tine, and which has the associated word described in the caption; recall again that the notion we are defining works for all non-degenerate paths and not just tines.

For an edge, the binary sequence always switches, either from $0$ to $1$, or vice versa. Associating the label $0$ with the label $-$ and the label $1$ with the label $+$, we have that the path crosses the edge in a manner which matches one of the four diagrams of Figure \ref{fig:Generator_Convention}, where our path $\gamma$ is taken in place of the tine. Accordingly, we associate to this discontinuity the corresponding generator $A_e$, $A_e^{-1}$, $-B_e$, or $-B_{e}^{-1}$, where we note that we have included a negative sign in two of these cases.

For a discontinuity with a thread $\tau_{f}(v)$, we associate with it a monomial
$$H(\tau_{f}(v)) \in \wt{R}^{nc} = \Z[\Pi]$$
defined as follows. Consider a counterclockwise loop around the vertex $v$ based at $c_f$ which goes from the label $-$ to the label $+$, and where the $B$ and $B^{-1}$ terms come with negative signs (as we did for edges). In other words, if we look at Figure \ref{fig:Vertex_Relation}, if the center $c_f$ is in the region with the red circle and the thread flows from the red circle to the vertex $v$, then we have $H(\tau_{f}(v)) = B_{e_1}^{-1}A_{e_2}B_{e_3}^{-1}$ by following a single loop counterclockwise around $v$ from $-$ to $+$, and where the sign is positive because there are two $B$ terms in this case. Notice that the sign of $H(\tau_{f}(v))$ is automatically $(-1)^{r_v}$, where $r_v$ is the number of outgoing edges. As for our construction of $w_B$, we include another sign, using $\pm H(\tau_f(v))$ for a discontinuity at $\tau_f(v)$, where the sign is determined by the following right-hand rule: the sign is $+1$ or $-1$ according to whether the orientation of the thread followed by the orientation of the path gives a frame matching the underlying orientation on the plane. The signs are indicated in Figure \ref{fig:Sign_Rule_Thread_Crossings}.

\begin{figure}[h]
	\centering
	\includegraphics[width=\textwidth]{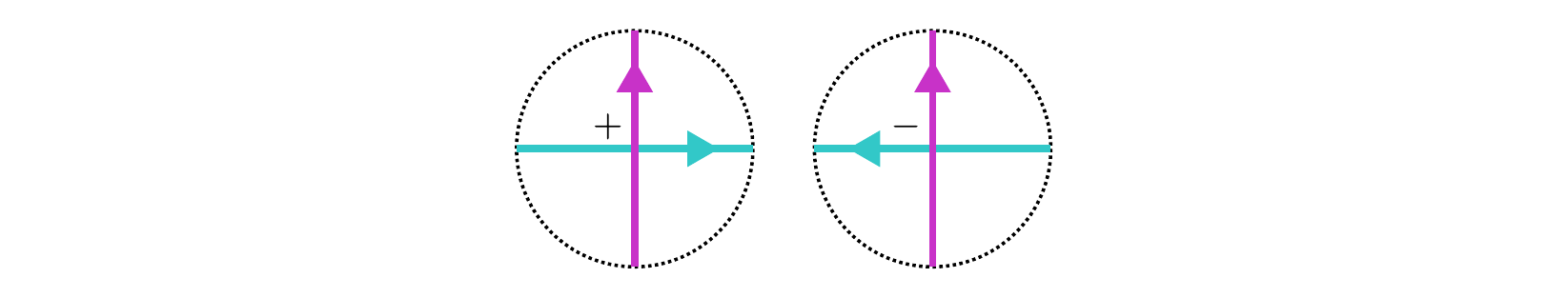}
	\caption{The sign placed in front of $H(\tau_f(v))$ for a binary sequence which changes from $0$ to $1$ at a thread. The horizontal lines are threads and the vertical lines represent our path $\gamma$ to which we are associating our monomial.}
	\label{fig:Sign_Rule_Thread_Crossings}
\end{figure}

\begin{rmk}
	In our convention for $H(\tau_f(v))$, we have used the geometric element associated to a counterclockwise loop around $v$. We could have alternatively used clockwise loops in our convention, which has the effect of simply negating $H(\tau_f(v))$, which would be equivalent to using the counterclockwise convention but for the opposite orientation on $G$.
\end{rmk}

Finally, if we have a discontinuity at a center $c_f$, we simply replace it with $f \in F$, considered as a monomial in $\wt{R}^{nc} * \Z \langle F\rangle$. This finishes the definition of $w_B$; we repeat, as we said at the beginning of this discussion, that an example is found in Figure \ref{fig:Garden} and its caption.

\begin{rmk}
	If $D_B = \emptyset$, then we set $w_B = 1$.
\end{rmk}

To each non-degenerate immersed path $\gamma$ and each $i,j \in \{0,1\}$, we may therefore define four associated words $w_{i,j} \in \wt{R}^{nc} = \Z[\Pi]$, given by
$$w_{i,j}(\gamma) := \sum_{B \in \scr{BS}_{i,j}(\gamma)} w_B.$$

\begin{rmk}
	If $\scr{BS}_{i,j}(\gamma) = \emptyset$, then $w_{i,j}(\gamma) = 0$.
\end{rmk}

We end this subsection by establishing two fundamental properties about the words $w_{i,j}(\gamma)$. The first is a statement about locality, showing that breaking up non-degenerate paths into smaller non-degenerate paths plays nicely with the associated word construction. The second involves invariance of the associated word under mild homotopies. Both of these are used in the commutative coefficient case by Casals and Murphy \cite{CM_DGA} within various proofs; here we single them out in separate lemmas.

\begin{lem} \label{lem:locality_of_word}
	Suppose $\gamma_0$ and $\gamma_1$ are non-degenerate paths such that the terminal point of $\gamma_0$ is the initial point of $\gamma_1$. Let $\gamma = \gamma_0 * \gamma_1$ be the composition of paths, smoothed along where they glue. Then
	$$w_{i,j}(\gamma) = \sum_{k = 0}^{1} w_{i,k}(\gamma_0) \cdot w_{k,j}(\gamma_1).$$
\end{lem}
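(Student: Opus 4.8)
The plan is to prove the identity by analyzing how binary sequences along $\gamma$ decompose into pairs of binary sequences along $\gamma_0$ and $\gamma_1$. First I would observe that since $\gamma_0$ and $\gamma_1$ are non-degenerate and the gluing point is an interior point of $\gamma$ lying in the preimage of an open face (so in particular not on an edge, thread, or center), the concatenated path $\gamma$ — after smoothing near the gluing point — is again non-degenerate, and the set of intersection points of $\gamma$ with the garden is the disjoint union of those of $\gamma_0$ and those of $\gamma_1$, listed in order. Consequently, the restriction map
$$
\mathscr{BS}_{i,j}(\gamma) \;\longrightarrow\; \bigsqcup_{k=0}^{1} \mathscr{BS}_{i,k}(\gamma_0) \times \mathscr{BS}_{k,j}(\gamma_1)
$$
that sends a binary sequence $B$ to the pair $(B|_{\gamma_0}, B|_{\gamma_1})$, where $k$ is the common value of $B$ at the gluing point, is well-defined: the defining conditions of Definition \ref{defn:binary_seq} (lower semicontinuity; the prescribed behavior at centers, edges, threads) are local, so $B$ satisfies them on $\gamma$ if and only if both restrictions satisfy them on $\gamma_0$ and $\gamma_1$ respectively. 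This map is clearly a bijection, with inverse given by concatenation: since the gluing point carries no intersection type, gluing a sequence ending at $k$ to one starting at $k$ produces a valid binary sequence on $\gamma$, and these two constructions are mutually inverse.

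Next I would check that the word-valued weight is multiplicative under this decomposition, i.e. $w_B = w_{B|_{\gamma_0}} \cdot w_{B|_{\gamma_1}}$ in $\wt{\scr{R}}^{nc} = \Z[\Pi]$. This is immediate from the definition of $w_B$: it is the ordered product over the set $D_B$ of discontinuities of the monomials associated to each discontinuity, and $D_B = D_{B|_{\gamma_0}} \sqcup D_{B|_{\gamma_1}}$ with the order on $\gamma$ restricting to the orders on $\gamma_0$ and $\gamma_1$. The monomial assigned to each individual discontinuity (a generator $A_e^{\pm 1}$ or $-B_e^{\pm 1}$ for an edge, a sign times $H_f(v)$ for a thread, a face variable $f$ for a center) depends only on the local data at that discontinuity — the orientation of the relevant edge or thread, the orientation of the path there, and whether $B$ switches $0 \to 1$ or $1 \to 0$ — all of which is unchanged by passing from $\gamma$ to the appropriate $\gamma_m$. (One should note the edge case $D_B = \emptyset$, handled by the convention $w_B = 1$, which is consistent since then both restrictions have empty discontinuity sets.)

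Combining these two facts, I would compute directly:
$$
w_{i,j}(\gamma) \;=\; \sum_{B \in \mathscr{BS}_{i,j}(\gamma)} w_B \;=\; \sum_{k=0}^{1} \sum_{\substack{B_0 \in \mathscr{BS}_{i,k}(\gamma_0) \\ B_1 \in \mathscr{BS}_{k,j}(\gamma_1)}} w_{B_0} \cdot w_{B_1} \;=\; \sum_{k=0}^{1} \Bigl( \sum_{B_0} w_{B_0} \Bigr)\Bigl( \sum_{B_1} w_{B_1} \Bigr) \;=\; \sum_{k=0}^{1} w_{i,k}(\gamma_0)\, w_{k,j}(\gamma_1),
$$
where the third equality uses bilinearity of multiplication in $\Z[\Pi]$ and the convention that empty sums of binary sequences give $0$. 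I do not expect a serious obstacle here; the only point requiring genuine care is verifying that smoothing $\gamma_0 * \gamma_1$ near the gluing point does not introduce or destroy any intersection with the garden and does not alter the local picture at the nearby discontinuities — this is where the hypothesis that the gluing point avoids all of $V$, $E$, the centers, and the threads is essential, and I would make that explicit at the start of the argument.
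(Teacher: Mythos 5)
Your proposal is correct and follows essentially the same route as the paper's proof, which simply records the bijection $\scr{BS}_{i,j}(\gamma) \cong \bigsqcup_{k} \scr{BS}_{i,k}(\gamma_0) \times \scr{BS}_{k,j}(\gamma_1)$ and notes that the words multiply accordingly. Your version just spells out in more detail the locality of the binary-sequence conditions and the multiplicativity $w_B = w_{B|_{\gamma_0}} \cdot w_{B|_{\gamma_1}}$, which the paper leaves implicit.
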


\begin{proof}
	We have obvious bijections
	$$\scr{BS}_{i,j}(\gamma) = \left(\scr{BS}_{i,0}(\gamma_0) \times \scr{BS}_{0,j}(\gamma_1)\right) \sqcup \left(\scr{BS}_{i,1}(\gamma_0) \times \scr{BS}_{1,j}(\gamma_1)\right),$$
	and the two sides of the equation in the lemma are just given by writing out the corresponding words according to these two associations.
\end{proof}

With this lemma proved, it is convenient from now on, for a non-degenerate path $\gamma$, to write
$$W(\gamma) := \begin{pmatrix}w_{0,0}(\gamma) & w_{0,1}(\gamma) \\ w_{1,0}(\gamma) & w_{1,1}(\gamma)\end{pmatrix} \in \mathrm{Mat}_{2 \times 2}(\wt{R}^{\mathrm{nc}}*\Z\langle F \rangle).$$
We will call this the \textbf{matrix of words} for $\gamma$. Lemma \ref{lem:locality_of_word} implies that a composition of non-degenerate paths is obtained by matrix multiplication of the corresponding matrices of words, i.e. $W(\gamma_1 * \gamma_2) = W(\gamma_1) \cdot W(\gamma_2).$ Even more is true; Lemma \ref{lem:invariance_of_word} below implies that the matrix of words is an invariant of non-degenerate paths relative to homotopies (through possibly degenerate paths) not crossing the centering of the garden.

\begin{lem} \label{lem:invariance_of_word}
	Suppose $\gamma_0$ and $\gamma_1$ are two non-degenerate paths with the same endpoints which are homotopic via a homotopy $\gamma_t$ (of not necessarily non-degenerate paths) which never intersects the centering of the garden. Then the matrices of words are equal,
	$$W(\gamma_0) = W(\gamma_1).$$
	Additionally, if $\gamma_0$ and $\gamma_1$ are two non-degenerate paths differing by an application of Move IV, then again $W(\gamma_0) = W(\gamma_1)$.
\end{lem}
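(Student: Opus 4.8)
The plan is to reduce the invariance statement to a finite local analysis. First I would invoke standard transversality/genericity: any homotopy $\gamma_t$ from $\gamma_0$ to $\gamma_1$ that avoids the centering can be perturbed (rel endpoints, still avoiding the centers) to one that is generic, meaning it is non-degenerate for all but finitely many times $t_1 < \cdots < t_k$, and at each $t_i$ exactly one of a short list of elementary degenerations occurs. Since the homotopy misses all centers $c_f$, the only degenerations that can arise involve the path's interaction with the edges of $G$ and the interiors of the threads of the web; passing through a vertex $v$ is also possible. Concretely, the elementary events are: (a) the path develops and then resolves a tangency with an edge (a bigon between $\gamma$ and an edge forms or disappears — this is the picture of Move II applied to a general path), (b) the path develops a tangency with a thread and resolves it (a bigon between $\gamma$ and a thread, as in Move I), (c) a triple point where $\gamma$ slides past the common endpoint of a thread and an edge at a vertex, or past the mutual tangency configuration of a thread at a vertex (the content of Move IV), and (d) $\gamma$ passes through a vertex $v$ of $G$ itself (Move III). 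By Lemma \ref{lem:locality_of_word}, since $W$ is multiplicative under concatenation, it suffices to check that $W$ is unchanged when we restrict to a small sub-path of $\gamma$ supported near where the elementary event happens and leave the rest of the path fixed; so in each case we are comparing $W(\delta_0)$ with $W(\delta_1)$ for two short non-degenerate paths with the same endpoints differing by one elementary move.

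Next I would dispatch each of the finitely many cases by a direct computation of the $2\times 2$ matrices of words. For case (a), a finger tangency with an edge $e$: before the move the sub-path crosses $e$ zero times, after it crosses $e$ twice in immediately successive fashion. A binary sequence along the "after" path either ignores these two crossings (forced equal start/end state, contributing $1$) or switches at both, contributing a factor $\lambda_{e,\mathfrak o}\lambda_{e,\mathfrak o}^{-1}$ (with the two signs in Figure \ref{fig:Generator_Convention} cancelling) which is $1$ in $\Z[\Pi]$; summing over binary sequences the matrix of words reduces to the identity times that of the "before" path. For case (b), a tangency with a thread $\tau_f(v)$: here the relevant subtlety is that at a thread a binary sequence is allowed to be continuous or to switch $0\to 1$. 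On the "after" path there are two successive thread intersections; the binary sequences either stay continuous at both (contributing $1$), or switch $0\to 1$ then must be continuous (since a thread cannot switch $1\to 0$), contributing $H_f(v)$ with one sign from Figure \ref{fig:Sign_Rule_Thread_Crossings}, or are continuous then switch, contributing $H_f(v)$ with the opposite sign from Figure \ref{fig:Sign_Rule_Thread_Crossings} because the tine's local orientation relative to the thread is reversed on the second strand; these last two cancel, and again $W$ is unchanged. For case (d), passing through a vertex $v$: before and after, the sub-path's sequence of edge-crossings differs by insertion of the six crossings making up $w_v$ (or its inverse), which is trivial in $\Pi$ by definition of $N$, with signs matching because $\mathrm{sign}(H_f(v)) = (-1)^{r_v}$ bookkeeping is consistent with the sign of $w_v$; so each binary sequence on one side corresponds to one on the other with equal word. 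Case (c), the Move IV–type event, changes \emph{which} thread discontinuities are available but the composite word contribution is unchanged: sliding $\gamma$'s crossing from one side of the vertex to the other replaces $H_f(v)$ (a partial loop around $v$ from one wedge) by an expression differing by the full relation $w_v$ together with an edge-crossing rearrangement, again trivial in $\Pi$; one verifies the signs from Figure \ref{fig:Sign_Rule_Thread_Crossings} work out. In every case we conclude $W(\delta_0)=W(\delta_1)$, hence $W(\gamma_0)=W(\gamma_1)$.

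The main obstacle I expect is not the topological bookkeeping (the classification of elementary events is routine, given that Casals--Murphy already carry out the analogous analysis in the commutative setting) but rather getting \emph{all the signs} exactly right in cases (b) and (c) — the thread-crossing signs of Figure \ref{fig:Sign_Rule_Thread_Crossings}, the $(-1)^{r_v}$ built into $H_f(v)$, and the negative signs attached to the $B_e^{\pm1}$ generators must all conspire so that the spurious contributions cancel in pairs rather than add. This is precisely the place where the spin-structure sign conventions (alluded to in the remark after the definition of $H_f(v)$) do the work, and it is the step that most repays careful checking against Figure \ref{fig:Garden}. Everything else — transversality of the homotopy, locality via Lemma \ref{lem:locality_of_word}, triviality of $w_v$ and of $\lambda\lambda^{-1}$ in $\Pi$ — is formal.
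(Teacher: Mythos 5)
Your overall strategy is exactly the paper's: perturb the homotopy to a generic one realized by a finite sequence of the local Moves I--IV, use Lemma \ref{lem:locality_of_word} to reduce to a comparison of the local sub-paths, and then verify each move by computing the local binary sequences, with the thread bigon cancelling by the sign rule, the edge bigon cancelling by $\lambda\lambda^{-1}=1$ (or $(-B_e)(-B_e^{-1})=1$), and the vertex pass using the relation $w_v=1$ in $\Pi$. So the architecture of your argument is sound and matches the paper.

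There are, however, two concrete problems in your case analysis. First, in case (a) you assert that a binary sequence may ``ignore'' the two successive edge crossings; by Definition \ref{defn:binary_seq} a binary sequence is \emph{forced} to switch at every edge crossing, so there is no such option. If both options existed you would get $w_{0,0}=1+\lambda\lambda^{-1}=2$ rather than $1$; the correct count is that the diagonal entries each have exactly one binary sequence (switching at both crossings, word equal to $1$) and the off-diagonal sets are empty. Second, and more seriously, your case (c) misidentifies Move IV. Move IV is not an event at a vertex of $G$ (that interaction is already the content of Move III, which your case (d) covers); it is the degeneration at a \emph{center} $c_f$, where a thread's tangent vector at $c_f$ swings past the direction in which the path crosses $c_f$. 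This changes which thread discontinuities are available adjacent to the center, and the verification does not go through $w_v$ at all: it uses the constraint that a binary sequence must switch from $0$ to $1$ at the center, which forbids the extra thread discontinuity on either side and forces $W(\gamma_\ell)=W(\gamma'_\ell)$ directly. As written, your proof never checks this degeneration, and since the paths to which the lemma is applied (the tines) do pass through centers, this case cannot be omitted.
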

\begin{proof}
	Any such homotopy maybe generically perturbed so that it is given (up to isotopy of $S^2$) by a finite sequence of Moves I-III as in Figure \ref{fig:Moves_I-IV} and creation/removal of self-intersections, with the understanding that our paths take the place of the tines. (Indeed, this lemma will primarily be applied to tines later on.) Since self-intersections do not affect the matrix of words, it suffices to check invariance under any of Moves I-IV. By Lemma \ref{lem:locality_of_word}, this is just a local check. To be precise, let $\gamma$ and $\gamma'$ be the before and after for each curve. Then we may decompose
	$$\gamma = \eta * \gamma_{\ell} * \mu$$
	$$\gamma' = \eta * \gamma'_{\ell} * \mu,$$
	where $\gamma_{\ell}$ and $\gamma'_{\ell}$ represent the local pieces on the left and right of the corresponding figure. It suffices to prove that
	$$W(\gamma_{\ell}) = W(\gamma'_{\ell}),$$
	i.e. that $w_{i,j}(\gamma_{\ell}) = w_{i,j}(\gamma'_{\ell})$ for each $i,j \in \{0,1\}$, since then
	$$W(\gamma) = W(\eta)W(\gamma_{\ell})W(\mu) = W(\eta)W(\gamma'_{\ell})W(\mu) = W(\gamma').$$
	
	For Move I, notice on the right that because there are only thread crossings, the only local binary sequences for which there is a discontinuity are from $0$ to $1$, and hence
	\begin{empheq}[left=\empheqbiglbrace]{align*}
		w_{0,0}(\gamma_{\ell}) &= 1 = w_{0,0}(\gamma'_{\ell})\\
		w_{1,1}(\gamma_{\ell}) &= 1 = w_{1,1}(\gamma'_{\ell})\\
		w_{1,0}(\gamma_{\ell}) &= 0 = w_{1,0}(\gamma'_{\ell})
	\end{empheq}
	On the other hand, whereas $\scr{BS}_{0,1}(\gamma_{\ell}) = \emptyset$, we have $\scr{BS}_{0,1}(\gamma'_{\ell})$ consists of two elements, depending upon which thread crossing is chosen to give a discontinuity. However, those two binary sequences come with opposite thread orientations, and hence
	$$w_{0,1}(\gamma'_{\ell}) = \pm H(\tau) \mp H(\tau) = 0 = w_{0,1}(\gamma_{\ell}).$$
	
	For Move II, we have each $W(\gamma_{\ell}) = I$ is the identity matrix since there are no crossings in the left diagram. On the other hand, on the right, both crossings must be discontinuities, and must switch the label. Clearly $w_{1,0}(\gamma'_{\ell}) = w_{0,1}(\gamma'_{\ell}) = 1$ since there are no corresponding binary sequences. But also, it is easy to see that we have chosen our orientations so that also $w_{0,0}(\gamma'_{\ell}) = w_{1,1}(\gamma'_{\ell})=1$. For example, if both the edge and the tine are oriented upwards in the diagram, then $w_{1,1}(\gamma'_{\ell}) = (-B_e)(-B_e^{-1}) = 1$, where $e$ is the edge.
	
	For Move III, suppose without loss of generality that our paths are directed from bottom left to top right. Starting from the bottom left endpoint of the local path and working clockwise, suppose the edges and tines are labeled $e_1,\tau_1,e_2,\tau_2,e_3,\tau_3$. Suppose the orientations of all the edges are inwards towards the vertex; we leave the other orientations to the reader. (The orientations of the threads are always inwards pointing, so we need not specify them.) One checks that with our orientations, $H(\tau_i) = B_{e_i}^{-1}A_{e_{i-1}}B_{e_{i-2}}^{-1}$, where the indices are taken modulo $3$. The following relations are easily checked, since the corresponding local binary sequence sets are singletons.
	\begin{empheq}[left=\empheqbiglbrace]{align*}
		w_{0,0}(\gamma_{\ell}) = (A_{e_1}^{-1})(-B_{e_2}) = (-H(\tau_3))A_{e_3} = w_{0,0}(\gamma'_{\ell})\\
		w_{1,1}(\gamma_{\ell}) = (-B_{e_1})(A_{e_2}^{-1}) = A_{e_3}(-H(\tau_2)) = w_{1,1}(\gamma'_{\ell})\\
		w_{1,0}(\gamma_{\ell}) = (-B_{e_1})(H(\tau_1))(-B_{e_2}) = A_{e_3} = w_{1,0}(\gamma'_{\ell})
	\end{empheq}
	For the final case, we have $\scr{BS}_{0,1}(\gamma_{\ell}) = \emptyset$, so that $w_{0,1}(\gamma_{\ell}) = 0$. On the other hand, $\scr{BS}_{0,1}(\gamma'_{\ell})$ consists of two elements, one which crosses only at the edge, and the other which also crosses at both threads. We see
	$$w_{0,1}(\gamma'_{\ell}) = -B_{e_3}^{-1}+(-H(\tau_3))(A_{e_3})(-H(\tau_2)) = 0.$$
	
	For Move IV, the fact that the binary sequence must change from $0$ to $1$ at the center severely restricts the possible local binary sequences. In particular, on the right hand side, if the path is oriented upwards, then the binary sequence cannot switch at the thread because it would have to change from $0$ to $1$, but must be $0$ when it enters the face. Similar reasoning applies if the path is oriented downwards. Regardless, we find
	\begin{empheq}[left=\empheqbiglbrace]{align*}
		w_{0,0}(\gamma_{\ell}) = 0 = w_{0,0}(\gamma'_{\ell})\\
		w_{1,1}(\gamma_{\ell}) = 0 = w_{1,1}(\gamma'_{\ell})\\
		w_{1,0}(\gamma_{\ell}) = 0  = w_{1,0}(\gamma'_{\ell})\\
		w_{0,1}(\gamma_{\ell}) = f = w_{0,1}(\gamma'_{\ell})
	\end{empheq}
	where $f$ is the face corresponding to the center $c_f$ in the diagram through which the path passes.
\end{proof}

\subsection{The enlarged dg-algebra} \label{ssec:enlarged}

Define
$$\wt{\scr{B}}^+_{G} := \wt{R}^{\mathrm{nc}} * \Z\langle F,x,y,z,w\rangle,$$
the free associative graded algebra over the ring $\wt{R}^{\mathrm{nc}}$ generated by all faces $F$ (including any chosen face at infinity) of degree $1$ and four extra generators $x,y,z,w$ of degree $2$. This is of course a slightly different object than appears in Theorems \ref{thm:nc_CM} and \ref{thm:nc_CM_functoriality}, but we will be able to use Lemma \ref{lem:induced_morphisms} together with the transition maps of Section \ref{ssec:transition} to eventually prove our desired results. The tilde in the notation refers to the coefficient ring $\wt{R}^{\mathrm{nc}}$ (as opposed to $R_T$), and the plus in the notation refers to the inclusion of the face at infinity and the generator $w$ which do not appear in the original formulation of Casals and Murphy in the commutative coefficient setting \cite{CM_DGA}. We will discuss removing these enlargements in Section \ref{sec:remove_enlargements}.

For a given garden $\Gamma$ on $G$, we define a linear degree $-1$ endomorphism $\wt{\partial}^{\scr{B}^+}_{G,\Gamma}$ by specifying its action on generators and extending via the Leibniz rule
$$\wt{\partial}(a \cdot b) = (\wt{\partial}a) \cdot b + (-1)^{|a|}a \cdot (\wt{\partial} b)$$
for homogeneous elements $a,b \in \wt{\scr{B}}^{+}_{G}$. To simplify notation, let
$$X = \begin{pmatrix} z & w \\ y & x \end{pmatrix}$$
be the matrix consisting of the four non-face generators of $\wt{\scr{B}}^+_G$. Then $\wt{\partial}^{\scr{B}^+}_{G,\Gamma}$ acts on the generators by 
\begin{empheq}{align*}
	\wt{\partial}^{\scr{B}^+}_{G,\Gamma}f &= \sum_{v \in f} H(\tau_f(v)), \quad \forall~f \in F\\
	\wt{\partial}^{\scr{B}^+}_{G,\Gamma}X &= \sum_{f \in F}W(\gamma_f)
\end{empheq}

\begin{rmk}
	A complete example of an enlarged dg-algebra for a specific pair $(G,\Gamma)$ is included in Appendix \ref{appx:example}.
\end{rmk}

It is not immediately clear that $\wt{\partial}^{\scr{B}^+}_{G,\Gamma}$ is actually a differential. Since we have defined it so that it is of degree $-1$ and is extended via the Leibniz rule, it suffices to check that $\wt{\partial}^{\scr{B}^+}_{G,\Gamma}$ squares to zero. We begin with a lemma which allows us to extend Lemma \ref{lem:invariance_of_word} to the setting in which a homotopy of non-degenerate paths passes through a center.

\begin{lem}\label{lem:htpy_through_center}
	Suppose $\gamma_0$ and $\gamma_1$ are two non-degenerate paths with the same endpoints which are homotopic via a homotopy $\gamma_t$ (of not necessarily non-degenerate paths) such that there is a single such path $\gamma_{t_0}$ (with $t_0 \neq 0,1$) passing through a single center so that frame given by the vector $\frac{d}{dt}|_{t=t_0}\gamma_t$ followed by the tangent vector to $\gamma_{t_0}$ forms a positively oriented frame at this center. Then for any garden $\Gamma$ on $G$,
	$$W(\gamma_1) - W(\gamma_0) = \wt{\partial}^{\scr{B}^+}_{G,\Gamma}W(\gamma_{t_0}).$$
\end{lem}

\begin{proof}
	Let us write
	$$\gamma_{t_0} := \gamma^s * \gamma^c * \gamma^t,$$
	where $\gamma^c$ represents a small portion of $\gamma^{t_0}$ which crosses through the center and none of the edges or threads, and $\gamma^s$ and $\gamma^t$ represent the starting and terminal portions of the curve. By homotoping the homotopy $\gamma_t$, we may assume without loss of generality that for some small $\epsilon > 0$, we may write
	$$\gamma_{t_0-\epsilon} = \gamma^s * \gamma^{c,L} * \gamma^t \qquad \mathrm{and} \qquad \gamma_{t_0+\epsilon} = \gamma^s * \gamma^{c,R} * \gamma^t,$$
	where $\gamma^{c,L}$ and $\gamma^{c,R}$ are small arcs with the same endpoints at $\gamma^c$ but passing on opposite sides of the corresponding center. We notice that the curves $\gamma^{c,L}$ and $\gamma^{c,R}$ have corresponding matrices of words of the form
	$$W(\gamma^{c,L}) = \begin{pmatrix}1 & w_{0,1}(\gamma^{c,L}) \\ 0 & 1\end{pmatrix}, \qquad \qquad W(\gamma^{c,R}) = \begin{pmatrix}1 & w_{0,1}(\gamma^{c,R}) \\ 0 & 1\end{pmatrix}$$
	since the corresponding arcs only interact with the threads of the center. In fact, each thread $\tau$ around this center contributes a unique local binary sequence from $0$ to $1$ for either $\gamma^{c,L}$ or $\gamma^{c,R}$, and the corresponding monomial $H(\tau)$ comes with a minus sign for $\gamma^{c,L}$ and a plus sign for $\gamma^{c,R}$. That is, if the center is $c_f$, then
	$$w_{0,1}(\gamma_k^{c,R}) - w_{0,1}(\gamma_k^{c,L}) = \sum_{v \in f}H(\tau_{f}(v)) = \wt{\partial}^{\scr{B}^+}_{G,\Gamma}f.$$
	Using Lemma \ref{lem:invariance_of_word}, we have $W(\gamma_0) = W(\gamma_{t_0-\epsilon})$ and $W(\gamma_1) = W(\gamma_{t_0+\epsilon})$. Therefore, from our computation, and from the fact that $\gamma^s$ and $\gamma^t$ pass through no centers so that
	$$\wt{\partial}^{\scr{B}^+}_{G,\Gamma}W(\gamma^s) = 0 = \wt{\partial}^{\scr{B}^+}_{G,\Gamma}W(\gamma^t),$$
	we find
	\begin{align*}
		W(\gamma_1) - W(\gamma_0) & =W(\gamma_{t_0+\epsilon}) - W(\gamma_{t_0-\epsilon}) \\
		&= W(\gamma^s)W(\gamma^{c,R})W(\gamma^t) - W(\gamma^s)W(\gamma^{c,L})W(\gamma^t) \\
		&= W(\gamma^s)\left[W(\gamma^{c,R})-W(\gamma^{c,L})\right]W(\gamma^t) \\
		&= W(\gamma^s)\begin{pmatrix}0 & \wt{\partial}^{\scr{B}^+}_{G,\Gamma} f\\ 0 & 0\end{pmatrix}W(\gamma^t) \\
		&= W(\gamma^s) \left[\wt{\partial}^{\scr{B}^+}_{G,\Gamma}\begin{pmatrix}0 & f \\ 0 & 0 \end{pmatrix}\right]W(\gamma^t)\\
		&=W(\gamma^s)\left[\wt{\partial}^{\scr{B}^+}_{G,\Gamma}W(\gamma^c)\right]W(\gamma^t) \\
		&=\wt{\partial}^{\scr{B}^+}_{G,\Gamma}\left[W(\gamma^s)W(\gamma^c)W(\gamma^t)\right]\\
		&= \wt{\partial}^{\scr{B}^+}_{G,\Gamma}W(\gamma_{t_0}).
	\end{align*}
\end{proof}

We may now prove the differential property.

\begin{prop} \label{prop:differential} For any non-degenerate garden $\Gamma$ for a trivalent plane graph $G$, the linear maps $\wt{\partial}^{\scr{B}^+}_{G,\Gamma}$ are differentials, i.e. $$\left(\wt{\partial}^{\scr{B}^+}_{G,\Gamma}\right)^2 = 0.$$ Hence, $(\wt{\scr{B}}^+_G,\wt{\partial}^{\scr{B}^+}_{G,\Gamma})$ is a dg-algebra with respect to any choice of non-degenerate garden.
\end{prop}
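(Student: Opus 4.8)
The plan is to reduce the identity $(\wt{\partial}^{\scr{B}^+}_{G,\Gamma})^2 = 0$ to two separate checks, one for the degree-$1$ face generators and one for the degree-$2$ generators $x,y,z,w$, and in each case interpret the vanishing geometrically in terms of binary sequences and transverse signed loops on $\Lambda_G$. Since $\wt{\partial}^{\scr{B}^+}_{G,\Gamma}$ is built from the Leibniz rule, once it squares to zero on generators it squares to zero everywhere, so it genuinely suffices to treat $f \in F$ and the matrix $X$.

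First I would handle the face generators. We have $\wt{\partial}^{\scr{B}^+}_{G,\Gamma} f = \sum_{v \in f} \pm H_f(v)$, and each $H_f(v)$ lies in the coefficient ring $\wt{R}^{\mathrm{nc}} = \Z[\Pi]$, on which $\wt{\partial}^{\scr{B}^+}_{G,\Gamma}$ vanishes for degree reasons. Hence $(\wt{\partial}^{\scr{B}^+}_{G,\Gamma})^2 f = 0$ automatically, and there is nothing to prove here; I would just note this.

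The substance is therefore the check $(\wt{\partial}^{\scr{B}^+}_{G,\Gamma})^2 X = 0$, i.e. $\sum_{f} \wt{\partial}^{\scr{B}^+}_{G,\Gamma} W(\gamma_f) = 0$ as a $2 \times 2$ matrix. The key point is that by the Leibniz rule and the definitions, applying $\wt{\partial}^{\scr{B}^+}_{G,\Gamma}$ to the matrix of words $W(\gamma_f)$ along a tine amounts to: (i) replacing each center-crossing entry $f'$ that occurs inside a word $w_B$ by $\wt{\partial}^{\scr{B}^+}_{G,\Gamma} f' = \sum_{v \in f'} \pm H_{f'}(v)$; and (ii) replacing each edge- or thread-crossing factor by the result of applying $\wt{\partial}^{\scr{B}^+}_{G,\Gamma}$, which is $0$ since those factors lie in $\Z[\Pi]$. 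So only the center contributions survive. The geometric meaning, following Casals--Murphy, is that differentiating the count of binary sequences along $\gamma_f$ corresponds to counting binary sequences along $\gamma_f$ with an extra ``resolution'' inserted at a center: a center-crossing $f'$ gets replaced by a small counterclockwise loop around a vertex $v \in f'$. After this replacement one recognizes the resulting family of binary sequences, summed over all faces $f$, as binary sequences along a different collection of curves — essentially the tines with a center resolved into a vertex detour — and these organize into a telescoping/cancelling sum. The cleanest way to see the cancellation is via Lemma~\ref{lem:locality_of_word} and Lemma~\ref{lem:invariance_of_word}: decompose each tine at its center as $\gamma_f = \gamma_f^{\mathrm{in}} * (\text{center}) * \gamma_f^{\mathrm{out}}$, so $W(\gamma_f) = W(\gamma_f^{\mathrm{in}}) \cdot \begin{pmatrix} 0 & f \\ 0 & 0\end{pmatrix} \cdot W(\gamma_f^{\mathrm{out}})$, apply the Leibniz rule, substitute $\wt{\partial}^{\scr{B}^+}_{G,\Gamma} f = \sum_{v\in f} H_f(v)$, and then reassemble each term $H_f(v)$ with the neighboring words using that near $v$ the loop $H_f(v)$ together with the tine segments can be slid (via Moves I--IV, invoking Lemma~\ref{lem:invariance_of_word}) across the vertex. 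Each term then appears twice with opposite sign — once from ``just before'' and once from ``just after'' passing the vertex relation, with the sign discrepancy being exactly the right-hand-rule thread sign of Figure~\ref{fig:Sign_Rule_Thread_Crossings} combined with the $(-1)^{|a|}$ Leibniz sign — and the whole sum cancels in pairs. Globally, summing over all faces $f \in F$ ensures every vertex–face adjacency is hit from both of its incident sides, which is what closes up the pairing.

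The main obstacle I expect is bookkeeping the signs: one must verify that the Leibniz sign $(-1)^{|a|}$ (where $a$ is the word-segment of the tine before the center, which has even degree since it lies in $\Z[\Pi]$, so in fact $(-1)^{|a|}=+1$ here — a simplification worth noting), the right-hand-rule sign attached to each $H_f(v)$, the sign $(-1)^{r_v}$ intrinsic to $H_f(v)$, and the negative signs built into the $-B_e$ edge generators all combine so that the two occurrences of each term genuinely cancel rather than reinforce. This is exactly the computation Casals--Murphy carry out in the commutative setting; the non-commutative upgrade is purely formal because the binary-sequence combinatorics and the sign conventions are identical — nothing about the cancellation uses commutativity of the coefficient ring, only the Leibniz rule and the two locality/invariance lemmas, which were proved above in the non-commutative generality. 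I would therefore structure the proof as: (1) reduce to $X$; (2) expand $\sum_f \wt{\partial}^{\scr{B}^+}_{G,\Gamma} W(\gamma_f)$ using Leibniz and the vanishing of $\wt{\partial}^{\scr{B}^+}_{G,\Gamma}$ on $\Z[\Pi]$, leaving only center contributions; (3) substitute $\wt{\partial}^{\scr{B}^+}_{G,\Gamma} f$ and use Lemmas~\ref{lem:locality_of_word} and~\ref{lem:invariance_of_word} to rewrite each resulting term as a binary-sequence count along a tine with a vertex detour; (4) exhibit the sign-reversing involution on these terms (pairing the two sides of each vertex) and conclude the sum is zero; (5) remark that this reproduces the Casals--Murphy argument verbatim under abelianization, so no further verification is needed beyond the sign check.
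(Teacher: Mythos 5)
Your reduction is exactly the paper's: square to zero on generators suffices, degree reasons dispose of the faces, and for $X$ you decompose each tine at its center so that only the differentiation of the face generator survives, leaving $\bigl(\wt{\partial}^{\scr{B}^+}_{G,\Gamma}\bigr)^2X=\sum_f W(\gamma_f^s)\left(\begin{smallmatrix}0 & \wt{\partial}f\\ 0&0\end{smallmatrix}\right)W(\gamma_f^t)$. Where you diverge is the cancellation. The paper does \emph{not} run the Casals--Murphy sign-reversing involution; it chooses interpolating loops $\eta_0,\dots,\eta_{g+3}$ based at the seed, with $\gamma_{f_k}$ lying between $\eta_{k-1}$ and $\eta_k$, observes that the threads emanating from $c_{f_k}$ split into those hitting the left bypass (counted with a minus sign) and those hitting the right bypass (counted with a plus sign), so that $W(\gamma_k^s)\left(\begin{smallmatrix}0 & \wt{\partial}f_k\\ 0&0\end{smallmatrix}\right)W(\gamma_k^t)=W(\eta_k)-W(\eta_{k-1})$, and then telescopes: the sum equals $W(\eta_{g+3})-W(\eta_0)=I-I=0$ since the extreme loops are contractible. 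This uses the cyclic ordering of the tines around the seed and Lemma \ref{lem:invariance_of_word}, and entirely sidesteps the sign bookkeeping you flag as the main obstacle.

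Your proposed route --- the involution --- is viable in principle (the paper's remark after the proof states that the Casals--Murphy pairing argument carries over to the non-commutative setting with no complication), but as written there is a genuine gap: you never construct the involution. The terms of $\bigl(\wt{\partial}^{\scr{B}^+}_{G,\Gamma}\bigr)^2X$ are indexed by triples (tine $\gamma_f$, binary sequence $B$ along $\gamma_f$, vertex $v\in f$), and your description of the pairing --- ``once from just before and once from just after passing the vertex relation,'' with ``every vertex--face adjacency hit from both of its incident sides'' --- does not identify a well-defined fixed-point-free involution on this index set, nor is it the right dichotomy. Each adjacency $(v,f)$ has a single thread $\tau_f(v)$, and the relevant split (visible in the paper's argument) is whether that thread leaves $c_f$ to the left or to the right of the tine; the resulting cancellation then occurs between terms attached to \emph{consecutive} tines $\gamma_{f_k}$ and $\gamma_{f_{k+1}}$, mediated by the common loop $\eta_k$, not between two occurrences of the same vertex--face adjacency. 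Deferring the pairing and the sign check to ``exactly the computation Casals--Murphy carry out'' leaves unproved precisely the step that constitutes the content of the proposition.
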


\begin{proof}
	It suffices to check that $\left(\wt{\partial}^{\scr{B}^+}_{G,\Gamma}\right)^2 = 0$ on generators. For degree reasons, this holds on faces, and it suffices to check the relation on the generators $x$, $y$, $z$, and $w$, which we conveniently group as the matrix $X$.
	
	For the given garden $\Gamma$, let $f_1,\ldots,f_{g+3}$ be the enumeration of the faces given by enumerating the tines in clockwise order as they emanate from the seed. For example, in Figure \ref{fig:Garden}, the face at infinity is given as $f_2$; alternatively, in any of the diagrams in Figure \ref{fig:Moves_VI-VII}, the tines are ordered from bottom to top, both as they leave the seed, and as they exit the seed. Choose non-degenerate loops $\eta_0,\eta_1,\ldots,\eta_{g+3}$ based at the seed such that $\gamma_{f_k}$ lies between $\eta_{k-1}$ and $\eta_{k}$. By Lemma \ref{lem:htpy_through_center}, we find
	$$W(\eta_k) - W(\eta_{k-1}) =\wt{\partial}^{\scr{B}^+}_{G,\Gamma}W(\gamma_{f_k}).$$
	Summing all of these up, we find
	$$W(\eta_{g+3}) - W(\eta_0) = \wt{\partial}^{\scr{B}^+}_{G,\Gamma}\sum_{k=1}^{g+3} W(\gamma_{f_k}) = \left(\wt{\partial}^{\scr{B}^+}_{G,\Gamma}\right)^2X.$$
	But notice that both $\eta_0$ and $\eta_{g+3}$ may be contracted to a point, so that
	$$W(\eta_0) = I = W(\eta_{g+3})$$
	is just the identity matrix. Hence indeed $\left(\wt{\partial}^{\scr{B}^+}_{G,\Gamma}\right)^2X = 0$.
\end{proof}

\begin{rmk}
	Our proof of Proposition \ref{prop:differential} is simpler than the one presented by Casals and Murphy \cite{CM_DGA}. They build an involution of pairs consisting of binary sequences and threads, each of which contributes a term to $(\wt{\partial}^{\scr{A}}_{G,\Gamma})^2$ (recall they do not include the enlargement in which we include two extra generators, so we have not included a $+$ in the notation), and check that the signs of these terms are opposite of each other. One could reason in precisely the same way in the non-commutative setting with no complication. Their argument is closer to the contact-geometric reason why the differential should square to zero: cancelling pairs identified via this involution are rigid gradient flow trees at the boundaries of moduli spaces of (non-rigid) gradient flow trees of dimension $1$.
\end{rmk}

\section{Functoriality for the enlarged dg-algebra} \label{sec:functoriality_enlarged}

At this point, we have defined an enlarged version $(\scr{B}^+_{G},\wt{\partial}_{G,\Gamma}^{\scr{B}^+})$, of the non-commutative Casals--Murphy dg-algebra. Some of the functoriality properties of Theorem \ref{thm:nc_CM_functoriality} will not appear at the enlarged level, since there is no choice of tree, and since we are not restricting to finite-type gardens. However, the composition property, changing the orientation, and changing the garden do make perfect sense at the enlarged level. The goal in this section is to prove the following theorem.

\begin{thm} \label{thm:nc_CM_enlarged}
	For any choice of garden $\Gamma$ on a trivalent plane graph $G$ of genus $g$, there exists a differential $\wt{\partial}_{G,\Gamma}^{\scr{B}^+}$ on the graded algebra $\wt{\scr{B}}^+_G$, making it into a dg-algebra. For any two gardens which are homotopic, the differentials are canonically identified.
	
	There exists a group action of $\scr{H}(G) := (\Z_2)^E \times F_{g+2} \times (\Z_2 * \Z_3)$ on the space of gardens up to homotopy. For each $\zeta \in \scr{H}(G)$ and homotopy class of garden $\Gamma$ on $G$, with $\Gamma' = \zeta \cdot \Gamma$, there is a dg-isomorphism
	$$\Phi^{\Gamma'}_{\Gamma}(\zeta) \colon (\wt{\scr{B}}^+_G,\wt{\partial}_{G,\Gamma}^{\scr{B}^+}) \rightarrow (\wt{\scr{B}}^+_G,\wt{\partial}_{G,\zeta \cdot \Gamma}^{\scr{B}^+})$$
	satisfying the following properties:
	\begin{itemize}
		\item \textbf{Composition property:}The dg-isomorphisms compose naturally, in the sense that
		$$\Phi_{\Gamma'}^{\Gamma''}(\zeta) \circ \Phi_{\Gamma}^{\Gamma'}(\theta) = \Phi_{\Gamma}^{\Gamma''}(\zeta\theta).$$
		\item \textbf{Orientation changes:} The action of the factor $(\Z_2)^E$ is induced by ring automorphisms of $\Z[\Pi]$.
		\item \textbf{Modifying the garden:} For any $\zeta \in F_{g+2} \times (\Z_2 * \Z_3)$ (i.e. trivial in the $(\Z_2)^E$ factor), the map $\Phi_{\Gamma}^{\Gamma'}(\zeta)$ is a regenerative tame dg-isomorphism. Furthermore, if $\zeta$ stabilizes $\Gamma$, i.e. $\zeta \cdot \Gamma = \Gamma$, then $\Phi_{\Gamma}^{\Gamma}(\zeta)$ is homotopic to the identity, and hence acts by the identity in homology.
	\end{itemize}	 
\end{thm}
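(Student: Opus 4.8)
The plan is to proceed in four steps: (1) define the differential on homotopy classes of gardens and establish homotopy invariance; (2) set up the $\scr{H}(G)$-action and handle the orientation factor; (3) construct the dg-isomorphisms for the garden-modifying moves and verify the composition property; (4) prove the null-homotopy statement when $\zeta$ stabilizes $\Gamma$.

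\textbf{Step 1: the differential and homotopy invariance.} Every garden is homotopic to a non-degenerate one by general position, so Proposition~\ref{prop:differential} provides a candidate differential for each homotopy class once we know that homotopic non-degenerate gardens produce literally the same formulas for $\wt{\partial}^{\scr{B}^+}_{G,\Gamma}$ on generators. Using the fact, recalled in the discussion of Moves~I--IV, that any two homotopic non-degenerate gardens are connected by isotopies of $S^2$ and a finite sequence of Moves~I--IV, one observes: isotopies do not change any combinatorial formula; $\wt{\partial}^{\scr{B}^+}_{G,\Gamma}f = \sum_{v \in f} H_f(v)$ depends only on the orientation of $G$ and on which vertices border $f$, hence is unchanged by all seven moves; and Lemma~\ref{lem:invariance_of_word} shows each matrix of words $W(\gamma_f)$, hence $\wt{\partial}^{\scr{B}^+}_{G,\Gamma}X = \sum_f W(\gamma_f)$, is unchanged by Moves~I--IV. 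So $\wt{\partial}^{\scr{B}^+}_{G,\Gamma}$ is well defined on homotopy classes, and homotopic gardens are canonically identified by the identity map of $\wt{\scr{B}}^+_G$.

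\textbf{Step 2: the action and the orientation factor.} I would let $(\Z_2)^E$ act by flipping edge orientations, and obtain the $F_{g+2}$ and $\Z_2 * \Z_3$ factors from the effect on homotopy classes of Move~V (center interchanges) and of Moves~VI and~VII (seed displacements), respectively; well-definedness of the action reduces to transitivity --- center positions being rigid up to homotopy of gardens, one only needs to see that any two gardens with the same orientation differ by webs, seeds, and rakes reachable through Moves~IV--VII --- together with the defining relations of the factors (freeness of the center-drag group, and $a^2 = b^3 = 1$ for the seed circling a trivalent vertex, plus independence of the three factors); this is planar-topology bookkeeping which I treat as the structural backbone. For the $(\Z_2)^E$ factor, the graded algebra $\wt{\scr{B}}^+_G$ is orientation-independent, and comparing the two conventions in Figure~\ref{fig:Generator_Convention} shows that flipping the orientation of an edge $e$ replaces $\wt{\partial}^{\scr{B}^+}_{G,\Gamma}$ by its image under the ring automorphism $\epsilon_e$ of $\Z[\Pi]$ induced by the homomorphism $\Pi \to \{\pm 1\}$ sending both generators $\lambda_{e,\mathfrak{o}}$ associated to $e$ to $-1$ and all other generators to $+1$; this is well defined because each vertex relation $w_v$ involves an even number of $e$-letters, and it conjugates the differential since edge-crossing and $H$-factors transform exactly under $\epsilon_e$ (the parity change of outgoing edges at a vertex absorbing the extra sign). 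Thus the dg-isomorphism for this factor is $\epsilon_e$, these commute, and the $(\Z_2)^E$-action is by ring automorphisms of $\Z[\Pi]$.

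\textbf{Step 3: dg-isomorphisms for garden modifications, and composition.} The decisive point is that, by Step~1, $\wt{\partial}^{\scr{B}^+}_{G,\Gamma}f$ is insensitive to centering, web, seed, and rake; so for $\zeta$ in $F_{g+2} \times (\Z_2 * \Z_3)$ I build $\Phi^{\Gamma'}_\Gamma(\zeta)$ to fix $\Z[\Pi]$ and every face generator, acting only on $x,y,z,w$. For each of Moves~V, VI, VII I compute the change in $\wt{\partial}^{\scr{B}^+}_{G,\Gamma}X = \sum_f W(\gamma_f)$ from the local pictures of Figures~\ref{fig:Move_V} and~\ref{fig:Moves_VI-VII}, breaking each affected tine via Lemma~\ref{lem:locality_of_word} as $\gamma_f = \gamma_f^s * \gamma_f^c * \gamma_f^t$ and pre/post-composing with the short arcs moved by the move: Move~V multiplies the two affected tines on appropriate sides by invertible matrices of words over $\Z[\Pi]$ coming from the connecting arc (pushed slightly off the centers), so $X$ is replaced by a two-sided product of $X$ with such matrices; Moves~VI and~VII conjugate the whole of $\wt{\partial}^{\scr{B}^+}_{G,\Gamma}X$ by the matrix of words of an arc from the old to the new seed position (with off-diagonal entry $\pm H_f(v) \in \Z[\Pi]$ in case~VI, anti-diagonal with entries $\pm A_e^{\pm 1}, \pm B_e^{\pm 1}$ in case~VII). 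In each case one factors the resulting automorphism, after a sign check, into elementary automorphisms of $\{x,y,z,w\}$ and a simple regeneration, so $\Phi^{\Gamma'}_\Gamma(\zeta)$ is a simply regenerative tame dg-isomorphism. The composition property $\Phi_{\Gamma'}^{\Gamma''}(\zeta) \circ \Phi_\Gamma^{\Gamma'}(\theta) = \Phi_\Gamma^{\Gamma''}(\zeta\theta)$ is then checked on generators: faces and $\Z[\Pi]$ being fixed by the $F_{g+2}\times(\Z_2*\Z_3)$ part, it comes down to the fact that concatenating connecting arcs multiplies their matrices of words, exactly as in Section~\ref{ssec:transition}, while the $(\Z_2)^E$ part slides through everything as a coefficient automorphism.

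\textbf{Step 4: null-homotopy when $\zeta$ stabilizes $\Gamma$.} By Step~1, $\wt{\partial}^{\scr{B}^+}_{G,\zeta\cdot\Gamma} = \wt{\partial}^{\scr{B}^+}_{G,\Gamma}$, so $\Phi := \Phi^\Gamma_\Gamma(\zeta)$ is a dg-automorphism of $(\wt{\scr{B}}^+_G, \wt{\partial}^{\scr{B}^+}_{G,\Gamma})$. A stabilizing $\zeta$ must be trivial in the $(\Z_2)^E$ factor (orientations are rigidly detected up to homotopy), hence lies in $F_{g+2} \times (\Z_2 * \Z_3)$, so by construction $\Phi$ fixes $\Z[\Pi]$ and all faces; taking $S' = \{ s : \Phi(s) = s \}$, which contains $\Z[\Pi]$ and $F$, the first two hypotheses of Proposition~\ref{prop:condition_htpc_identity} hold automatically, since $\wt{\partial}^{\scr{B}^+}_{G,\Gamma}f \in \Z[\Pi]$ and $\wt{\partial}^{\scr{B}^+}_{G,\Gamma}X$ lies in the subalgebra generated by $\Z[\Pi]$ and the face generators, so $\wt{\partial}^{\scr{B}^+}_{G,\Gamma}s \in A_{S'}$ for every generator $s$. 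What remains --- and this I expect to be the main obstacle --- is to produce for each $s \in \{x,y,z,w\}$ an element $\beta_s$ with $\Phi(s) - s = \wt{\partial}^{\scr{B}^+}_{G,\Gamma}\beta_s$. The idea is that a $\zeta$ stabilizing $\Gamma$ corresponds to a sequence of Moves~V/VI/VII carrying $\Gamma$ to a garden homotopic to $\Gamma$, i.e. (after inserting Moves~I--IV) to a loop in the space of gardens that is null-homotopic through gardens; choosing such a null-homotopy gives a two-parameter family of connecting arcs whose matrix of words is a degree-$2$ element, and one shows $\Phi(X) - X = \wt{\partial}^{\scr{B}^+}_{G,\Gamma}(\text{this matrix})$ by a telescoping argument of the same shape as the identity $(\wt{\partial}^{\scr{B}^+}_{G,\Gamma})^2 X = W(\eta_{g+3}) - W(\eta_0)$ in the proof of Proposition~\ref{prop:differential}. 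The delicate points are the sign bookkeeping and the way the four scalar generators each receive their primitive as a single entry of a matrix-valued primitive; granting these, Proposition~\ref{prop:condition_htpc_identity} shows $\Phi$ is homotopic to the identity and therefore acts trivially on homology, completing the proof.
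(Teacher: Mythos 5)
Your Steps 1--3 follow the paper's architecture, but there are two genuine problems. The major one is Step 4. Your strategy rests on the claim that a stabilizing $\zeta$ yields a loop in the space of gardens that is \emph{null-homotopic through gardens}, which you would then fill by a two-parameter family to telescope out a primitive. This premise is false for most of the stabilizer. The stabilizer of $\Gamma$ in $F_{g+2}\times(\Z_2*\Z_3)$ is generated by elements of the form $P_f\wt{\tau}_fL^{n_f}P_f^{-1}$ (a seed loop around a center $c_f$ compensated by a partial braid twist), together with the kernel of $F_{g+2}\to\mathrm{Br}_{g+3}$ and the full twist $\Delta$. The loop realizing the full twist is a $360^\circ$ rotation of the seed, which generates $\pi_1(S^*\nu)\cong\Z$ and hence projects to a nontrivial loop under the fibration $\scr{G}(G)\to S^*(S^2\setminus(G\cup W))$; the loop realizing $P_f\wt{\tau}_fL^{n_f}P_f^{-1}$ encircles $c_f$ and is nontrivial in $\pi_1(S^2\setminus C)\cong F_{g+2}$. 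Neither bounds a disk of gardens, so no telescoping over a filling is available. (Only the braid-relation part of the stabilizer, where the seed is fixed and $\pi_1(\scr{G}(G;\xi))=0$, is amenable to your argument.) This is exactly where the paper has to work hardest: the primitives are produced by explicit algebraic identities, e.g.\ for the full twist one shows $\wt{\Delta}\cdot X-X=\overline{W}\,W$ with $\overline{W}=\sum_i W(\gamma_i^{-1})$ and exhibits the primitive $\overline{W}\cdot X$ using that $\overline{W}$ is closed (itself a consequence of $(\wt{\partial}^{\scr{B}^+})^2=0$ for the reversed garden), and the seed-loop case requires a substantially longer computation of the same flavor. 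These identities are not consequences of a disk filling, and without them the ``modifying the garden'' clause is unproved.

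Separately, your formula for the Move V dg-isomorphism is wrong. You assert that $X$ is replaced by a two-sided product of $X$ with invertible matrices of words coming from the connecting arc. Since $\wt{\partial}^{\scr{B}^+}_{G,\Gamma}X=\sum_fW(\gamma_f)$ and Move V alters only two summands, the change in the differential is additive, not conjugative: the correct isomorphism is the elementary automorphism $X\mapsto X+W(\gamma^{\Gamma}_{f,g})$, where $\gamma^{\Gamma}_{f,g}=\gamma_f^s*\eta*\gamma_g^t$ passes through both centers, so that $W(\gamma^{\Gamma}_{f,g})$ is a genuine degree-$2$ element containing products of the two face generators --- something no expression of the form $AXB$ with $A,B$ over $\Z[\Pi]$ can produce. (Your conjugation formulas for Moves VI and VII are correct.) Relatedly, to get an action of $\Z_2*\Z_3$ rather than of $F_3=\langle L,R,S\rangle$ one must verify that the dg-isomorphisms themselves satisfy $LR=S^2=(SL)^3=\mathrm{id}$ on the nose (the last via the vanishing $W(\gamma_v)=I$ of the matrix of words of a contractible loop around a vertex); your Step 2 only records these relations at the level of the action on gardens, not at the level of the maps $\Phi$.
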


We have already verified that $\wt{\partial}^{\scr{B}^+}_{G,\Gamma}$ is a differential which is unaffected by Moves I-IV, and hence, our dg-isomorphisms will be constructed for orientation changes along with each of Moves V, VI, and VII. We proceed in the following order, subsection by subsection:
\begin{itemize}
	\item \textbf{Changing Orientations:} We construct a dg-isomorphism every time we change the orientation of an edge simply by negating the corresponding generators in the coefficient ring.
	\item \textbf{Invariance under Move V:} We construct a tame dg-isomorphism for each application of Move V. Further, we show that Move V yields a natural free transitive braid group action $\mathrm{Br}_{g+3}$ on the homotopy classes of the space of gardens with a fixed seed. We have a surjection $F_{g+2} \twoheadrightarrow \mathrm{Br}_{g+3}$, the kernel of which acts by dg-automorphisms homotopic to the identity.
	\item \textbf{Exactness of the full twist:} The full twist in the braid group preserves the homotopy class of the garden, as it may be realized by just twisting the seed by 360 degrees. We prove that the full twist acts by a dg-automorphism homotopic to the identity.
	\item \textbf{Invariance under Moves VI and VII:} We construct a dg-isomorphism for each application of Moves VI and VII. We show these commute with Move V, and further show that Moves VI and VII yield a $\Z_2 * \Z_3$ action on the homotopy classes of gardens, and understand the corresponding regenerative tame dg-isomorphisms.
	\item \textbf{Proof of Theorem \ref{thm:nc_CM_enlarged}:} In this short subsection, we simply summarize the results we have obtained, which proves the desired theorem.
\end{itemize}

\subsection{Changing orientations} \label{ssec:orientation_change}

We have already seen as a consequence of Lemma \ref{lem:invariance_of_word} that Moves I-IV preserve the differential on the nose. In other words, if two gardens $\Gamma$ and $\Gamma'$ differ by a composition of isotopies and applications of Moves I-IV, then $\wt{\partial}_{\Gamma} = \wt{\partial}_{\Gamma'}$, and hence we may take $\Phi_{\Gamma}^{\Gamma'} = \id$. We turn our attention now to the remaining ways in which we may change a garden, by changing the orientation of an edge, or by Moves V-VII. In this subsection, we focus on changing orientations.

There is an obvious $(\Z_2)^E$ action on the homotopy classes of gardens given by swapping orientations at corresponding edges. Let us consider the simplest case, in which we swap a single edge, $e_0 \in E$ is an edge. Let us define a ring automorphism $\Phi_{e_0} \colon \Z[\Pi] \rightarrow \Z[\Pi]$ on generators as follows. Recall that $\Pi$ is generated by elements of the form $\lambda_{e,\mathfrak{o}}$, where $(e,\mathfrak{o}) \in \wt{E}$. Define $\Phi_{e_0}$ on these generators by
$$\Phi_{e_0}(\lambda_{e,\mathfrak{o}}) = \left\{\begin{matrix}\lambda_{e,\mathfrak{o}}, & e \neq e_0 \\ -\lambda_{e,\mathfrak{o}}, & e = e_0\end{matrix}\right.$$
In turn, this induces an graded algebra automorphism of $\wt{\scr{B}}^+_G$, acting by the identity on generators. Furthermore, it is obvious that $(\Phi_{e_0})^2 = \id$, and that $\Phi_{e_0}$ and $\Phi_{e_1}$ commute for any $e_0,e_1 \in E$. Hence, we also have a homomorphism
$$(\Z_2)^E \rightarrow \mathrm{Aut}(\wt{\scr{B}}^+_G).$$
For $\iota \in (\Z_2)^E$ (the character $\iota$ is used because we have an \emph{involution} on the orientations of edges), we write $\Phi_{\iota}$ for the induced automorphism of $\wt{\scr{B}}^+_G$.

\begin{lem} \label{lem:orientation_change}
	For any homotopy class of garden $\Gamma$ and $\iota \in (\Z_2)^E$, the graded algebra automorphism $\Phi_{\iota} \colon \wt{\scr{B}}^+_G \rightarrow \wt{\scr{B}}^+_G$ intertwines the differentials $\wt{\partial}^{\scr{B}^+}_{G,\Gamma}$ and $\wt{\partial}^{\scr{B}^+}_{G,\iota \cdot \Gamma}$, yielding a dg-isomorphism
	$$\Phi_{\iota} \colon (\wt{\scr{B}}^+_G,\wt{\partial}^{\scr{B}^+}_{G,\Gamma}) \rightarrow (\wt{\scr{B}}^+_G,\wt{\partial}^{\scr{B}^+}_{G,\iota \cdot \Gamma}).$$
\end{lem}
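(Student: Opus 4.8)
The strategy is to reduce to the case $\iota = \Phi_{e_0}$ for a single edge $e_0$, since the general statement follows by composing the single-edge cases (and these commute). So fix $e_0 \in E$ and let $\Gamma' = \iota \cdot \Gamma$ be the garden obtained by reversing the orientation of $e_0$ only. The differentials $\wt\partial^{\scr B^+}_{G,\Gamma}$ and $\wt\partial^{\scr B^+}_{G,\Gamma'}$ are both determined by the same combinatorial data (centers, threads, tines, seed) and differ only in the labeling conventions $A_{e_0} = \lambda_{e_0,\mathfrak o_{e_0}}$ versus $A_{e_0} = \lambda_{e_0,\overline{\mathfrak o}_{e_0}}$; that is, reversing the orientation of $e_0$ interchanges the roles of $A_{e_0}$ and $B_{e_0}$ in every formula. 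The claim is that this discrepancy is exactly compensated by the sign automorphism $\Phi_{e_0}$ on the coefficient ring. Because $\Phi_{e_0}$ acts by the identity on the generators $F, x,y,z,w$, to check it intertwines the two differentials it suffices to check the equations $\wt\partial^{\scr B^+}_{G,\Gamma'}(\Phi_{e_0}(g)) = \Phi_{e_0}(\wt\partial^{\scr B^+}_{G,\Gamma}(g))$ on generators $g$, i.e. on the faces $f \in F$ and on the matrix $X$.

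The computation is entirely local. Both $\wt\partial f = \sum_{v\in f} H_f(v)$ and $\wt\partial X = \sum_f W(\gamma_f)$ are built out of the elementary monomials attached to edge-crossings and thread-discontinuities (Definition of $w_B$ and the matrix of words $W(\gamma)$). So the key step is: for a single edge-crossing of $e_0$, compare the monomial assigned using the $\Gamma$-orientation to the one assigned using the $\Gamma'$-orientation, and verify they agree after applying $\Phi_{e_0}$. Here one uses the sign convention in Figure \ref{fig:Generator_Convention}: reversing the orientation of $e_0$ swaps the two sides of that figure, so a crossing that contributed $A_{e_0}^{\pm 1}$ (with sign $+1$) now contributes $-B_{e_0}^{\pm 1}$ in the $\Gamma'$-labeling, and vice versa. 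Since $\Phi_{e_0}(\lambda_{e_0,\mathfrak o}) = -\lambda_{e_0,\mathfrak o}$ for both orientations $\mathfrak o$ of $e_0$, applying $\Phi_{e_0}$ to the $\Gamma$-monomial produces precisely the extra sign, yielding the $\Gamma'$-monomial. One does the same check for the thread-monomials $H_f(v)$: the factor $H_f(v)$ is a product of three $A$/$B$ generators around a vertex $v$, and if $e_0$ is one of the three incident edges then exactly one of those three factors involves $e_0$, so $\Phi_{e_0}$ introduces exactly one sign, matching the change in the $H_f(v)$ recipe under reversing $e_0$ (the sign $(-1)^{r_v}$ in the definition of $H_f(v)$ changes by exactly one, since reversing $e_0$ changes the number of outgoing edges at $v$ by $\pm 1$). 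Crossings and threads not involving $e_0$ are untouched by $\Phi_{e_0}$ and unaffected by the reorientation, so they match trivially. Assembling the monomials into $H_f(v)$ and into $W(\gamma_f)$ respects these signs because $\Phi_{e_0}$ is a ring homomorphism, giving the desired intertwining on all generators.

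Finally, $\Phi_{e_0}$ is a graded algebra automorphism (it is induced by a ring automorphism of $\Z[\Pi]$, which is an automorphism because $(\Phi_{e_0})^2 = \id$ on $\Pi$'s generators), so once the intertwining is verified it is automatically a dg-isomorphism; the general $\iota$ case follows since the $\Phi_{e_0}$ over distinct edges commute and compose to $\Phi_\iota$, while the corresponding reorientations compose to $\iota\cdot\Gamma$. I expect the only mildly delicate point to be bookkeeping the signs in the two cases of Figure \ref{fig:Generator_Convention} and in the definition of $H_f(v)$ consistently — in particular confirming that the sign $\Phi_{e_0}$ contributes is $-1$ regardless of which orientation of $e_0$ appears and regardless of the $\pm1$ exponent — but this is a finite check with no conceptual obstacle.
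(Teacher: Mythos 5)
Your proposal is correct and follows essentially the same route as the paper: reduce to a single edge $e_0$, check the intertwining equation on generators, and verify it discontinuity by discontinuity using the sign conventions of Figure \ref{fig:Generator_Convention} and the definition of $H_f(v)$. If anything you are more thorough than the paper's own proof of this lemma, which explicitly addresses only the thread monomials $H_f(v)$ and defers the matching of the edge-crossing monomials (the $\pm\lambda_{e_0,\mathfrak{o}}^{\pm 1}$ sign bookkeeping you carry out) to the later proof of Theorem \ref{thm:nc_CM_enlarged}.
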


\begin{proof}
	It suffices to check the equation
	$$\Phi_{\iota} \circ \wt{\partial}^{\scr{B}^+}_{G,\Gamma} = \wt{\partial}^{\scr{B}^+}_{G,\iota \cdot \Gamma} \circ \Phi_{\iota}$$
	on generators. If we apply both sides to a face $f$, then this reads
	$$\sum_f \Phi_{\iota}(H^{\Gamma}(\tau_f(v))) = \sum_f H^{\iota \cdot \Gamma}(\tau_f(v)),$$
	where we have augmented the notation $H(\tau_f(v))$ to point out that it depends upon a choice of orientation of the edges. But this relation is clear term by term for each $f$ by definition of $H(\tau_f(v))$, i.e.
	$$\Phi_{\iota}(H^{\Gamma}(\tau_f(v))) = H^{\iota \cdot \Gamma}(\tau_f(v)),$$
	since both sides are $\pm H^{\Gamma}(\tau_f(v))$ depending upon the parity of the number of edges incident to $v$ which change orientation. Similarly, applying both sides to one of the generators $x$, $y$, $z$, or $w$, it suffices to check that
	$$\Phi_{\iota}(W^{\Gamma}(\gamma_f)) = W^{\iota \cdot \Gamma}(\gamma_f)$$
	for each tine $\gamma_f$. In turn, this resorts to checking that for each binary sequence $B$ along $\gamma_f$, we have $\Phi_{\iota}(w^{\Gamma}_B) = w^{\iota \cdot \Gamma}_B$. Each $w_B$ is a product of monomials for each discontinuity of $B$, and it suffices to check for each such discontinuity. For threads, we have already verified the relation. For centers, there is no sign. For an edge, the corresponding term $\pm \lambda_{e,\mathfrak{o}}^{\pm 1}$ in $w^{\Gamma}_B$ and $w^{\iota \cdot \Gamma}_{B}$ are the same if $\iota$ leaves the orientation of the edge unfixed and negatives of each other if $\iota$ reverses the orientation of the edge. But this negation is the same as the action of $\Phi_{\iota}$.
\end{proof}

\subsection{Invariance under Move V} \label{ssec:Move_V_invariance}

We turn now to understanding Move V. We refer to Figure \ref{fig:Move_V} again, with the convention that the tines are oriented upwards, $c_f$ is the center on the left, and $c_g$ is the center on the right. Let and $\gamma_f$ and $\gamma_g$ be the tines in the left figure, and $\gamma_f'$ and $\gamma_g'$ the tines in the right figure. Recall in the proof of Proposition \ref{prop:differential} that we wrote each tine as a composition of starting, central, and terminal portions, e.g.
$$\gamma_f = \gamma_f^s * \gamma_f^c * \gamma_f^t.$$
Both $\gamma_f^s$ and $(\gamma')_f^s$ have the same start point (the seed) and endpoint (just under $c_g$), and we see that they are in fact homotopic with fixed endpoints without passing through any centers, since Move V is applied to neighboring tines. Hence, by Lemma \ref{lem:invariance_of_word}
$$W(\gamma_f^s) = W((\gamma')_f^s).$$
On the other hand, $\gamma_f^t$ and $(\gamma')_f^t$ have the same start point (just above $c_f$) and endpoint (the seed), and we see that there is a natural homotopy between them which passes through the center $c_g$. We saw in Lemma \ref{lem:htpy_through_center} how a homotopy passing through a center affects the corresponding words. In particular, if $\eta$ is the orange curve in the figure, oriented from $c_f$ to $c_g$, then
$$W((\gamma')_f^t)-W(\gamma_f^t) = W(\eta)\begin{pmatrix}0 & \wt{\partial}^{\scr{B}^+}_{G,\Gamma}g \\ 0 & 0 \end{pmatrix}W(\gamma_g^t).$$
By similar reasoning,
$$W(\gamma_g^t) = W((\gamma')_g^t)$$
and
$$W(\gamma_g^s)-W((\gamma')_g^t) = W(\gamma_f^s)\begin{pmatrix}0 & \wt{\partial}^{\scr{B}^+}_{G,\Gamma}f \\ 0 & 0 \end{pmatrix}W(\eta).$$
Now if we compute the differentials of $X$ with respect to the two gardens $\Gamma$ and $\Gamma'$, we see the contributions only differ along these two tines. Therefore,
$$\left(\wt{\partial}^{\scr{B}^+}_{G,\Gamma'}-\wt{\partial}^{\scr{B}^+}_{G,\Gamma}\right)X = W((\gamma'_f))+W((\gamma')_g) - W(\gamma_f) - W(\gamma_g).$$
But we have by our relations above that
\begin{align*}
	W((\gamma')_f) - W(\gamma_f) &= W((\gamma')_f^s)\begin{pmatrix}0 & f \\ 0 & 0 \end{pmatrix} W((\gamma')_f^t) - W(\gamma_f^s)\begin{pmatrix}0 & f \\ 0 & 0 \end{pmatrix} W(\gamma_f^t) \\
		&= W(\gamma_f^s)\begin{pmatrix}0 & f \\ 0 & 0 \end{pmatrix} W(\eta)\begin{pmatrix}0 & \wt{\partial}^{\scr{B}^+}_{G,\Gamma}g \\ 0 & 0 \end{pmatrix}W(\gamma_g^t)
\end{align*}
Similarly,
$$W((\gamma')_g) - W(\gamma_g) = -W(\gamma_f^s)\begin{pmatrix}0 & \wt{\partial}^{\scr{B}^+}_{G,\Gamma}f \\ 0 & 0 \end{pmatrix} W(\eta)\begin{pmatrix}0 & g \\ 0 & 0 \end{pmatrix}W(\gamma_g^t).$$
Adding these together, we find that
$$\left(\wt{\partial}^{\scr{B}^+}_{G,\Gamma'}-\wt{\partial}^{\scr{B}^+}_{G,\Gamma}\right)X = -\wt{\partial}^{\scr{B}^+}W(\gamma^{\Gamma}_{f,g}),$$
where the differential on the right hand side is given no subscript since it may be taken with respect to either $\Gamma$ or $\Gamma'$, and where $\gamma^{\Gamma}_{f,g}$ is the path given by following $\gamma^s_f$, then passing through $c_f$, then following $\eta$, then passing through $c_g$, and finally following $\gamma^t_g$, so that
$$W(\gamma^{\Gamma}_{f,g}) = W(\gamma_s^f)\begin{pmatrix} 0 & f \\ 0 & 0 \end{pmatrix}W(\eta) \begin{pmatrix} 0 & g \\ 0 & 0\end{pmatrix}W(\gamma^t_g).$$
This path is actually well defined up to homotopy, assuming the garden has been fixed up to homotopy. Suppose $\Gamma$ is a garden, and suppose $\gamma_f$ is tine adjacent and to the left of $\gamma_g$ (as we have been assuming). Then $\gamma^{\Gamma}_{f,g}$ may be defined as the (unique homotopy class of) path from the base point to itself, with initial and tangent vectors positive with respect to the seed, which stays between $\gamma_f$ and $\gamma_g$, and such that it passes through the centers $c_f$ and $c_g$ once in that order.

Our discussion has proved the following proposition.

\begin{prop} \label{prop:Move_V_action}
	Suppose $\Gamma$ and $\Gamma'$ are gardens which differ from each up to homotopies fixing the seed by an application of Move V as in Figure \ref{fig:Move_V} applied to neighboring tines $\gamma_f$ and $\gamma_g$ in $\Gamma$. Let $\gamma^{\Gamma}_{f,g}$ be the path described above. Then the graded algebra homomorphism $\Phi_{\Gamma}^{\Gamma'} \colon \wt{\scr{B}}^+ \rightarrow \wt{\scr{B}}^+$ defined on generators by
	\begin{align*}
		\Phi_{\Gamma}^{\Gamma'}(f) &= f\\
		\Phi_{\Gamma}^{\Gamma'}(X) &= X + W(\gamma^{\Gamma}_{f,g})
	\end{align*}
	is a tame dg-isomorphism with respect to $\wt{\partial}^{\scr{B}^+}_{G,\Gamma}$ and $\wt{\partial}^{\scr{B}^+}_{G,\Gamma'}$.
\end{prop}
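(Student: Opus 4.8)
The computation in the paragraphs preceding the statement already carries out the heart of the argument: it identifies $\left(\wt\partial^{\scr{B}^+}_{G,\Gamma'}-\wt\partial^{\scr{B}^+}_{G,\Gamma}\right)X$ with a derivative of the matrix of words of the composite path $\gamma^{\Gamma}_{f,g}=\gamma_f^s*\eta*\gamma_g^t$, and notes that $\Phi^{\Gamma'}_\Gamma$ leaves the face generators and the coefficient ring untouched. The plan is to package this into a clean proof of the three claims in the proposition — that $\gamma^{\Gamma}_{f,g}$ has a well-defined homotopy class, that $\Phi^{\Gamma'}_\Gamma$ is a well-defined, invertible, \emph{tame} graded algebra automorphism, and that it intertwines the two differentials — filling in the two points not yet settled in the discussion, namely the well-definedness of $\gamma^{\Gamma}_{f,g}$ and the tameness of $\Phi^{\Gamma'}_\Gamma$.

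First I would handle the well-definedness of $\gamma^{\Gamma}_{f,g}$. Since $\gamma_f^s$ and $\gamma_g^t$ are fixed sub-arcs of the fixed tines, only the middle leg $\eta$ is in question. The geometric input is that the closed strip in $S^2$ bounded by two neighboring tines $\gamma_f$, $\gamma_g$ is an embedded disk whose intersection with the centering is precisely $\{c_f,c_g\}$, and these lie on its boundary: all other tines, and hence all other centers, meet this strip only at the seed. Consequently any two admissible choices of $\eta$ are homotopic rel endpoints through paths in this disk meeting the centering only at $c_f$ and $c_g$; such a homotopy does not cross the centering away from those two points, so Lemma~\ref{lem:invariance_of_word} (together with a small, routine argument near the two centers, where $\eta$ has its endpoints) shows that $W(\eta)$, and hence $W(\gamma^{\Gamma}_{f,g})$ via Lemma~\ref{lem:locality_of_word}, is independent of the choice.

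Next I would check that $\Phi^{\Gamma'}_\Gamma$ is a tame automorphism. Because none of $\gamma_f^s$, $\eta$, $\gamma_g^t$ passes through a center, every entry of $W(\gamma^{\Gamma}_{f,g})$ is a degree-two word in $\wt R^{\mathrm{nc}}$ and the two face generators $f,g$ alone, and in particular lies in the subalgebra generated by everything except $x,y,z,w$. Hence $\Phi^{\Gamma'}_\Gamma$ — which fixes $\wt R^{\mathrm{nc}}$ and all of $F$ and adds such an element to each of $z,w,y,x$ — is a well-defined graded algebra endomorphism, equal to the composition of four elementary automorphisms (altering $z$, then $w$, then $y$, then $x$, one at a time, each modifying term being fixed by the subsequent maps), hence tame; it is invertible, with inverse the analogous map built from $-W(\gamma^{\Gamma}_{f,g})$.

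Finally I would verify the intertwining relation $\Phi^{\Gamma'}_\Gamma\circ\wt\partial^{\scr{B}^+}_{G,\Gamma}=\wt\partial^{\scr{B}^+}_{G,\Gamma'}\circ\Phi^{\Gamma'}_\Gamma$ on generators. On the face generators it is immediate, since $\Phi^{\Gamma'}_\Gamma$ fixes them and the two differentials agree on degree-one generators (they have the same underlying orientation). On $X$, expanding $\wt\partial^{\scr{B}^+}_{G,\Gamma'}\!\left(X+W(\gamma^{\Gamma}_{f,g})\right)$ by the Leibniz rule and using that $\Phi^{\Gamma'}_\Gamma$ fixes — and $\wt\partial^{\scr{B}^+}$ is garden-independent on — the entries of $W(\gamma^{\Gamma}_{f,g})$ and of $\wt\partial^{\scr{B}^+}_{G,\Gamma}X$, the identity collapses to exactly the relation between $\left(\wt\partial^{\scr{B}^+}_{G,\Gamma'}-\wt\partial^{\scr{B}^+}_{G,\Gamma}\right)X$ and $\wt\partial^{\scr{B}^+}W(\gamma^{\Gamma}_{f,g})$ recorded in the preceding computation. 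Since $\Phi^{\Gamma'}_\Gamma$ is an automorphism, this completes the proof. I expect the main obstacle to be purely a matter of bookkeeping — matching the Leibniz signs in this last step to the signs in the preceding computation, and nailing down the behavior of $\eta$ near the two centers in the well-definedness step — rather than anything conceptual.
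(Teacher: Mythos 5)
Your proposal is correct and follows essentially the same route as the paper: the paper's own proof is literally a one-line summary of the preceding computation plus the observation that the map is a composition of four elementary automorphisms (one for each entry of $X$). The only difference is that you spell out the well-definedness of $\gamma^{\Gamma}_{f,g}$ up to homotopy (via the strip between neighboring tines being a disk meeting the centering only at $c_f$ and $c_g$, so that Lemmas \ref{lem:locality_of_word} and \ref{lem:invariance_of_word} apply), a point the paper asserts without proof.
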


\begin{proof}
	This is just a summary of our discussion. Tameness is clear, as this dg-isomorphism is just a composition of four elementary dg-automorphisms, one for each of the generators $x$, $y$, $z$, and $w$ which have been collected in the matrix $X$.
\end{proof}

\begin{rmk}
	Notice that Move V comes with a handed-ness, in which we perform a clockwise half-twist on the centers. We notice that after Move V, the new tine $\gamma'_g$ is now to the left of $\gamma'_f$, and one might be tempted to say that the inverse of Move V is given by taking $X \mapsto X + W(\gamma^{\Gamma'}_{g,f})$. But this is not correct, since this second application of Move V actually yields a different garden, up to homotopy, given by a full twist of the two centers. Instead, we artificially define the dg-isomorphism for the counter-clockwise twist to just be the inverse map. In particular, it has the slightly funky form as follows: suppose $\Gamma'$ is given from $\Gamma$ by an inverse application of Move V, with a counter-clockwise half twist. If $f$ is to the left of $g$ in $\Gamma$, then $g$ is to the left of $f$ in $\Gamma'$, and our dg-isomorphism takes $X \mapsto X-W(\gamma^{\Gamma'}_{f,g})$. It is worth noting that $\gamma^{\Gamma'}_{f,g}$ may be defined with respect to $\Gamma$ as well: it is the (unique homotopy class of) path from the base point to itself, positive with respect to the seed, which remains between the two tines $\gamma'_g$ and $\gamma'_f$, and crosses the faces $c_f$ and $c_g$ once, in that order. Notice here that it crosses $c_f$ first, but now $c_f$ is to the right of $c_g$.
\end{rmk}

Proposition \ref{prop:Move_V_action} demonstrates that Move V preserves the tame dg-isomorphism type of our dg-algebra. But actually, there is more structure. Suppose we have trivalent plane graph $G$. Notice that up to isotopy, the centering and web of any two gardens are the same, and hence we may assume this data is fixed. We will also fix an underlying orientation of the edges of $G$, as we have described the effect of changing this orientation in Section \ref{ssec:orientation_change}. Let $\scr{G}(G)$ denote the space of (not necessarily non-degenerate) gardens for this fixed centering, web, and orientation, all of which we omit from the notation. If $\xi \in S^*S^2$ is a possible seed, meaning its base point $s$ is not on the vertices, edges, centering, or web, then denote by $\scr{G}(G;\xi)$ the space of gardens with this fixed centering, web, and orientation, but with $\xi$ as its seed. We may realize this as the fiber of a Serre fibration.

Let $\scr{S}_n(\xi)$ denote the space of $n$-tuples of embedded loops on $S^2$ starting and ending at $\xi$ with initial and terminal tangent vectors positive with respect to $\xi$, such that the strands are disjoint except at the base point $s$, and with a chosen point on each loop other than $s$. We note that $\scr{S}_1(\xi)$ is contractible with respect to the $C^{\infty}$ topology, c.f. work of Eliashberg and Polterovich \cite[Proposition 1.4.A]{EP} for the contractibility of a slightly different space, from which contractibility of our $\scr{S}_1(\xi)$ easily follows. (Their space is a little more rigid than ours near the seed, and does not include the choice of point.) Furthermore, for each element of $\scr{S}_{n}(\xi)$, because $S^2$ is oriented, the loops are ordered, and we note that the map $\scr{S}_{n+1}(\xi) \rightarrow \scr{S}_n(\xi)$ given by forgetting the first loop is a Serre fibration with fiber given by a space equivalent to $\scr{S}_1(\xi)$. Hence, inductively, each $\scr{S}_n(\xi)$ is contractible. Additionally, for any such element of $\scr{S}_n(\xi)$, we may extract from it an (unordered) $n$-tuple of points, one for each loop. This is a Serre fibration
$$\pi \colon \scr{S}_n(\xi) \rightarrow \mathrm{UConf}_n(S^2 \setminus \{s\})$$
to the $n$th unordered configuration space. When $n = g+3$, the space $\scr{G}(G;\xi)$ is just the fiber over the centering. Hence, from the long exact sequence of homotopy groups, and since $\scr{S}_n(\xi)$ is contractible, we find that the connected components of $\scr{G}(G;\xi)$ are in bijection with the braid group on $g+3$ elements,
$$\pi_0(\scr{G}(G;\xi)) = \pi_1(\mathrm{UConf}_{g+3}(S^2 \setminus \{s\})) \cong \mathrm{Br}_{g+3}.$$
As Moves I-IV leave the differential invariant, we see that the dg-algebra is constant on each connected component of $\scr{G}(G;\xi)$. On the other hand, we see that Move V consists of taking two neighboring centers and twisting them around each other: this is precisely a braid group element. That is, $\pi_0(\scr{G}(G;\xi))$ naturally forms a $\mathrm{Br}_{g+3}$-torsor.

To be more explicit, recall that as a group, the braid group on $n$ strands has the natural presentation
$$\mathrm{Br}_n = \left\langle \sigma_1,\ldots,\sigma_{n-1} \mid \begin{matrix}\sigma_i\sigma_{i+1}\sigma_i = \sigma_{i+1}\sigma_i\sigma_{i+1}, & 1\leq i \leq n-2 \\ \sigma_i\sigma_j = \sigma_j\sigma_i, & |i-j| \geq 2\end{matrix} \right\rangle.$$
Suppose we have fixed a garden with seed $\xi$. Then the tines may be ordered $f_1,\ldots,f_{g+3}$ based upon their initial tangent vectors at the base point. The element $\sigma_i$ in $\mathrm{Br}_{g+3}$ then acts via Move V applied to the tines $\gamma_{f_i}$ and $\gamma_{f_{i+1}}$. In Proposition \ref{prop:Move_V_action}, we have already discussed how Move V acts by dg-isomorphism on the dg-algebra. In fact, these dg-isomorphisms are coherent, in that the braid relations are satisfied up to exact tame dg-isomorphims, as we now verify.

Let $F_{g+2}$ be the free group on $g+2$ generators, denoted $\wt{\sigma}_1,\ldots,\wt{\sigma}_{g+2}$. Let
$$\pi \colon F_{g+2} \twoheadrightarrow \mathrm{Br}_{g+3}$$
be the natural projection, with $\pi(\wt{\sigma}_k) = \sigma_k$. Then the action of $\mathrm{Br}_{g+3}$ on $\pi_0(\scr{G}(G;\xi))$ yields in turn an action of $F_{g+2}$. Under this action, we have the following lemma.

\begin{lem} \label{lem:Move_V_invariance}
	Suppose $\Gamma \in \pi_0(\scr{G}(G;\xi))$ and $\zeta \in F_{g+2}$. Then there exist tame dg-isomorphisms
	$$\Phi_{\Gamma}^{\zeta} \colon (\wt{\scr{B}}_G^+,\wt{\partial}^{\scr{B}^+}_{G,\Gamma}) \rightarrow (\wt{\scr{B}}_G^+, \wt{\partial}^{\scr{B}^+}_{G,\zeta \cdot \Gamma})$$
	such that
	$$\Phi_{\Gamma}^{1} = \id$$
	and
	$$\Phi_{\Gamma}^{\zeta\theta} = \Phi_{\theta \cdot \Gamma}^{\zeta} \circ \Phi_{\Gamma}^{\theta}.$$
	Furthermore, if $\pi(\zeta) = \id \in \mathrm{Br}_{g+3}$, then $\Phi^\zeta_{\Gamma}$ is homotopic to the identity, and hence induces the identity on homology.
\end{lem}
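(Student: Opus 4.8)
The plan is to build $\Phi^{\zeta}_{\Gamma}$ by composing the tame dg-isomorphisms already produced by Proposition \ref{prop:Move_V_action}, and then to isolate the "$\pi(\zeta)=\id$" statement as the one place where real work is needed. Concretely: write $\zeta \in F_{g+2}$ as a reduced word $\wt{\sigma}_{i_1}^{\epsilon_1}\cdots\wt{\sigma}_{i_k}^{\epsilon_k}$ in the free generators, and let $\Phi^{\zeta}_{\Gamma}$ be the composite of the dg-isomorphisms attached to the corresponding sequence of Move V's, using Proposition \ref{prop:Move_V_action} for positive letters and the subsequent remark (which defines the inverse Move V's dg-isomorphism to be the literal inverse) for negative letters. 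Because a pair $\wt{\sigma}_k^{\pm1}\wt{\sigma}_k^{\mp1}$ then composes to the identity, the composite depends only on $\zeta$ as an element of the free group; $\Phi^{1}_{\Gamma}=\id$ holds for the empty word, and the composition property $\Phi^{\zeta\theta}_{\Gamma}=\Phi^{\zeta}_{\theta\cdot\Gamma}\circ\Phi^{\theta}_{\Gamma}$ and tameness hold by construction.

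For the last assertion, recall that the $\mathrm{Br}_{g+3}$-action on $\pi_0(\scr{G}(G;\xi))$ factors through $\pi$, so $\pi(\zeta)=\id$ forces $\zeta\cdot\Gamma=\Gamma$ for all $\Gamma$, and $\Phi^{\zeta}_{\Gamma}$ is then a dg-automorphism of $(\wt{\scr{B}}^+_G,\wt{\partial}^{\scr{B}^+}_{G,\Gamma})$. I would introduce
$$K := \{\zeta\in\ker\pi \mid \Phi^{\zeta}_{\Gamma}\text{ is homotopic to }\id\text{ for every }\Gamma\in\pi_0(\scr{G}(G;\xi))\}.$$
Using the composition property, the fact that $\zeta\cdot\Gamma=\Gamma$ for $\zeta\in\ker\pi$, the identity $\Phi^{\zeta^{-1}}_{\Gamma}=(\Phi^{\zeta}_{\zeta^{-1}\cdot\Gamma})^{-1}$, and the elementary observation that for a dg-isomorphism $A$ and a $\Psi$ homotopic to the identity the conjugate $A\Psi A^{-1}$ is again homotopic to the identity (conjugate the chain homotopy operator by $A$, using $A\wt{\partial}=\wt{\partial}A$), one checks that $K$ is a normal subgroup of $F_{g+2}$. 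Since $\ker\pi$ is the normal closure in $F_{g+2}$ of the braid relators $\wt{\sigma}_i\wt{\sigma}_j\wt{\sigma}_i^{-1}\wt{\sigma}_j^{-1}$ ($|i-j|\geq2$) and $\wt{\sigma}_i\wt{\sigma}_{i+1}\wt{\sigma}_i\wt{\sigma}_{i+1}^{-1}\wt{\sigma}_i^{-1}\wt{\sigma}_{i+1}^{-1}$, it then suffices to show each braid relator lies in $K$. For a far-commutation relator the two applications of Move V are supported in disjoint regions of $S^2$ and each dg-isomorphism fixes $\wt{R}^{\mathrm{nc}}$ and the face generators; since the path $\gamma^{\Gamma}_{f_i,f_{i+1}}$ of one move, and hence its matrix of words (Lemma \ref{lem:invariance_of_word}), is unaffected by the other move, the two dg-isomorphisms commute and the relator acts by $\id$, so it is trivially in $K$.

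The substantive case is the triple braid relator at position $i$. Writing $\alpha=\wt{\sigma}_i\wt{\sigma}_{i+1}\wt{\sigma}_i$ and $\beta=\wt{\sigma}_{i+1}\wt{\sigma}_i\wt{\sigma}_{i+1}$ (so the relator is $\alpha\beta^{-1}$ and $\pi(\alpha)=\pi(\beta)$), for any base garden $\Gamma$ both $\Phi^{\alpha}_{\Gamma}$ and $\Phi^{\beta}_{\Gamma}$ are tame dg-isomorphisms onto the dg-algebra of the common garden $\alpha\cdot\Gamma=\beta\cdot\Gamma$; each fixes $\wt{R}^{\mathrm{nc}}$ and $F$, so they have the form $X\mapsto X+M_{\mathrm{L}}$ and $X\mapsto X+M_{\mathrm{R}}$, where — tracking the three successive Move V's and the accompanying relabellings of the tines $f_i,f_{i+1},f_{i+2}$ — $M_{\mathrm{L}}$ and $M_{\mathrm{R}}$ are explicit sums of three matrices of words $W(\gamma)$ for paths $\gamma$ sweeping through two of the three centers $c_{f_i},c_{f_{i+1}},c_{f_{i+2}}$ in a prescribed order. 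The key step I would carry out is to show that $M_{\mathrm{L}}-M_{\mathrm{R}}$ is $\wt{\partial}^{\scr{B}^+}_{G,\Gamma}$-exact: one pins down the homotopy classes of the six paths involved using Lemma \ref{lem:invariance_of_word}, rewrites their matrices of words using the "$\wt{\partial}f$-correction" identities of the type $W(\eta)W(\gamma^t)=W((\gamma')^t)+W(\eta)\begin{pmatrix}0 & \wt{\partial}^{\scr{B}^+}f\\ 0 & 0\end{pmatrix}W(\gamma^t)$ established in the proof of Proposition \ref{prop:differential}, and recognizes the outcome as $\wt{\partial}^{\scr{B}^+}_{G,\Gamma}$ applied to a single degree-$3$ matrix of words (morally, a path sweeping through all three centers). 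Granting this, Proposition \ref{prop:condition_htpc_identity} applies to the dg-automorphism $(\Phi^{\beta}_{\Gamma})^{-1}\circ\Phi^{\alpha}_{\Gamma}$ of $(\wt{\scr{B}}^+_G,\wt{\partial}^{\scr{B}^+}_{G,\Gamma})$ (which acts by $X\mapsto X+(M_{\mathrm{L}}-M_{\mathrm{R}})$): it fixes $\wt{R}^{\mathrm{nc}}$, the differential of every generator of $\wt{\scr{B}}^+_G$ already lies in the subalgebra $\wt{R}^{\mathrm{nc}}*\Z\langle F\rangle$ on the fixed generators, and the change in $x,y,z,w$ is exact; hence it is homotopic to the identity, and therefore so is $\Phi^{\alpha\beta^{-1}}_{\Gamma}=\Phi^{\alpha}_{\Gamma}\circ(\Phi^{\beta}_{\Gamma})^{-1}$ after conjugating the homotopy operator by the dg-isomorphism $\Phi^{\beta}_{\Gamma}$. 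This places $\alpha\beta^{-1}$ in $K$, gives $K=\ker\pi$, and proves the lemma. The main obstacle is exactly the verification that $M_{\mathrm{L}}-M_{\mathrm{R}}$ is exact: it is a finite but delicate bookkeeping of matrices of words along paths through three centers, and it is the only point at which the precise geometry of Move V enters beyond Proposition \ref{prop:Move_V_action}.
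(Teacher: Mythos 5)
Your proposal is correct and follows essentially the same route as the paper: define the action on the free generators via Proposition \ref{prop:Move_V_action}, reduce the last claim to the normal generators of $\ker\pi$, observe that far commutation relators act by the literal identity, and for the triple braid relator show the difference of the two actions on $X$ is exact with primitive the matrix of words of a path through all three centers, then invoke Proposition \ref{prop:condition_htpc_identity}. The only part you defer --- the bookkeeping showing $M_{\mathrm{L}}-M_{\mathrm{R}}=\wt{\partial}^{\scr{B}^+}W(\gamma_1^s*\eta_1*\eta_2*\gamma_3^t)$ --- is exactly the computation the paper carries out with its curves $A,B,C$ and $A',B',C'$, and your proposed method (the $\wt{\partial}f$-correction identities for pushing a path through a center) is the one used there.
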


\begin{proof}
	The construction of the dg-isomorphisms $\Phi_{\Gamma}^\zeta$ is clear: the element $\wt{\sigma}_k$ acts via the action of Move V as applied to the faces $f_k$ and $f_{k+1}$ via the formula of Proposition \ref{prop:Move_V_action}. The composition rule is given by construction since we are using a free group and have defined the action on a minimal generating set.
	
	The kernel $K$ of the projection $\pi \colon F_{g+2} \twoheadrightarrow \mathrm{Br}_{g+3}$ is the normal subgroup generated by the braid relations. We check the extent to which the braid relations are satisfied by the morphisms constructed by Move V. Notice that it suffices to prove that $\Phi_{\Gamma}^{\zeta}$ is homotopic to the identity for these braid relations which normally generate $K$. Indeed, suppose for $\zeta$ one such generator we have $\Psi^{\zeta}_{\Gamma}$ is our desired chain homotopy, i.e. with
	$$\Psi^{\zeta}_{\Gamma} \circ \wt{\partial}^{\scr{B}^+}_{G,\Gamma} - \wt{\partial}^{\scr{B}^+}_{G,\zeta \cdot \Gamma} \circ  \Psi^{\zeta}_{\Gamma}= \id-\Phi_{\Gamma}^{\zeta}.$$
	Then for $\zeta,\theta \in K$, we obtain a chain homotopy for $\Phi^{\zeta\theta^{-1}}_{\Gamma}$ by taking
	$$\Psi_{\Gamma}^{\zeta\theta^{-1}} = \left(\Psi_{\theta^{-1}\cdot\Gamma}^{\zeta} - \Psi_{\Gamma}^{\theta}\right) \circ \Phi_{\theta^{-1}\cdot\Gamma}^{\theta^{-1}},$$
	and if $\theta \in F_{g+2}$ and $\zeta \in K$, then we may take
	$$\Psi^{\theta^{-1}\zeta\theta}_{\Gamma} = (\Phi_{\Gamma}^\theta)^{-1}\Psi^\zeta_{\theta \cdot \Gamma}\Phi_{\Gamma}^{\theta}.$$

	First, consider the braid relations $\sigma_i\sigma_j = \sigma_j\sigma_i$ for $|i-j| \geq 2$. Notice that Move V applied to $f_i$ and $f_{i+1}$ does not affect $\gamma_{f_j,f_{j+1}}$ at all, and vice versa. In other words,
	$$\gamma^{\sigma_i \cdot \Gamma}_{f_j,f_{j+1}} = \gamma^{\Gamma}_{f_j,f_{j+1}}$$
	$$\gamma^{\sigma_j \cdot \Gamma}_{f_i,f_{i+1}} = \gamma^{\Gamma}_{f_i,f_{i+1}}$$
	We find that both $\wt{\sigma}_i\wt{\sigma}_j$ and $\wt{\sigma}_j\wt{\sigma}_i$ therefore both act on the degree 2 generators by
	$$X \mapsto X + W(\gamma^{\Gamma}_{f_i,f_{i+1}}) + W(\gamma^{\Gamma}_{f_j,f_{j+1}}).$$
	Meanwhile, they act by the identity on the degree 1 generators. Hence, since they act the same way on all generators, they are identical. In particular, we have $\Phi_{\Gamma}^{\wt{\sigma}_i\wt{\sigma}_j(\wt{\sigma}_j\wt{\sigma}_i)^{-1}} = \id$.

	The second braid relation $\sigma_i\sigma_{i+1}\sigma_i = \sigma_{i+1}\sigma_i\sigma_{i+1}$ is a little more tricky, and we will need to choose nonzero $\Psi$. To simplify notation, we will suppose $i=1$. The argument is the same for any other triple of adjacent tines, so this may be assumed without loss of generality. The tines will simply be referred to as $\gamma_1$, $\gamma_2$, and $\gamma_3$ passing through the centers $c_1$, $c_2$, and $c_3$, respectively, and we will use paths $\eta_1$ from $c_1$ to $c_2$ and $\eta_2$ from $c_2$ to $c_3$ completely contained between neighboring tines. As usual, for any tine we use the decomposition
	$$\gamma_f = \gamma_f^s * \gamma_f^c * \gamma_f^t.$$
	See Figure \ref{fig:Move_V_Braid_Relation_Labels}.	
	
	\begin{figure}[h]
		\centering
		\includegraphics[width=\textwidth]{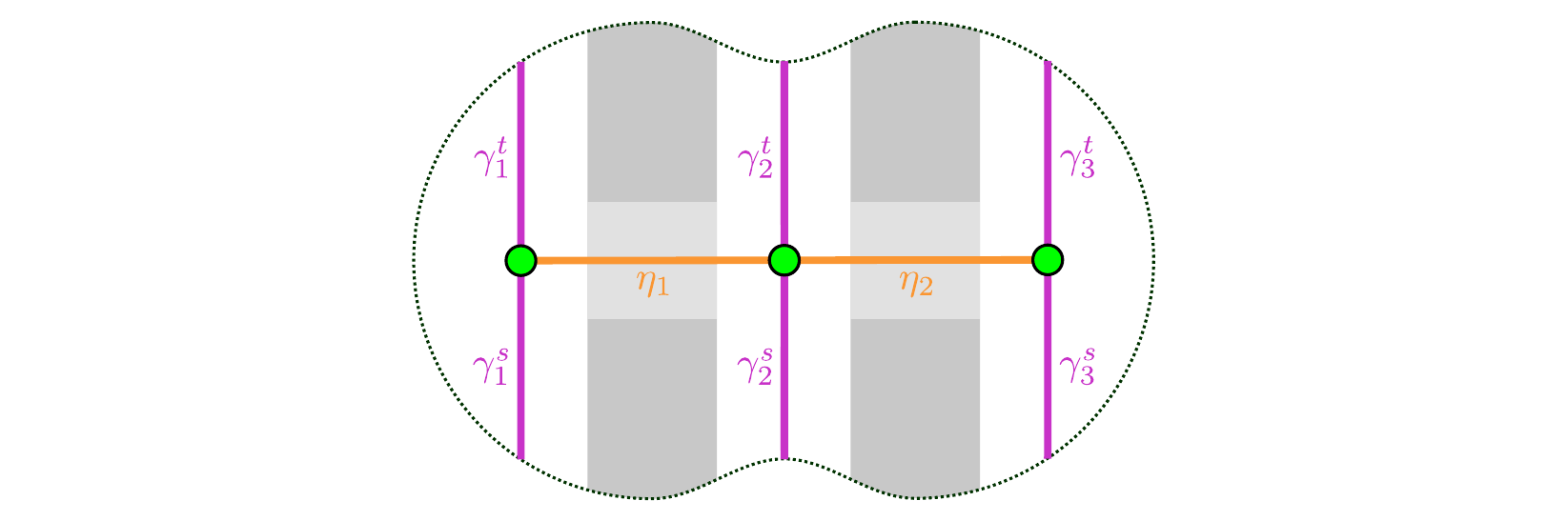}
		\caption{All of the tines are oriented upwards, and the curves $\eta_1$ and $\eta_2$ are oriented from left to right.}
		\label{fig:Move_V_Braid_Relation_Labels}
	\end{figure}

	Let us consider the action of $\sigma_1 \sigma_2 \sigma_1$. As discussed, the action of $\sigma_1$ is given on the generators $x$, $y$, $z$, and $w$, by adding the corresponding associated words for $\gamma_{1,2}$, which goes along $\gamma_1^s$ through $f_1$, then along $\eta_1$, through $f_2$, and ends with $\gamma_2^t$. Once we have performed this operation, we are now in a position to apply $\sigma_{2}$, as in Figure \ref{fig:Move_V_Braid_Relation_Step_1}.
	
	\begin{figure}[h]
		\centering
		\includegraphics[width=\textwidth]{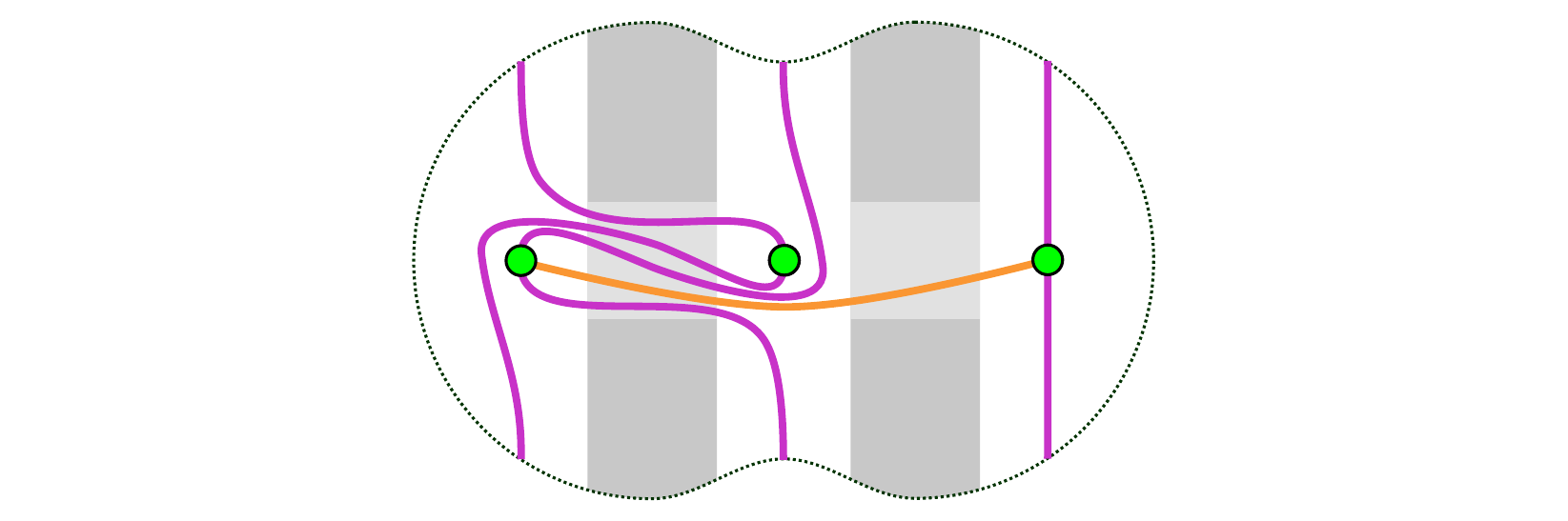}
		\caption{After performing Move V according to $\sigma_1$, we are now in a position to apply Move V along the orange curve, i.e. by $\sigma_2$.}
		\label{fig:Move_V_Braid_Relation_Step_1}
	\end{figure}

	We see in this case that the connecting curve goes under the second center $f_2$. By homotoping the curves using Moves I-IV, we see that Move V acts at this stage by the word associated to $\gamma_1^s * (\eta_{1,2}^D) * \gamma_3^t$, where the superscript $D$ denotes that the curve goes down, under $f_2$. Finally, after this application of Move V, the picture now looks as in the left diagram of Figure \ref{fig:Move_V_Braid_Relation_Step_2}, which we homotope to the right diagram.
	\begin{figure}[h]
		\centering
		\includegraphics[width=\textwidth]{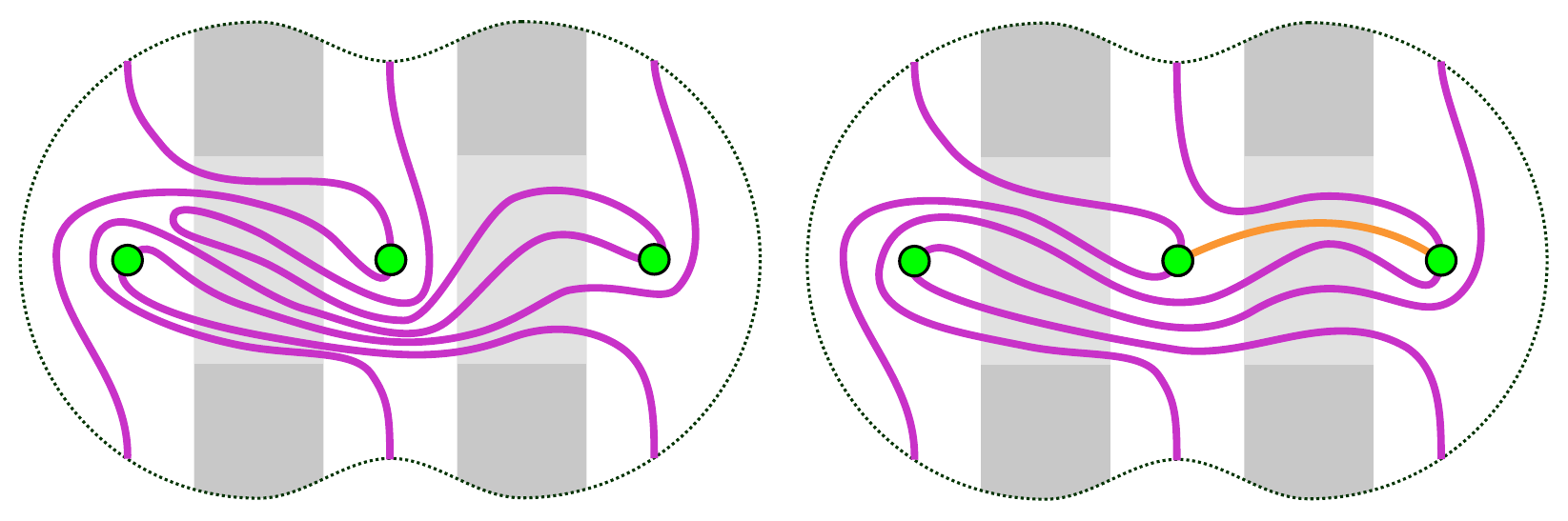}
		\caption{The diagram after an application of $\sigma_i$ followed by an application of $\sigma_{i+1}$. The left and right figures are the same up to a number of applications of Moves I and II. A final application of $\sigma_{i}$ may now be performed by considering Move V along the orange curve of the right diagram.}
		\label{fig:Move_V_Braid_Relation_Step_2}
	\end{figure}
	In total, we find that $\wt{\sigma}_1\wt{\sigma}_{2}\wt{\sigma}_1$ acts via
	$$X \mapsto X+W(A) + W(B) + W(C),$$
	where $A$, $B$, and $C$ are the curves in the top row of Figure \ref{fig:Move_V_Braid_Relation_Curves}. Similarly, we find that $\wt{\sigma}_2\wt{\sigma}_1\wt{\sigma}_2$ acts via
	$$X \mapsto X+W(A') + W(B') + W(C')$$
	and similarly on the other generators, where $A'$, $B'$, and $C'$ are on the bottom row of Figure \ref{fig:Move_V_Braid_Relation_Curves}.
	\begin{figure}[h]
		\centering
		\includegraphics[width=\textwidth]{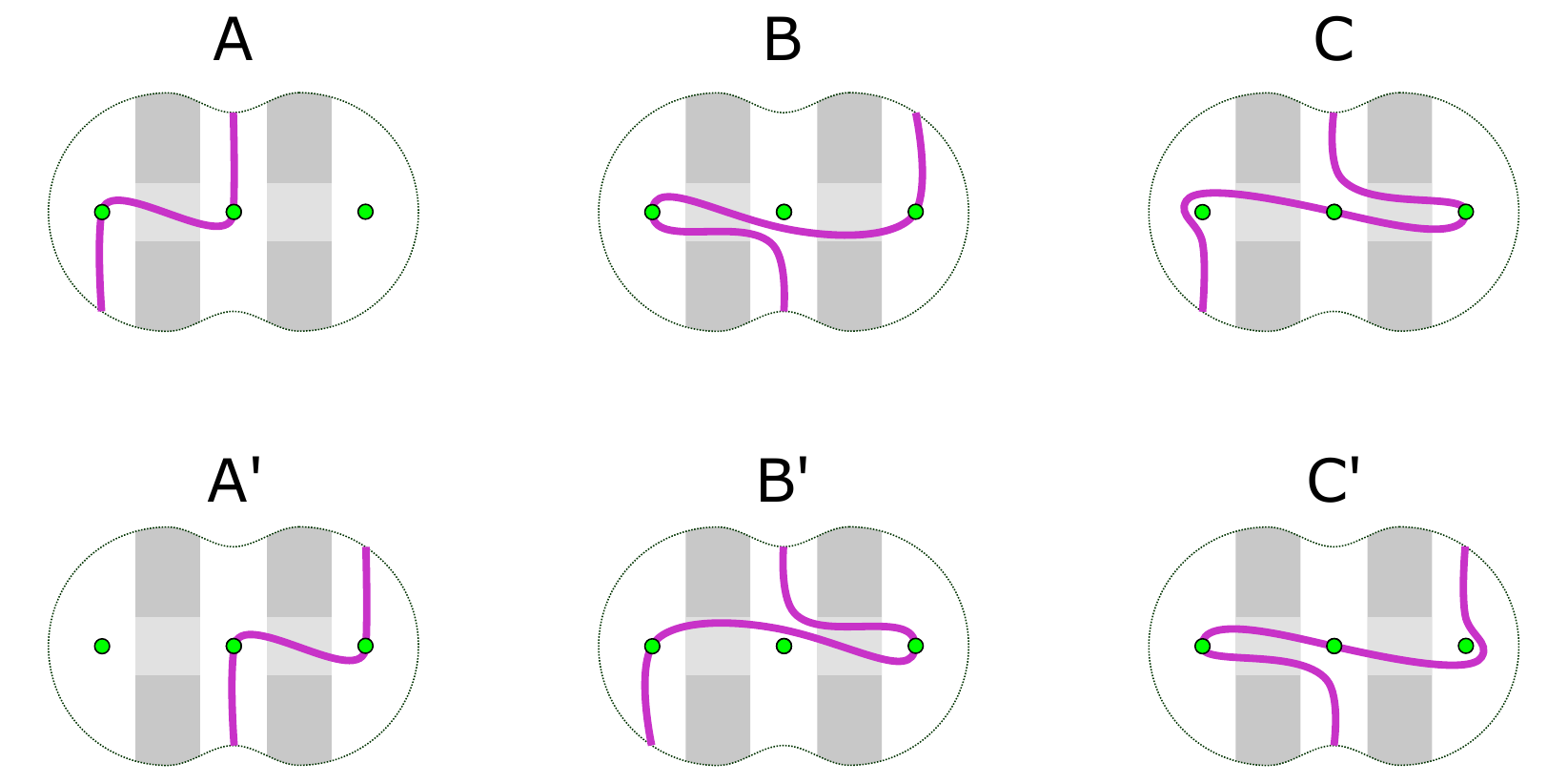}
		\caption{The curves corresponding to the words added by the action of $\sigma_1\sigma_2\sigma_1$, in the top row, and $\sigma_2\sigma_1\sigma_2$, in the bottom row.}
		\label{fig:Move_V_Braid_Relation_Curves}
	\end{figure}
	We see that up to homotopy through Moves I-IV:
	\begin{itemize}
		\item $A$ and $C'$ are the same curve, except that $C'$ is pushed to go around $f_3$ to the right, whereas $A$ goes around it to the left
		\item $B$ and $B'$ are the same curve, except that $B'$ is pushed to go over $f_2$, whereas $B$ goes under it
		\item $C$ and $A'$ are the same, except that $C$ is pushed to go around $f_1$ to the left, whereas $A'$ goes around it to the right
	\end{itemize}
	We described in Lemma \ref{lem:htpy_through_center} how pushing a non-degenerate curve through a center changes the associated word by an exact element, with primitive involving the face through which the curve is pushed. One easily sees that
	$$\left(W(A') + W(B') + W(C')\right) - \left(W(A) + W(B) + W(C)\right) = \wt{\partial}^{\scr{B}^+} W(\gamma_{f_1,f_2,f_3}^{\Gamma}),$$
	where $\gamma_{f_1,f_2,f_3}^{\Gamma}$ is the path which follows $\gamma_{1}^{s}$, then passes through $f_1$, then follows $\eta_1$, then passes through $f_2$, then follows $\eta_2$, then passes through $f_3$, and finally traverses $\gamma_3^t$. (Notice we have removed the garden from the notation on the right since for all such choices, the differential acts in the same way on the matrix of words, since the words only involve the degree $1$ face generators.) In other words, we have
	$$\left(\Phi_{\Gamma}^{\wt{\sigma}_2\wt{\sigma}_1\wt{\sigma}_2} - \Phi_{\Gamma}^{\wt{\sigma}_1\wt{\sigma}_2\wt{\sigma}_1}\right)(X) = \wt{\partial}^{\scr{B}^+}W(\gamma_{f_1,f_2,f_3}^{\Gamma}).$$
	Applying $\left(\Phi_{\Gamma}^{\wt{\sigma_1}\wt{\sigma_2}\wt{\sigma_1}}\right)^{-1}$, which acts trivially on the right hand side since degree $1$ generators are unharmed, yields
	$$\Phi_{\Gamma}^{\wt{\sigma_1}^{-1}\wt{\sigma}_2^{-1}\wt{\sigma}_1^{-1}\wt{\sigma}_2\wt{\sigma}_1\wt{\sigma}_2}(X) = X + \wt{\partial}^{\scr{B}^+}W(\gamma_{f_1,f_2,f_3}^{\Gamma}).$$
	This is homotopic to the identity because it satisfies the hypotheses of Proposition \ref{prop:condition_htpc_identity}.
\end{proof}

\begin{rmk}
	From the proof, we see that everywhere we use the action of the free group $F_{g+2}$, we could instead use the action of the quotient by the normal subgroup generated by the commutators $[\sigma_i,\sigma_j]$ for $|i-j| \geq 2$.
\end{rmk}

\subsection{The full twist} \label{ssec:Full_Twist}

So far, we have understood the path components $\pi_0(\scr{G}(G;\xi))$ of gardens where we have fixed the seed as a $\mathrm{Br}_{g+3}$-torsor. Let us now study $\pi_0(\scr{G}(G))$. (Again, implicitly, we have fixed an orientation in this discussion.) We have a Serre fibration
$$\pi \colon \scr{G}(G) \rightarrow S^*(S^2 \setminus (G \cup W))$$
given by projecting to the seed. Here, $G$ is the union of the vertices and edges of the graph, and $W$ (for \emph{web}) consists of the threads and centers. By definition, $\pi^{-1}(\xi) = \scr{G}(G;\xi)$. On the other hand, we have $S^2 \setminus (G \cup W)$ consists of a union of open triangles, each with one vertex at a center opposite an edge of the triangle which is just an edge of the graph, and two vertices on the vertices of the graph, each opposite an edge lying along a thread of the web. Since each edge of the graph has a triangle on either side of it, there are $2|E| = 6g+6$ such triangles. Let $\mathrm{Tri}(G)$ be the collection of these triangles, just a finite set with $6g+6$ elements. Let $\nu \in \mathrm{Tri}(G)$ be one such triangle. Then we obtain a sequence
$$\pi_1(S^*\nu;\xi) \rightarrow \pi_0(\scr{G}(G;\xi)) \rightarrow \pi_0(\scr{G}(G))$$
which is exact at the central term. Notice that $S^*\nu \cong \nu \times S^1 \sim S^1$, so $\pi_1(S^*\nu;\xi) \cong \Z$. Geometrically, this represents the free action given by twisting $\xi$ by a full 360 degrees. This $\Z$-action on $\pi_0(\scr{G}(G;\xi))$ corresponds to the full braid twist in $\mathrm{Br}_{g+3}$, which is the generator of the center of $\mathrm{Br}_{g+3}$. We hence find that $\pi_0(\scr{G}(G))$ consists of a collection of $6g+6$ $\mathrm{Br}_{g+3}/\Z$-torsors, indexed by $\mathrm{Tri}(G)$ based on in which triangle the base point lies. We will study $\mathrm{Tri}(G)$ further in Section \ref{ssec:Moves_VI_VII}.

Even though the full twist acts non-trivially on $\pi_0(\scr{G}(G;\xi))$, it acts trivially on $\pi_0(\scr{G})$. Comparing with Theorem \ref{thm:nc_CM_functoriality}, we must therefore prove that the full twist is homotopic to the identity.

\begin{lem} \label{lem:full_twist}
	Suppose $G$ is a trivalent plane graph with garden $\Gamma \in \pi_0(\scr{G}(G;\xi))$. Then any lift $\wt{\Delta} \in F_{g+2}$ of the full twist $\Delta = (\sigma_{g+2}\cdots\sigma_1)^{g+3} \in \mathrm{Br}_{g+3}$ acts on $(\wt{\scr{B}}^+_{G},\wt{\partial}^{\scr{B}^+}_{G,\Gamma})$ via a tame dg-automorphism homotopic to the identity.
\end{lem}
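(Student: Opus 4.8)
The plan is to exploit the fact that the full twist $\Delta$ has a particularly clean geometric realization: rotating the seed covector $\xi$ by a full $360$ degrees produces an isotopy of gardens from $\Gamma$ back to $\Gamma$ (passing through possibly degenerate gardens, which is allowed by our definition of homotopy), and the braid-group element induced on $\pi_0(\scr{G}(G;\xi))$ by this loop is exactly $\Delta = (\sigma_{g+2}\cdots\sigma_1)^{g+3}$. So $\Delta$ stabilizes the homotopy class of $\Gamma$, i.e. $\wt{\Delta}\cdot \Gamma = \Gamma$ in $\pi_0(\scr{G}(G;\xi))$. This means $\Phi_{\Gamma}^{\wt{\Delta}}$ is an honest dg-\emph{automorphism} of $(\wt{\scr{B}}^+_G, \wt{\partial}^{\scr{B}^+}_{G,\Gamma})$, and it is tame by Lemma \ref{lem:Move_V_invariance} (being built from applications of Move V). It remains to show it is homotopic to the identity, for which we aim to verify the hypotheses of Proposition \ref{prop:condition_htpc_identity}.

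The first hypothesis, that $\Phi_{\Gamma}^{\wt{\Delta}}$ fixes the coefficient ring $\wt{R}^{\mathrm{nc}} = \Z[\Pi]$, is immediate since every Move V dg-isomorphism acts by the identity on $\Pi$ and only modifies the degree-$2$ generators $x,y,z,w$ (collected in $X$). The second hypothesis requires that $\wt{\partial}^{\scr{B}^+}_{G,\Gamma}$ applied to each generator lands in the subalgebra generated by the fixed generators; here the only non-fixed generators are $x,y,z,w$, and since $\wt{\partial}^{\scr{B}^+}_{G,\Gamma}X = \sum_f W(\gamma_f)$ has all entries in $\wt{R}^{\mathrm{nc}}*\Z\langle F\rangle$ — i.e. involving only the coefficient ring and the degree-$1$ face generators, all of which are fixed — this condition holds automatically. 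So the real content is the third hypothesis: that $\Phi_{\Gamma}^{\wt{\Delta}}(X) - X$ is exact, i.e. that $W$ of the total curve traced out by the composite of all the Move V paths is of the form $\wt{\partial}^{\scr{B}^+}_{G,\Gamma}(\text{something})$.

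The hard part will be this exactness computation, and here the geometry does the work. Decomposing $\Delta$ into its $(g+3)$ blocks of the form $\sigma_{g+2}\cdots\sigma_1$ — each block cyclically rotating one center all the way around past the others — and applying Proposition \ref{prop:Move_V_action} repeatedly, $\Phi_{\Gamma}^{\wt{\Delta}}(X) - X$ is a sum $\sum_j W(\delta_j)$ of matrices of words for a collection of curves $\delta_j$, each of which is a loop based at the seed, positive with respect to $\xi$, passing through some ordered subset of the centers. The key point is that the full $360$-degree rotation of $\xi$, before the rotation is performed, bounds a disk (the rotation sweeps out a trivial loop in $S^*(S^2\setminus(G\cup W))$ up to the fiber direction), and correspondingly the total curve $\prod_j \delta_j$, read as a loop on $\Lambda_G$, can be contracted to a point once we are allowed to push it across the centers $c_{f_1},\dots,c_{f_{g+3}}$. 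By the mechanism established in the proof of Proposition \ref{prop:differential} (pushing a non-degenerate curve across a center $c_f$ changes its matrix of words by an exact term with primitive involving the face generator $f$), iterating this over all $g+3$ centers shows $\sum_j W(\delta_j) = \wt{\partial}^{\scr{B}^+}W(\rho)$ for an explicit curve $\rho$ passing through $f_1,\dots,f_{g+3}$ in order — exactly as in the braid-relation case of Lemma \ref{lem:Move_V_invariance}, but now for the full twist rather than a single hexagon relation. Granting this, Proposition \ref{prop:condition_htpc_identity} applies and $\Phi_{\Gamma}^{\wt{\Delta}}$ is homotopic to the identity; independence of the choice of lift $\wt{\Delta} \in F_{g+2}$ then follows because any two lifts differ by an element of the kernel $K$ of $F_{g+2}\twoheadrightarrow \mathrm{Br}_{g+3}$, which by Lemma \ref{lem:Move_V_invariance} already acts by dg-automorphisms homotopic to the identity. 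The main obstacle is organizing the bookkeeping of the $O((g+3)^2)$ curves $\delta_j$ and checking that their total contribution telescopes cleanly into a single exact term; I expect this to be routine but notationally heavy, and it may be cleanest to argue it abstractly via the homotopy-of-gardens picture rather than by an explicit curve-by-curve tally.
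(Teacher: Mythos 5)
Your high-level skeleton matches the paper's: reduce to the standard lift $\wt{\Delta}=(\wt{\sigma}_{g+2}\cdots\wt{\sigma}_1)^{g+3}$ (any two lifts differ by an element of $\ker(F_{g+2}\twoheadrightarrow\mathrm{Br}_{g+3})$, handled by Lemma \ref{lem:Move_V_invariance}), observe that the first two hypotheses of Proposition \ref{prop:condition_htpc_identity} are automatic, and concentrate all the work in showing $\Phi^{\wt\Delta}_\Gamma(X)-X$ is exact. But the step you defer as ``routine but notationally heavy'' is the entire content of the lemma, and the mechanism you propose for it does not work. You claim the total contribution telescopes to $\wt{\partial}^{\scr{B}^+}W(\rho)$ for a single curve $\rho$ passing through all $g+3$ centers. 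A degree count rules this out for $g\geq 1$: every nonzero entry of $W(\rho)$ contains exactly one face generator per center traversed, so $W(\rho)$ has entries of degree $g+3$ and $\wt{\partial}^{\scr{B}^+}W(\rho)$ of degree $g+2$, whereas $\Phi^{\wt\Delta}_\Gamma(X)-X$ is a sum of products of two tine-words and hence has entries of degree $2$. The telescoping-across-centers mechanism from Proposition \ref{prop:differential} and the hexagon relation produces differences $W(\eta_k)-W(\eta_{k-1})$, which is linear in the tines; the full-twist defect is quadratic.

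What the paper actually does at this point is different in structure. Writing $\gamma^{\Gamma}_{i,j}\sim(\gamma^{\Gamma}_i)^{-1}*\gamma^{\Gamma}_j$ and using $\gamma^{C\cdot\Gamma}_{i,j}=\gamma^{\Gamma}_{i+1,j+1}$, the defect is the full double sum $\sum_{i\neq j}W(\gamma^{\Gamma}_{i,j})$, which (after noting $W(\gamma_{i,i})=0$) \emph{factors} as the product $\bigl(\sum_i W(\gamma_i^{-1})\bigr)\cdot\bigl(\sum_j W(\gamma_j)\bigr)=\overline{W}\cdot W$. Exactness then follows from two facts: $W=\wt{\partial}^{\scr{B}^+}X$ by definition of the differential, and $\wt{\partial}^{\scr{B}^+}\overline{W}=0$, which is proved by a separate trick --- $\overline{W}$ is the value of $\wt{\partial}^{\scr{B}^+}_{G,\overline{\Gamma}}X$ for the garden $\overline{\Gamma}$ with all tines reversed and seed negated, so its differential is $(\wt{\partial}^{\scr{B}^+}_{G,\overline{\Gamma}})^2X=0$. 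The primitive is $\overline{W}\cdot X$, which involves the degree-$2$ generators themselves and is not the matrix of words of any curve. Neither the product factorization nor the orientation-reversal argument for closedness of $\overline{W}$ appears in your proposal, so as written the proof has a genuine gap at its central step.
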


\begin{proof}
	It suffices to prove the result for one lift, since a composition of dg-automorphisms homotopic to the identity is itself homotopic to the identity, and any two lifts differ by a dg-automorphism homotopic to the identity by Lemma \ref{lem:Move_V_invariance}. We use the lift
	$$\wt{\Delta} = (\wt{\sigma}_{g+2}\cdots\wt{\sigma}_1)^{g+3}.$$
	Let $\wt{C} = \wt{\sigma}_{g+2}\cdots\wt{\sigma}_1$ be a lift of $C \in \mathrm{Br}_{g+3}$, so $\wt{\Delta} = \wt{C}^{g+3}$. Identifying $\mathrm{Br}_{g+3} \cong \pi_1(\mathrm{UConf}_{g+3}(S^2 \setminus \{s\}))$, we see $C$ acts on the garden as follows. Up to homotopy, there is a unique decomposition $S^2 = \D_+ \cup_{S^1} \D_-$ into two disks with the property that $\D_+$ contains the seed, and that the centers appear along $S^1 = \D_+ \cap \D_-$ in order according to the boundary orientation of $\D_+$. Then $\wt{C}$ acts on the tines via a homotopy supported near the gluing $S^1$, so that the centers are rotated from one to the next. See Figure \ref{fig:Cyclical_Twist}.
	
	\begin{figure}[h]
		\centering
		\includegraphics[width=\textwidth]{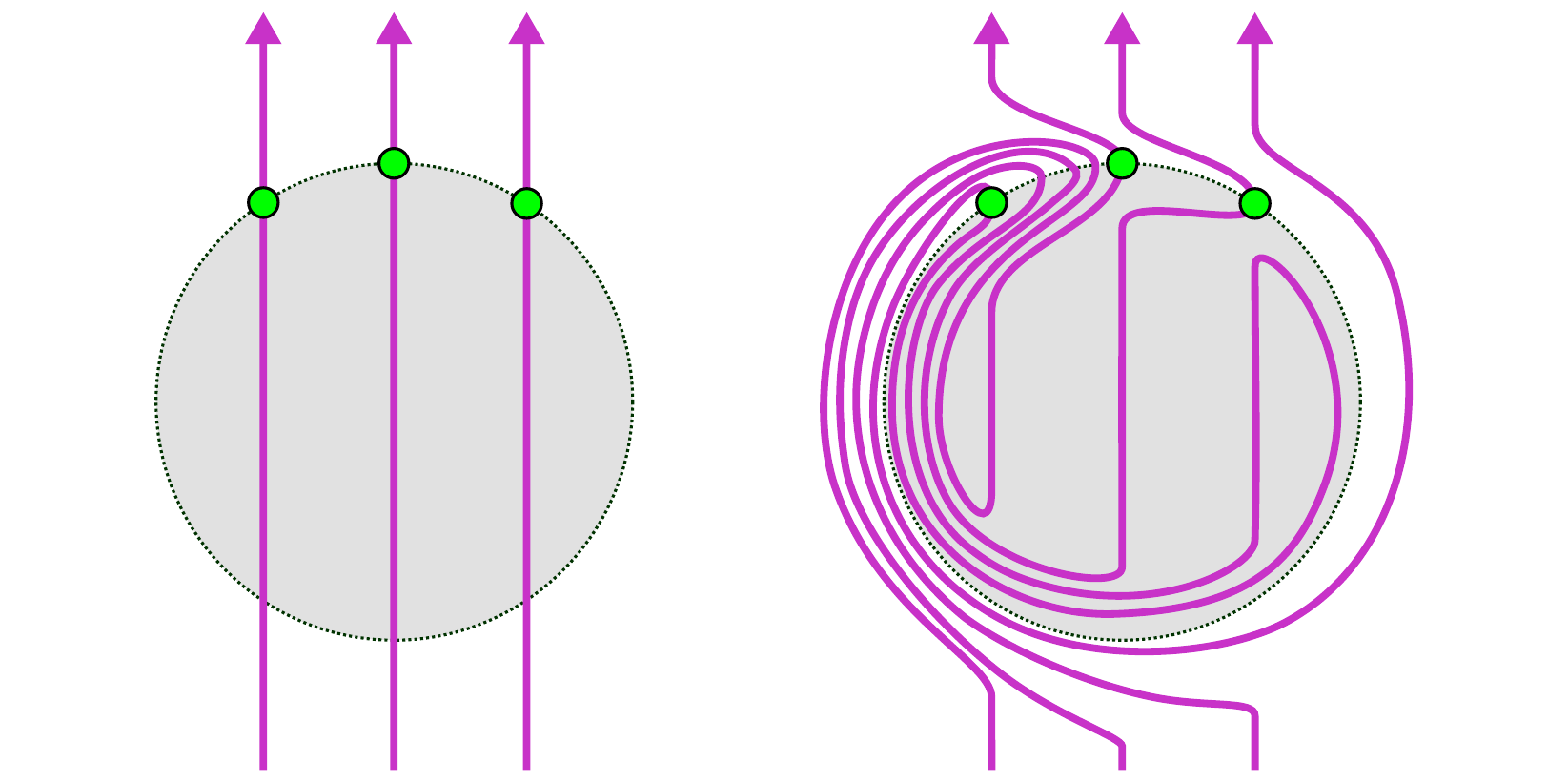}
		\caption{The action of the cyclical twist $C$ on the tines. In general, there may be more tines, but we only draw three for convenience. The disk $\D_-$ is shaded in grey, and the centers occur in the order from left to right across the top of the disk. The seed is off at infinity. We have drawn the action of $C$ so that it is supported in a neighborhood of the circle separating $\D_+$ and $\D_-$.}
		\label{fig:Cyclical_Twist}
	\end{figure}
	
	Let us consider the action of $\wt{C}$ on the dg-algebra. Notice that curves A and B in Figure \ref{fig:Move_V_Braid_Relation_Curves} already correspond to the curves $\gamma_{f,g}$ involved in $\wt{\sigma}_1$ followed by $\wt{\sigma}_2$ respectively. It is not hard to see how this pattern continues as we apply $\wt{C}$: $\wt{\sigma}_k$ corresponds to the curve which goes through $f_1$, under $f_2$ through $f_{k}$ and then through $f_{k+1}$. Let us denote these curves by $\gamma^{\Gamma}_{1,k+1}$.
	
	More generally, we may define curves $\gamma^{\Gamma}_{i,j}$ as the homotopy class of loop based at the base point of $\Gamma$ given by first following a path from the base point to $c_i$ through $\D_+$, then a path from $c_i$ to $c_j$ through $\D_-$, and then a path from $c_j$ back to the base point through $\D_+$. Notice that this description completely determines a unique curve up to homotopy since $\D_+$ and $\D_-$ are contractible. Further, notice that when $i=1$ and $j=k+1$, this matches the description above. Finally, up to homotopy, we have simply
	$$\gamma^{\Gamma}_{i,j} \sim (\gamma^{\Gamma}_i)^{-1} * \gamma^{\Gamma}_j,$$
	where $\gamma^{\Gamma}_k$ is the $k$th tine. The curves $\gamma^{\Gamma}_{i,j}$ are shown in Figure \ref{fig:Twist_Curves}.
	
	\begin{figure}[h]
		\centering
		\includegraphics[width=\textwidth]{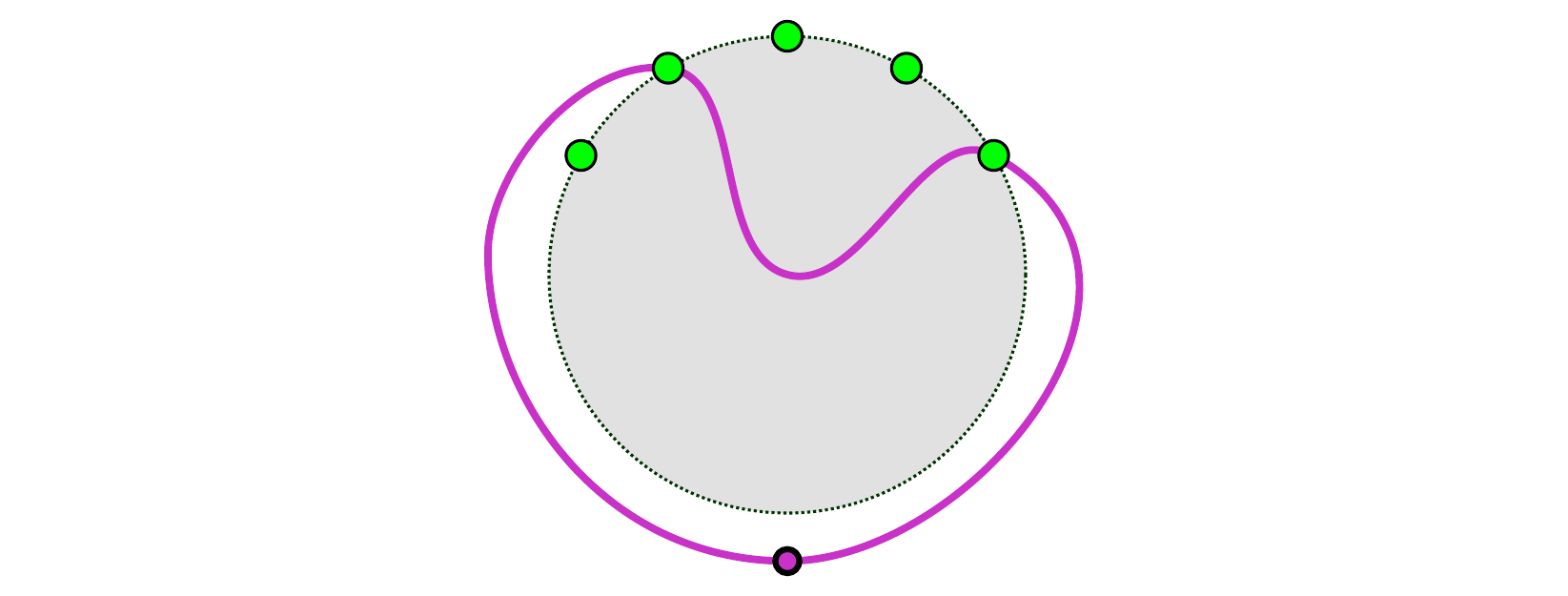}
		\caption{The curves $\gamma^{\Gamma}_{i,j}$, where $\D_-$ is highlighted in grey. In this case, imagining the centers are labelled in this case from $1$ to $5$ from left to right, then the curve depicted is either $\gamma^{\Gamma}_{2,5}$ or $\gamma^{\Gamma}_{5,2}$, depending upon the orientation of the curve. Here, we have moved the base point in from infinity for a slightly different perpsective.}
		\label{fig:Twist_Curves}
	\end{figure}
	
	Notice, either by inspection of Figure \ref{fig:Cyclical_Twist}, or by noting that the decomposition $S^2 = \D_+ \cup \D_-$ is compatible with both $\Gamma$ and $C \cdot \Gamma$, we have
	$$\gamma_{i,j}^{C \cdot \Gamma} = \gamma^{\Gamma}_{i+1,j+1},$$
	with the convention that the indices are taken modulo $g+3$. In fact, explicitly on tines, one sees that
	$$\gamma_i^{C \cdot \Gamma} = \eta_1^{-1} * \gamma_{i+1}^{\Gamma}$$
	where $\eta_1$ is a loop lying between $\gamma^{\Gamma}_1$ and $\gamma^{\Gamma}_2$ (as in Proposition \ref{prop:differential}).

	On the other hand, the action of $\wt{C}$ on the degree 2 generators is just
	$$\wt{C} \cdot X = X + \sum_{k=1}^{g+2}W(\gamma^{\Gamma}_{1,k+1}).$$
	Hence, applying $\wt{C}$ a total of $g+3$ times, we find
	\begin{align*}
		\wt{C}^{g+3} \cdot X &= X + \sum_{j=0}^{g+2}\sum_{k=2}^{g+3}W(\gamma^{C^j \cdot \Gamma}_{1,k}) \\
		&= \sum_{\left\{\begin{matrix}1 \leq i,j \leq g+3 \\ i \neq j\end{matrix}\right\}} W(\gamma^{\Gamma}_{i,j}) \\
		&= \sum_{\left\{\begin{matrix}1 \leq i,j \leq g+3 \\ i \neq j\end{matrix}\right\}} W((\gamma^{\Gamma}_{i})^{-1}) \cdot W(\gamma^{\Gamma}_j).
	\end{align*}
	But even further, notice that if $i=j$, then $\gamma_{i,i}$ can be homotoped so that it hits $c_{f_i}$ twice in a row, without intersecting a thread or edge in between. Hence $W(\gamma_{i,i}) = 0$, since no binary sequence can switch from $0$ to $1$ at both points it intersects $c_{f_i}$ since it doesn't have a chance to go from $1$ to $0$ in between. Hence, we may more simply write
	$$\wt{C}^{g+3} \cdot X = X + \left(\sum_{i=1}^{g+3}W(\gamma_i^{-1})\right) \cdot \left(\sum_{i=1}^{g+3}W(\gamma_j) \right).$$
	
	By Proposition \ref{prop:condition_htpc_identity}, we see that the action of $\wt{\Delta}$ satisfies the required conditions so long as we can prove that $\left(\sum_{i=1}^{g+3}W(\gamma_i^{-1})\right) \cdot \left(\sum_{j=1}^{g+3}W(\gamma_j) \right)$ is exact. To this end, let $\overline{\Gamma}$ be the garden obtained from $\Gamma$ by switching the orientation of all the tines simultaneously and negating $\xi$. Then since the differentials of $\Gamma$ and $\overline{\Gamma}$ are the same on faces, and hence on matrices of words,
	\begin{align*}
		\wt{\partial}^{\scr{B}^+}_{G,\Gamma}\left(\sum_{i=1}^{g+3} W(\gamma_i^{-1})\right) &= \wt{\partial}^{\scr{B}^+}_{G,\overline{\Gamma}}\left(\sum_{i=1}^{g+3} W(\gamma_i^{-1})\right) \\
		&= \left(\wt{\partial}^{\scr{B}^+}_{G,\overline{\Gamma}}\right)^2 X \\
		&= 0.
	\end{align*}
	On the other hand, $\wt{\partial}^{\scr{B}^+}_{G,\Gamma}X = \sum_{j=1}^{g+3}W(\gamma_j)$. Hence, we have
	$$\left(\sum_{i=1}^{g+3}W(\gamma_i^{-1})\right) \cdot \left(\sum_{j=1}^{g+3}W(\gamma_j) \right) = \wt{\partial}^{\scr{B}^+}_{G,\Gamma} \left[ \left(\sum_{i=1}^{g+3}W(\gamma_i^{-1})\right) \cdot X \right]$$
	is exact as desired.
\end{proof}

\subsection{Invariance under Moves VI and VII} \label{ssec:Moves_VI_VII}

Recall that Moves VI and VII pass a seed through a thread or edge respectively, as in Figure \ref{fig:Moves_VI-VII}. We now construct tame dg-automorphisms which intertwine the respective differentials for the gardens, in two separate lemmas.

\begin{lem} \label{lem:Move_VI}
	Suppose $\Gamma$ and $\Gamma'$ are two gardens which differ by Move VI as in the left side of Figure \ref{fig:Moves_VI-VII}, and suppose $\tau$ is the thread appearing in the diagram. Let $\sigma$ be $+1$ if the thread is oriented upwards and $-1$ if the thread is oriented downwards, and let $\wt{H}(\tau) = \sigma \cdot H(\tau)$ be the monomial associated to the thread, with an appropriate sign. Then we have a tame dg-automorphism
	$$\Phi_{\Gamma}^{\Gamma'} \colon (\wt{\scr{B}}_G^+,\wt{\partial}^{\scr{B}^+}_{G,\Gamma}) \rightarrow (\wt{\scr{B}}_G^+,\wt{\partial}^{\scr{B}^+}_{G,\Gamma'})$$
	defined on generators by
	\begin{align*}
		\Phi_{\Gamma}^{\Gamma'}(f) &= f \\
		\Phi_{\Gamma}^{\Gamma'}(X) &= \begin{pmatrix}1 & \wt{H}(\tau) \\ 0 & 1\end{pmatrix}X\begin{pmatrix}1 & -\wt{H}(\tau) \\ 0 & 1\end{pmatrix}
	\end{align*}
\end{lem}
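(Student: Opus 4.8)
The plan is to mimic the proof strategy already used for Move V in Proposition \ref{prop:Move_V_action}: verify directly that the proposed graded algebra map intertwines the two differentials by checking the defining equation $\Phi_\Gamma^{\Gamma'} \circ \wt\partial^{\scr{B}^+}_{G,\Gamma} = \wt\partial^{\scr{B}^+}_{G,\Gamma'} \circ \Phi_\Gamma^{\Gamma'}$ on generators, and to note at the end that the map is tame because conjugating $X$ by a unipotent upper-triangular matrix is a composition of elementary automorphisms (one adds a multiple of $H(\tau)$ times another generator to each of $z$, $w$, $y$, $x$, in the appropriate order).

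First I would reduce to the local picture. On the degree $1$ face generators both differentials agree since moving the seed through a thread does not change which threads border which faces and does not change the associated monomials $H_f(v)$ (the orientation data on edges is untouched by Move VI); hence $\Phi_\Gamma^{\Gamma'}(f)=f$ is compatible trivially. The content is the degree $2$ check. Here I would use the key fact, established via Lemma \ref{lem:invariance_of_word} and Lemma \ref{lem:locality_of_word}, that the matrix of words $W(\gamma_f)$ of a tine is invariant under homotopies not crossing the centering, and is multiplicative under concatenation. When the seed passes through the thread $\tau$, each tine $\gamma_f$ of $\Gamma'$ can be written, up to such a homotopy, as $\mu * \gamma_f^{\mathrm{old}} * \mu^{-1}$ where $\mu$ is the short arc that now crosses $\tau$ at the base-point end; more precisely, the new initial and terminal legs of every tine each acquire one extra transverse crossing of $\tau$. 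Computing the matrix of words of this tiny extra arc from Definition \ref{defn:binary_seq} — the binary sequence can only stay constant or switch $0 \to 1$ at a thread — gives exactly $\begin{pmatrix}1 & \wt H(\tau) \\ 0 & 1\end{pmatrix}$ on one side and its inverse $\begin{pmatrix}1 & -\wt H(\tau) \\ 0 & 1\end{pmatrix}$ on the other, with the sign $\sigma$ governed by the right-hand rule of Figure \ref{fig:Sign_Rule_Thread_Crossings} according to the thread's orientation. Therefore $W(\gamma_f^{\Gamma'}) = \begin{pmatrix}1 & \wt H(\tau) \\ 0 & 1\end{pmatrix} W(\gamma_f^{\Gamma}) \begin{pmatrix}1 & -\wt H(\tau) \\ 0 & 1\end{pmatrix}$ for every $f$, and summing over $f$ gives $\wt\partial^{\scr{B}^+}_{G,\Gamma'} X = \begin{pmatrix}1 & \wt H(\tau) \\ 0 & 1\end{pmatrix} \left(\wt\partial^{\scr{B}^+}_{G,\Gamma} X\right) \begin{pmatrix}1 & -\wt H(\tau) \\ 0 & 1\end{pmatrix}$.

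It then remains to confirm that this is precisely the statement $\wt\partial^{\scr{B}^+}_{G,\Gamma'} \circ \Phi_\Gamma^{\Gamma'} = \Phi_\Gamma^{\Gamma'} \circ \wt\partial^{\scr{B}^+}_{G,\Gamma}$ on $X$. Since $\Phi_\Gamma^{\Gamma'}$ fixes the coefficient ring $\wt R^{\mathrm{nc}}$ and $\wt H(\tau) \in \wt R^{\mathrm{nc}}$, applying $\wt\partial^{\scr{B}^+}_{G,\Gamma'}$ to $\Phi_\Gamma^{\Gamma'}(X)=\begin{pmatrix}1 & \wt H(\tau) \\ 0 & 1\end{pmatrix}X\begin{pmatrix}1 & -\wt H(\tau) \\ 0 & 1\end{pmatrix}$ and using that the differential kills elements of $\wt R^{\mathrm{nc}}$ and obeys the Leibniz rule, one gets $\begin{pmatrix}1 & \wt H(\tau) \\ 0 & 1\end{pmatrix}\left(\wt\partial^{\scr{B}^+}_{G,\Gamma'}X\right)\begin{pmatrix}1 & -\wt H(\tau) \\ 0 & 1\end{pmatrix}$, which by the identity just derived equals $\begin{pmatrix}1 & \wt H(\tau) \\ 0 & 1\end{pmatrix}\begin{pmatrix}1 & \wt H(\tau) \\ 0 & 1\end{pmatrix}\left(\wt\partial^{\scr{B}^+}_{G,\Gamma} X\right)\begin{pmatrix}1 & -\wt H(\tau) \\ 0 & 1\end{pmatrix}\begin{pmatrix}1 & -\wt H(\tau) \\ 0 & 1\end{pmatrix}$ — wait, one must be careful: the cleaner route is to observe $\wt\partial^{\scr{B}^+}_{G,\Gamma'} X = \begin{pmatrix}1 & \wt H(\tau) \\ 0 & 1\end{pmatrix}\left(\wt\partial^{\scr{B}^+}_{G,\Gamma} X\right)\begin{pmatrix}1 & -\wt H(\tau) \\ 0 & 1\end{pmatrix} = \Phi_\Gamma^{\Gamma'}\!\left(\wt\partial^{\scr{B}^+}_{G,\Gamma} X\right)$, the last equality because $\Phi_\Gamma^{\Gamma'}$ acts as conjugation by that matrix on the span of $x,y,z,w$ and as the identity on $\wt R^{\mathrm{nc}}$, and $\wt\partial^{\scr{B}^+}_{G,\Gamma} X$ has entries in $\wt R^{\mathrm{nc}}$ so $\Phi_\Gamma^{\Gamma'}$ indeed just conjugates it. Hence the intertwining holds on $X$, and it is already known on faces, completing the verification.

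The main obstacle is the bookkeeping of signs and of the precise unipotent matrix: one must pin down, using Figure \ref{fig:Generator_Convention}, Figure \ref{fig:Sign_Rule_Thread_Crossings} and the conventions in Definition \ref{defn:binary_seq}, that the extra thread crossing on the outgoing initial leg of every tine contributes $+\wt H(\tau)$ in the $(0,1)$-entry of its $2\times 2$ word matrix while the incoming terminal leg contributes $-\wt H(\tau)$ (or vice versa, consistently), so that what appears is genuinely conjugation by $\begin{pmatrix}1 & \wt H(\tau) \\ 0 & 1\end{pmatrix}$ and not some other combination; the fact that a binary sequence at a thread can only switch $0\to 1$ (never $1\to 0$) is what forces the matrix to be unipotent upper-triangular rather than a general matrix. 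Once that local computation is nailed down, the rest is formal from Lemmas \ref{lem:locality_of_word} and \ref{lem:invariance_of_word} and the Leibniz rule, exactly as in Proposition \ref{prop:Move_V_action}. Tameness follows since $\Phi_\Gamma^{\Gamma'}$ sends $z \mapsto z + \wt H(\tau) y$, $w \mapsto w + \wt H(\tau)x - z\wt H(\tau) - \wt H(\tau) y \wt H(\tau)$, $y \mapsto y$, $x \mapsto x - y\wt H(\tau)$ (up to rechecking the exact expressions), each an elementary automorphism or a composition of such, so the composite is tame.
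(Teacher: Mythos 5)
Your overall strategy is the paper's: decompose each tine using Lemmas \ref{lem:locality_of_word} and \ref{lem:invariance_of_word} so that Move VI amounts to moving a single unipotent factor $M := \begin{pmatrix}1 & \wt{H}(\tau) \\ 0 & 1\end{pmatrix}$ from one end of the matrix of words to the other, then conclude by the Leibniz rule; your tameness formulas also match the paper's. But your final intertwining step contains a genuine error. You assert that $\Phi_{\Gamma}^{\Gamma'}\bigl(\wt{\partial}^{\scr{B}^+}_{G,\Gamma}X\bigr) = M\bigl(\wt{\partial}^{\scr{B}^+}_{G,\Gamma}X\bigr)M^{-1}$ because ``$\Phi_{\Gamma}^{\Gamma'}$ acts as conjugation on the span of $x,y,z,w$ and as the identity on $\wt{R}^{\mathrm{nc}}$, and $\wt{\partial}^{\scr{B}^+}_{G,\Gamma}X$ has entries in $\wt{R}^{\mathrm{nc}}$ so it just conjugates it.'' This cannot be right: the entries of $\wt{\partial}^{\scr{B}^+}_{G,\Gamma}X$ involve only ring elements and face generators, and an algebra homomorphism that fixes the ring and every face generator fixes every such element, so $\Phi_{\Gamma}^{\Gamma'}\bigl(\wt{\partial}^{\scr{B}^+}_{G,\Gamma}X\bigr) = \wt{\partial}^{\scr{B}^+}_{G,\Gamma}X$, not its conjugate. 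The condition you must verify is therefore $\wt{\partial}^{\scr{B}^+}_{G,\Gamma'}\bigl(MXM^{-1}\bigr) = M\bigl(\wt{\partial}^{\scr{B}^+}_{G,\Gamma'}X\bigr)M^{-1} = \wt{\partial}^{\scr{B}^+}_{G,\Gamma}X$, i.e.\ the \emph{old} differential is the conjugate of the \emph{new} one. Your geometric claim $\gamma_f^{\Gamma'} \sim \mu * \gamma_f^{\Gamma} * \mu^{-1}$ yields the reversed relation $\wt{\partial}^{\scr{B}^+}_{G,\Gamma'}X = M\bigl(\wt{\partial}^{\scr{B}^+}_{G,\Gamma}X\bigr)M^{-1}$, and with that relation the map intertwining the differentials would be $X \mapsto M^{-1}XM$, not the one in the statement. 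So the ``or vice versa'' you defer is not innocuous bookkeeping: it decides whether you prove the lemma or its inverse.

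The way to pin it down is the paper's decomposition. Before the move every tine crosses $\tau$ at its very start, $\gamma = \gamma^{\tau} * \gamma'$, while after the move the same loop crosses $\tau$ at its very end, $\gamma = \gamma' * \gamma^{\tau}$, with the binary sequences on $\gamma'$ and $\gamma^{\tau}$ identified in both cases. Hence $W^{\Gamma}(\gamma) = W(\gamma^{\tau})W(\gamma')$ and $W^{\Gamma'}(\gamma) = W(\gamma')W(\gamma^{\tau})$, so $W^{\Gamma}(\gamma) = M\,W^{\Gamma'}(\gamma)\,M^{-1}$. Summing over tines gives $\wt{\partial}^{\scr{B}^+}_{G,\Gamma}X = M\bigl(\wt{\partial}^{\scr{B}^+}_{G,\Gamma'}X\bigr)M^{-1} = \wt{\partial}^{\scr{B}^+}_{G,\Gamma'}\bigl(\Phi_{\Gamma}^{\Gamma'}(X)\bigr)$, which together with $\Phi_{\Gamma}^{\Gamma'}\bigl(\wt{\partial}^{\scr{B}^+}_{G,\Gamma}X\bigr) = \wt{\partial}^{\scr{B}^+}_{G,\Gamma}X$ closes the verification.
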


\begin{proof}
	Suppose $\gamma$ is any curve on $S^2$ from the base point $s$ back to itself with initial and final tangent vectors matching the coorientation of the seed $\xi$, and suppose that the thread passes through $\xi$ according to Move VI. Notice that before the thread passes through, we may write $\gamma = \gamma^\tau * \gamma'$, whereas after, we have $\gamma = \gamma' * \gamma^\tau$, up to homotopy of endpoints not interacting with the garden $\Gamma$ in the first case and with the garden $\Gamma'$ in the second. In particular, in this decomposition, the binary sequences associated to $\gamma'$ and $\gamma^{\tau}$ are identified, so we may write $W(\gamma')$ and $W(\gamma^{\tau})$ without respect to the garden, even though we must write $W^{\Gamma}(\gamma)$ and $W^{\Gamma'}(\gamma)$ accordingly. The matrix of words for $\gamma^{\tau}$ is
	$$W(\gamma^{\tau}) = \begin{pmatrix}1 & \wt{H}(\tau) \\ 0 & 1\end{pmatrix}.$$
	Therefore, we find
	$$W^{\Gamma}(\gamma) = \begin{pmatrix}1 & \wt{H}(\tau) \\ 0 & 1\end{pmatrix}W(\gamma'), \qquad W^{\Gamma'}(\gamma) = W(\gamma')\begin{pmatrix}1 & \wt{H}(\tau) \\ 0 & 1\end{pmatrix}.$$
	Since $\begin{pmatrix}1 & \wt{H}(\tau) \\ 0 & 1 \end{pmatrix}^{-1} = \begin{pmatrix}1 & -\wt{H}(\tau) \\ 0 & 1 \end{pmatrix}$, we arrive at
	$$W^{\Gamma}(\gamma) = \begin{pmatrix}1 & \wt{H}(\tau) \\ 0 & 1\end{pmatrix}W^{\Gamma'}(\gamma)\begin{pmatrix}1 & -\wt{H}(\tau) \\ 0 & 1\end{pmatrix}.$$
	Summing these relations over all tines $\gamma_f$, we find the desired result
	$$\wt{\partial}^{\scr{B}^+}_{G,\Gamma}(X) = \begin{pmatrix}1 & \wt{H}(\tau) \\ 0 & 1\end{pmatrix} \cdot \left(\wt{\partial}^{\scr{B}^+}_{G,\Gamma'}(X)\right) \cdot \begin{pmatrix}1 & -\wt{H}(\tau) \\ 0 & 1\end{pmatrix} = \wt{\partial}^{\scr{B}^+}_{G,\Gamma'} \left(\begin{pmatrix}1 & \wt{H}(\tau) \\ 0 & 1\end{pmatrix} \cdot X \cdot \begin{pmatrix}1 & -\wt{H}(\tau) \\ 0 & 1\end{pmatrix}\right).$$
	This verifies that $\Phi_{\Gamma}^{\Gamma'}$ as in the statement in the lemma is a dg-homomorphism. Writing these expressions out in terms of $x$, $y$, $z$, and $w$, we have
	\begin{empheq}[left=\empheqbiglbrace]{align*}
		\Phi(x) &= x-y \cdot \wt{H}(\tau) \\
		\Phi(y) &= y \\
		\Phi(z) &= z + \wt{H}(\tau) \cdot y \\
		\Phi(w) &= w + \wt{H}(\tau) \cdot x - \wt{H}(\tau) \cdot y \cdot \wt{H}(\tau) - z \cdot \wt{H}(\tau)
	\end{empheq}
	from which it follows that the constructed dg-isomorphism is tame, given as a composition of four elementary dg-automorphisms.
\end{proof}

\begin{lem} \label{lem:Move_VII}
	Suppose $\Gamma$ and $\Gamma'$ are two gardens which differ by Move VII as in the right side of Figure \ref{fig:Moves_VI-VII}, and suppose $e$ is the edge appearing in the diagram. Let $\mathfrak{o}$ be the orientation of the edge upwards (regardless of the orientation induced by the garden), and set $\lambda := \lambda_{e,\mathfrak{o}}$ and $\overline{\lambda} := \lambda_{e,\overline{\mathfrak{o}}}$. Then we have a regeneration
	$$\Phi_{\Gamma}^{\Gamma'} \colon (\wt{\scr{B}}_G^+,\wt{\partial}^{\scr{B}^+}_{G,\Gamma}) \rightarrow (\wt{\scr{B}}_G^+,\wt{\partial}^{\scr{B}^+}_{G,\Gamma'})$$
	defined on generators by
	\begin{align*}
		\Phi_{\Gamma}^{\Gamma'}(f) &= f \\
		\Phi_{\Gamma}^{\Gamma'}(X) &= \begin{pmatrix}0 & -\lambda^{-1} \\ \overline{\lambda} & 0\end{pmatrix} X \begin{pmatrix}0 & \overline{\lambda}^{-1} \\ -\lambda & 0\end{pmatrix}
	\end{align*}
\end{lem}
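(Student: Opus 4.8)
The plan is to run the argument of Lemma~\ref{lem:Move_VI} essentially in parallel, the only genuinely new ingredient being the computation of the matrix of words for a single edge-crossing, which turns out to be antidiagonal rather than unipotent. First I would describe the effect of Move~VII on the tines locally. Passing the base point $s$ of the seed through the edge $e$ means that, up to a homotopy of endpoints that does not touch the rest of the garden, every loop $\gamma$ based at $s$ whose initial and terminal tangent vectors match the coorientation of $\xi$ decomposes as $\gamma = \gamma^e * \gamma'$ relative to $\Gamma$ and as $\gamma = \gamma' * \gamma^e$ relative to $\Gamma'$, where $\gamma^e$ is a short transverse path crossing $e$ exactly once in the configuration of Figure~\ref{fig:Moves_VI-VII} and $\gamma'$ is the remainder, whose binary sequences (hence whose matrix of words $W(\gamma')$) are unaffected by the move. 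By Lemma~\ref{lem:locality_of_word} together with the homotopy invariance of Lemma~\ref{lem:invariance_of_word} this yields, with conventions matching the statement, $W^{\Gamma}(\gamma) = N\,W(\gamma')$ and $W^{\Gamma'}(\gamma) = W(\gamma')\,N$, hence $W^{\Gamma}(\gamma) = N\,W^{\Gamma'}(\gamma)\,N^{-1}$, where $N := W(\gamma^e)$.

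The next step is to identify $N$. Since $\gamma^e$ meets exactly one edge and no center or thread, every binary sequence along it is discontinuous precisely at that crossing and is forced to switch state there; hence $w_{0,0}(\gamma^e) = w_{1,1}(\gamma^e) = 0$ and only the off-diagonal entries survive. Reading off the associated monomials from the conventions of Figure~\ref{fig:Generator_Convention} (with $0 \leftrightarrow -$, $1 \leftrightarrow +$, and the right-hand sign rule) for the crossing in the local picture of Move~VII gives $w_{0,1}(\gamma^e) = -\lambda^{-1}$ and $w_{1,0}(\gamma^e) = \overline{\lambda}$, so that
$$N = \begin{pmatrix} 0 & -\lambda^{-1} \\ \overline{\lambda} & 0 \end{pmatrix}, \qquad N^{-1} = \begin{pmatrix} 0 & \overline{\lambda}^{-1} \\ -\lambda & 0 \end{pmatrix},$$
the second identity checked by a direct multiplication (both $\lambda$ and $\overline{\lambda}$ are units in $\wt{R}^{\mathrm{nc}} = \Z[\Pi]$, being elements of $\Pi$). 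Summing $W^{\Gamma}(\gamma_f) = N\,W^{\Gamma'}(\gamma_f)\,N^{-1}$ over $f \in F$, using that $\wt{\partial}^{\scr{B}^+}_{G,\Gamma}$ and $\wt{\partial}^{\scr{B}^+}_{G,\Gamma'}$ act identically on the degree-$1$ generators (the orientation of $G$ is unchanged) and that $\wt{\partial}$ kills $N$ and $N^{-1}$, one obtains $\wt{\partial}^{\scr{B}^+}_{G,\Gamma}X = N\bigl(\wt{\partial}^{\scr{B}^+}_{G,\Gamma'}X\bigr)N^{-1} = \wt{\partial}^{\scr{B}^+}_{G,\Gamma'}\bigl(NXN^{-1}\bigr)$, so the graded-algebra automorphism $\Phi_{\Gamma}^{\Gamma'}$ fixing faces and sending $X \mapsto NXN^{-1}$ is a dg-isomorphism between the two differentials.

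It then remains to see that $\Phi_{\Gamma}^{\Gamma'}$ is a regeneration. Expanding $NXN^{-1}$ entrywise, conjugation by the antidiagonal $N$ interchanges the diagonal and off-diagonal entries of $X$: one finds $\Phi(z) = \lambda^{-1} x \lambda$, $\Phi(x) = \overline{\lambda}\,z\,\overline{\lambda}^{-1}$, $\Phi(w) = -\lambda^{-1} y\,\overline{\lambda}^{-1}$, $\Phi(y) = -\overline{\lambda}\,w\,\lambda$. Thus, after the transposition of the pairs of degree-$2$ generators $\{z,x\}$ and $\{w,y\}$ that is carried along with Move~VII (analogous to the handedness already attached to Move~V), $\Phi_{\Gamma}^{\Gamma'}$ multiplies each generator on the left and on the right by units of $\wt{R}^{\mathrm{nc}}$, and is therefore a regeneration; the faces are fixed, so nothing further is needed.

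The main obstacle is the sign-and-orientation bookkeeping concentrated in the second step: pinning down exactly which of $\lambda_{e,\mathfrak{o}}^{\pm 1}$ and $\lambda_{e,\overline{\mathfrak{o}}}^{\pm 1}$ occurs, and with which sign, in $w_{0,1}(\gamma^e)$ and $w_{1,0}(\gamma^e)$, which requires a careful comparison of the conventions of Figure~\ref{fig:Generator_Convention} against the local geometry of Move~VII; and, relatedly, making the re-decomposition $\gamma = \gamma^e * \gamma'$ versus $\gamma = \gamma' * \gamma^e$ precise enough that the intervening homotopy crosses no center, so that Lemma~\ref{lem:invariance_of_word} genuinely applies.
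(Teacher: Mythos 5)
Your proposal is correct and follows essentially the same route as the paper, which reduces Move VII to the argument for Move VI with the unipotent matrix $W(\gamma^\tau)$ replaced by the antidiagonal matrix $W(\gamma^e)$; the paper merely records the sign as $\pm$ (depending on the edge's garden orientation) and observes that either choice gives the same conjugation, while you fix one choice, which is harmless. Your explicit entrywise expansion of $NXN^{-1}$ and the remark that the conjugation permutes the degree-$2$ generators is a correct and slightly more careful reading of what ``regeneration'' must mean here than the paper spells out.
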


\begin{proof}
	The proof is the same as in Lemma \ref{lem:Move_VI}, but where now we use $\gamma^e$ instead of $\gamma^\tau$, with
	$$W(\gamma^e) = \begin{pmatrix}0 & \pm \lambda^{-1} \\ \mp \overline{\lambda} & 0\end{pmatrix},$$
	where the signs depend upon whether the orientation of the edge in the garden points upwards or downwards. Either choice of sign yields the same end result.
\end{proof}

Notice that in the proof of Lemmas \ref{lem:Move_VI} and \ref{lem:Move_VII}, the key point is that the formulas for $X$ actually hold for any non-degenerate path from the seed back to itself. This is useful for the following reason. Suppose that $\Gamma$ is a garden, and suppose that $\Gamma'$ is a resulting garden after Move VI; Move VII is handled simlarly. Consider the application of Move V, according to $\wt{\sigma}_i$, on either $\Gamma$ or $\Gamma'$. Then we have
$$W(\gamma^{\Gamma}_{f_i,f_{i+1}}) = \begin{pmatrix}1 & \wt{H}(\tau) \\ 0 & 1\end{pmatrix}W(\gamma^{\Gamma'}_{f_i,f_{i+1}})\begin{pmatrix}1 & -\wt{H}(\tau) \\ 0 & 1\end{pmatrix}.$$
It follows, regardless of whether we apply Move V followed by Move VI, or Move VI followed by Move V, the image of the matrix of generators $X$ is given by the left and right hand sides of the following equality:
$$\begin{pmatrix}1 & \wt{H}(\tau) \\ 0 & 1\end{pmatrix}X\begin{pmatrix}1 & -\wt{H}(\tau) \\ 0 & 1\end{pmatrix} + W(\gamma^{\Gamma}_{f_i,f_{i+1}}) = \begin{pmatrix}1 & \wt{H}(\tau) \\ 0 & 1\end{pmatrix}(X+W(\gamma^{\Gamma'}_{f_i,f_{i+1}}))\begin{pmatrix}1 & -\wt{H}(\tau) \\ 0 & 1\end{pmatrix}.$$
In other words, the dg-isomorphisms by which Moves V and VI act commute. By the same reasoning, Moves V and VII also commute. Hence, we may essentially study Moves VI and VII independently from Move V, which we already understand quite well, as summarized by Lemma \ref{lem:Move_V_invariance}.

Recall from Section \ref{ssec:Full_Twist} that $\pi_0(\scr{G}(G))$ is given by a collection of $\mathrm{Br}_{g+3}/\Z$-torsors over the collection $\mathrm{Tri}(G)$ of triangles obtained by cutting $S^2$ along the graph and the web (which we have fixed up to isotopy, since the web is a contractible choice). Moves VI and VII are precisely the moves which allow us to move from one triangle to the next. In fact, suppose that triangles $\nu_1,\nu_2 \in \mathrm{Tri}(G)$ are adjacent to each other, sharing an edge which is either an edge of the graph or a thread. If we have a garden $\Gamma$ with base point in $\nu_1$, then we may homotope all of the tines (via Moves I-IV) so that they all pass through the edge shared between $\nu_1$ and $\nu_2$ before interacting with any other edges, threads, or centers. Then we can push the seed from $\nu_1$ to $\nu_2$ using Move VI or VII accordingly. Since these moves commute with Move V, this yields a bijection between the corresponding $\mathrm{Br}_{g+3}/\Z$-torsors of homotopy classes of gardens. In fact, this bijection is completely canonical, irrespective of how we applied Moves I-IV in order to arrive in a situation in which we could push the seed through an edge or thread via Move VI or VII. It so happens also that the formulas of Lemmas \ref{lem:Move_VI} and \ref{lem:Move_VII} only depend upon the orientation through which the seed is pushed. Hence, if $\Gamma_1$ is a homotopy class of garden with base point on $\nu_1$ and $\Gamma_2$ is the corresponding homotopy class of garden with base point on $\nu_2$ given by an application of Move VI or VII, then the corresponding formula for Move VI or VII is a canonical tame dg-automorphism or regeneration from $(\wt{\scr{B}}^+,\wt{\partial}^{\scr{B}^+}_{G,\Gamma_1})$ to $(\wt{\scr{B}}^+,\wt{\partial}^{\scr{B}^+}_{G,\Gamma_2})$.

In this way, for each triangle $\nu \in \mathrm{Tri}(G)$ and each of its edges (which is either an edge of $G$ or a thread in the web), we may associate a dg-automorphism corresponding to moving the seed through that edge. Furthermore, the triangle comes with a canonical orientation, and hence we can distinguish canonically between the two edges of the triangle which are actually threads. We write $L,R,S$, for \emph{left}, \emph{right}, and \emph{straight}, to denote the corresponding Move VI and VII actions, where the labels $L$, $R$, and $S$ appear in the order of the natural boundary orientation of the triangle, and where $S$ is placed on the unique edge of the triangle which is an edge of the graph. In this way, we naturally find an $F_3 = \langle L,R,S \rangle$-action on gardens up to homotopy which lift canonically to regenerative tame dg-automorphisms. That is, for each word $\zeta \in F_3$ and $\Gamma \in \pi_0(\scr{G}(G))$, we obtain canonical regenerative tame dg-automorphisms
$$\Phi^{\zeta}_{\Gamma} \colon (\wt{\scr{B}}_G^+,\wt{\partial}^{\scr{B}^+}_{G,\Gamma}) \rightarrow (\wt{\scr{B}}_G^+,\wt{\partial}^{\scr{B}^+}_{G,\zeta \cdot \Gamma})$$
which compose in the obvious way
$$\Phi^{\zeta\theta}_{\Gamma} = \Phi^{\zeta}_{\theta \cdot \Gamma} \circ \Phi^{\theta}_{\Gamma}.$$

As is suggested in Theorem \ref{thm:nc_CM_functoriality}, we would like to understand the stabilizer of the action on $F_3$ on gardens. If we push a seed back and forth through the same edge, then we come back to the same garden up to homotopy, i.e. $\Gamma = LR \cdot \Gamma = S^2 \cdot \Gamma = RL \cdot \Gamma$. In fact, this is reflected at the level of dg-automorphisms.

\begin{lem}
	We have that $L$ and $R$ act by inverses, and $S$ acts as its own inverse, i.e.
	$$\id = \Phi^{L}_{R \cdot \Gamma}\Phi^{R}_{\Gamma} = \Phi^{S}_{S \cdot \Gamma}\Phi^{S}_{\Gamma} = \Phi^{R}_{L \cdot \Gamma}\Phi^{L}_{\Gamma}.$$
\end{lem}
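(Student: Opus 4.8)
The plan is to verify each of the three claimed identities by a direct computation with the explicit formulas for $\Phi^L$, $\Phi^R$, and $\Phi^S$ coming from Lemmas \ref{lem:Move_VI} and \ref{lem:Move_VII}, using the observation (already recorded before this lemma) that the action of Moves VI and VII on the matrix $X$ is given by conjugation by a fixed $2\times 2$ matrix over $\wt R^{\mathrm{nc}}$, and that pushing the seed through an edge or thread in the reverse direction corresponds exactly to conjugating by the inverse matrix. So the entire statement reduces to checking, in each of the three cases, that the relevant matrix is inverse to the matrix used for the reverse move.

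Concretely, first I would treat $S$. Here the relevant triangle edge is an edge $e$ of the graph, and by Lemma \ref{lem:Move_VII} the move acts by $X \mapsto M X M'$ where $M = \begin{pmatrix} 0 & -\lambda^{-1} \\ \overline\lambda & 0 \end{pmatrix}$ and $M' = \begin{pmatrix} 0 & \overline\lambda^{-1} \\ -\lambda & 0 \end{pmatrix}$, with $\lambda = \lambda_{e,\mathfrak{o}}$, $\overline\lambda = \lambda_{e,\overline{\mathfrak{o}}}$. Pushing back through $e$ in the opposite direction swaps the roles of the two faces adjacent to $e$, which has the effect of sending the matrix $W(\gamma^e)$ to its inverse; one computes $M M = \begin{pmatrix} -\lambda^{-1}\overline\lambda & 0 \\ 0 & -\overline\lambda\lambda^{-1}\cdot(\cdots)\end{pmatrix}$ — more precisely $M$ applied twice, together with the matching change for $M'$, returns the identity because $M^{-1} = \begin{pmatrix} 0 & \overline\lambda^{-1} \\ -\lambda & 0\end{pmatrix}$ is exactly the matrix appearing when the seed is pushed back through $e$ from the other side. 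Hence $\Phi^S_{S\cdot\Gamma} \circ \Phi^S_\Gamma$ is conjugation of $X$ by $M^{-1}M = I$ (and the identity on faces), so it is the identity. The key input is that the sign ambiguity noted in Lemma \ref{lem:Move_VII} ("either choice of sign yields the same end result") is exactly what makes the forward and backward matrices genuine inverses.

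Next I would treat $L$ and $R$ together. By the canonical description preceding the lemma, $L$ and $R$ are the two Move VI actions associated to the two thread-edges of a triangle $\nu$, read off according to the boundary orientation; moving the seed from $\nu$ through its left thread into the neighboring triangle $\nu'$ is, viewed from $\nu'$, the move through $\nu'$'s right thread back into $\nu$. Thus $\Phi^L_{R\cdot\Gamma}$ is literally the dg-isomorphism for pushing the seed back through the same thread $\tau$, which by Lemma \ref{lem:Move_VI} is conjugation of $X$ by $\begin{pmatrix} 1 & -\wt H(\tau) \\ 0 & 1\end{pmatrix}$ — the inverse of the matrix $\begin{pmatrix} 1 & \wt H(\tau) \\ 0 & 1\end{pmatrix}$ used by $\Phi^R_\Gamma$. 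Their composite is conjugation by the identity, so $\Phi^L_{R\cdot\Gamma}\Phi^R_\Gamma = \id$, and symmetrically $\Phi^R_{L\cdot\Gamma}\Phi^L_\Gamma = \id$. Here one must be slightly careful that $\wt H(\tau)$ is computed with respect to the correct orientation data in each of the two triangles, but since $\tau$ and its orientation are the same thread regardless of which adjacent triangle the seed sits in, the monomial $\wt H(\tau)$ is literally identical, so no correction is needed.

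I expect the only genuine subtlety — and hence the main obstacle — to be the bookkeeping of orientations and signs: confirming that "pushing the seed back through the same edge/thread" really does produce the matrix-inverse of the forward move, rather than some sign-twisted variant, and that the $L/R$ labels assigned via the triangle's boundary orientation are consistent when viewed from either of the two adjacent triangles. Once the matrices are checked to be honest inverses, the conclusion is immediate from the fact that these moves act purely by conjugating $X$ and fixing all face generators, so composition of moves corresponds to multiplication of the conjugating matrices. I would phrase the final write-up as: in each case the composite acts by conjugation of $X$ by a product $N^{-1}N = I$ and by the identity on $F$, hence is the identity dg-automorphism; the three identities $LR$, $S^2$, $RL$ being trivial in $F_3 = \langle L,R,S\rangle$ is then consistent with, and in fact implied by, this computation.
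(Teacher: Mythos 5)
Your overall strategy is exactly the paper's: the proof in the text is a one-liner observing that the identities follow from the explicit formulas of Lemmas \ref{lem:Move_VI} and \ref{lem:Move_VII} together with the fact that the seed passes back through the thread or edge \emph{with opposite orientation}. Your treatment of $S$ is correct, including the observation that the sign ambiguity in Move VII is harmless because the move acts by conjugation.

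There is, however, one confused step in your $L$/$R$ argument that undercuts the very sign you need. You assert that the backward move is conjugation by $\begin{pmatrix}1 & -\wt H(\tau)\\ 0 & 1\end{pmatrix}$, and then justify this by saying that ``the monomial $\wt H(\tau)$ is literally identical, so no correction is needed.'' But $\wt H(\tau)=\sigma\cdot H(\tau)$, where $\sigma=\pm1$ is determined by how the thread's orientation appears in the seed's local frame (``upwards'' versus ``downwards'' in Figure \ref{fig:Moves_VI-VII}). If $\wt H(\tau)$ — sign included — were literally the same for the reverse move, then Lemma \ref{lem:Move_VI} would give the \emph{same} conjugating matrix twice, and the composite would be conjugation by $\begin{pmatrix}1 & 2\wt H(\tau)\\ 0 & 1\end{pmatrix}$, not the identity. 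What actually happens is that the thread's canonical orientation (center to vertex) is fixed, but once the seed sits on the other side of the thread, that orientation appears reversed in the seed's local picture, so $\sigma$ flips and $\wt H(\tau)$ changes sign; this is precisely the ``opposite orientation'' the paper's proof invokes, and it is the sole source of the inverse matrix. So the correction you dismiss is exactly the correction that makes the lemma true. With that one point repaired, the rest of your computation goes through.
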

\begin{proof}
	This is obvious from Lemmas \ref{lem:Move_VI} and \ref{lem:Move_VII}, and the fact that we pass the thread or edge back and forth with opposite orientation.
\end{proof}

If a thread moves around a vertex of the graph, i.e. via the word $(SL)^3$, then the garden is again preserved up to homotopy, since the seed has just gone around a contractible loop in $S^2$ around a vertex. This is also reflected at the level of dg-automorphism.

\begin{lem} \label{lem:SL_cubed}
	We have
	$$\id = \Phi^S_{LSLSL \cdot \Gamma} \circ \Phi^L_{SLSL \cdot \Gamma} \circ \Phi^S_{LSL \cdot \Gamma} \circ \Phi^L_{SL \cdot \Gamma} \circ \Phi^S_{L \cdot \Gamma} \circ \Phi^L_{\Gamma}.$$
\end{lem}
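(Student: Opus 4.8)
We want to show that the composition of six dg-automorphisms corresponding to the word $(SL)^3 \in F_3$ acts as the identity on $(\wt{\scr{B}}^+_G, \wt{\partial}^{\scr{B}^+}_{G,\Gamma})$. Geometrically, the word $(SL)^3$ moves the seed of the garden once around a single vertex $v$ of $G$ and back to its starting triangle, traversing the six triangles incident to $v$ (recall a trivalent vertex is surrounded by six triangles in the subdivision of $S^2$ by the graph together with the web, alternating between edge-sides and thread-sides). Since this loop is contractible in $S^2$ — it bounds a small disk around $v$ — the resulting garden is homotopic to $\Gamma$, so at least the source and target dg-algebras agree. The content of the lemma is that the \emph{composite dg-automorphism} is literally the identity, not merely homotopic to it.

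The plan is to compute the composite directly using the matrix formulas of Lemmas \ref{lem:Move_VI} and \ref{lem:Move_VII}. As noted in the proofs of those lemmas, each move acts on the matrix $X$ by left-and-right multiplication by the matrix of words $W(\gamma^\sharp)$ of a short path $\gamma^\sharp$ crossing the relevant edge or thread, and its inverse: Move VII across edge $e$ conjugates by $\begin{pmatrix}0 & -\lambda_{e,\mathfrak o}^{-1} \\ \lambda_{e,\overline{\mathfrak o}} & 0\end{pmatrix}$ (an $S$-move), and Move VI across thread $\tau$ conjugates by $\begin{pmatrix}1 & \wt H(\tau) \\ 0 & 1\end{pmatrix}$ (an $L$-move). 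Label the six threads and three edges incident to $v$ cyclically as they appear around $v$; the six moves in $(SL)^3$ then conjugate $X$ successively by the matrices associated to these alternating edges and threads as the seed circles $v$. The key observation is that the ordered product of these six matrices of words is precisely a representative of the matrix of words $W(\gamma)$ for a small loop $\gamma$ encircling $v$ once (this is exactly the kind of composition computed in Lemma \ref{lem:locality_of_word}), and by homotopy invariance — Lemma \ref{lem:invariance_of_word}, since this loop is contractible through a homotopy not crossing any center — this product equals the identity matrix $I$. Actually one must be slightly careful: the precise statement we need is that the product of the conjugating matrices over the $(SL)^3$ cycle is $I$, which amounts to saying the concatenation of the six short transverse paths, each crossing one incident edge or thread of $v$, is the contractible loop around $v$; this is true by construction of how $L$ and $R$ were defined relative to the triangle orientation, together with the vertex relation $w_v$ being trivial in $\Pi$ (which is precisely what makes the matrix of words of a loop around a vertex equal to $I$). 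Since $\Phi^{(SL)^3}_\Gamma$ fixes all face generators $f$ and acts on $X$ by conjugation by this product, which is $I$, the composite fixes $X$ as well, hence is the identity.

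The main obstacle — really the only nontrivial point — is bookkeeping the signs and the cyclic ordering: one must verify that as the seed passes through the six triangles around $v$, the alternating sequence of $S$ and $L$ moves conjugates by the matrices $W(\gamma^e_i)$ and $W(\gamma^\tau_i)$ in exactly the cyclic order in which the incident edges $e_1,e_2,e_3$ and the six thread-segments appear around $v$, with orientations consistent with the right-hand sign conventions of Figures \ref{fig:Generator_Convention} and \ref{fig:Sign_Rule_Thread_Crossings}. Once this is pinned down, the identity $W(\gamma^{e_1})W(\gamma^{\tau})\cdots = I$ follows from Lemma \ref{lem:invariance_of_word} applied to the contractible vertex loop, or equivalently from directly expanding and using the vertex relation $w_v = 1$ in $\Pi$ together with the definition of $\wt H$. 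I would organize the proof as: (1) identify the six moves with conjugations by explicit $2\times2$ matrices over $\wt R^{\mathrm{nc}}$; (2) observe their ordered product is the matrix of words of the small loop around $v$; (3) invoke Lemma \ref{lem:invariance_of_word} (contractibility, no center crossed) to conclude this product is $I$; (4) conclude the composite dg-automorphism fixes $X$ and all $f$, hence is $\id$.
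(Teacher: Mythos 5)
Your proposal is correct and follows essentially the same route as the paper: form the ordered product $Z$ of the six conjugating matrices for the alternating $L$ (thread) and $S$ (edge) moves, identify $Z$ (up to sign) with the matrix of words of a small loop around the vertex, and conclude $Z = \pm I$ by homotopy invariance of the matrix of words under contraction of that loop, so the composite fixes $X$ and all degree-$1$ generators. The only slip is that there are three threads (not six) incident to the vertex — one per adjacent face — alternating with the three edges to give the six triangles; this does not affect the argument.
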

\begin{proof}
	Upon applying $(SL)^3$, we traverse three threads and three edges. Let us label them $\tau_1,e_1,\tau_2,e_2,\tau_3,e_3$ in the order that they appear. Let $\lambda_k = A_{e_k}$ and $\overline{\lambda}_k = B_{e_k}$ with respect to the inward pointing orientation of the edge. Let
	$$Z := \begin{pmatrix}1 & H(\tau_1) \\ 0 & 1\end{pmatrix}\begin{pmatrix}0 & -\lambda_1^{-1} \\ \overline{\lambda}_1 & 0 \end{pmatrix}\begin{pmatrix}1 & H(\tau_2) \\ 0 & 1\end{pmatrix}\begin{pmatrix}0 & -\lambda_2^{-1} \\ \overline{\lambda}_2 & 0 \end{pmatrix}\begin{pmatrix}1 & H(\tau_3) \\ 0 & 1\end{pmatrix}\begin{pmatrix}0 & -\lambda_3^{-1} \\ \overline{\lambda}_3 & 0 \end{pmatrix}.$$
	The right hand side of the desired equation preserves the degree $1$ generators and acts on the degree $2$ generators as
	$$X \mapsto ZXZ^{-1}.$$
	But notice that $Z = \pm W(\gamma_v)$, where $\gamma_v$ is the loop going around a vertex once, and $W(\gamma_v) = I$, since $\gamma_v$ can be contracted to a point under Move IV.
\end{proof}

Our action of $F_{3} = \langle L,R,S \rangle$ is actually larger than we need, since even on the dg-algebra level, we have $LR = S^2 = (SL)^3 = 1$. We may therefore instead use the group
$$\Z_2 * \Z_3 \cong \langle S,(SL) \mid S^2, (SL)^3 \rangle \cong \langle L,R,S \mid LR, S^2, (SL)^3 \rangle.$$
In other words, we arrive at the following.
\begin{lem} \label{lem:Moves_V-VII_together}
	There is an action of $F_{g+2} \times (\Z_2 * \Z_3)$ on homotopy classes of gardens (with the same orientation). For each homotopy class of garden $\Gamma$ and element $\tau \in F_{g+2} \times (\Z_2 * \Z_3)$, there exists a regenerative tame dg-isomorphism
	$$\Phi_{\Gamma}^{\tau} \colon (\wt{\scr{B}}^+_G,\wt{\partial}_{G,\Gamma}^+) \rightarrow (\wt{\scr{B}}^+_G,\wt{\partial}_{G,\tau \cdot \Gamma}^+).$$
	These satisfy the composition property
	$$\Phi_{\Gamma}^{\tau\tau'} = \Phi_{\tau' \cdot \Gamma}^{\tau} \circ \Phi_{\Gamma}^{\tau'}.$$
\end{lem}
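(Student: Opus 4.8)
The plan is to assemble Lemma~\ref{lem:Moves_V-VII_together} directly out of the pieces already established: Lemma~\ref{lem:Move_V_invariance} provides the $F_{g+2}$-factor, and the discussion culminating in the $\Z_2 * \Z_3$-action (Lemmas~\ref{lem:Move_VI}, \ref{lem:Move_VII}, \ref{lem:SL_cubed}, and the immediately preceding paragraph) provides the $\Z_2 * \Z_3$-factor. The first step is to record the group action: $F_{g+2}$ acts on homotopy classes of gardens with fixed orientation via Move~V (through the surjection $\pi \colon F_{g+2} \twoheadrightarrow \mathrm{Br}_{g+3}$ onto the braid-group torsor structure of $\pi_0(\scr{G}(G;\xi))$), while $\Z_2 * \Z_3 = \langle L,R,S \mid LR, S^2, (SL)^3 \rangle$ acts via Moves~VI and VII, moving the seed between adjacent triangles of $\mathrm{Tri}(G)$. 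These two actions commute: this is precisely the content of the displayed identity just before Lemma~\ref{lem:Move_VI}'s neighbors, namely that the $\mathrm{Br}_{g+3}$-action (Move~V) and the triangle-change (Moves~VI/VII) commute since Move~V only conjugates the relevant $\gamma^{\Gamma}_{f_i,f_{i+1}}$ by the fixed matrix $\begin{pmatrix}1 & \wt{H}(\tau) \\ 0 & 1\end{pmatrix}$ (resp.\ the Move~VII matrix), independently of which triangle the seed occupies. Hence we obtain a genuine action of the product group $F_{g+2} \times (\Z_2 * \Z_3)$.

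The second step is to build the dg-isomorphisms $\Phi_{\Gamma}^{\tau}$. For $\tau = (\zeta, \theta)$ with $\zeta \in F_{g+2}$ and $\theta \in \Z_2 * \Z_3$, set $\Phi_{\Gamma}^{(\zeta,\theta)} := \Phi_{\zeta \cdot \Gamma}^{\theta} \circ \Phi_{\Gamma}^{\zeta}$, where $\Phi_{\Gamma}^{\zeta}$ is the tame dg-isomorphism of Lemma~\ref{lem:Move_V_invariance} and $\Phi_{\Gamma}^{\theta}$ is the regenerative tame dg-automorphism assembled from Moves~VI/VII (here I use that the $F_3$-action there descends to $\Z_2 * \Z_3$ by Lemmas on $LR$, $S^2$, $(SL)^3$, which act trivially at the dg-level). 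One should check this is well-defined, i.e.\ independent of the order in which the two factors are applied: since the two families of dg-isomorphisms commute (again the displayed commutation relation above, now applied generator-by-generator on $X$, with the degree-$1$ face generators untouched by both), we have $\Phi_{\zeta\cdot\Gamma}^{\theta} \circ \Phi_{\Gamma}^{\zeta} = \Phi_{\theta\cdot\Gamma}^{\zeta} \circ \Phi_{\Gamma}^{\theta}$, so the definition is symmetric. Each $\Phi_{\Gamma}^{\zeta}$ is tame and each $\Phi_{\Gamma}^{\theta}$ is a composition of tame dg-automorphisms and (simple) regenerations — the regenerations coming from Move~VII have the form $X \mapsto \begin{pmatrix}0 & -\lambda^{-1} \\ \overline\lambda & 0\end{pmatrix} X \begin{pmatrix}0 & \overline\lambda^{-1} \\ -\lambda & 0\end{pmatrix}$, and one checks the off-diagonal conjugation amounts to scaling each of $x,y,z,w$ by $\pm\lambda^{\pm 1}$ and $\pm\overline\lambda^{\pm 1}$ whose product abelianizes to $1$ — hence the composite $\Phi_{\Gamma}^{\tau}$ is a simply regenerative tame dg-isomorphism, as claimed.

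The third step is the composition property $\Phi_{\Gamma}^{\tau\tau'} = \Phi_{\tau' \cdot \Gamma}^{\tau} \circ \Phi_{\Gamma}^{\tau'}$. Writing $\tau = (\zeta,\theta)$, $\tau' = (\zeta',\theta')$, so $\tau\tau' = (\zeta\zeta', \theta\theta')$, we expand both sides using the composition rules already proved: $\Phi^{\zeta\zeta'}_{\Gamma} = \Phi^{\zeta}_{\zeta'\cdot\Gamma}\circ\Phi^{\zeta'}_{\Gamma}$ from Lemma~\ref{lem:Move_V_invariance}, and $\Phi^{\theta\theta'}_{\Gamma} = \Phi^{\theta}_{\theta'\cdot\Gamma}\circ\Phi^{\theta'}_{\Gamma}$ for the $\Z_2 * \Z_3$-factor. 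Interleaving these via the commutation of the two families then rearranges $\Phi^{\theta\theta'}_{(\zeta\zeta')\cdot\Gamma}\circ\Phi^{\zeta\zeta'}_{\Gamma}$ into $(\Phi^{\theta}_{(\theta'\zeta')\cdot\Gamma}\circ\Phi^{\zeta}_{(\zeta'\theta')\cdot\Gamma})\circ(\Phi^{\theta'}_{\zeta'\cdot\Gamma}\circ\Phi^{\zeta'}_{\Gamma})$, which upon using commutativity of the action of $F_{g+2}$ and $\Z_2*\Z_3$ on the garden is exactly $\Phi^{\tau}_{\tau'\cdot\Gamma}\circ\Phi^{\tau'}_{\Gamma}$.

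The main obstacle is the commutation step: one must verify carefully that applying Move~V and then Move~VI/VII to a garden produces \emph{the same homotopy class} as the reverse order, and that the corresponding dg-isomorphisms literally agree (not just up to homotopy). The geometric content is that the curves $\gamma^{\Gamma}_{f_i,f_{i+1}}$ controlling Move~V can always be chosen to pass through the edge or thread of Move~VI/VII in the ``$\gamma^\tau * \gamma'$'' normal form used in the proofs of Lemmas~\ref{lem:Move_VI} and \ref{lem:Move_VII}; once that normal form is arranged — which is possible by Moves~I--IV, invariantly by Lemma~\ref{lem:invariance_of_word} — the matrix identities follow by the already-displayed computation. The only subtlety is bookkeeping the seed's triangle and confirming that the $\Z_2*\Z_3$-formulas depend only on the orientation of the traversed edge, which is stated in the text preceding Lemma~\ref{lem:Move_VI}. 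Everything else is formal group-action and composition bookkeeping.
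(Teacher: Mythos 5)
Your proposal is correct and follows essentially the same route as the paper: the paper likewise defines $\Phi_{\Gamma}^{\tau}$ for $\tau=(\tau_1,\tau_2)$ as the composite of the Move V isomorphism of Lemma \ref{lem:Move_V_invariance} with the Moves VI/VII isomorphism, uses the commutation identity displayed after Lemma \ref{lem:Move_VII} to see that the two orders of composition agree, and then verifies the composition property by exactly the interleaving/regrouping computation you describe. The only difference is cosmetic: you try to justify the adjective ``simply regenerative'' for the Move VII factor by hand (note that conjugating $X$ by the off-diagonal matrices permutes the degree-$2$ generators rather than merely scaling them, so the check is slightly more delicate than ``product abelianizes to $1$''), whereas the paper simply inherits this from Lemmas \ref{lem:Move_VI} and \ref{lem:Move_VII}.
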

\begin{proof}
	We have defined an action of $\Z_2 * \Z_3$ on the dg-algebras which commute with the action of $F_{g+3}$ via Move V. In other words, we may write
	$$\Phi_{\Gamma}^{\tau} := \Phi_{\tau_2 \cdot \Gamma}^{\tau_1} \circ \Phi_{\Gamma}^{\tau_2} = \Phi_{\tau_1 \cdot \Gamma}^{\tau_2} \circ \Phi_{\Gamma}^{\tau_1}$$
	where $\tau = (\tau_1,\tau_2) \in F_{g+2} \times (\Z_2 * \Z_3)$. The commutativity allows us to group like terms. In other words, if $\tau = (\tau_1,\tau_2)$ and $\tau' = (\tau'_1,\tau'_2)$, then:
	\begin{align*}
		\Phi_{\tau' \cdot \Gamma}^{\tau} \circ \Phi_{\Gamma}^{\tau'} &= \Phi_{(\tau_2\tau') \cdot \Gamma}^{\tau_1} \circ \Phi_{\tau' \cdot \Gamma}^{\tau_2} \circ \Phi_{\tau'_2 \cdot \Gamma}^{\tau'_1} \circ \Phi_{\Gamma}^{\tau'_2} \\
			&= \Phi_{(\tau_2\tau'_1\tau'_2) \cdot \Gamma}^{\tau_1} \circ \Phi_{(\tau_1'\tau_2') \cdot \Gamma}^{\tau_2} \circ \Phi_{\tau'_2 \cdot \Gamma}^{\tau'_1} \circ \Phi_{\Gamma}^{\tau'_2} \\
			&= \Phi^{\tau_2}_{(\tau_1\tau'_1\tau'_2) \cdot \Gamma} \circ \Phi^{\tau_1}_{(\tau'_1\tau'_2) \cdot \Gamma} \circ \Phi_{\tau'_2 \cdot \Gamma}^{\tau'_1} \circ \Phi_{\Gamma}^{\tau'_2} \\
			&=\Phi^{\tau_2}_{(\tau_1\tau'_1\tau'_2) \cdot \Gamma} \circ \Phi^{\tau_1\tau'_1}_{\tau'_2 \cdot \Gamma} \circ \Phi_{\Gamma}^{\tau'_2} \\
			&= \Phi^{\tau_2}_{(\tau'_2\tau_1\tau'_1) \cdot \Gamma} \circ \Phi_{(\tau_1\tau'_1) \cdot \Gamma}^{\tau'_2} \circ \Phi_{\Gamma}^{\tau_1\tau'_1} \\
			&= \Phi^{\tau_2\tau'_2}_{(\tau_1\tau'_1) \cdot \Gamma} \circ \Phi_{\Gamma}^{\tau_1\tau'_1} \\
			&= \Phi^{(\tau_1\tau'_1,\tau'_2\tau'_2)}_{\Gamma} \\
			&= \Phi^{\tau\tau'}_{\Gamma}.
	\end{align*}
\end{proof}

One of the items in Theorem \ref{thm:nc_CM_functoriality} asks us to prove that the stabilizer at $\Gamma$ of the action of $\scr{H}(G)$ on homotopy classes of gardens acts in a manner homotopic to the identity. From our description, we are now at a point where we may understand this stabilizer. We notice that since the orientation of the edges of $G$ is invariant under homotopies, any element of the stabilizer is trivial in the factor $(\Z_2)^E$.

Suppose $\Gamma \in \pi_0(\scr{G}(G))$, and let us consider its stabilizer. Suppose that we fix $s \in S^2$. Let $C$ be the union of the centering of a garden, which we continue to assume is fixed. Then there is an injective homomorphism $$\phi \colon \pi_1(S^2 \setminus C;s) \rightarrow \Z_2 * \Z_3$$ as follows. If we choose a representative non-degenerate loop, then we may label it with a word in $F_3 = \langle L,R,S \rangle$ according to how it passes through threads and edges of the graph. Any two non-degenerate representatives of the same element in the fundamental group are equivalent up to the relations $LR = S^2 = (SL)^3 = 1$, as we can form a homotopy which cancels out going back and forth through the same edge or going around a vertex. The image of $\phi$ consists of those elements of $\Z_2 * \Z_3$ which preserve gardens with base point in the triangle in $\mathrm{Tri}(G)$ containing $s$.

Notice that $S^2 \setminus C$ is homotopy equivalent to a wedge of $g+2$ circles. Hence $\pi_1(S^2 \setminus C;s) \cong F_{g+2}$. One could use a canonical set of generators given by the curves $\eta_1, \ldots, \eta_{g+2}$ appearing in the proof of Proposition \ref{prop:differential}, with $\eta_k$ lying between the tines $\gamma_{f_k}$ and $\gamma_{f_{k+1}}$. One could study the action of Moves VI and VII as applied along $\eta_k$, though we take an alternative approach.

Suppose $c_f$ is one of the centers in $C$, and suppose that the corresponding face $f$ has $n_f$ vertices, or equivalently, $c_f$ has $n_f$ triangles with $c_f$ as a vertex. If $\nu \in \mathrm{Tri}(G)$ has $c_f$ as a vertex, and $\Gamma$ is a homotopy class of garden with base point in $\nu$, then $L^{n_f} \cdot \Gamma$ is a garden again with base point in $\nu$, though not the same garden. However, there will be some element $\tau_f \in \mathrm{Br}_{g+3}/\Z$ such that $\tau_f L^{n_f} \cdot \Gamma = \Gamma$.

In our presentation, however, we are interested in studying a garden $\Gamma$ with base point $s$ which is not necessarily in a triangle with vertex $c_f$. Suppose we choose a path from $s$ to (some fixed point in) $\nu$, where $\nu$ is a triangle with vertex $c_f$. The path corresponds to a word $P_f \in \Z_2 * \Z_3$. The loops based at $s$ corresponding to $P_f L^{n_f}P_f^{-1}$ generate $\pi_1(S^2 \setminus C;s)$. (In fact, any collection of all but one of them are a free set of generators.) It follows, therefore, that if $\Gamma$ is a garden with base point $s$, then its stabilizer in $\scr{H}(G)$ is
$$\mathrm{Stab}_{\scr{H}(G)}(\Gamma) = \langle P_f \wt{\tau}_f L^{n_f} P_f^{-1} \rangle,$$
the subgroup generated by the collection $\{P_f\wt{\tau}_f L^{n_f}P_f^{-1}\}$ over all faces $f$ and lifts $\wt{\tau}_f \in F_{g+2}$ of $\tau_f \in \mathrm{Br}_{g+3}/\Z$. (This is true regardless of the choices of paths $P_f$, precisely because the loops corresponding to $P_fL^{n_f}P_f^{-1}$ generate $\pi_1(S^2 \setminus C;s)$, and any other choice of $P_f$ differs by left multiplication by some element of this group.) Our goal, c.f. Theorem \ref{thm:nc_CM_functoriality}, is to prove that the various $P_f\wt{\tau}_f L^{n_f}P_f^{-1}$ act $(\wt{\scr{B}}^+_{G},\wt{\partial}^{\scr{B}^+}_{G,\Gamma})$ via dg-automorphisms homotopic to the identity.

\begin{lem} \label{lem:Move_VI_around_face}
	In the set up just described, the dg-automorphisms $(\wt{\scr{B}}^+_{G},\wt{\partial}^{\scr{B}^+}_{G,\Gamma})$ induced by the elements $P_f\wt{\tau}_f L^{n_f}P_f^{-1}$ are homotopic to the identity. 
\end{lem}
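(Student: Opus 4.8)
The plan is to reduce the statement to Proposition \ref{prop:condition_htpc_identity}, just as was done for the full twist in Lemma \ref{lem:full_twist} and the braid relations in Lemma \ref{lem:Move_V_invariance}. First I would conjugate away the prefix $P_f$: since $\Phi^{P_f\wt\tau_f L^{n_f}P_f^{-1}}_{\Gamma} = (\Phi^{P_f}_{\Gamma})^{-1} \circ \Phi^{\wt\tau_f L^{n_f}}_{P_f^{-1}\cdot\Gamma} \circ \Phi^{P_f}_{\Gamma}$ by the composition property of Lemma \ref{lem:Moves_V-VII_together}, and since a conjugate of a dg-automorphism homotopic to the identity is again homotopic to the identity (the conjugating dg-isomorphisms act trivially on the degree $1$ generators, which is exactly the hypothesis needed so conjugation preserves homotopy-to-identity), it suffices to treat the case where the base point already lies in a triangle $\nu$ with vertex $c_f$, i.e. to show $\Phi^{\wt\tau_f L^{n_f}}_{\Gamma}$ is homotopic to the identity for a garden $\Gamma$ based in $\nu$. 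Second, since $L^{n_f} \cdot \Gamma$ differs from $\tau_f^{-1}\cdot\Gamma$ (as homotopy classes of gardens) and $\wt\tau_f$ lifts $\tau_f \in \mathrm{Br}_{g+3}/\Z$, I would use Lemma \ref{lem:Move_V_invariance} and Lemma \ref{lem:full_twist} to observe that $\Phi^{\wt\tau_f}$ is well-defined up to dg-automorphisms homotopic to the identity independent of the lift, so we are free to compute with a convenient lift.

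The core computation is then to evaluate the composite dg-automorphism $\Phi^{\wt\tau_f L^{n_f}}_{\Gamma}$ on the matrix $X$ of degree $2$ generators. Applying $L$ exactly $n_f$ times traverses the $n_f$ threads $\tau_f(v_1),\ldots,\tau_f(v_{n_f})$ emanating from $c_f$ and the $n_f$ edges $e_1,\ldots,e_{n_f}$ bordering the face $f$, in cyclic order around $c_f$. By Lemmas \ref{lem:Move_VI} and \ref{lem:Move_VII}, the effect on $X$ is conjugation by the product
\[
Z_f := \prod_{k=1}^{n_f} \begin{pmatrix}1 & \pm\wt H(\tau_f(v_k)) \\ 0 & 1\end{pmatrix}\begin{pmatrix}0 & \pm\lambda_{e_k}^{-1} \\ \mp\overline{\lambda}_{e_k} & 0\end{pmatrix},
\]
and I claim, exactly as in the proof of Lemma \ref{lem:SL_cubed}, that $Z_f = \pm W(\gamma_{c_f})$ where $\gamma_{c_f}$ is the boundary loop $\partial\overline{D_{c_f}}$ of a small disk around the center $c_f$, oriented compatibly with the base point; this is because following $L^{n_f}$ precisely traces out a small loop encircling $c_f$ once inside the face $f$. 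Now $\gamma_{c_f}$ is a loop that passes through the $n_f$ threads and $n_f$ edges but crosses \emph{no} center, so by Lemma \ref{lem:invariance_of_word} its matrix of words $W(\gamma_{c_f})$ is a homotopy invariant; however $\gamma_{c_f}$ is \emph{not} contractible in $S^2\setminus C$ (it encircles $c_f$), so $W(\gamma_{c_f})$ need not equal $I$. Instead, $\gamma_{c_f}$ bounds the punctured disk $D_{c_f}\setminus\{c_f\}$; applying $\tau_f$ (i.e., a suitable braid/Move-V correction isotoping the $c_f$-tine around the loop) lets me homotope $\wt\tau_f L^{n_f}\cdot\Gamma$ back to $\Gamma$, and correspondingly $\Phi^{\wt\tau_f}$ multiplies $X$ on the appropriate side so that the net conjugating element becomes $W$ of the contractible loop based at the seed that bounds the disk $D_{c_f}$ with $c_f$ removed-and-replaced. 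Put differently, $\Phi^{\wt\tau_f L^{n_f}}_\Gamma(X) = M_f X M_f^{-1}$ where $M_f = W(\mu_f)$ for a loop $\mu_f$ that goes from the seed into $f$, around $c_f$ once (crossing $c_f$ zero times), and back, corrected by the $c_f$-tine, which is homotopic through Moves I--IV to a curve passing through $c_f$ once going \emph{out} and once coming \emph{back}; such a curve has vanishing binary-sequence contribution (a binary sequence cannot switch $0\to1$ at $c_f$ twice with no intervening $1\to 0$), hence $M_f$ is, up to an exact correction, a product of an explicit invertible coefficient-ring element times $X$-free terms.

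Finally I would verify the hypotheses of Proposition \ref{prop:condition_htpc_identity}. The dg-automorphism $\Phi^{\wt\tau_f L^{n_f}}_\Gamma$ fixes $R=\Z[\Pi]$ and fixes all degree $1$ generators (Moves VI, VII, and V never touch face generators), so the first two hypotheses of that proposition hold trivially for $S'$ containing all degree $1$ generators, and we need only check that $\Phi(s)-s$ is exact for $s\in\{x,y,z,w\}$, equivalently that $M_f X M_f^{-1} - X$ is exact entrywise. Writing $M_f = I + N_f$ with $N_f$ built from $W(\gamma_{c_f})-I$, and using $\wt\partial^{\scr B^+}X = \sum_{f'}W(\gamma_{f'})$ together with $\wt\partial^{\scr B^+}(W(\gamma_{c_f})) = 0$ — which follows because $\gamma_{c_f}$ crosses no center so its word lies in $\wt R^{\mathrm{nc}}$, on which $\wt\partial^{\scr B^+}$ vanishes — a direct Leibniz-rule computation (mirroring the final display of Lemma \ref{lem:full_twist}) exhibits $M_f X M_f^{-1}-X$ as $\wt\partial^{\scr B^+}$ of an explicit degree $3$ element of the form $(\text{word in } \wt R^{\mathrm{nc}})\cdot X\cdot(\text{word})$. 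I expect the main obstacle to be precisely the bookkeeping in the previous paragraph: correctly identifying the braid correction $\tau_f$ and tracking the $\pm$ signs and the left/right placement of the conjugating matrices so that $Z_f$ really is (up to sign and an exact term) $W$ of a loop whose matrix of words is a \emph{unipotent} or \emph{invertible-diagonal-times-exact} matrix, rather than an arbitrary element — this is where the geometric picture of pushing the seed around a center, and the vanishing-at-$c_f$ argument, must be deployed carefully, exactly as the analogous step was the crux of Lemmas \ref{lem:full_twist} and \ref{lem:SL_cubed}.
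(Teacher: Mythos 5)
Your reduction steps are fine and match the paper: conjugating away $P_f$ is harmless because the conjugating dg-isomorphisms preserve the property of being homotopic to the identity, and the choice of lift $\wt{\tau}_f$ is irrelevant by Lemmas \ref{lem:Move_V_invariance} and \ref{lem:full_twist}. But the core computation in your proposal has two genuine errors that derail it.

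First, $L^{n_f}$ does \emph{not} traverse any edges of the graph. Moving the seed around the center $c_f$ passes through exactly the $n_f$ threads emanating from $c_f$ and through no edges: crossing an edge of $G$ would carry the base point into a different face. You have conflated the loop around a \emph{center} with the loop around a \emph{vertex}; the latter is what alternates threads and edges and is the content of Lemma \ref{lem:SL_cubed} (the relation $(SL)^3=1$), not of this lemma. Consequently $L^{n_f}$ acts on $X$ by conjugation by the unipotent matrix $\left(\begin{smallmatrix}1 & \wt{\partial}^{\scr{B}^+}f \\ 0 & 1\end{smallmatrix}\right)$, which is genuinely far from the identity — its off-diagonal entry is the differential of the face generator — rather than by $\pm W(\gamma_{c_f})$ for a contractible-ish loop.

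Second, and more seriously, the claim that the net action of $\wt{\tau}_f L^{n_f}$ on $X$ is a conjugation $M_f X M_f^{-1}$ is structurally wrong. Move V acts \emph{additively}, $X \mapsto X + W(\gamma^{\Gamma}_{f,g})$, not multiplicatively, so $\Phi^{\wt{\tau}_f}$ cannot "multiply $X$ on the appropriate side." The actual content of the lemma is the interaction between the additive contribution of the partial twist $\tau_f = (\sigma_{g+1}\cdots\sigma_1)^{g+2}$ (a sum of terms $W(\gamma^{\Gamma}_{i,j})$ over $1 \le i,j \le g+2$, which your proposal never identifies) and the conjugation by $\left(\begin{smallmatrix}1 & \wt{\partial}^{\scr{B}^+}f \\ 0 & 1\end{smallmatrix}\right)$ coming from $L^{n_f}$; one must expand both, collect terms, and exhibit the difference $\wt{\tau}_f L^{n_f}\cdot X - X$ as exact using identities such as $\wt{\partial}^{\scr{B}^+}\bigl(\sum_{j<i}\overline{W}_iW_j\bigr) = \overline{W}-W$. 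None of this cancellation is present in your sketch, so the appeal to Proposition \ref{prop:condition_htpc_identity} at the end is unsupported.
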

\begin{proof}
	It suffices to prove that $\wt{\tau}_f L^{n_f}$ is homotopic to the identity whenever $\Gamma$ has base point in one of the triangles with vertex $c_f$, since conjugation by $P_f$ does not affect this. In fact, for the same reason, by conjugating by a sequence of actions of Move V, we may assume without loss of generality that $\Gamma$ is such that the tine $\gamma_f$ passing through $c_f$ is the right-most tine exiting the seed. In other words, if we enumerate the faces $f_1,\ldots,f_{g+3}$ according to the order of the tines, then $f = f_{g+3}$ is the specific face we are focusing on. We label the centers $c_1,\ldots,c_{g+3}$ and the tines $\gamma_1,\ldots,\gamma_{g+3}$ accordingly. We write $n_{g+3}$ instead of $n_{f_{g+3}}$ and $\tau$ for $\tau_{f_{g+3}}$. Finally, we note that it suffices to prove the result for a single lift $\wt{\tau}$ of $\tau$, since any two lifts differ by a sequence of full twists, which acts by a dg-automorphism itself homotopic to the identity.

	With these assumptions out of the way, we must first understand the element $\tau \in \mathrm{Br}_{g+3}/\Z$. Consider a decomposition of $S^2 = \D_+ \cup_{S^1} \D_-$ into two disks, similar to in Lemma \ref{lem:full_twist}, but now where both the base point and $c_{g+3}$ are contained in the interior of $\D_+$, and $c_{1}$ through $c_{g+2}$ now occur in order around the boundary of $\D_+$, as in See Figure \ref{fig:Partial_Twist_Set-up}. Notice that if we apply $L^{n_f}$ to the garden, it has the effect given by twisting $\D_+$ clockwise by a full twist. Correspondingly, $\tau$ must be a counter-clockwise twist of $\D_-$, since this undoes the clockwise twist of $\D_+$. That is,
	$$\tau = (\sigma_{g+1}\cdots \sigma_2\sigma_1)^{g+2}.$$
	We may therefore use the lift
	$$\wt{\tau} := (\wt{\sigma}_{g+1}\cdots \wt{\sigma}_2\wt{\sigma}_1)^{g+2}.$$
	
	\begin{figure}[h]
		\centering
		\includegraphics[width=\textwidth]{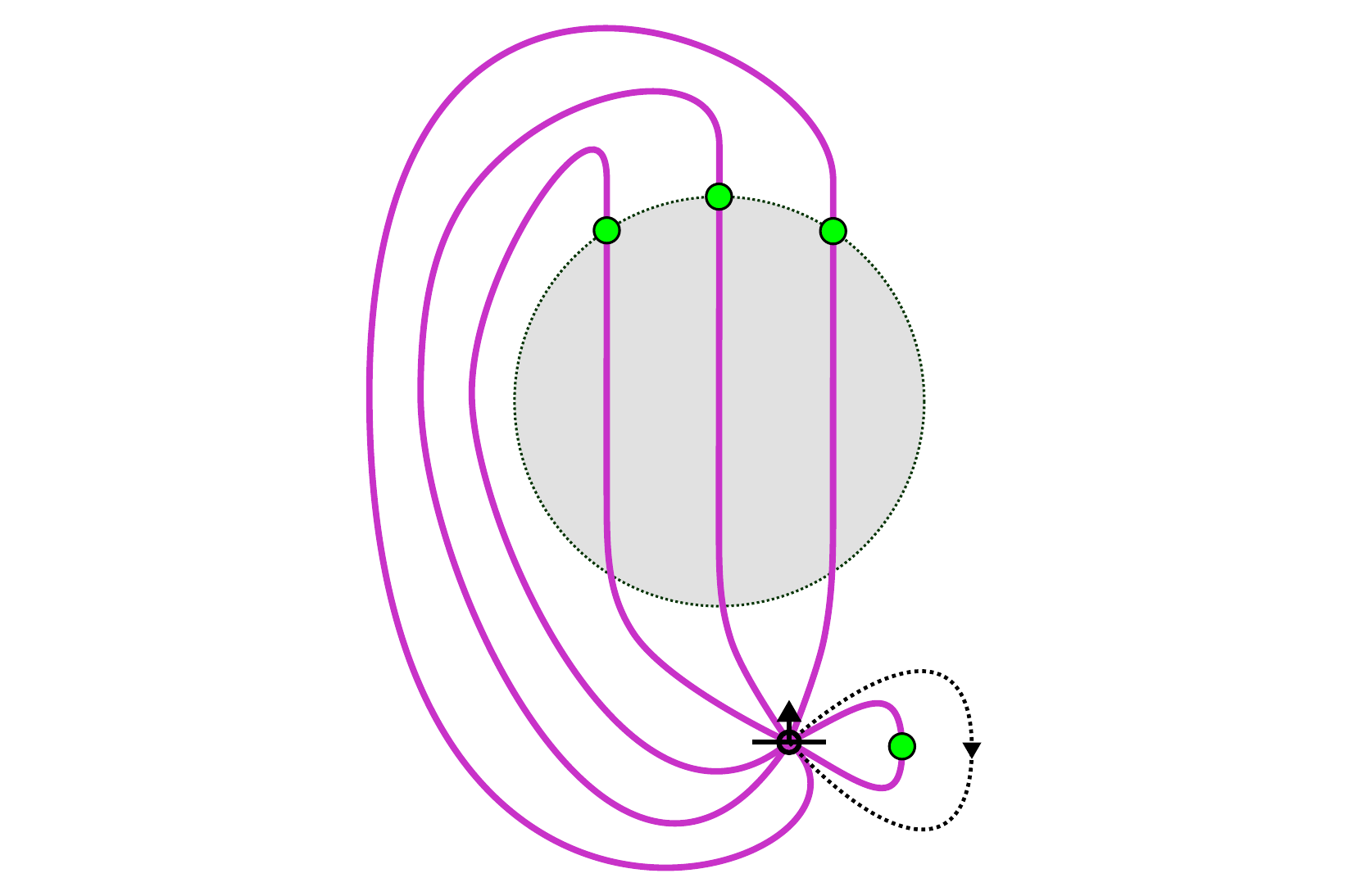}
		\caption{Our set-up for Lemma \ref{lem:Move_VI_around_face}, where $\D_-$ is shaded in grey, and the center $c_{g+3}$ is now in the interior of $\D_+$. For simplicity, we have drawn the situation $g=1$, so that there are only four tines. The dotted black arrow indicates that $L^{n_{g+3}}$ acts by homotoping the seed around this loop (under the assumption that the base point is located in the face corresponding to $f_{g+3}$), which is realized by a full clockwise rotation of $\D_+$.}
		\label{fig:Partial_Twist_Set-up}
	\end{figure}
	
	Let $\wt{D} := \wt{\sigma}_{g+1}\cdots \wt{\sigma}_2\wt{\sigma}_1$, so that $\wt{\tau} = \wt{D}^{g+2}$. Let $D = \pi(\wt{\tau})$, so that $\tau = D^{g+2}$. Then the action of $\wt{\tau}$ is given by preserving the degree $1$ generators and acting on the degree $2$ generators by
	$$\wt{\tau} \cdot X = X+\sum_{j=0}^{g+1}\sum_{k=2}^{g+2}W(\gamma^{D^j \cdot \Gamma}_{1,k}),$$
	where we have used the same notation $\gamma^{\Gamma}_{i,j} \sim (\gamma^{\Gamma}_i)^{-1} * \gamma^{\Gamma}_j$ as in the proof of Lemma \ref{lem:full_twist}. However, the sum is less simple here, because $D$ does not simply shift indices by $1$. Indeed, by inspection of when we apply the action of $D$,
	$$\gamma^{D \cdot \Gamma}_{i} \sim  \begin{cases}(\eta_1)^{-1} *\gamma^{\Gamma}_{i+1,j+1}, & i,j \neq g+2,g+3 \\
		(\eta_1)^{-1} * \gamma^{\Gamma}_{1} * \eta_{g+2}, & i = g+2 \\ \gamma^{\Gamma}_{g+3}, & i = g+3\end{cases},$$
	where $\eta_1$ is the path between $\gamma^{\Gamma}_1$ and $\gamma^{\Gamma}_2$ and $\eta_{g+2}$ is the path between $\gamma^{\Gamma}_{g+2}$ and $\gamma^{\Gamma}_{g+3}$, as in the proof of Proposition \ref{prop:differential}. (Notice $\eta_{g+2}$ appears in Figure \ref{fig:Partial_Twist_Set-up} as the dotted black line.) It follows that the action of $\wt{\tau}$ on the generators $X$ may be written
	$$\wt{\tau} \cdot X = X + \sum_{1 \leq i<j \leq g+2 \\ i \neq j} W(\gamma^{\Gamma}_{i,j}) + \sum_{1 \leq j < i \leq g+2} W(\gamma^{\Gamma}_{i,j}*\eta_{g+2}^{-1}).$$	
	
	On the other hand, the action of $L^{n_{g+3}}$ (as the seed moves counter-clockwise around $c_{g+3}$) is given by
	$$L^{n_{g+3}} \cdot X = \begin{pmatrix}1& H_1 \\ 0 & 1\end{pmatrix} \cdots \begin{pmatrix}1& H_{n_{g+3}} \\ 0 & 1\end{pmatrix}X \begin{pmatrix}1& -H_{n_{g+3}} \\ 0 & 1\end{pmatrix} \cdots \begin{pmatrix}1& -H_1 \\ 0 & 1\end{pmatrix}$$
	where $H_k$ are the monomials associated to the threads emanating from $c_{g+3}$ in order. (This formula holds for any $\Gamma$ with base point in a triangle with vertex $c_{g+3}$.) But multiplying all of these out, this is just
	$$L^{n_{g+3}} \cdot X = \begin{pmatrix}1 & \wt{\partial}^{\scr{B}^+}f_{g+3} \\ 0 & 1 \end{pmatrix} X \begin{pmatrix}1 & -\wt{\partial}^{\scr{B}^+}f_{g+3} \\ 0 & 1 \end{pmatrix}.$$

	For simplicity, let us write
	\begin{align*}
		W_i := W(\gamma_i) &\qquad \qquad \overline{W}_i := W(\gamma_i^{-1})\\
		W := \sum_{i=1}^{g+3}W_i &\qquad \qquad \overline{W} := \sum_{i=1}^{g+3}\overline{W}_i
	\end{align*}
	We have already seen, in the course of the proof of Lemma \ref{lem:full_twist}, that
	$$\wt{\partial}^{\scr{B}^+}\overline{W} = \wt{\partial}^{\scr{B}^+}W = 0$$
	$$X = \wt{\partial}^{\scr{B}^+}X = W.$$
	Notice also that
	$$W_{g+3} = \begin{pmatrix}0 & f_{g+3} \\ 0 & 0\end{pmatrix} = \overline{W}_{g+3}$$
	since they both matrices of words associated to out-and-back paths from the seed to $c_{g+3}$, which lie in the same face. Finally, notice that
	$$W(\eta_{g+2}) = \begin{pmatrix} 1 & -\wt{\partial}^{\scr{B}^+}f_{g+3} \\ 0 & 1\end{pmatrix} = I - \wt{\partial}^{\scr{B}^+}W_{g+3}.$$
	This allows us to notationally simplify both the actions of $\wt{\tau}$ and $L^{n_{g+3}}$. For the action of $\wt{\tau}$, notice that if we include terms with $i=j$, then those terms are $0$. So we have
	\begin{align*}
		\wt{\tau} \cdot X &= X + \sum_{1 \leq i<j \leq g+2} \overline{W}_iW_j + \sum_{1 \leq j<i \leq g+2} \overline{W}_iW_j(I - \wt{\partial}^{\scr{B}^+}W_{g+3}) \\
		&= X+\left(\sum_{i=1}^{g+2}\overline{W}_i\right)\left(\sum_{j=1}^{g+2}W_j\right) - \sum_{1 \leq j<i \leq g+2} \overline{W}_iW_j(\wt{\partial}^{\scr{B}^+}W_{g+3}) \\
		&= X+(\overline{W}-W_{g+3})(W-W_{g+3})+ W_{g+3}W(\wt{\partial}^{\scr{B}^+}W_{g+3}) - \sum_{1 \leq j<i \leq g+3} \overline{W}_iW_j(\wt{\partial}^{\scr{B}^+}W_{g+3}) \\
		&= X+\overline{W}W - \overline{W}W_{g+3} - W_{g+3}W + W_{g+3}W(\wt{\partial}^{\scr{B}^+}W_{g+3}) - \sum_{1 \leq j<i \leq g+3 } \overline{W}_iW_j(\wt{\partial}^{\scr{B}^+}W_{g+3}),
	\end{align*}
	where the tricky terms $W_{g+3}W(\wt{\partial}^{\scr{B}^+}W_{g+3})$ arises because we include in our sum the case $i = g+3$. The action of $L^{n_{g+3}}$ simplifies to
	\begin{align*}
		L^{n_{g+3}} \cdot X &= (I+\wt{\partial}^{\scr{B}^+}W_{g+3})X(I-\wt{\partial}^{\scr{B}^+}W_{g+3}) \\
		&= X + (\wt{\partial}^{\scr{B}^+}W_{g+3}) \cdot X - X(\wt{\partial}^{\scr{B}^+}W_{g+3}) - (\wt{\partial}^{\scr{B}^+}W_{g+3})X(\wt{\partial}^{\scr{B}^+}W_{g+3}).
	\end{align*}
	Combining these actions, we have the following, upon reordering terms.
	\begin{align*}
		\wt{\tau}L^{n_{g+3}} \cdot X - X &= \overline{W}W \\
		&+ (\wt{\partial}^{\scr{B}^+}W_{g+3}) \cdot X - W_{g+3} \cdot W\\
		&+W_{g+3}W(\wt{\partial}^{\scr{B}^+}W_{g+3})- (\wt{\partial}^{\scr{B}^+}W_{g+3})X(\wt{\partial}^{\scr{B}^+}W_{g+3}) \\
		&- \left(X+\sum_{1 \leq j<i \leq g+3} \overline{W}_iW_j \right)(\wt{\partial}^{\scr{B}^+}W_{g+3}) - \overline{W}W_{g+3}
	\end{align*}
	The first three lines on the right are exact:
	\begin{align*}
		\wt{\tau}L^{n_{g+3}} \cdot X - X &= \wt{\partial}^{\scr{B}^+}(\overline{W}X + W_{g+3}X - W_{g+3}X\wt{\partial}^{\scr{B}^+}W_{g+3} \\
		&- \left(X+\sum_{1 \leq j<i \leq g+3} \overline{W}_iW_j \right)(\wt{\partial}^{\scr{B}^+}W_{g+3}) - \overline{W}W_{g+3}.
	\end{align*}
	By Proposition \ref{prop:condition_htpc_identity}, it suffices to prove that the last line is exact. For this, it suffices to prove that
	$$\wt{\partial}^{\scr{B}^+}\left(\sum_{1 \leq j<i \leq g+3} \overline{W}_iW_j\right) = \overline{W}-W,$$
	since then the last line is just $\wt{\partial}^{\scr{B}^+}\left[\left(\sum_{1 \leq j<i \leq g+3} \overline{W}_iW_j + X\right)W_{g+3}\right].$
	
	Let us prove this final claim. Let $\eta_0$ through $\eta_{g+3}$ be as in the proof of Proposition \ref{prop:differential}, so that $\eta_k$ is between the tines $\gamma^{\Gamma}_k$ and $\gamma^{\Gamma}_{k+1}$. Then, as was proved in that proposition,
	$$\wt{\partial}^{\scr{B}^+}W_j = W(\eta_{j})-W(\eta_{j-1}),$$
	and similarly we may conclude that
	$$\wt{\partial}^{\scr{B}^+}\overline{W}_i = W((\eta_{i-1})^{-1}) - W((\eta_{i})^{-1}).$$
	Therefore,
	\begin{align*}
		\wt{\partial}^{\scr{B}^+}\left(\sum_{1 \leq j<i \leq g+3} \overline{W}_iW_j\right) &= \sum_{1 \leq j<i \leq g+3}\left[(W((\eta_{i-1})^{-1}) - W((\eta_{i})^{-1}))W_j - \overline{W}_i(W(\eta_{j})-W(\eta_{j-1})) \right] \\
		&= \sum_{j=1}^{g+3}\left(W((\eta_j)^{-1})-W(\eta_{g+3}^{-1})\right)W_j - \sum_{i=1}^{g+3}\overline{W}_i(W(\eta_{i-1})-W(\eta_0))
	\end{align*}
	by summing over one index. But $\eta_0$ and $\eta_{g+3}$ are contractible via homotopies not interacting with the centering, and so $W(\eta_0) = W((\eta_{g+3})^{-1}) = I$. We find, therefore,
	$$\wt{\partial}^{\scr{B}^+}\left(\sum_{1 \leq j<i \leq g+3} \overline{W}_iW_j\right) = \overline{W} - W + \sum_{j=1}^{g+3}\left(W(\eta_j^{-1} * \gamma_j)-W(\gamma_j^{-1} * \eta_{j-1})\right).$$
	But $\eta_j^{-1} * \gamma_j \sim \gamma_j^{-1} * \eta_{j-1}$ are both homotopic to the out-and-back paths from the base point to the center $c_j$, so the last sum cancels, and we have the desired claim.
\end{proof}

\subsection{Proof of Theorem \ref{thm:nc_CM_enlarged}}

In order to prove Theorem \ref{thm:nc_CM_enlarged}, we essentially just need to check that orientation changes commute with Moves V-VII. We include an explicit proof for completeness.

\begin{proof}[Proof of Theorem \ref{thm:nc_CM_enlarged}]
	Suppose $\zeta = (\iota,\tau) \in (\Z_2)^E \times \left( F_{g+2} \times (\Z_2 * \Z_3)\right)$, and $\Gamma$ is a homotopy class of garden. Then we may take
	$$\Phi_{\Gamma}^{\zeta \cdot \Gamma}(\tau) \colon (\wt{\scr{B}}_G^+, \wt{\partial}^{\scr{B}^+}_{G,\Gamma}) \rightarrow (\wt{\scr{B}}_G^+, \wt{\partial}^{\scr{B}^+}_{G,\tau \cdot \Gamma})$$
	as the composition
	$$\Phi_{\Gamma}^{\zeta \cdot \Gamma}(\tau) := \Phi_{\iota} \circ \Phi_{\Gamma}^{\tau},$$
	where $\Phi_{\iota}$ is as in Lemma \ref{lem:orientation_change} describe the action of changing orientations, and where $\Phi_{\Gamma}^{\tau}$ is as in Lemma \ref{lem:Moves_V-VII_together} describing the action of Moves V, VI, and VII together. We now verify the desired properties.
	\begin{itemize}
		\item \textbf{Composition Property:} It suffices to check that orientation changes commute with Moves V--VII, in which case we have that
		$$\Phi_{\Gamma}^{\zeta \cdot \Gamma}(\zeta) = \Phi_{\iota} \circ \Phi_{\Gamma}^{\tau} = \Phi_{\iota \cdot \Gamma}^{\tau} \circ \Phi_{\iota}.$$
		Once we establish this claim, the proof of the composition property is now the same as in the proof of Lemma \ref{lem:Moves_V-VII_together}, where we use the fact that Move V and Moves VI and VII commuted in the same way. We work towards establishing the claim.
		
		First, note that if $\gamma$ is a non-degenerate path, then
		$$\Phi_{\iota}(W^{\Gamma}(\gamma)) = W^{\iota \cdot \Gamma}(\gamma).$$
		Indeed, we may check this discontinuity by discontinuity for each binary sequence. Equality at centers through which $\gamma$ passes is clear because $\Phi^{\iota}\begin{pmatrix} 0 & f \\ 0 & 0 \end{pmatrix} = \begin{pmatrix} 0 & f \\ 0 & 0 \end{pmatrix}$. Similarly, for discontinuities at edges of the graph, since our sign conventions are precisely so that $A_e$ and $B_e$ come with opposite signs, the result is clear. Finally, for discontinuities at threads, we have already discussed the relevant statement in the proof of Lemma \ref{lem:orientation_change}.
		
		With this in mind, it is clear that orientation changes commute with Move V. This is clear on the ring and on the degree $1$ generators. On the degree $2$ generators, this follows from the formula of Move V in Proposition \ref{prop:Move_V_action} combined with the fact just discussed, which implies
		$$\Phi_{\iota}(W^{\Gamma}(\gamma_{f,g})) = W^{\iota \cdot \Gamma}(\gamma_{f,g})$$
		where $\gamma_{f,g} = \gamma^{\Gamma}_{f,g} = \gamma^{\iota \cdot \Gamma}_{f,g}$ since the curve is independent of the orientations of the edges of the graph.
		
		Move VI, with formula as in Lemma \ref{lem:Move_VI}, is similar, since $\wt{H}(\tau)$ appearing in the formula transforms in the same way. As for Move VII, the matrix $\begin{pmatrix} 0 & -\lambda^{-1} \\ \overline{\lambda} & 0 \end{pmatrix}$ is completely independent of the orientation of the edge, whereas applying $\Phi_{\iota}$ either preserves or negates this matrix, depending upon whether the corresponding edge has its orientation reversed. Either way, since we conjugate by this matrix, even if the orientation is reversed, the minus signs cancel, and we have that changing orientation commutes with Move VII.\\
		\item \textbf{Orientation changes:} This is just a part of Lemma \ref{lem:orientation_change}.\\
		\item \textbf{Modifying the garden:} This follows from Lemma \ref{lem:Moves_V-VII_together} and Lemma \ref{lem:Move_VI_around_face} (with the understanding that the stabilizer of $\Gamma$ is generated by the elements occurring in Lemma \ref{lem:Move_VI_around_face}, as discussed in Section \ref{ssec:Moves_VI_VII}).
	\end{itemize}
\end{proof}

\section{Removing the enlargements} \label{sec:remove_enlargements}

We now wish to prove Theorem \ref{thm:nc_CM_functoriality}, our starting point being Theorem \ref{thm:nc_CM_enlarged} of the last section. We must form the following two operations:
\begin{itemize}
	\item We must shrink the ring from $\wt{R}^{nc} = \Z[\Pi]$ to $R_T^{nc} = \Z[\Pi_T]$, using functoriality properties from $\wt{\scr{B}}^+_{G}$ (with its differentials depending upon a garden) to yield functoriality properties on $\scr{B}^+_{G,T}$ (with its differentials depending upon a garden as well as the tree $T$).
	\item In the special case that the garden $\Gamma$ is finite-type (Definition \ref{defn:finite-type_garden} below), we must further show that we can destabilize to cancel the generators $f_{\Gamma}$ and $w$.
\end{itemize}
We spend a subsection on each of these, and a final short subsection to check that we have proved Theorems \ref{thm:nc_CM} and \ref{thm:nc_CM_functoriality}.

\subsection{Shrinking the ring} \label{ssec:tree_time}

The projections $\pi_T \colon \Pi \rightarrow \Pi_T$ from Section \ref{ssec:nc-coef} induce graded algebra morphisms $\pi_T \colon \wt{\scr{B}}^+_{G} \rightarrow \scr{B}^+_{G,T}$, simply by reducing all of the coefficients. On the other hand, we have right inverses $i_T \colon \Pi_T \rightarrow \Pi$ constructed in Corollary \ref{cor:right_inverse}, which induces graded algebra morphisms $i_T \colon \scr{B}^+_{G,T} \rightarrow \wt{\scr{B}}^+_{G}$. We may now simply check that
$$\pi_T \circ \wt{\partial}^{\scr{B}^+}_{G,\Gamma} \circ i_T \circ \pi_T = \pi_T \circ \wt{\partial}^{\scr{B}^+}_{G,\Gamma}$$
by checking on generators (as an algebra) and extending to the entire algebra by using the Leibniz rule of the differential together with $\Z$-linearity. By Lemma \ref{lem:induced_morphisms}, the differentials $\wt{\partial}^{\scr{B}^+}_{G,\Gamma}$ hence induce differentials
$$\partial^{\scr{B}^+}_{G,\Gamma,T} = \pi_T \circ \wt{\partial}^{\scr{B}^+}_{G,\Gamma} \circ i_T \colon \scr{B}^+_{G,T} \rightarrow \scr{B}^+_{G,T}.$$
In practice, the new differential $\partial^{\scr{B}^+}_{G,\Gamma,T}$ acts in the same way as it usually would, but every time we see a generator $\lambda_{e,\mathfrak{o}}$ for an edge $e$ not in $T$ arising from the differential of an element of $F \cup \{x,y,z,w\}$, we simply set it equal to $1$.

Recall that we defined morphisms $\wt{\scr{C}}_T^{T'}(\gamma) \colon \Pi \rightarrow \Pi$ given on generators $\lambda_{e,\mathfrak{o}}$, which we may think of as a transverse signed path from $c_f^+$ to $c_g^-$ in $\Lambda_G$, by
$$(\wt{\scr{C}}_T^{T'}(\gamma))(\lambda_{e,\mathfrak{o}}) = \beta_T^{T'}(f,\gamma,+) \cdot \lambda_{e,\mathfrak{o}} \cdot \beta_T^{T'}(f,\gamma,-)^{-1}.$$
In turn, this induces a graded algebra isomorphism
$$\wt{\scr{C}}_T^{T'}(\gamma) \colon \wt{\scr{B}}^+_{G} \rightarrow \wt{\scr{B}}^+_{G},$$
with the caveat that it does not intertwine the differential $\wt{\partial}^{\scr{B}^+}_{G,\Gamma}$. In order to fix this, we may now consider the regeneration
$$\wt{\scr{R}}_T^{T'}(\gamma) \colon \wt{\scr{B}}^+_{G} \rightarrow \wt{\scr{B}}^+_{G}$$
defined on generators by:
\begin{align*}
	\left(\wt{\scr{R}}_T^{T'}(\gamma)\right)(f) &= \beta_T^{T'}(f,\gamma,-) \cdot f \cdot \beta_T^{T'}(f,\gamma,+)^{-1} \\
	\left(\wt{\scr{R}}_T^{T'}(\gamma)\right)(x) &= \beta_T^{T'}(f_{\Gamma},\gamma,+) \cdot x \cdot \beta_T^{T'}(f_{\Gamma},\gamma,+)^{-1} \\
	\left(\wt{\scr{R}}_T^{T'}(\gamma)\right)(y) &= \beta_T^{T'}(f_{\Gamma},\gamma,+) \cdot y \cdot \beta_T^{T'}(f_{\Gamma},\gamma,-)^{-1} \\
	\left(\wt{\scr{R}}_T^{T'}(\gamma)\right)(z) &= \beta_T^{T'}(f_{\Gamma},\gamma,-) \cdot z \cdot \beta_T^{T'}(f_{\Gamma},\gamma,-)^{-1} \\
	\left(\wt{\scr{R}}_T^{T'}(\gamma)\right)(w) &= \beta_T^{T'}(f_{\Gamma},\gamma,-) \cdot w \cdot \beta_T^{T'}(f_{\Gamma},\gamma,+)^{-1} 
\end{align*}
In other words, we simply treat $f$ as though it were a path from $c_f^-$ to $c_f^+$, and similarly, the generators in $X$ as paths from $c_{f_{\Gamma}}^{\pm}$ to $c_{f_{\Gamma}}^{\pm}$, depending upon which particular generator we are looking at. (For example, the generator $w$, corresponding to binary sequences of type $(1,0)$, are thought of as paths from $c_{f_{\Gamma}}^+$ to $c_{f_{\Gamma}}^-$.)

\begin{prop} \label{prop:changing_trees}
	For a fixed garden $\Gamma$, the graded algebra isomorphism
	$$\wt{\Phi}_T^{T'}(\gamma) := \wt{\scr{R}}_{T}^{T'}(\gamma) \circ \wt{\scr{C}}_T^{T'}(\gamma) \colon \wt{\scr{B}}^+_{G} \rightarrow \wt{\scr{B}}^+_{G}$$
	is a dg-automorphism with respect to the differential $\wt{\partial}^{\scr{B}^+}_{G,\Gamma}$. Furthermore, these satisfy the composition rule
	$$\wt{\Phi}_{T'}^{T''}(\gamma') \circ \wt{\Phi}_{T}^{T'}(\gamma) = \wt{\Phi}_T^{T''}(\gamma*\gamma').$$	
\end{prop}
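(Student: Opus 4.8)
## Proof proposal for Proposition \ref{prop:changing_trees}

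The plan is to verify the two claims separately, with the dg-automorphism claim being the technical heart. For the composition rule, the plan is to reduce it to the composition rule already established for the coefficient-ring maps $\wt{\scr{C}}_T^{T'}(\gamma)$ in Section \ref{ssec:transition}, together with a bookkeeping computation of how the regenerations $\wt{\scr{R}}_T^{T'}(\gamma)$ compose. First I would recall that $\wt{\scr{C}}_{T'}^{T''}(\gamma') \circ \wt{\scr{C}}_T^{T'}(\gamma) = \wt{\scr{C}}_T^{T''}(\gamma * \gamma')$, which is exactly the second functoriality property proved for the $\wt{\scr{C}}$'s, coming from the homotopy of paths $\left[(\gamma_{q,T''})^{-1} * (\gamma')^{-1} * \gamma_{q,T'}\right] * \left[(\gamma_{q,T'})^{-1} * \gamma^{-1} * \gamma_{q,T}\right] \sim (\gamma_{q,T''})^{-1} * (\gamma * \gamma')^{-1} * \gamma_{q,T}$, i.e. from the relation $\beta_{T'}^{T''}(f,\gamma',\pm) \cdot \beta_T^{T'}(f,\gamma,\pm) = \beta_T^{T''}(f,\gamma,\pm)$. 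Then I would need to commute $\wt{\scr{R}}_T^{T'}(\gamma)$ past $\wt{\scr{C}}_{T'}^{T''}(\gamma')$: the point is that $\wt{\scr{R}}$ conjugates generators by geometric elements $\beta_T^{T'}(f,\gamma,\pm)$ and $\beta_T^{T'}(f_\Gamma,\gamma,\pm)$, while $\wt{\scr{C}}_{T'}^{T''}(\gamma')$ acts on these elements of $\Pi$ by its coefficient-ring action; tracking this through and using the cocycle relation on the $\beta$'s should collapse the four-fold composite to $\wt{\scr{R}}_T^{T''}(\gamma*\gamma') \circ \wt{\scr{C}}_T^{T''}(\gamma*\gamma')$. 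I expect this to be a somewhat fiddly but entirely mechanical diagram chase on generators.

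For the dg-automorphism claim, the plan is to check $\wt{\Phi}_T^{T'}(\gamma) \circ \wt{\partial}^{\scr{B}^+}_{G,\Gamma} = \wt{\partial}^{\scr{B}^+}_{G,\Gamma} \circ \wt{\Phi}_T^{T'}(\gamma)$ on generators. The conceptual reason this works is that $\wt{\Phi}_T^{T'}(\gamma)$ should have the effect of ``relocating the base point along $\gamma$'' in a way compatible with the geometric definition of the differential. Concretely, I would first observe that on the coefficient ring the map $\wt{\scr{C}}_T^{T'}(\gamma)$ is induced by the conjugation isomorphism $\scr{C}(\gamma)$ on $\pi_1$, so it should intertwine $\wt{\partial}^{\scr{B}^+}$ with a ``conjugated'' differential; the job of the simple regeneration $\wt{\scr{R}}_T^{T'}(\gamma)$ is precisely to absorb that conjugation. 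The key computational input is the behavior of the matrix of words $W(\gamma_f)$ under these operations. Since the differential is defined by $\wt{\partial}^{\scr{B}^+}_{G,\Gamma}f = \sum_{v \in f} H_f(v)$ and $\wt{\partial}^{\scr{B}^+}_{G,\Gamma}X = \sum_f W(\gamma_f)$, I would need: (a) that applying $\wt{\scr{C}}_T^{T'}(\gamma)$ to $H_f(v) \in \Pi$ and then conjugating by the appropriate $\beta$'s recovers $H_f(v)$ placed between the correct $\beta$-factors matching $\wt{\scr{R}}_T^{T'}(\gamma)(f)$; and (b) the analogous statement for each entry of $W(\gamma_f)$, where the subtle point is that the $(0,0)$, $(0,1)$, $(1,0)$, $(1,1)$ entries get conjugated by $\beta_T^{T'}(f_\Gamma,\gamma,+)$ and $\beta_T^{T'}(f_\Gamma,\gamma,-)$ in the pattern dictated by the definition of $\wt{\scr{R}}$ on $x,y,z,w$ — that is, $W(\gamma_f) \mapsto \begin{pmatrix} \beta^- & 0 \\ 0 & \beta^+ \end{pmatrix} W(\gamma_f) \begin{pmatrix} (\beta^-)^{-1} & 0 \\ 0 & (\beta^+)^{-1} \end{pmatrix}$ in shorthand, where $\beta^\pm = \beta_T^{T'}(f_\Gamma,\gamma,\pm)$, which is exactly how a matrix of words transforms when the two endpoints (both at the $f_\Gamma$ base point, on sheets $+$ and $-$) are moved by a path. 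Matching indices on the Leibniz expansion of $\wt{\partial}^{\scr{B}^+}X$ then gives the intertwining relation.

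The main obstacle I anticipate is getting the sheet-labels and the $+/-$ conventions exactly right in claim (b): one must be careful that the binary-sequence label $0 \leftrightarrow -$, $1 \leftrightarrow +$ is matched consistently with which sheet of $\Lambda_G$ the endpoint of a type-$(i,j)$ binary sequence lies on, so that the conjugating factors in $\wt{\scr{R}}_T^{T'}(\gamma)(x), \ldots, \wt{\scr{R}}_T^{T'}(\gamma)(w)$ line up precisely with the row/column conjugation of $W(\gamma_f)$. I would handle this by working with the $2 \times 2$ matrix formalism throughout: write $B(f_\Gamma,\gamma) = \mathrm{diag}(\beta_T^{T'}(f_\Gamma,\gamma,+), \beta_T^{T'}(f_\Gamma,\gamma,-))$ (note the order, since row/column $1$ corresponds to label $0$ corresponds to sheet $-$), check that $\wt{\scr{R}}_T^{T'}(\gamma)(X) = B(f_\Gamma,\gamma)^{-1} \cdot X \cdot B(f_\Gamma,\gamma)$ reproduces the five displayed formulas, verify that the geometric path-relocation identity $\wt{\scr{C}}_T^{T'}(\gamma)(W^\Gamma(\gamma_f)) = B(f_\Gamma,\gamma) \cdot W^{\Gamma}(\gamma_f) \cdot B(f_\Gamma,\gamma)^{-1}$ holds (this is the analogue of Lemma \ref{lem:invariance_of_word}-style locality applied to the relocation, using that $\wt{\scr{C}}_T^{T'}(\gamma)$ inserts cancelling $\beta^{-1}\beta$ pairs at every internal edge/thread crossing and leaves honest $\beta^\pm$-factors only at the two endpoints), and then conclude by a one-line matrix computation that both sides of the desired intertwining equation equal $B(f_\Gamma,\gamma)^{-1} \left(\sum_f W^\Gamma(\gamma_f)\right) B(f_\Gamma,\gamma)$ after applying $\wt{\scr{R}}_T^{T'}(\gamma)$. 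The face-generator case is the same argument one dimension down, using the corresponding relation for $\beta_T^{T'}(f,\gamma,\pm)$ and the vertex words $H_f(v)$.
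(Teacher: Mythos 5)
Your proposal follows essentially the same route as the paper's proof: check the intertwining relation on generators, using that $\wt{\scr{C}}_T^{T'}(\gamma)$ conjugates each letter of a geometric word so that the internal $\beta$-factors cancel and only boundary factors survive, which are then absorbed by the simple regeneration $\wt{\scr{R}}_T^{T'}(\gamma)$; and derive the composition rule from the cocycle identity $\beta_{T'}^{T''}(f,\gamma',\pm)\cdot\beta_T^{T'}(f,\gamma,\pm) = \beta_T^{T''}(f,\gamma*\gamma',\pm)$ together with the fact that $\wt{\scr{C}}_{T'}^{T''}(\gamma')$ acts on the $\beta$'s by conjugation. Two bookkeeping points to fix in the execution: first, the identity $\wt{\scr{C}}_T^{T'}(\gamma)(W^{\Gamma}(\gamma_f)) = B\,W^{\Gamma}(\gamma_f)\,B^{-1}$ is false as literally stated, since a tine passes through its center and so $W^{\Gamma}(\gamma_f)$ contains the face generator $f$, which $\wt{\scr{C}}$ fixes; the $\beta$-factors flanking $f$ cancel only after $\wt{\scr{R}}$ replaces $f$ by $\beta_T^{T'}(f,\gamma,-)\cdot f\cdot \beta_T^{T'}(f,\gamma,+)^{-1}$, so the clean statement is $\wt{\Phi}_T^{T'}(\gamma)(W^{\Gamma}(\gamma_f)) = B\,W^{\Gamma}(\gamma_f)\,B^{-1}$, obtained by applying your cancellation argument separately to $\gamma_f^s$ and $\gamma_f^t$ with an intermediate diagonal factor $\mathrm{diag}(\beta_T^{T'}(f,\gamma,-),\beta_T^{T'}(f,\gamma,+))$ absorbed by $\wt{\scr{R}}(f)$ — this is exactly the decomposition the paper uses. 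Second, your two paragraphs give conflicting conventions ($B\,W\,B^{-1}$ with $B = \mathrm{diag}(\beta^-,\beta^+)$ versus $B^{-1}XB$ with $B = \mathrm{diag}(\beta^+,\beta^-)$); only the former reproduces the displayed formulas for $\wt{\scr{R}}_T^{T'}(\gamma)$ on $x$, $y$, $z$, $w$, given that row/column $1$ carries the label $0$, i.e.\ the sign $-$.
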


\begin{proof}
	We first check that $\wt{\Phi}_T^{T'}(\gamma)$ intertwines the differential by checking this property on generators. For the degree $1$ generators $f$, we have
	\begin{align*}
		\left(\wt{\partial}^{\scr{B}^+}_{G,\Gamma} \circ \wt{\Phi}_T^{T'}(\gamma)\right)(f) &= \wt{\partial}^{\scr{B}^+}_{G,\Gamma} \left(\beta_T^{T'}(f,\gamma,-) \cdot f \cdot \beta_T^{T'}(f,\gamma,+)^{-1}\right)\\
			&= \beta_T^{T'}(f,\gamma,-) \cdot \sum_{v \in f}H(\tau_f(v)) \cdot \beta_T^{T'}(f,\gamma,+)^{-1}
	\end{align*}
	whereas
	$$(\wt{\Phi}_T^{T'}(\gamma) \circ \wt{\partial}^{\scr{B}^+}_{G,\Gamma})(f) = \wt{\scr{C}}_T^{T'}(\gamma)\left(\sum_{v \in f} H(\tau_f(v))\right).$$
	If we write out $H(\tau_f(v))$ as a (signed) product of three generators in $W$, we see that
	$$\wt{\scr{C}}_T^{T'}(\gamma)(H(\tau_f(v))) = \beta_T^{T'}(f,\gamma,-) \cdot H(\tau_f(v)) \cdot \beta_T^{T'}(f,\gamma,+)^{-1}$$
	since the internal $\beta$ terms cancel out.
	
	Similarly, for the degree $2$ generators, we have the same principle, but where we notice that our binary sequences are based at curves starting and ending at the seed, located in the face $f_{\Gamma}$. For example, for the generator $x$, we have
	$$\partial^{\scr{B}^+}_{G,\Gamma}(x) = \sum_{f} w_{1,0}(\gamma_f^s) \cdot f \cdot w_{1,1}(\gamma_f^t),$$
	and we notice correspondingly that each word comprising $w_{1,0}(\gamma_f^s)$ corresponds to a geometric element of the group $\Pi$ which originates with label $+$ in the face $f_{\Gamma}$ and similarly $w_{1,1}(\gamma_f^t)$ terminates with label $+$ in $f_{\Gamma}$.

	Finally, for the composition property, we may check it on the ring and on the generators. On the ring, we just have $\wt{\scr{C}}_T^{T'}(\gamma)$ which satisfy the desired composition property. On face generators, we have
	\begin{align*}
		(\wt{\Phi}_{T'}^{T''}(\gamma') \circ \wt{\Phi}_{T}^{T'}(\gamma))(f) &= (\wt{\Phi}_{T'}^{T''}(\gamma')) (\beta_T^{T'}(f,\gamma,-) \cdot f \cdot \beta_T^{T'}(f,\gamma,+)^{-1}) \\
			&= \left(\wt{\scr{C}}_{T'}^{T''}(\gamma')\right)(\beta_T^{T'}(f,\gamma,-)) \\
			& \qquad \qquad \cdot \left(\wt{\scr{R}}_{T'}^{T''}(\gamma)\right)(f)\\
			& \qquad \qquad \qquad \qquad \cdot \left(\wt{\scr{C}}_{T'}^{T''}(\gamma')\right)(\beta_T^{T'}(f,\gamma,+)^{-1}) \\
			&= \left(\beta_{T'}^{T''}(f,\gamma',-) \cdot \beta_T^{T'}(f,\gamma,-) \cdot \beta_{T'}^{T''}(f,\gamma',-)^{-1}\right) \\
			& \qquad \qquad \cdot \left(\beta_{T'}^{T''}(f,\gamma',-) \cdot f \cdot \beta_{T'}^{T''}(f,\gamma',+)^{-1}\right) \\
			& \qquad \qquad \qquad \qquad \cdot \left(\beta_{T'}^{T''}(f,\gamma',+) \cdot \beta_T^{T'}(f,\gamma,+) \cdot \beta_{T'}^{T''}(f,\gamma',+)^{-1}\right) \\
			&= \beta_{T}^{T''}(f,\gamma*\gamma',-) \cdot f \cdot \beta_{T}^{T''}(f,\gamma*\gamma',+)^{-1} \\
			&= (\wt{\Phi}_T^{T''}(\gamma * \gamma'))(f)
	\end{align*}
	A similar computation works for the degree $2$ generators.
\end{proof}

Upon checking that
$$\pi_{T'} \circ \wt{\Phi}_T^{T'}(\gamma) \circ i_T \circ \pi_T = \pi_{T'} \circ \wt{\Phi}_T^{T'}(\gamma)$$
on generators and hence on the entire graded algebra, we may apply Lemma \ref{lem:induced_morphisms}, so that for fixed garden $\Gamma$, the dg-isomorphisms $\wt{\Phi}_T^{T'}(\gamma)$ induce dg-isomorphisms
$$\Phi_T^{T'}(\gamma) \colon (\scr{B}^+_{G,T}, \partial^{\scr{B}^+}_{G,\Gamma,T}) \rightarrow (\scr{B}^+_{G,T'}, \partial^{\scr{B}^+}_{G,\Gamma,T'}).$$
The fact that they intertwine the differentials also follows from Lemma \ref{lem:induced_morphisms}, since the morphisms $\Phi_T^{T'}(\gamma) \circ \partial^{\scr{B}^+}_{G,\Gamma,T}$ and $\partial^{\scr{B}^+}_{G,\Gamma,T'} \circ \Phi_T^{T'}(\gamma)$ are induced by the same morphism upstairs given by $\wt{\Phi}_T^{T'}(\gamma) \circ \wt{\partial}^{\scr{B}^+}_{G,\Gamma} = \wt{\partial}^{\scr{B}^+}_{G,\Gamma} \circ \wt{\Phi}_T^{T'}(\gamma)$, and hence are equal.

\subsection{Destabilization for finite-type gardens} \label{ssec:finite-type}

Recall from Remark \ref{rmk:finite-type} that the Casals--Murphy definition of a garden excludes a tine passing through the face at infinity. We use the following slightly different definition.

\begin{defn} \label{defn:finite-type_garden}
	A garden $\Gamma$ on a trivalent plane graph $G$ is said to be of \textbf{finite-type} if the tine $\gamma_{f_\infty}$ passing through the center of the face at infinity $c_{f_{\Gamma}}$ comes either first or last as it exits the seed.
\end{defn}

Notice that if a garden is finite type, then the tine $\gamma_{f_{\infty}}$ may be homotoped so that it is completely contained inside of the face $f_{\infty}$. After this homotopy, the tine only interacts with the rest of the garden at threads and the center $c_{f_{\infty}}$. It follows that there is a unique binary sequence in $\scr{BS}(\gamma_{f_{\infty}})$, since any binary sequence must go from $0$ to $1$ as the tine passes through $f_{\infty}$, and can only go from $0$ to $1$ as it passes through threads. In other words, for a finite-type garden $\Gamma$, the differential on $\wt{\scr{B}}^+_G$ on degree $2$ generators is of the form
$$\wt{\partial}^{\scr{B}^+}_{G,\Gamma} X = \begin{pmatrix} 0 & f_{\Gamma} \\ 0 & 0 \end{pmatrix} + \sum_{f \in F_{\mathrm{fin}}} W(\gamma_f) = \begin{pmatrix} 0 & f_{\Gamma} \\ 0 & 0 \end{pmatrix} + \sum_{f \in F_{\mathrm{fin}}} W(\gamma_f^s) \cdot \begin{pmatrix} 0 & f \\ 0 & 0\end{pmatrix} \cdot W(\gamma_f^T).$$
In other words, the differentials of the generators $x$, $y$, and $z$ only see terms corresponding to the tines through the finite faces, whereas the differential of $w$ picks up $f_{\Gamma}$ for the face at infinity. This already explains algebraically why, when Casals and Murphy defined a garden, they were able to simply not include a tine through the face at infinity - they were only looking at the generators $x$, $y$, and $z$!

Projecting under $\pi_T$, we have that for a finite type garden,
$$\partial^{\scr{B}^+}_{G,\Gamma,T} X = \begin{pmatrix} 0 & f_{\Gamma} \\ 0 & 0 \end{pmatrix} + \sum_{f \in F_{\mathrm{fin}}} \pi_T\left(W(\gamma_f^s)\right) \cdot \begin{pmatrix} 0 & f \\ 0 & 0\end{pmatrix} \cdot \left(W(\gamma_f^T)\right).$$
Singling out the generator $w$,
$$\partial^{\scr{B}^+}_{G,\Gamma,T} w = f_{\Gamma}+\sum_{f \in F_{\mathrm{fin}}} \pi_T(w_{0,0}(\gamma_f^s)) \cdot f \cdot \pi_T(w_{1,1}(\gamma_f^t)).$$

Consider now the elementary automorphism $\mu_{G,\Gamma,T}$ of $\scr{B}^+_{G,T}$ given by
$$\mu_{G,\Gamma,T}(f_{\Gamma}) = f_{\Gamma} - \sum_{f \in F_{\mathrm{fin}}} \pi_T(w_{0,0}(\gamma_f^s)) \cdot f \cdot \pi_T(w_{1,1}(\gamma_f^t))$$
(and the identity on all other generators), inducing a dg-isomorphism
$$\mu_{G,\Gamma,T} \colon (\scr{B}^+_{G,T},\partial^{\scr{B}^+}_{G,\Gamma,T}) \xrightarrow{\sim} (\scr{B}^+_{G,T}, (\partial^{\scr{B}^+}_{G,\Gamma,T})'),$$
with induced differential the same on all generators except $w$ and $f_{\Gamma}$, for which
$$(\partial^{\scr{B}^+}_{G,\Gamma,T})'w = f_{\Gamma}$$
$$(\partial^{\scr{B}^+}_{G,\Gamma,T})'f_{\Gamma} = 0$$
We may therefore destabilize, yielding the desired dg-algebra $(\scr{B}_{G,f_{\Gamma},T},\partial^{\scr{B}}_{G,\Gamma,T})$. Explicitly, the formula for $\partial^{\scr{B}}_{G,\Gamma,T}$ on the generators in $F_{\mathrm{fin}}$ and $x$, $y$, and $z$ is just the same as the formula for $\partial^{\scr{B}^+}_{G,\Gamma,T}$, where we notice that none of these differentials include a term $f_{\Gamma}$ or $w$ by inspection.

\subsection{Proof of Theorems \ref{thm:nc_CM} and \ref{thm:nc_CM_functoriality}}

We now have all of the ingredients to prove our desired theorems.

\begin{proof}[Proof of Theorem \ref{thm:nc_CM_functoriality}]
	We have constructed the dg-algebras $(\scr{B}_{G,T}^+,\partial^{\scr{B}^+}_{G,\Gamma,T})$ in Section \ref{ssec:tree_time}. For a fixed garden, we have discussed changing trees in the form of the morphisms $\wt{\Phi}_{T}^{T'}(\gamma)$ in Proposition \ref{prop:changing_trees} and its induced morphisms $\Phi_T^{T'}(\gamma)$ downstairs. On the other hand, by Lemma \ref{lem:induced_morphisms} (along with the usual check that the lemma can be applied), the morphisms $\Phi_{\Gamma}^{\Gamma'}(\zeta)$ constructed in Section \ref{thm:nc_CM_enlarged} via the actions of Moves V--VII, where $\Gamma' = \zeta \cdot \Gamma$, descend from $\wt{\scr{B}}_G^+$ under $\pi_T$, i.e. we obtain
	$$\Phi_{\Gamma,T}^{\Gamma',T}(\zeta) \colon (\scr{B}^+_{G,T},\partial^{\scr{B}^+}_{G,\Gamma,T}) \rightarrow (\scr{B}^+_{G,T},\partial^{\scr{B}^+}_{G,\Gamma',T})$$
	arising as the unique graded algebra morphism fitting into the diagram
	$$\xymatrix{\wt{\scr{B}}_G^+ \ar[rr]^{\Phi_{\Gamma}^{\Gamma'}(\zeta)} \ar[d]_{\pi_T} & & \wt{\scr{B}}_{G}^+ \ar[d]^{\pi_T} \\ \scr{B}^+_{G,T} \ar@{-->}[rr]_{\Phi_{\Gamma,T}^{\Gamma',T}(\zeta)} & & \scr{B}^+_{G,T}}$$
	By Lemma \ref{lem:induced_morphisms}, the fact that it intertwines the differentials follows from the same property upstairs. We may therefore define
	$$\Phi_{\Gamma,T}^{\Gamma',T'}(\zeta,\gamma) := \Phi_T^{T'}(\gamma) \circ \Phi_{\Gamma,T}^{\Gamma',T}(\zeta).$$
	We now check the desired properties.
	\begin{itemize}
		\item \textbf{Composition property:} Just like in the proof of Theorem \ref{thm:nc_CM_enlarged}, it suffices to check that the two factors $\Phi_{\Gamma,T}^{\Gamma',T}(\zeta)$ and $\Phi_T^{T'}(\gamma)$ commute with each other, in the sense that
		$$\Phi_T^{T'}(\gamma) \circ \Phi_{\Gamma,T}^{\Gamma',T}(\zeta) = \Phi_{\Gamma,T'}^{\Gamma',T'}(\zeta) \circ \Phi_T^{T'}(\gamma),$$
		since if this is satisfied, then the composition property of Theorem \ref{thm:nc_CM_enlarged} and the composition property of Proposition \ref{prop:changing_trees} yield the desired result. It suffices to check this composition property upstairs, i.e. that
		$$\wt{\Phi}_T^{T'}(\gamma) \circ \Phi_{\Gamma}^{\Gamma'}(\zeta) = \Phi_{\Gamma}^{\Gamma'}(\zeta) \circ \wt{\Phi}_T^{T'}(\gamma).$$
		Because $\Phi_{\Gamma}^{\Gamma'}(\zeta)$ is itself constructed as a composition of orientation changes and Moves V--VII, it suffices to check this commutativity for each of these types of moves, each of which is easy to check. For example, if $\zeta$ acts via an application of Move V, then since it preserves all degree $1$ generators, it suffices to check commutativity on the degree $2$ generators. That is, if the application of Move V is with respect to the faces $f$ and $g$, we have, for example
		\begin{align*}
			(\wt{\Phi}_T^{T'}(\gamma) \circ  \Phi_{\Gamma}^{\Gamma'}(\zeta))(x) &= \wt{\Phi}_T^{T'}(\gamma) \left(x+w_{1,1}(\gamma_{f,g}^{\Gamma})\right) \\ 
				&=(\wt{\Phi}_T^{T'}(\gamma)) (x)+(\wt{\Phi}_T^{T'}(\gamma))w_{1,1}(\gamma_{f,g}^{\Gamma}) \\
				&= \beta_T^{T'}(f_{\Gamma},\gamma,+) \cdot x \cdot \beta_T^{T'}(f_{\Gamma},\gamma,+)^{-1} \\
				& \qquad \qquad + \beta_T^{T'}(f_{\Gamma},\gamma,+) \cdot w_{1,1}(\gamma^{\Gamma}_{f,g}) \cdot \beta_T^{T'}(f_{\Gamma},\gamma,+)^{-1} \\
				&= \beta_T^{T'}(f_{\Gamma},\gamma,+) \cdot (x+w_{1,1}(\gamma_{f,g}^{\Gamma})) \cdot  \beta_T^{T'}(f_{\Gamma},\gamma,+)^{-1} \\
				&= \beta_T^{T'}(f_{\Gamma},\gamma,+) \cdot ((\Phi_{\Gamma}^{\Gamma'})(\zeta))(x) \cdot  \beta_T^{T'}(f_{\Gamma},\gamma,+)^{-1} \\
				&= (\Phi_{\Gamma}^{\Gamma'}(\zeta))(\beta_T^{T'}(f_{\Gamma},\gamma,+) \cdot x \cdot \beta_T^{T'}(f_{\Gamma},\gamma,+)^{-1}) \\
				&= (\Phi_{\Gamma}^{\Gamma'}(\zeta) \circ \wt{\Phi}_T^{T'}(\gamma))(x).
		\end{align*}
		The other generators $y$, $z$, $w$ are analogous, and we leave Moves VI and VII for the reader.\\
		\item \textbf{Orientation changes:} This descends from the same property in Theorem \ref{thm:nc_CM_enlarged} under Lemma \ref{lem:induced_morphisms}.\\
		\item \textbf{Fixing the trees and capping paths; modifying the garden:} This also descends from the enlarged property.\\
		\item \textbf{Fixing the garden; modifying the trees and capping paths:} This follows from Proposition \ref{prop:changing_trees}.\\
		\item \textbf{Finite-type gardens:} We have defined the differential on $\partial^{\scr{B}}_{G,\Gamma,T}$ precisely in this way in Section \ref{ssec:finite-type}.
	\end{itemize}
\end{proof}

\begin{proof}[Proof of Theorem \ref{thm:nc_CM}]
	We have already found the desired differentials yielding the dg-algebras $(\scr{B}_{G,f_{\Gamma},T},\partial^{\scr{B}}_{G,\Gamma,T})$. That these partially abelianize to the Casals--Murphy dg-algebras is clear by comparing our construction with their construction.
	
	Suppose $\Gamma$ and $\Gamma'$ are two finite-type gardens and $T,T' \subset G$ are two trees spanning all but one vertex. For any homotopy class of path $\gamma$ from $*_T$ to $*_{T'}$ and element $\zeta \in \scr{H}(G)$ such that $\zeta \cdot \Gamma = \Gamma'$, we then have a quasi-isomorphism induced under destabilization as in Lemma \ref{lem:induced_morphisms} as in the following diagram:	
	$$\xymatrix{(\scr{B}_{G,f_{\Gamma},T}, \partial^{\scr{B}}_{G,\Gamma,T}) \ar@{-->}[ddd] & (\scr{B}^+_{G,T},(\partial^{\scr{B}^+}_{G,\Gamma,T})') \ar[d]^-{\mu_{G,\Gamma,T}^{-1}} \ar[l]^{\mathrm{destab}} \\
		& (\scr{B}^+_{G,T},\partial^{\scr{B}^+}_{G,\Gamma,T}) \ar[d]^{\Phi_{\Gamma,T}^{\Gamma',T'}(\gamma)} \\ 
		& (\scr{B}^+_{G,T'},\partial^{\scr{B}^+}_{G,\Gamma',T'}) \ar[d]^-{\mu_{G,\Gamma',T'}}	\\
		(\scr{B}_{G,f_{\Gamma'},T'}, \partial^{\scr{B}}_{G,\Gamma',T'}) & (\scr{B}^+_{G,T'},(\partial^{\scr{B}^+}_{G,\Gamma',T'})') \ar[l]^{\mathrm{destab}} }$$
	This is clearly a dg-isomorphism, since its inverse is constructed in the same way but swapping $(\Gamma,T)$ with $(\Gamma',T')$ and using $\gamma^{-1}$ in place of $\gamma$.
\end{proof}

\section{Representations and rank $r$ face colorings}

The goal of this section is to prove Theorem \ref{thm:reps_are_sheaves}.

\subsection{Representations-to-colors for the enlarged dg-algebra}

As with functoriality between the various dg-algebras $(\scr{B}^+_{G,T},\partial^{\scr{B}^+}_{G,\Gamma,T})$, when studying representations, it is convenient to begin with $(\wt{\scr{B}}_G^+,\wt{\partial}^{\scr{B}^+}_{G,\Gamma})$, and to therefore verify a version of Theorem \ref{thm:reps_are_sheaves} but with respect to the dg-isomorphism $\Phi_{\Gamma}^{\Gamma'}(\zeta)$ appearing in Theorem \ref{thm:nc_CM_enlarged}. Let us therefore begin with a study of $\mathrm{Rep}_{r}(\wt{\scr{B}}_G^+,\wt{\partial}^{\scr{B}^+}_{G,\Gamma};\F)$, and see how we can extract from it a rank $r$ face coloring.

For degree reasons, a representation $\epsilon \in \mathrm{Rep}_r(\wt{\scr{B}}_G^+,\wt{\partial}^{\scr{B}^+}_{G,\Gamma};\F)$ kills all of the generators in nonzero degree, and hence, we may identify $\epsilon$ with a representation of $\Pi$, which we think of as a ring homomorphism $$\epsilon_\Pi \colon \Z[\Pi] \rightarrow \mathrm{Mat}_{r \times r}(\F).$$ That $\epsilon$ is a representation of the dg-algebra is just the requirement that
$$\epsilon_\Pi(\wt{\partial}^{\scr{B}^+}_{G,\Gamma} f) = 0$$
for all $f \in F$. Writing this out, we are simply looking at representations of $\Pi$ with the property that
$$\sum_{v \in f} \epsilon_\Pi(H(\tau_f(v)) = 0.$$

Recall that associated to any path $\gamma$ on the sphere $S^2$, non-degenerate with respect to the graph $G$ and its web, there is an associated word $W(\gamma) \in \mathrm{Mat}_{2 \times 2}(\Z[\Pi]*\Z\langle F \rangle)$. If the path $\gamma$ furthermore is disjoint from the face centers, then all of the faces disappear from the expression, and $W(\gamma) \in \mathrm{Mat}_{2 \times 2}(\Z[\Pi])$, in which case $W(\gamma)$ is a composition of matrices either of one of the following two forms:
$$\begin{pmatrix}1 & H \\ 0 & 1\end{pmatrix} \qquad \pm \begin{pmatrix} 0 & \lambda \\ -\overline{\lambda}^{-1} & 0 \end{pmatrix}$$
Here, $H$ is a monomial in $\Pi$, with sign $\pm 1$, and $\lambda$ and $\overline{\lambda}$ are just an elements of $\Pi$. Hence, under $\epsilon_\Pi$, the matrices
$$\epsilon_\Pi\begin{pmatrix}1 & H \\ 0 & 1\end{pmatrix} = \begin{pmatrix}I & \epsilon_\Pi(H) \\ 0 & I\end{pmatrix}$$
$$\epsilon_\Pi\begin{pmatrix} 0 & \lambda \\ -\overline{\lambda}^{-1} & 0 \end{pmatrix} = \begin{pmatrix} 0 & \epsilon_\Pi(\lambda) \\ -\epsilon_\Pi(\overline{\lambda})^{-1} & 0 \end{pmatrix}$$
are both invertible matrices, i.e. they live in $\mathrm{GL}_{2r}(\F)$.

Suppose $\gamma$ is a small loop around $c_f$. Then
$$W(\gamma) = \begin{pmatrix}1 & \wt{\partial}^{\scr{B}^+}_{G,\Gamma}f \\ 0 & 1 \end{pmatrix}.$$
The condition that $\epsilon$ is a representation implies that $\epsilon_\Pi(W(\gamma)) = I_{2r} \in \mathrm{GL}_{2r}(\F)$. Hence, even though $W(\gamma)$ is invariant only up to homotopies (with fixed endpoints) not passing through the centering, the matrix $\epsilon_\Pi(W(\gamma))$ is now independent under all homotopies (with fixed endpoints). Since our paths lie on $S^2$, any two paths with the same endpoints are homotopic, so the corresponding element $\epsilon_\Pi(W(\gamma)) \in \mathrm{GL}_{2r}(\F)$ only depends upon the endpoints of $\gamma$. Furthermore, two points in the same triangle $\mathrm{Tri}(G)$ are essentially indistinguishable. We therefore obtain, for any two triangles $\nu_1,\nu_2 \in \mathrm{Tri}(G)$, an element
$$M^{\epsilon}(\nu_1,\nu_2) \in \mathrm{GL}_{2r}(\F).$$
These satisfy the following properties:
\begin{itemize}
	\item For all $\nu_1,\nu_2,\nu_3 \in \mathrm{Tri}(G)$, we have $$M^{\epsilon}(\nu_1,\nu_2) \cdot M^{\epsilon}(\nu_2,\nu_3) = M^{\epsilon}(\nu_1,\nu_3).$$
	\item For all $\nu \in \mathrm{Tri}(G)$, we have $M^{\epsilon}(\nu,\nu) = I_{2r}$.
\end{itemize}

Recall now that as part of the data of a garden $\Gamma$, we have a base point, which lives in one of the triangles $\nu_{\Gamma} \in \mathrm{Tri}(G)$. Hence, for each $\nu \in \mathrm{Tri}(G)$, we may define
$$M^{\epsilon,\Gamma}(\nu) := M^{\epsilon}(\nu_{\Gamma},\nu) \in \mathrm{GL}_{2r}(\F).$$

Consider the $2r \times r$ matrix $\begin{pmatrix} I \\ 0 \end{pmatrix}$. The product $M^{\epsilon,\Gamma}(\nu) \cdot \begin{pmatrix}I \\ 0 \end{pmatrix}$ is the $2r \times r$ matrix consisting of the first $r$ columns of $M^{\epsilon,\Gamma}(\nu)$. Suppose that $\nu_1$ and $\nu_2$ are two triangles sharing an edge along a thread of the graph. Then we have that for some $H \in \mathrm{GL}_r(\F)$,
\begin{align*}
	M^{\epsilon,\Gamma}(\nu_2) \cdot \begin{pmatrix}I \\ 0\end{pmatrix} &= M^{\epsilon,\Gamma}(\nu_1) \cdot M^{\epsilon}(\nu_1,\nu_2) \cdot \begin{pmatrix}I \\ 0\end{pmatrix} \\
		&=  M^{\epsilon,\Gamma}(\nu_1) \cdot \begin{pmatrix}I & H \\ 0 & I \end{pmatrix} \cdot \begin{pmatrix}I \\ 0\end{pmatrix} \\
		&= M^{\epsilon,\Gamma}(\nu_1) \cdot \begin{pmatrix}I \\ 0\end{pmatrix}.
\end{align*}
Therefore, for each $f \in F$, we obtain a canonical $2r \times r$ matrix $M^{\epsilon,\Gamma}(f)$ given as
$$M^{\epsilon,\Gamma}(f) := M^{\epsilon,\Gamma}(\nu) \cdot \begin{pmatrix}I \\ 0\end{pmatrix}$$
for any triangle $\nu \in \mathrm{Tri}(G)$ contained in $f$. In fact, this matrix is of rank $r$, on account of the fact that the columns of $M^{\epsilon,\Gamma}(\nu)$ are linearly independent, a fortiori the first $r$ columns are linearly independent. Taking the span of the columns of $M^{\epsilon,\Gamma}(f)$ yields an element $\chi^{\epsilon,\Gamma}(f) \in \mathrm{Gr}(r,2r;\F)$.

Similarly, suppose that $\nu_1,\nu_2 \in \mathrm{Tri}(G)$ are two triangles sharing an edge which is also an edge of the graph. Let us write $f_1,f_2 \in F$ for the two faces containing $\nu_1$ and $\nu_2$ respectively. Then we have that there are some $\Lambda,\Lambda' \in \mathrm{GL}_r(\F)$ such that
\begin{align*}
	M^{\epsilon,\Gamma}(f_2) &=	M^{\epsilon,\Gamma}(\nu_2) \cdot \begin{pmatrix}I \\ 0\end{pmatrix} \\
	&= M^{\epsilon,\Gamma}(\nu_1) \cdot M^{\epsilon}(\nu_1,\nu_2) \cdot \begin{pmatrix}I \\ 0\end{pmatrix} \\
	&=  M^{\epsilon,\Gamma}(\nu_1) \cdot \begin{pmatrix}0 & \Lambda' \\ \Lambda & 0 \end{pmatrix} \cdot \begin{pmatrix}I \\ 0\end{pmatrix} \\
	&= M^{\epsilon,\Gamma}(\nu_1) \cdot \begin{pmatrix}0 \\ \Lambda\end{pmatrix} \\
	&= M^{\epsilon,\Gamma}(\nu_1) \cdot \begin{pmatrix}0 \\ I \end{pmatrix} \cdot \Lambda \\
\end{align*}
Notice that $M^{\epsilon,\Gamma}(\nu_1) \cdot \begin{pmatrix}0 \\ I \end{pmatrix}$ consists of the last $r$ column vectors of $M^{\epsilon,\Gamma}(\nu_1)$, which are linearly independent with span transverse to the span of the first $r$ columns, i.e. $\chi^{\epsilon,\Gamma}(f_1)$. Multiplying by $\Lambda$ on the right does not change the span. Therefore, we see that $\chi^{\epsilon,\Gamma}(f_1)$ and $\chi^{\epsilon,\Gamma}(f_2)$ are transverse.

Let us be explicit about what our construction has given. In the first step of the construction, we had a morphism
$$M^{\Gamma} \colon \mathrm{Rep}_r((\wt{\scr{B}}^+_G,\wt{\partial}^{\scr{B}^+}_{G,\Gamma});\F) \rightarrow \left(\mathrm{GL}_{2r}(\F)\right)^{\mathrm{Tri}(G)}.$$
To each element of $\mathrm{GL}_{2r}(\F)$, we then multiply on the right by $\begin{pmatrix}I \\ 0 \end{pmatrix}$ to obtain, for each $\nu \in \mathrm{Tri}(G)$, an element of $\mathrm{Mat}_{2r \times r}^{\mathrm{\mathrm{rk}=r}}(\F)$, by which we mean a $2r \times r$ matrices of rank $r$, yielding the composition
$$\mathrm{Rep}_r(\wt{\scr{B}}^+_G,\wt{\partial}^{\scr{B}^+}_{G,\Gamma};\F) \xrightarrow{M^{\Gamma}} \left(\mathrm{GL}_{2r}(\F)\right)^{\mathrm{Tri}(G)} \xrightarrow{\bullet \begin{pmatrix}I \\ 0 \end{pmatrix}} \left(\mathrm{Mat}_{2r \times r}^{\mathrm{\mathrm{rk}=r}}(\F)\right)^{\mathrm{Tri}(G)}.$$
The image of this composition has the property that it lands inside of $\left(\mathrm{Mat}_{2r \times r}^{\mathrm{\mathrm{rk}=r}}(\F)\right)^{F} \subset \left(\mathrm{Mat}_{2r \times r}^{\mathrm{\mathrm{rk}=r}}(\F)\right)^{\mathrm{Tri}(G)}$, those elements for which all triangles in the same face have the same matrix. Taking the span of the column vectors yields a map
$$\mathrm{span} \colon \mathrm{Mat}_{2r \times r}^{\mathrm{rk}=r}(\F) \rightarrow \mathrm{Gr}(r,2r;\F),$$
which we may now apply to each face to yield a morphism
$$\mathrm{Span} \colon \left(\mathrm{Mat}_{2r \times r}^{\mathrm{rk}=r}(\F)\right)^F \rightarrow \left(\mathrm{Gr}(r,2r;\F)\right)^F.$$
If we trace this through, starting at a representation, we end up with an element of $\mathrm{Col}_r(G;\F)$.

Summarizing, we have therefore constructed a map
$$\wt{\Psi}_{G,\Gamma} \colon \mathrm{Rep}_r(\wt{\scr{B}}^+_{G},\wt{\partial}^{\scr{B}^+}_{G,\Gamma}) \rightarrow \mathrm{Col}_r(G;\F)$$
given by
$$\wt{\Psi}_{G,\Gamma}(\epsilon) := (\chi^{\epsilon,\Gamma}(f))_{f \in F}.$$
which fits into the following diagram, where the dashed arrows indicate that these exist (and are obviously uniquely determined):
$$\xymatrix{& \left(\mathrm{GL}_{2r}(\F)\right)^{\mathrm{Tri}(G)} \ar[r]^{\bullet \begin{pmatrix}I \\ 0\end{pmatrix}} & \left(\mathrm{Mat}_{2r \times r}^{\mathrm{rk}=r}(\F)\right)^{\mathrm{Tri}(G)} & \\ \mathrm{Rep}_r(\wt{\scr{B}}^+_G,\wt{\partial}^{\scr{B}^+}_{G,\Gamma};\F) \ar@{-->}[rr] \ar@{-->}@/_2pc/[rrrd]_{\wt{\Psi}_{G,\Gamma}} \ar[ur]^{M^{\Gamma}} & & \left(\mathrm{Mat}_{2r \times r}^{\mathrm{rk}=r}(\F)\right)^F \ar@{^{(}->}[u] \ar[r]^{\mathrm{Span}} & \left(\mathrm{Gr}(r,2r;\F)\right)^F \\ & & & \mathrm{Col}_r(G;\F) \ar@{^{(}->}[u]}$$

Let us trace the conjugation action of $\mathrm{GL}_r(\F)$ on $\mathrm{Rep}_r(\wt{\scr{B}}^+_G,\wt{\partial}^{\scr{B}^+}_{G,\Gamma};\F)$ through this diagram. Suppose $g \in \mathrm{GL}_r(\F)$. Then by a simple computation,
$$(g \cdot \epsilon)_\Pi\begin{pmatrix}1 & H \\ 0 & 1 \end{pmatrix} = \begin{pmatrix}g & 0 \\ 0 & g\end{pmatrix} \begin{pmatrix}1 & \epsilon_\Pi(H) \\ 0 & 1 \end{pmatrix}\begin{pmatrix}g^{-1} & 0 \\ 0 & g^{-1}\end{pmatrix}$$
and
$$(g \cdot \epsilon)_\Pi\begin{pmatrix}0 & \lambda \\ -\overline{\lambda}^{-1} & 0 \end{pmatrix} = \begin{pmatrix}g & 0 \\ 0 & g\end{pmatrix} \begin{pmatrix}0 & \epsilon_{\Pi}(\lambda) \\ -\epsilon_{\Pi}(\overline{\lambda})^{-1} & 0 \end{pmatrix}\begin{pmatrix}g^{-1} & 0 \\ 0 & g^{-1}\end{pmatrix}.$$
We find, therefore, that since our map $M^{\Gamma}$ was given as a composition of these matrices, that,
$$M^{g \cdot \epsilon,\Gamma}(\nu) = \begin{pmatrix}g & 0 \\ 0 & g\end{pmatrix}M^{\epsilon,\Gamma}(\nu)\begin{pmatrix}g^{-1} & 0 \\ 0 & g^{-1}\end{pmatrix}.$$
In other words $M^{\Gamma}$ intertwines the $\mathrm{GL}_r(\F)$ action on representations with simultaneous conjugation by the matrix $\begin{pmatrix}g & 0 \\ 0 & g\end{pmatrix}$. Multiplying by the matrix $\begin{pmatrix}I \\ 0\end{pmatrix}$ on the right, we see that the unlabelled dotted map in the previous diagram intertwines the $\mathrm{GL}_r(\F)$-conjugation on representations with the action of $\mathrm{GL}_r(\F)$ on $(\mathrm{Mat}^{\mathrm{rk}=r}_{2r \times r}(\F))^F$ given by simultaneous action on each component by
$$g \cdot M = \begin{pmatrix}g & 0 \\ 0 & g \end{pmatrix} \cdot M \cdot g^{-1}$$
for $M \in \mathrm{Mat}^{\mathrm{rk}=r}_{2r \times r}(\F)$. Finally, applying $\mathrm{Span}$, we find that the multiplication by $g^{-1}$ on the right in the above expression does not change the span, and we are hence left with the fact that $\wt{\Psi}_{G,\Gamma}$ intertwines the conjugation action of $\mathrm{GL}_r(\F)$ with the action of $\mathrm{PGL}_{2r}(\F)$ on $\mathrm{Col}_r(G;\F)$ via the map from $\mathrm{GL}_r(\F)$ to $\mathrm{PGL}_{2r}(\F)$ given by
$$g \mapsto \begin{bmatrix}g & 0 \\ 0 & g\end{bmatrix}.$$

In conclusion, because $\wt{\Psi}_{G,\Gamma}$ has this compatibility between the conjugation action on representations by elements of $\mathrm{GL}_r(\F)$ and the action on colorings by elements of $\mathrm{PGL}_{2r}(\F)$, it descends to the corresponding moduli spaces as
$$\Psi_{G,\Gamma} \colon \scr{M}^{\mathrm{Rep}}_r(\wt{\scr{B}}^+_G,\wt{\partial}^{\scr{B}^+}_{G,\Gamma};\F) \rightarrow \scr{M}^{\mathrm{Col}}_r(G;\F).$$

\subsection{Functoriality for the representations-to-colors map}

Notice that the maps $\bullet \begin{pmatrix}I\\0\end{pmatrix}$ and $\mathrm{Span}$ both intertwine a natural simultaneous left action of $\mathrm{GL}_{2r}(\F)$ which reduces to the desired $\mathrm{PGL}_{2r}(\F)$-action on $\mathrm{Color}_r(G;\F)$. We wish to intertwine the functoriality properties for the dg-algebra $(\wt{\scr{B}}^+_{G},\wt{\partial}^{\scr{B}^+}_{G,\Gamma})$ appearing in Theorem \ref{thm:nc_CM_enlarged} with the $\mathrm{PGL}_{2r}(\F)$-action on $\mathrm{Color}_r(G;\F)$.

Suppose $\zeta \in \scr{H}(G) = (\Z_2)^E \times F_{g+2} \times (\Z_2 * \Z_3)$ and let $\Gamma$ and $\Gamma'$ be two gardens with $\Gamma' = \zeta \cdot \Gamma$. Then we have maps
$$\Phi_{\Gamma}^{\Gamma'}(\zeta) \colon (\wt{\scr{B}}^+_{G},\wt{\partial}^{\scr{B}^+}_{G,\Gamma}) \rightarrow (\wt{\scr{B}}^+_{G},\wt{\partial}^{\scr{B}^+}_{G,\Gamma'}),$$
inducing pullbacks
$$(\Phi_{\Gamma}^{\Gamma'}(\zeta))^* \colon \mathrm{Rep}_r(\wt{\scr{B}}^+_{G},\wt{\partial}^{\scr{B}^+}_{G,\Gamma'};\F) \rightarrow \mathrm{Rep}_r(\wt{\scr{B}}^+_{G},\wt{\partial}^{\scr{B}^+}_{G,\Gamma};\F).$$
The following proposition yields the desired functoriality for the maps $\Psi_{G,\Gamma}$.

\begin{prop}
	For each $\Gamma,\Gamma',\zeta$ as above, we have
	$$\Psi_{G,\Gamma'} = \Psi_{G,\Gamma} \circ (\Phi_{\Gamma}^{\Gamma'}(\zeta))^*.$$
\end{prop}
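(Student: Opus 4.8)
The strategy is to show that the representation-to-colors map $\wt\Psi_{G,\Gamma}$ depends on the garden $\Gamma$ only through the combinatorial data that the dg-isomorphisms $\Phi_\Gamma^{\Gamma'}(\zeta)$ preserve, namely the matrices $M^\epsilon(\nu_1,\nu_2)$ read off from paths on $S^2$. The key observation is that for any representation $\epsilon$, the quantity $M^\epsilon(\nu_1,\nu_2)\in\mathrm{GL}_{2r}(\F)$ is intrinsic to the triangles $\nu_1,\nu_2$ and the representation $\epsilon$ of $\Pi$, and does \emph{not} involve the seed or the rake of the garden at all. The only place $\Gamma$ enters the definition of $\wt\Psi_{G,\Gamma}$ is in the choice of the preferred triangle $\nu_\Gamma$ (where the base point lives) used to normalize $M^{\epsilon,\Gamma}(\nu)=M^\epsilon(\nu_\Gamma,\nu)$. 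But the final coloring $\chi^{\epsilon,\Gamma}(f)=\mathrm{span}\,M^{\epsilon,\Gamma}(\nu)\binom{I}{0}$ is a \emph{projective} subspace, and changing the normalization point $\nu_\Gamma$ to $\nu_{\Gamma'}$ replaces $M^{\epsilon,\Gamma}(\nu)$ by $M^\epsilon(\nu_{\Gamma'},\nu_\Gamma)\cdot M^{\epsilon,\Gamma}(\nu)$, i.e. left-multiplies every face matrix by a single fixed element of $\mathrm{GL}_{2r}(\F)$. Hence $\wt\Psi_{G,\Gamma}$ and $\wt\Psi_{G,\Gamma'}$ literally agree as maps to $\mathrm{Col}_r(G;\F)$ up to a global $\mathrm{PGL}_{2r}(\F)$-change of coordinates; the content of the proposition is that this change of coordinates is exactly the one induced by the pullback $(\Phi_\Gamma^{\Gamma'}(\zeta))^*$.

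First I would unwind what $(\Phi_\Gamma^{\Gamma'}(\zeta))^*$ does to a representation. A representation $\epsilon'$ of $(\wt{\scr B}^+_G,\wt\partial^{\scr B^+}_{G,\Gamma'})$ is, for degree reasons, the same data as a ring map $\epsilon'_\Pi\colon\Z[\Pi]\to\mathrm{Mat}_r(\F)$ satisfying the face relations $\sum_{v\in f}\epsilon'_\Pi(H_f(v))=0$. Pulling back by $\Phi_\Gamma^{\Gamma'}(\zeta)$ gives $\epsilon=\epsilon'\circ\Phi_\Gamma^{\Gamma'}(\zeta)$, whose restriction to $\Pi$ is $\epsilon'_\Pi$ precomposed with whatever ring automorphism of $\Z[\Pi]$ the map $\Phi_\Gamma^{\Gamma'}(\zeta)$ induces on coefficients. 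Here I would split into the three factor types of $\scr H(G)$ following the structure of Theorem \ref{thm:nc_CM_enlarged}: (i) for the $(\Z_2)^E$ orientation-change factor, $\Phi_\iota$ acts on $\Pi$ by the ring automorphism $\lambda_{e,\mathfrak o}\mapsto\pm\lambda_{e,\mathfrak o}$ of Lemma \ref{lem:orientation_change}; (ii) for the $F_{g+2}$ factor (Move V), $\Phi_\Gamma^{\Gamma'}(\zeta)$ fixes $\Pi$ pointwise and only modifies the degree-$2$ generators $X$, so it induces the identity on $\epsilon_\Pi$ and moves only $\nu_\Gamma\leadsto\nu_{\Gamma'}$ within the same triangle-cell (actually within the same triangle, since Move V does not move the seed); (iii) for the $\Z_2*\Z_3$ factor (Moves VI, VII), $\Phi_\Gamma^{\Gamma'}(\zeta)$ conjugates $X$ by the matrix of words $W(\gamma^\tau)$ or $W(\gamma^e)$ and genuinely moves the seed across a thread or edge, i.e. $\nu_{\Gamma'}$ is the adjacent triangle. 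In each case I want to verify that $M^{\epsilon,\Gamma}(\nu)$ and $M^{\epsilon',\Gamma'}(\nu)$ differ by left multiplication by a fixed $g_\zeta\in\mathrm{GL}_{2r}(\F)$ independent of $\nu$ and of $f$, so that $\mathrm{Span}$ kills the discrepancy.

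The cleanest way to organize all three cases uniformly is via the naturality of the matrix-of-words construction: for a non-degenerate loop $\gamma$ at the seed disjoint from the centering, $\epsilon_\Pi(W^\Gamma(\gamma))$ only depends on the endpoints (as established in the excerpt, once $\epsilon$ kills $\wt\partial f$), so it equals $M^{\epsilon}(\nu_\Gamma,\nu_\Gamma)=I$; more generally the $M^\epsilon(\nu_1,\nu_2)$ are determined purely by $\epsilon_\Pi$ and the pair of triangles, via the image under $\epsilon_\Pi$ of a path's matrix of words. So the proof reduces to two checks. Check A: $\epsilon_\Pi=\epsilon'_\Pi\circ(\text{coefficient map of }\Phi_\Gamma^{\Gamma'}(\zeta))$, and this coefficient map, being either trivial (Move V), a sign change on edge generators (orientation), or conjugation-type behavior, produces $\epsilon_\Pi(W(\gamma))$ differing from $\epsilon'_\Pi(W(\gamma))$ by a controlled $\mathrm{GL}_{2r}$-conjugation (and, crucially, the sign issues wash out because $W(\gamma)$ involves $A_e$ and $B_e$ with opposite signs, as used in the proof of Theorem \ref{thm:nc_CM_enlarged}). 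Check B: the base-triangle shift $\nu_\Gamma\to\nu_{\Gamma'}$ contributes the left factor $M^{\epsilon'}(\nu_{\Gamma'},\nu_\Gamma)$, which is exactly the extra matrix-of-words factor ($I$, a unipotent $\begin{pmatrix}I&H\\0&I\end{pmatrix}$, or an anti-diagonal $\begin{pmatrix}0&\Lambda'\\ \Lambda&0\end{pmatrix}$) that $\Phi_\Gamma^{\Gamma'}(\zeta)$ conjugates $X$ by. Assembling A and B, $M^{\epsilon,\Gamma}(\nu)$ and $M^{(\Phi_\Gamma^{\Gamma'}(\zeta))^*(\epsilon'),\Gamma}(\nu)$ agree, so $\wt\Psi_{G,\Gamma}\circ(\Phi_\Gamma^{\Gamma'}(\zeta))^*=\wt\Psi_{G,\Gamma'}$, and passing to $\mathrm{PGL}_{2r}$-quotients gives the stated equality $\Psi_{G,\Gamma'}=\Psi_{G,\Gamma}\circ(\Phi_\Gamma^{\Gamma'}(\zeta))^*$.

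I expect the main obstacle to be bookkeeping of signs and the anti-diagonal conjugation in the Move VII case: there one must check that conjugating $X$ by $\begin{pmatrix}0&-\lambda^{-1}\\ \overline\lambda&0\end{pmatrix}$ on the algebra side corresponds, after applying $\epsilon_\Pi$ and multiplying by $\binom{I}{0}$, precisely to relabelling which $r$-plane in $\F^{2r}$ plays the role of the $+$ sheet versus the $-$ sheet across that edge — and that this is consistent with the transversality bookkeeping in the definition of $\chi^{\epsilon,\Gamma}$ for the two faces flanking the edge. Since only spans matter, the scalar $\pm$ and the $\mathrm{GL}_r$-ambiguity $\Lambda$ disappear, but writing this out carefully is where the routine-but-fiddly work lives; the conceptual content is entirely contained in the observation that $M^\epsilon(-,-)$ is a cocycle on $\mathrm{Tri}(G)$ intrinsic to $\epsilon_\Pi$ and that $\Phi_\Gamma^{\Gamma'}(\zeta)$ acts by exactly the corresponding coboundary shift of base triangle together with a coefficient automorphism invisible after $\mathrm{Span}$.
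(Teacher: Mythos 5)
Your proposal is correct and follows essentially the same route as the paper: the paper's proof also reduces to exhibiting, for each of the three factor types (orientation changes, Move V, Moves VI--VII), a single matrix $Z_{\Gamma}^{\Gamma'}(\zeta,\epsilon)\in\mathrm{GL}_{2r}(\F)$ with $Z\cdot M^{\Gamma}((\Phi_{\Gamma}^{\Gamma'}(\zeta))^*\epsilon)=M^{\Gamma'}(\epsilon)$, taking $Z=I$ in the first two cases and $Z=M^{\epsilon}(\nu_{\Gamma'},\nu_{\Gamma})$ for the base-point moves, which is exactly your ``Check B'' coboundary factor killed after passing to $\mathrm{PGL}_{2r}(\F)$. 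The only caution is that your intermediate assertion that $\wt\Psi_{G,\Gamma}\circ(\Phi_{\Gamma}^{\Gamma'}(\zeta))^*$ equals $\wt\Psi_{G,\Gamma'}$ on the nose is false for Moves VI--VII (they differ by the action of $[Z]$ on colorings); the equality only holds after the $\mathrm{PGL}_{2r}(\F)$ quotient, as you correctly note in your final clause.
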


\begin{proof}
	Let us be more explicit about what we wish to prove. Suppose we are given
	$$\epsilon \in \mathrm{Rep}_r(\wt{\scr{B}}^+_G,\wt{\partial}^{\scr{B}^+}_{G,\Gamma'};\F).$$
	Then we have
	$$\begin{cases} M^{\Gamma'}(\epsilon) \in (\mathrm{GL}_{2r}(\F))^{\mathrm{Tri}(G)}\\ M^{\Gamma}((\Phi_{\Gamma}^{\Gamma'}(\zeta))^*\epsilon) \in (\mathrm{GL}_{2r}(\F))^{\mathrm{Tri}(G)}\end{cases}.$$
	It suffices to find an element $Z_{\Gamma}^{\Gamma'}(\zeta,\epsilon) \in \mathrm{GL}_{2r}(\F)$ such that
	$$Z_{\Gamma}^{\Gamma'}(\zeta,\epsilon) \cdot M^{\Gamma}((\Phi_{\Gamma}^{\Gamma'}(\zeta))^*\epsilon) = M^{\Gamma'}(\epsilon),$$
	since then we see that
	$$\wt{\Psi}_{G,\Gamma'}(\epsilon) = [Z_{\Gamma}^{\Gamma'}(\zeta,\epsilon)] \cdot \wt{\Psi}_{G,\Gamma'}((\Phi_{\Gamma}^{\Gamma'}(\zeta))^*\epsilon),$$
	where $[Z_{\Gamma}^{\Gamma'}(\zeta,\epsilon)]$ is the image of $Z_{\Gamma}^{\Gamma'}(\zeta,\epsilon)$ under the projection $\mathrm{GL}_{2r}(\F) \rightarrow \mathrm{PGL}_{2r}(\F)$.

	Recall that $\Phi_{\Gamma}^{\Gamma'}(\gamma)$ arises via a composition of three types of behavior: orientation changes, changing the tines (Move V), and moving the base-point (Moves VI-VII). It suffices, therefore, to study each type of behavior, since we may simply compose the various $Z_{\Gamma}^{\Gamma'}(\zeta,\epsilon)$ under compositions. Let us check each of these individually.
		
	\begin{itemize}
		\item \textbf{Orientation changes:} We have already seen how orientation changes affect the elements $H(\tau)$ (the proof of Lemma \ref{lem:orientation_change}) and $\lambda_{e,\mathfrak{o}}$ (by definition); it follows that we may take each
		$$Z_{\Gamma}^{\Gamma'}(\zeta,\epsilon) = I$$
		for any $\epsilon$ when $\zeta \in (\Z_2)^E$.\\
		\item \textbf{Tine changes (Move V):} Tine changes only affect the differentials of the generators of degree $2$, and hence do not affect the representations at all. Again, we may take
		$$Z_{\Gamma}^{\Gamma'}(\zeta,\epsilon)=I$$
		in this setting as well.\\
		\item \textbf{Moving the base-point (Moves VI-VII):} Move VI or VII simply moves the base-point through a tine or thread. In particular, it follows that we may take
		$$Z_{\Gamma}^{\Gamma'}(\zeta,\epsilon) := M^{\epsilon}(\nu_{\Gamma'},\nu_{\Gamma}) = M^{\Phi^*\epsilon}(\nu_{\Gamma'},\nu_{\Gamma}),$$
		where the two matrices on the right are the same because the representations $\epsilon$ and $\Phi^*\epsilon$ take the same values on elements of the coefficient ring because $\Phi$ preserves the degree $1$ generators (i.e. the maps $\epsilon_\Pi$ and $(\Phi^*\epsilon)_\Pi$ are the same, as ring homomorphisms from $\Z[\Pi]$ to $\mathrm{Mat}_{r \times r}(\F)$).
	\end{itemize}
\end{proof}

Given a tree $T$, we have a projection $\pi_T \colon (\wt{\scr{B}}^+_{G},\wt{\partial}^{\scr{B}^+}_{G,\Gamma}) \rightarrow  (\scr{B}^+_{G,T},\partial^{\scr{B}^+}_{G,\Gamma,T})$ inducing pullbacks
$$(\pi_{T})^* \colon \mathrm{Rep}_r(\scr{B}^+_{G,T},\partial^{\scr{B}^+}_{G,\Gamma,T};\F)) \rightarrow \mathrm{Rep}_r(\wt{\scr{B}}^+_{G},\wt{\partial}^{\scr{B}^+}_{G,\Gamma};\F).$$
and
$$(\pi_{T})^* \colon \scr{M}^{\mathrm{Rep}}_r(\scr{B}^+_{G,T},\partial^{\scr{B}^+}_{G,\Gamma,T};\F)) \rightarrow \scr{M}^{\mathrm{Rep}}_r(\wt{\scr{B}}^+_{G},\wt{\partial}^{\scr{B}^+}_{G,\Gamma};\F).$$
In particular, we obtain the maps
$$\wt{\Psi}_{G,\Gamma,T} := \wt{\Psi}_{G,\Gamma} \circ (\pi_T)^*$$
and
$$\Psi_{G,\Gamma,T} := \Psi_{G,\Gamma} \circ (\pi_T)^*,$$
the latter of which is the one occurring in Theorem \ref{thm:reps_are_sheaves}.

Recall that if we wish to change the tree, then we have dg-isomorphisms
$$\Phi_{T}^{T'}(\gamma) \colon (\scr{B}^+_{G,T},\partial^{\scr{B}^+}_{G,\Gamma,T}) \rightarrow (\scr{B}^+_{G,T'},\partial^{\scr{B}^+}_{G,\Gamma,T'}).$$
We will prove functoriality with respect to these transformations, which will easily allow us to project to the version in which we have included the tree.

\begin{lem}
	For any fixed garden $\Gamma$ on $G$ and any homotopy class of path $\gamma$ from $*_T$ to $*_{T'}$ in $\Lambda_G$, we have
	$$(\wt{\Phi}_{T}^{T'}(\gamma))^* \colon \mathrm{Rep}_r(\wt{\scr{B}}^+_{G},\wt{\partial}^{\scr{B}^+}_{G,\Gamma};\F) \rightarrow \mathrm{Rep}_r(\wt{\scr{B}}^+_{G},\wt{\partial}^{\scr{B}^+}_{G,\Gamma};\F)$$
	is just the identity.
\end{lem}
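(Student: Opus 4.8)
The plan is to reduce the statement to the observation that, for degree reasons, a rank $r$ representation $\epsilon$ of $(\wt{\scr{B}}^+_G,\wt{\partial}^{\scr{B}^+}_{G,\Gamma})$ is determined entirely by the ring homomorphism $\epsilon_\Pi \colon \Z[\Pi] \to \mathrm{Mat}_{r\times r}(\F)$ obtained by restriction, since all generators in $F \cup \{x,y,z,w\}$ are in positive degree and hence killed. Thus the pullback $(\wt{\Phi}_T^{T'}(\gamma))^*\epsilon$ is again determined by its restriction to $\Z[\Pi]$, and it suffices to show that $\epsilon_\Pi \circ \wt{\Phi}_T^{T'}(\gamma)|_{\Z[\Pi]} = \epsilon_\Pi$ as ring homomorphisms $\Z[\Pi]\to\mathrm{Mat}_{r\times r}(\F)$.

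Recall from Proposition \ref{prop:changing_trees} that $\wt{\Phi}_T^{T'}(\gamma) = \wt{\scr{R}}_T^{T'}(\gamma) \circ \wt{\scr{C}}_T^{T'}(\gamma)$, and that $\wt{\scr{R}}_T^{T'}(\gamma)$ acts as the identity on the coefficient ring $\Z[\Pi]$ (it is a simple regeneration modifying only the generators $f$, $x$, $y$, $z$, $w$). So on $\Z[\Pi]$ the map $\wt{\Phi}_T^{T'}(\gamma)$ agrees with $\wt{\scr{C}}_T^{T'}(\gamma)$, which on a generator $\lambda_{e,\mathfrak{o}}$ (viewed as a transverse signed path from $c_f^+$ to $c_g^-$) is given by
$$(\wt{\scr{C}}_T^{T'}(\gamma))(\lambda_{e,\mathfrak{o}}) = \beta_T^{T'}(f,\gamma,+)\cdot \lambda_{e,\mathfrak{o}} \cdot \beta_T^{T'}(f,\gamma,-)^{-1}.$$
The first step is therefore to show that for any representation $\epsilon$ of the dg-algebra, the conjugating factors $\beta_T^{T'}(f,\gamma,\pm) \in \Pi$ become, under $\epsilon_\Pi$, independent of the face $f$ and the sign $\pm$ — i.e. they collapse to a single matrix $Z \in \mathrm{GL}_{2r}(\F)$-type data, or more precisely a single matrix in $\mathrm{GL}_r(\F)$. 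This is exactly the mechanism already established in the previous subsection: the condition that $\epsilon$ is a dg-representation forces $\epsilon_\Pi(W(\gamma)) = I$ for every small loop around a center, which makes $\epsilon_\Pi\bigl(W(\delta)\bigr)$ depend only on the endpoints of a path $\delta$ (on $S^2$ any two such paths are homotopic). I would package this as: $\beta_T^{T'}(f,\gamma,\sigma)$ is the geometric element of a loop of the form $(\gamma_{q_f^\sigma,T'})^{-1}*\gamma^{-1}*\gamma_{q_f^\sigma,T}$, and under $\epsilon_\Pi$ its image depends only on the triangle data, hence is the \emph{same} matrix $Z \in \mathrm{GL}_r(\F)$ for all $f$ and both signs $\sigma$ (the $+$ and $-$ sheets over a face are connected by moving the base point within $p^{-1}(f)$, which does not cross the centering).

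Given that, the second step is the immediate conclusion: $\epsilon_\Pi\bigl((\wt{\scr{C}}_T^{T'}(\gamma))(\lambda_{e,\mathfrak{o}})\bigr) = Z \cdot \epsilon_\Pi(\lambda_{e,\mathfrak{o}}) \cdot Z^{-1}$, so $\epsilon_\Pi \circ \wt{\scr{C}}_T^{T'}(\gamma)$ equals the conjugate $\mathrm{Ad}(Z)\circ \epsilon_\Pi$. In principle this is only the identity up to conjugation, but I expect the point is sharper: because the relation holds uniformly with the \emph{same} $Z$ on every generator and because the defining loop of $\beta_T^{T'}(f,\gamma,+)$ with $f = f_\Gamma$ the face containing $\gamma$'s endpoints is in fact nullhomotopic in $S^2$ rel the appropriate structure — or, alternatively, because the relevant $\beta$'s multiply out to a geometric element of a contractible loop — one gets $\epsilon_\Pi(\beta) = I$ outright, so $\epsilon_\Pi\circ\wt{\scr{C}}_T^{T'}(\gamma) = \epsilon_\Pi$ on the nose and hence $(\wt{\Phi}_T^{T'}(\gamma))^*\epsilon = \epsilon$. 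The main obstacle, and the step to get right, is precisely this last point: verifying that the conjugating matrix collapses not merely to a conjugation but to the identity, which hinges on the fact that on $S^2$ all paths between two points are homotopic together with $\epsilon_\Pi(W(\text{center loop})) = I$, forcing $\epsilon_\Pi(\beta_T^{T'}(f,\gamma,\sigma))$ to be the value on a trivial loop. I would spell this out by choosing representatives of $\gamma_{q_f^\sigma,T}$ and $\gamma_{q_f^\sigma,T'}$ adapted to the canonical disks $p^{-1}(S^2\setminus T)$, $p^{-1}(S^2\setminus T')$ and tracking that the composite loop, after applying $\epsilon_\Pi$ (which trivializes all center-monodromies), is the identity matrix — then everything else is bookkeeping already done in Proposition \ref{prop:changing_trees}.
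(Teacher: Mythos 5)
Your proposal is correct and follows essentially the same route as the paper: reduce to the action on $\Z[\Pi]$ (where $\wt{\scr{R}}_T^{T'}(\gamma)$ is trivial and $\wt{\scr{C}}_T^{T'}(\gamma)$ acts by conjugation by the $\beta_T^{T'}(f,\gamma,\pm)$), then use that each $\beta$ is the geometric element of a loop whose projection to $S^2$ is contractible, so that the representation condition $\epsilon_\Pi(W(\text{loop around a center})) = I$ forces $\epsilon_\Pi(\beta) = I$ and the conjugation becomes trivial. The intermediate step in which you first argue that all the $\beta$'s map to a common matrix $Z$ is an unnecessary detour, since the endgame you correctly identify (the value on a nullhomotopic loop) already gives $Z = I$ directly, which is exactly how the paper argues.
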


\begin{proof}
	Recall that $\wt{\Phi}_T^{T'}(\gamma)$ acts on degree $0$ generators as $\wt{\scr{C}}_T^{T'}(\gamma)$, given by
	$$(\wt{\scr{C}}_T^{T'}(\gamma))(\lambda_{e,\mathfrak{o}}) = \beta_T^{T'}(f,\gamma,+) \cdot \lambda_{e,\mathfrak{o}} \cdot \beta_T^{T'}(f,\gamma,-))^{-1}.$$
	Recall further that the words $\beta$ were associated to certain loops of paths on $\Lambda_G$. In particular, the terms $\beta$ appear as the diagonal terms in the matrix of words for these loops. Upon applying any representation $\epsilon$, we find that $\epsilon_\Pi(\beta) = I$. It follows now that if $\epsilon \in \mathrm{Rep}_r(\wt{\scr{B}}^+_{G},\wt{\partial}^{\scr{B}^+}_{G,\Gamma};\F)$ and $\lambda_{e,\mathfrak{o}} \in \Pi$, then
	$$\epsilon_\Pi(\lambda_{e,\mathfrak{o}}) = \epsilon_\Pi((\wt{\scr{C}}_T^{T'}(\gamma))(\lambda_{e,\mathfrak{o}})).$$
	Hence, $\epsilon = (\wt{\Phi}_T^{T'}(\gamma))^*\epsilon$.
\end{proof}

\begin{cor} \label{cor:reps_are_sheaves_functoriality}
	For any of the dg-isomorphisms $\Phi_{\Gamma,T}^{\Gamma',T'}(\zeta,\gamma)$, we have
	$$\Psi_{G,\Gamma,T} \circ (\Phi_{\Gamma,T}^{\Gamma',T'}(\zeta,\gamma))^* = \Psi_{G,\Gamma',T'}.$$
\end{cor}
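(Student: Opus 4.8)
The plan is to deduce the statement directly from the two functoriality results already in hand: the Proposition just above (stating $\Psi_{G,\Gamma'} = \Psi_{G,\Gamma} \circ (\Phi_{\Gamma}^{\Gamma'}(\zeta))^*$ at the enlarged level) and the Lemma immediately preceding, which says that $(\wt{\scr{C}}_T^{T'}(\gamma))^*$, and hence $(\wt{\Phi}_T^{T'}(\gamma))^*$, acts as the identity on $\mathrm{Rep}_r(\wt{\scr{B}}^+_G, \wt{\partial}^{\scr{B}^+}_{G,\Gamma};\F)$. The definitions to unwind are $\Psi_{G,\Gamma,T} = \Psi_{G,\Gamma} \circ (\pi_T)^*$ and $\Phi_{\Gamma,T}^{\Gamma',T'}(\zeta,\gamma) = \Phi_T^{T'}(\gamma) \circ \Phi_{\Gamma,T}^{\Gamma',T}(\zeta)$, where $\Phi_{\Gamma,T}^{\Gamma',T}(\zeta)$ is the map induced under $\pi_T$ by $\Phi_{\Gamma}^{\Gamma'}(\zeta)$ on $\wt{\scr{B}}^+_G$, and $\Phi_T^{T'}(\gamma)$ is induced under $\pi_{T'}$ by $\wt{\Phi}_T^{T'}(\gamma)$.

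First I would record the two commuting-square identities at the level of pullbacks on representations. Because $\Phi_{\Gamma,T}^{\Gamma',T}(\zeta)$ is, by construction via Lemma \ref{lem:induced_morphisms}, the unique graded algebra map fitting into the square $\pi_T \circ \Phi_{\Gamma}^{\Gamma'}(\zeta) = \Phi_{\Gamma,T}^{\Gamma',T}(\zeta) \circ \pi_T$, taking pullbacks gives
$$(\Phi_{\Gamma,T}^{\Gamma',T}(\zeta))^* \circ (\pi_T)^* = (\pi_T)^* \circ (\Phi_{\Gamma}^{\Gamma'}(\zeta))^*.$$
Similarly $\pi_{T'} \circ \wt{\Phi}_T^{T'}(\gamma) = \Phi_T^{T'}(\gamma) \circ \pi_T$ yields
$$(\Phi_T^{T'}(\gamma))^* \circ (\pi_{T'})^* = (\pi_T)^* \circ (\wt{\Phi}_T^{T'}(\gamma))^*.$$

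Now I would simply compose. Starting from $\Psi_{G,\Gamma,T} \circ (\Phi_{\Gamma,T}^{\Gamma',T'}(\zeta,\gamma))^*$, expand $\Psi_{G,\Gamma,T} = \Psi_{G,\Gamma} \circ (\pi_T)^*$ and $(\Phi_{\Gamma,T}^{\Gamma',T'}(\zeta,\gamma))^* = (\Phi_{\Gamma,T}^{\Gamma',T}(\zeta))^* \circ (\Phi_T^{T'}(\gamma))^*$ (contravariance of pullback reverses the composition order). Using the first displayed identity one pushes $(\pi_T)^* \circ (\Phi_{\Gamma,T}^{\Gamma',T}(\zeta))^*$ down to $(\Phi_{\Gamma}^{\Gamma'}(\zeta))^* \circ$ something; more cleanly, I would rewrite the whole composite as $\Psi_{G,\Gamma} \circ (\pi_T)^* \circ (\Phi_{\Gamma,T}^{\Gamma',T}(\zeta))^* \circ (\Phi_T^{T'}(\gamma))^*$, then apply the transpose of the second square to replace $(\pi_T)^* \circ$ (coming from $\Phi_T^{T'}$) — being careful that it is the $T'$-projection that appears — arriving at $\Psi_{G,\Gamma} \circ (\Phi_{\Gamma}^{\Gamma'}(\zeta))^* \circ (\pi_{T'})^* \circ (\wt{\Phi}_T^{T'}(\gamma))^*$, modulo the bookkeeping of which projection sits where. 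By the preceding Lemma, $(\wt{\Phi}_T^{T'}(\gamma))^* = \mathrm{id}$ on $\mathrm{Rep}_r(\wt{\scr{B}}^+_G, \wt{\partial}^{\scr{B}^+}_{G,\Gamma};\F)$, so that factor drops out; and by the Proposition, $\Psi_{G,\Gamma} \circ (\Phi_{\Gamma}^{\Gamma'}(\zeta))^* = \Psi_{G,\Gamma'}$. We are left with $\Psi_{G,\Gamma'} \circ (\pi_{T'})^* = \Psi_{G,\Gamma',T'}$, as desired.

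The main obstacle is purely organizational: getting the order of the compositions and the placement of the two projections $\pi_T$ and $\pi_{T'}$ exactly right when interleaving the two commuting squares, since $\Phi_{\Gamma,T}^{\Gamma',T'}(\zeta,\gamma)$ changes both the garden (staying over $T$) and then the tree (from $T$ to $T'$), so the intermediate object lives over $(\Gamma',T)$. I would handle this by drawing the full commuting diagram
$$\xymatrix{\mathrm{Rep}_r(\scr{B}^+_{G,T'},\partial^{\scr{B}^+}_{G,\Gamma',T'}) \ar[r]^-{(\Phi_T^{T'}(\gamma))^*} \ar[d]_{(\pi_{T'})^*} & \mathrm{Rep}_r(\scr{B}^+_{G,T},\partial^{\scr{B}^+}_{G,\Gamma',T}) \ar[r]^-{(\Phi_{\Gamma,T}^{\Gamma',T}(\zeta))^*} \ar[d] & \mathrm{Rep}_r(\scr{B}^+_{G,T},\partial^{\scr{B}^+}_{G,\Gamma,T}) \ar[d]^{(\pi_T)^*} \\ \mathrm{Rep}_r(\wt{\scr{B}}^+_G,\wt{\partial}^{\scr{B}^+}_{G,\Gamma'}) \ar[r]_-{(\wt{\Phi}_T^{T'}(\gamma))^*}^-{=} & \mathrm{Rep}_r(\wt{\scr{B}}^+_G,\wt{\partial}^{\scr{B}^+}_{G,\Gamma'}) \ar[r]_-{(\Phi_{\Gamma}^{\Gamma'}(\zeta))^*} & \mathrm{Rep}_r(\wt{\scr{B}}^+_G,\wt{\partial}^{\scr{B}^+}_{G,\Gamma})}$$
with the middle vertical map being $(\pi_T)^*$ precomposed with $(\Phi_{\Gamma}^{\Gamma'}(\zeta))^*$-bookkeeping, then chasing $\epsilon$ around the outside: the two squares commute by Lemma \ref{lem:induced_morphisms}, the bottom-left arrow is the identity by the preceding Lemma, and composing the bottom row with $\Psi_{G,\Gamma}$ on the right and $\Psi_{G,\Gamma'}$ on the left is exactly the content of the Proposition. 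Everything downstairs passes through $\Psi_{G,\Gamma}$, $\Psi_{G,\Gamma'}$ compatibly, so the diagram chase closes and the corollary follows. An entirely parallel argument gives the statement at the level of moduli spaces $\scr{M}^{\mathrm{Rep}}_r$, using that all maps involved intertwine the conjugation actions.
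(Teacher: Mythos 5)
Your proof is correct and follows essentially the same route as the paper's: the paper likewise establishes $\Psi_{G,\Gamma}\circ(\wt{\Phi}_{\Gamma,T}^{\Gamma',T'}(\zeta,\gamma))^* = \Psi_{G,\Gamma'}$ at the enlarged level (using the proposition on garden changes and the lemma that $(\wt{\Phi}_T^{T'}(\gamma))^*$ is the identity on representations) and then descends it through the commuting squares of Lemma \ref{lem:induced_morphisms}. The only blemish is that your two displayed pullback identities have the compositions written in reversed, type-incorrect order (contravariance gives $(\pi_T)^*\circ(\Phi_{\Gamma,T}^{\Gamma',T}(\zeta))^* = (\Phi_{\Gamma}^{\Gamma'}(\zeta))^*\circ(\pi_T)^*$, etc.), but your final diagram has the arrows placed correctly, so the chase goes through.
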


\begin{proof}
	We have proved that $\Psi_{G,\Gamma}$ commutes with orientation changes and Moves V-VII, as well as the morphisms $\wt{\Phi}_T^{T'}(\gamma)$. Every $\Phi_{\Gamma,T}^{\Gamma',T'}(\zeta,\gamma)$ descends from a composition of these simple changes, i.e. $\Phi_{\Gamma,T}^{\Gamma',T'}(\zeta,\gamma)$ fits into a diagram of the form
	$$\xymatrix{(\wt{\scr{B}}^+_{G},\wt{\partial}^{\scr{B}^+}_{G,\Gamma}) \ar[r]^-{\wt{\Phi}_{\Gamma,T}^{\Gamma',T'}(\zeta,\gamma)} \ar[d]_{\pi_T} & (\wt{\scr{B}}^+_{G},\wt{\partial}^{\scr{B}^+}_{G,\Gamma'}) \ar[d]^{\pi_{T'}} \\ (\scr{B}^+_{G,T},\partial^{\scr{B}^+}_{G,\Gamma,T}) \ar[r]_-{\Phi_{\Gamma,T}^{\Gamma',T'}(\zeta\gamma)} & (\scr{B}^+_{G,T'},\partial^{\scr{B}^+}_{G,\Gamma',T'})}$$
	with the property that $\Psi_{G,\Gamma} \circ (\wt{\Phi}_{\Gamma,T}^{\Gamma',T'}(\zeta,\gamma))^* = \Psi_{G,\Gamma'}$. In a similar manner as in the composition property of Lemma \ref{lem:induced_morphisms}, the desired equation is induced from this one.
\end{proof}

\subsection{The colors-to-representations map}

In this final subsection, we prove the following proposition, which is enough to prove our desired theorem.

\begin{prop} \label{prop:reps_are_sheaves_inverse}
	The maps
	$$\Psi_{G,\Gamma,T} \colon \scr{M}^{\mathrm{Rep}}_r(\scr{B}^+_{G,T},\partial^{\scr{B}^+}_{G,\Gamma,T};\F) \rightarrow \scr{M}^{\mathrm{Col}}_r(G;\F)$$
	are bijections for each choice of $\Gamma$ and $T$.
\end{prop}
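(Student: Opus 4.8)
The plan is to exhibit an explicit two-sided inverse to $\Psi_{G,\Gamma,T}$ and then track how the group actions descend. I would first unwind both sides. Since every generator of $\scr{B}^+_{G,T}$ has positive degree, a point of $\mathrm{Rep}_r(\scr{B}^+_{G,T},\partial^{\scr{B}^+}_{G,\Gamma,T};\F)$ is simply a ring homomorphism $\epsilon\colon\Z[\Pi_T]\to\mathrm{Mat}_r(\F)$ — equivalently, by Theorem \ref{thm:geometric_coefficient_ring}, a homomorphism $\pi_1(\Lambda_G,*_T)\to\mathrm{GL}_r(\F)$ — subject only to the one family of relations $\sum_{v\in f}\epsilon(\pi_T(H_f(v)))=0$, one per face $f$; a point of $\mathrm{Col}_r(G;\F)$ is a transverse assignment $\chi\colon F\to\mathrm{Gr}(r,2r;\F)$. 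The forward map was already built (as $\Psi_{G,\Gamma}\circ(\pi_T)^*$), shown to land in colorings, and shown to intertwine the $\mathrm{GL}_r(\F)$-conjugation with the $\mathrm{PGL}_{2r}(\F)$-action through $g\mapsto[\operatorname{diag}(g,g)]$, so the only task is inversion.

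For the inverse, given a representative coloring $\chi$, I would reconstruct the ``monodromy frames'': an element $M(\nu)\in\mathrm{GL}_{2r}(\F)$ for each triangle $\nu\in\mathrm{Tri}(G)$, whose first $r$ columns span $\chi(\text{face of }\nu)$, such that the transition $M(\nu_1)^{-1}M(\nu_2)$ between triangles sharing a thread $\tau_f(v)$ is unipotent upper-triangular $\begin{pmatrix}I&H\\0&I\end{pmatrix}$ and the transition across a shared graph-edge is of the antidiagonal shape $\begin{pmatrix}0&\lambda\\-\overline{\lambda}^{-1}&0\end{pmatrix}$. The coloring determines the ``first block'' of each $M(\nu)$, and the tree $T$ rigidifies the ``second block'': since $\pi_T$ sends every non-tree edge generator to $1$, all non-tree edge transitions become the standard flip, and $p^{-1}(S^2\setminus T)\subset\Lambda_G$ is a disk, so after normalizing on the base triangle $\nu_\Gamma$ (say $M(\nu_\Gamma)=I_{2r}$) the frames are uniquely and consistently propagated to all of $\mathrm{Tri}(G)$. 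The key fact is that transversality makes this automatic: running once around a center $c_f$ and using that each neighbouring color is transverse to $\chi(f)$, the accumulated unipotent transition is forced to be $I$ — which is exactly the face relation $\sum_{v\in f}\epsilon(\pi_T(H_f(v)))=0$ — and the surface relations $\pi_T(w_v)=1$ hold because a loop around a non-branch vertex of $\Lambda_G$ bounds a disk. Reading $\epsilon(A_e),\epsilon(B_e)$ off the graph-edge transitions across the tree edges $e\in T$ then produces the desired homomorphism $\epsilon\colon\Pi_T\to\mathrm{GL}_r(\F)$, and $\Psi_{G,\Gamma,T}(\epsilon)=[\chi]$ follows directly from the definitions.

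Finally I would check the two constructions are mutually inverse up to the relevant equivalences. Running the forward map on the reconstructed $\epsilon$ returns $\chi$ by construction; conversely, for a representation $\epsilon$ the frames $M^{\epsilon,\Gamma}(\nu)$ with $M^{\epsilon,\Gamma}(\nu_\Gamma)=I$ are the unique ones with the prescribed transition shapes, hence coincide with those the reconstruction produces from $\chi^{\epsilon,\Gamma}$, so the two recover the same edge transitions and therefore the same $\epsilon$ on the generators of $\Pi_T$. To pass to the moduli spaces one must see that the tree-normalization pins a representative coloring down to within exactly the diagonal action $g\mapsto[\operatorname{diag}(g,g)]$, which is precisely the residual freedom after fixing $M(\nu_\Gamma)=I$ on the representation side; this is what makes $\Psi_{G,\Gamma,T}$ descend to a bijection. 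I expect this last point — controlling the residual gauge via the disk $p^{-1}(S^2\setminus T)$, and checking that the ``automatic'' consistency around each center genuinely reproduces the face relation — to be the main obstacle; the rank one case (Theorem \ref{thm:augs_sheaves_finite_type}, proved in the appendix to \cite{CM_DGA}) is the model, and the new content is the linear-algebra upgrade of transversality of points of $\F\PP^1$ to transversality of $r$-planes in $\F^{2r}$.
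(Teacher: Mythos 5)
Your proposal is correct and follows essentially the same route as the paper's proof: both construct an explicit inverse by gauge-fixing a representative coloring at the three faces around $*_T$, propagating frames $M(\nu)\in\mathrm{GL}_{2r}(\F)$ over the disk $S^2\setminus T$ with unipotent thread transitions (uniquely determined by transversality) and antidiagonal edge transitions, and matching the residual $\mathrm{GL}_r(\F)$ gauge freedom with the conjugation action on representations. The paper merely adds bookkeeping scaffolding (the sets $V_r(G,\Gamma,T'\subset T,Y;\F)$ and the edge${}^*$/thread${}^*$ relations) to make precise the consistency and uniqueness checks you correctly identify as the main obstacles.
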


\begin{proof}[Proof of Theorem \ref{thm:reps_are_sheaves} (assuming Proposition \ref{prop:reps_are_sheaves_inverse})]
	By Proposition \ref{prop:reps_are_sheaves_inverse}, the maps $\Psi_{G,\Gamma,T}$ are bijections. They were proved to satisfy the functoriality property in Corollary \ref{cor:reps_are_sheaves_functoriality}.
\end{proof}

In the rest of this subsection, we prove Proposition \ref{prop:reps_are_sheaves_inverse}, essentially by constructing inverse maps. Although we wish to find an inverse to $\Psi_{G,\Gamma,T}$, we will work with representations instead of moduli of representations, until we need to.

To begin, we wish to understand which elements of $(\mathrm{GL}_{2r}(\F))^{\mathrm{Tri}(G)}$ belong in the image of $M^{\Gamma}$. Instead of attacking this directly, note that we may encode more information in the construction of $M^{\Gamma}$ as follows. For each representation $\epsilon \in \mathrm{Rep}_r(\wt{\scr{B}}^+_{G},\wt{\partial}^{\scr{B}^+}_{G,\Gamma};\F)$, we may choose to encode $\epsilon_\Pi(H(\tau_f(v))) \in \mathrm{GL}_r(\F)$ for each thread $\tau_f(v)$, as well as $\epsilon_\Pi(\lambda_{e,\mathfrak{o}}) \in \mathrm{GL}_r(\F)$ for each generator $\lambda_{e,\mathfrak{o}} \in \Pi$ corresponding to some oriented edge $(e,\mathfrak{o}) \in \wt{E}$. In other words, we have a modified morphism
$$\wt{M}^{\Gamma} \colon  \mathrm{Rep}_r(\wt{\scr{B}}^+_{G},\wt{\partial}^{\scr{B}^+}_{G,\Gamma};\F) \rightarrow (\mathrm{GL}_{2r}(\F))^{\mathrm{Tri}(G)} \times (\mathrm{GL}_r(\F))^{\mathrm{Thr}(G)} \times (\mathrm{GL}_r(\F))^{\wt{E}}$$
where we have denoted by $\mathrm{Thr}(G)$ the collection of threads. We may write this map in terms of the three factors on the right as
$$\wt{M}^{\Gamma} = (M^{\Gamma},H^{\Gamma},\Lambda^{\Gamma}),$$
where $M^{\Gamma}$ is the map described previously, and
$$H^{\Gamma}(\epsilon,\tau) := \epsilon_\Pi(H(\tau)), \qquad \qquad \Lambda^{\Gamma}(\epsilon,\lambda_{e,\mathfrak{o}}) := \epsilon_\Pi(\lambda_{e,\mathfrak{o}})$$
for all $\tau \in \mathrm{Thr}(G)$ and $(e,\mathfrak{o}) \in \wt{E}$ respectively.

\begin{lem}
	The map $\wt{M}^{\Gamma}$ is injective.
\end{lem}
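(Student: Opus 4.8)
The claim is that a representation $\epsilon \in \mathrm{Rep}_r(\wt{\scr{B}}^+_{G},\wt{\partial}^{\scr{B}^+}_{G,\Gamma};\F)$ is determined by the data $\wt{M}^{\Gamma}(\epsilon) = (M^{\Gamma}(\epsilon), H^{\Gamma}(\epsilon), \Lambda^{\Gamma}(\epsilon))$. Since $\epsilon$ vanishes on all positive-degree generators for degree reasons, $\epsilon$ is the same information as the ring homomorphism $\epsilon_\Pi \colon \Z[\Pi] \to \mathrm{Mat}_{r\times r}(\F)$, and since $\Pi = \langle \{\lambda_{e,\mathfrak{o}}\} \mid \{w_v\}\rangle$ is generated by the $\lambda_{e,\mathfrak{o}}$, it suffices to show that the values $\epsilon_\Pi(\lambda_{e,\mathfrak{o}}) \in \mathrm{GL}_r(\F)$ for all $(e,\mathfrak{o}) \in \wt{E}$ are recoverable from $\wt{M}^{\Gamma}(\epsilon)$. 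But this is immediate: the values $\epsilon_\Pi(\lambda_{e,\mathfrak{o}})$ are literally recorded as the third factor $\Lambda^{\Gamma}(\epsilon)$ of $\wt{M}^{\Gamma}(\epsilon)$. Therefore two representations with the same image under $\wt{M}^\Gamma$ agree on all generators $\lambda_{e,\mathfrak{o}}$ of $\Pi$, hence agree as ring homomorphisms on $\Z[\Pi]$, hence are equal.

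\begin{proof}
	Suppose $\epsilon, \epsilon' \in \mathrm{Rep}_r(\wt{\scr{B}}^+_{G},\wt{\partial}^{\scr{B}^+}_{G,\Gamma};\F)$ with $\wt{M}^{\Gamma}(\epsilon) = \wt{M}^{\Gamma}(\epsilon')$. For degree reasons, $\epsilon$ and $\epsilon'$ annihilate every generator of $\wt{\scr{B}}^+_G$ in positive degree (i.e. the faces $F$ and the generators $x,y,z,w$), and hence are determined by the ring homomorphisms $\epsilon_{\Pi}, \epsilon'_{\Pi} \colon \Z[\Pi] \to \mathrm{Mat}_{r \times r}(\F)$ obtained by restriction to the coefficient ring $\wt{R}^{nc} = \Z[\Pi]$. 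Since $\wt{M}^{\Gamma}(\epsilon) = \wt{M}^{\Gamma}(\epsilon')$, in particular the third components agree: $\Lambda^{\Gamma}(\epsilon) = \Lambda^{\Gamma}(\epsilon')$, which by definition means
	$$\epsilon_{\Pi}(\lambda_{e,\mathfrak{o}}) = \epsilon'_{\Pi}(\lambda_{e,\mathfrak{o}})$$
	for every oriented edge $(e,\mathfrak{o}) \in \wt{E}$. By construction, the group $\Pi$ is generated by the elements $\lambda_{e,\mathfrak{o}}$, so the ring $\Z[\Pi]$ is generated as a $\Z$-algebra by these same elements together with their inverses. Two ring homomorphisms out of $\Z[\Pi]$ agreeing on this generating set are equal; hence $\epsilon_{\Pi} = \epsilon'_{\Pi}$, and therefore $\epsilon = \epsilon'$.
\end{proof}

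\noindent I expect no real obstacle here: the lemma is essentially a bookkeeping observation, since the factor $\Lambda^{\Gamma}$ was deliberately added to $\wt M^\Gamma$ precisely to record the values of $\epsilon_\Pi$ on a generating set of $\Pi$. The only thing worth stating carefully is the degree argument showing a representation is the same data as $\epsilon_\Pi$, and the fact that $\{\lambda_{e,\mathfrak{o}}\}$ generates $\Pi$ (hence $\Z[\Pi]$ as an algebra), both of which are already in place from Section~\ref{ssec:nc-coef}. The subtler injectivity statement — namely which tuples lie in the \emph{image} of $\wt M^\Gamma$, cut out by compatibility of $M^\Gamma$ with $H^\Gamma$ and $\Lambda^\Gamma$ along the matrix-of-words relations, the vertex relations $w_v$, and the representation condition $\sum_{v\in f}\epsilon_\Pi(H_f(v))=0$ — is what the subsequent construction of the colors-to-representations inverse will need, but it is not required for injectivity itself.
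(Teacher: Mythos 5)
Your proof is correct and is essentially the paper's own argument: $\Lambda^{\Gamma}(\epsilon)$ records $\epsilon_\Pi$ on the generators $\lambda_{e,\mathfrak{o}}$ of $\Pi$, and since a representation is trivial on positive-degree generators, this data alone recovers $\epsilon$. You have merely spelled out the degree argument and the generation of $\Z[\Pi]$ slightly more explicitly than the paper does.
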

\begin{proof}
	We have that $\Lambda^{\Gamma}(\epsilon)$ is precisely encoding the images $\epsilon_\Pi(\lambda_{e,\mathfrak{o}})$. But since the various $\lambda_{e,\mathfrak{o}}$ generate $\Pi$, and $\epsilon$ is only non-trivial on elements of $\Pi$, we have that $\Lambda^{\Gamma}(\epsilon)$ alone is enough to recover $\epsilon$.
\end{proof}

An element in the image of $\wt{M}^{\Gamma}$ satisfies a number of properties which are clear by construction:
\begin{itemize}
	\item \textbf{Base-point fixing:} If $\nu_{\Gamma} \in \mathrm{Tri}(G)$ is the triangle containing the base-point of the garden $M^{\Gamma}(\nu_{\Gamma}) = I$ (by which we mean this is the case for every $\epsilon$).\\
	\item \textbf{Edge relation:} For every edge $e$, if $\mathfrak{o}$ is the orientation of $e$ occuring in $\Gamma$, and $\nu_1,\nu_2$ are the adjacent triangles to the left and right of the edge respectively, then
	$$M^{\Gamma}(\nu_2) = M^{\Gamma}(\nu_1)\begin{pmatrix}0 & \Lambda^{\Gamma}(\lambda_{e,\mathfrak{o}})^{-1} \\ -\Lambda^{\Gamma}(\lambda_{e,\overline{\mathfrak{o}}}) & 0\end{pmatrix}$$
	(again for every $\epsilon$).\\ 
	\item \textbf{Thread relation:} For every thread $\tau$, with its canonical orientation from center to vertex, if $\nu_1$ and $\nu_2$ are the triangles on the left and right of the thread, respectively, then
	$$M^{\Gamma}(\nu_2) = M^{\Gamma}(\nu_1)\begin{pmatrix}I & H^{\Gamma}(\tau) \\ 0 & I\end{pmatrix}$$
	(for every $\epsilon$).\\
	\item \textbf{Compatibility:} For every vertex $v \in V$ and every thread $\tau$ terminating at $v$, let $e_1$, $e_2$, and $e_3$ be the edges around $E$ appearing counterclockwise around $v$ starting at the face containing $\tau$, and let $\mathfrak{o}_1$, $\mathfrak{o}_2$, and $\mathfrak{o}_3$ be the outward orientations. Then
	$$H^{\Gamma}(\tau) = (-1)^{r_v}\Lambda^{\Gamma}(\lambda_{e_1,\mathfrak{o}_1})^{-1}\Lambda^{\Gamma}(\lambda_{e_2,\mathfrak{o}_2})\Lambda^{\Gamma}(\lambda_{e_3,\mathfrak{o}_3}^{-1}).$$
\end{itemize}

In fact, the compatibility condition is extraneous, as the following lemma demonstrates.

\begin{lem}\label{lem:compatibility_removal}
	Suppose we have a vertex $v \in V$. Suppose for each of the six triangles around the vertex, each incident thread, and each incident edge with orientation, we have associated an element of $\mathrm{GL}_{2r}(\F)$, $\mathrm{GL}_r(\F)$, and $\mathrm{GL}_r(\F)$, with the property that these associated elements satisfy the edge and thread relations around each incident thread and edge. Then the compatibility condition is automatically satisfied.
\end{lem}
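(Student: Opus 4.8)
The plan is to set up coordinates around the vertex $v$ and simply propagate the edge and thread relations around the six triangles. Label the three incident edges $e_1,e_2,e_3$ counterclockwise starting from the face containing the thread $\tau$, with outward orientations $\mathfrak{o}_1,\mathfrak{o}_2,\mathfrak{o}_3$, and write $\Lambda_i := \Lambda^{\Gamma}(\lambda_{e_i,\mathfrak{o}_i})$ for brevity. The six triangles around $v$ alternate between ``thread triangles'' (on which the small loop picks up a thread's $H$-matrix) and ``edge triangles''; walking once counterclockwise around $v$ crosses, in order, a thread, then $e_1$, then a thread, then $e_2$, then a thread, then $e_3$. I would fix one reference triangle $\nu_0$ (say the one just clockwise of $\tau$, lying in the face containing $\tau$), and use the edge and thread relations to express $M^{\Gamma}(\nu)$ for each of the other five triangles $\nu$ in terms of $M^{\Gamma}(\nu_0)$, the matrices $\begin{pmatrix} I & H^{\Gamma}(\tau_j) \\ 0 & I\end{pmatrix}$ for the three incident threads $\tau_1=\tau,\tau_2,\tau_3$, and the matrices $\begin{pmatrix} 0 & \Lambda_i^{-1} \\ -\Lambda_i' & 0\end{pmatrix}$ for the three incident edges, where $\Lambda_i' := \Lambda^{\Gamma}(\lambda_{e_i,\overline{\mathfrak{o}_i}})$.

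The key step is then the consistency requirement: going all the way around $v$ and back to $\nu_0$ must give the identity, i.e.
\begin{align*}
\begin{pmatrix} I & H^{\Gamma}(\tau_1) \\ 0 & I\end{pmatrix}
\begin{pmatrix} 0 & \Lambda_1^{-1} \\ -\Lambda_1' & 0\end{pmatrix}
\begin{pmatrix} I & H^{\Gamma}(\tau_2) \\ 0 & I\end{pmatrix}
\begin{pmatrix} 0 & \Lambda_2^{-1} \\ -\Lambda_2' & 0\end{pmatrix}
\begin{pmatrix} I & H^{\Gamma}(\tau_3) \\ 0 & I\end{pmatrix}
\begin{pmatrix} 0 & \Lambda_3^{-1} \\ -\Lambda_3' & 0\end{pmatrix} = I.
\end{align*}
This holds because the product of these six $2\times 2$ block matrices is exactly $\epsilon_\Pi$ applied to the word $w_v$ (up to the overall sign already built into the $H$ conventions), and $w_v = 1$ in $\Pi$; but more to the point, for the lemma I only need to \emph{derive} the compatibility condition from this identity rather than assume it. So the plan is: expand the left-hand product as a single $2\times 2$ block matrix, read off the four block entries as (noncommutative) polynomials in the $H^{\Gamma}(\tau_j)$, $\Lambda_i$, $\Lambda_i'$, and set it equal to $I$. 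The top-left (or bottom-left) entry of the product, once set to the corresponding entry of $I$, will solve to give precisely
$$H^{\Gamma}(\tau) = H^{\Gamma}(\tau_1) = (-1)^{r_v}\Lambda_1^{-1}\Lambda_2\Lambda_3^{\prime\,-1}$$
after using the thread relations around the \emph{other} two threads $\tau_2,\tau_3$ to eliminate $H^{\Gamma}(\tau_2)$ and $H^{\Gamma}(\tau_3)$ in terms of the $\Lambda$'s (this is where the hypothesis ``these associated elements satisfy the edge and thread relations around each incident thread and edge'' gets used — it forces the $2\times 2$ products corresponding to each small loop to equal $I$, hence forces the full product around $v$ to equal $I$).

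The main obstacle I anticipate is bookkeeping of signs and of the asymmetry between $\Lambda_i = \Lambda^{\Gamma}(\lambda_{e_i,\mathfrak{o}_i})$ and $\Lambda_i' = \Lambda^{\Gamma}(\lambda_{e_i,\overline{\mathfrak{o}_i}})$: these are \emph{not} inverse to one another in general (the edge relation only involves $\begin{pmatrix}0 & \Lambda^{-1} \\ -\Lambda' & 0\end{pmatrix}$, which is invertible with inverse $\begin{pmatrix}0 & -\Lambda^{\prime\,-1} \\ \Lambda & 0\end{pmatrix}$), so I must be careful not to conflate them, and the sign $(-1)^{r_v}$ with $r_v$ the number of outgoing edges must be tracked through exactly as in the definition of $H_f(v)$ given earlier in the paper. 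A secondary subtlety is that the orientations $\mathfrak{o}_i$ in the garden $\Gamma$ may disagree with the ``inward/outward'' conventions used in the compatibility formula, so I would first record once and for all the translation between $\lambda_{e,\mathfrak{o}}$ for the garden orientation and the $\lambda_{e_i,\mathfrak{o}_i^{\mathrm{out}}}$ appearing in the statement, and then the computation is a purely mechanical multiplication of three conjugated unipotent-and-swap pairs. Once the signs are pinned down, equating one off-diagonal block of the sixfold product to $0$ yields the claimed identity, completing the proof.
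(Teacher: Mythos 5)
Your overall architecture is exactly the paper's: chain the edge and thread relations counterclockwise around $v$ to get $M^{\Gamma}(\nu_0) = M^{\Gamma}(\nu_0)\cdot Z$, where $Z$ is the product of the six $2\times 2$ block matrices, conclude $Z = I$ from invertibility of $M^{\Gamma}(\nu_0)$, and then read the compatibility relation off a block entry of $Z$. That is precisely how the paper proceeds (modulo its simplifying convention that all edges are oriented inward, which you correctly flag as a bookkeeping step).

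There is, however, one step in your plan that would fail as written. You propose to extract the relation from the top-left or bottom-left block of $Z=I$ and then ``use the thread relations around the other two threads $\tau_2,\tau_3$ to eliminate $H^{\Gamma}(\tau_2)$ and $H^{\Gamma}(\tau_3)$ in terms of the $\Lambda$'s.'' The thread relations do no such thing: they only relate $M^{\Gamma}(\nu_1)$ and $M^{\Gamma}(\nu_2)$ across a thread via its $H$; an expression for $H$ purely in terms of the $\Lambda$'s \emph{is} the compatibility condition you are trying to prove, so this elimination is circular. The fix is to pick the right block: if $Z$ begins and ends at an edge-crossing so that the thread crossings sit in positions $1,3,5$ of the product (as in the paper, where the order is $\tau_1,e_1,\tau_2,e_2,\tau_3,e_3$), then the bottom-right $r\times r$ block of $Z$ equals $\overline{\Lambda}_1 H_2 \overline{\Lambda}_2\Lambda_3^{-1}$ and involves only the middle thread's $H$; setting it equal to $I$ gives $H_2=\overline{\Lambda}_1^{-1}\Lambda_3\overline{\Lambda}_2$ with no reference to $H_1$ or $H_3$. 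The compatibility relations for the other two threads then follow by restarting the loop at a different base triangle (cyclically permuting $Z$), not by substitution. With that correction your argument coincides with the paper's proof.
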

\begin{proof}
	For simplicity, we assume all of the edges are oriented inwards in $\Gamma$; we may change the orientation of an edge by negating the two elements of $\mathrm{GL}_r(\F)$ associated to that edge. Then, just as in the proof of Lemma \ref{lem:SL_cubed}, we have that for $\nu$ a triangle adjacent to the vertex such that the counter-clockwise loop around $v$ traverses the threads and edges $\tau_1,e_1,\tau_2,e_2,\tau_3,e_3$, in that order, we find
	$$M(\nu) = M(\nu) \cdot Z,$$
	where, letting $H_j \in \mathrm{GL}_r(\F)$ be the component corresponding to $\tau_j$ and $\Lambda_j \in \mathrm{GL}_r(\F)$ be the component corresponding to $e_j$ inwardly oriented,
	$$Z := \begin{pmatrix}1 & H_1 \\ 0 & 1\end{pmatrix}\begin{pmatrix}0 & -\Lambda_1^{-1} \\ \overline{\Lambda}_1 & 0 \end{pmatrix}\begin{pmatrix}1 & H_2 \\ 0 & 1\end{pmatrix}\begin{pmatrix}0 & \Lambda_2^{-1} \\ -\overline{\Lambda}_2 & 0 \end{pmatrix}\begin{pmatrix}1 & H_3 \\ 0 & 1\end{pmatrix}\begin{pmatrix}0 & -\Lambda_3^{-1} \\ \overline{\Lambda}_3 & 0 \end{pmatrix}.$$
	Since $M^{\Gamma}(\nu) \in \mathrm{GL}_{2r}(\F)$ is invertible, it follows that $Z = I$. Multiplying out and looking at the bottom right $r \times r$ matrix yields
	$$I = \overline{\Lambda}_1H_2\overline{\Lambda}_2\Lambda_3^{-1},$$
	and hence
	$$H_2 = \overline{\Lambda}_1^{-1}\Lambda_3\overline{\Lambda}_2,$$
	which is one of the compatibility relations. The others follow by symmetry.
\end{proof}

Let
$$V_r(G,\Gamma;\F) \subset (\mathrm{GL}_{2r}(\F))^{\mathrm{Tri}(G)} \times (\mathrm{GL}_r(\F))^{\mathrm{Thr}(G)} \times (\mathrm{GL}_r(\F))^{\wt{E}}$$
be the subset satisfying these relations simultaneously. As the lemma indicates, since the compatibility relation is extraneous, we will henceforth ignore it.

\begin{prop}
	The image of $\wt{M}^{\Gamma}$ is $V_r(G,\Gamma;\F)$. Hence, there is an induced bijection
	$$\mathrm{Rep}_r(\wt{\scr{B}}^+_{G},\wt{\partial}^{\scr{B}^+}_{G,\Gamma};\F) \cong V_r(G,\Gamma;\F).$$
\end{prop}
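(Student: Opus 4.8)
The plan is to prove that $\wt{M}^{\Gamma}$ surjects onto $V_r(G,\Gamma;\F)$ by constructing, from any point of $V_r(G,\Gamma;\F)$, a representation $\epsilon$ of $\wt{\scr{B}}^+_G$ and verifying that $\wt{M}^{\Gamma}(\epsilon)$ is the given point. Since $\wt{M}^{\Gamma}$ is already known to be injective, this proves the displayed bijection. The construction of $\epsilon$ is the crucial step, and it naturally splits according to whether we think of $\epsilon$ as determined by its values on the generators $\lambda_{e,\mathfrak{o}}$ of $\Pi$, or by the geometric matrix-of-words machinery.

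First I would use the $\Lambda^{\Gamma}$-component of the given datum. Since $\Pi$ has presentation $\langle \{\lambda_{e,\mathfrak{o}}\} \mid \{w_v\}\rangle$ (the generators being in bijection with $\wt{E}$), a ring homomorphism $\epsilon_\Pi \colon \Z[\Pi] \to \mathrm{Mat}_{r}(\F)$ landing in $\mathrm{GL}_r(\F)$ on generators is the same as an assignment $(e,\mathfrak{o}) \mapsto \Lambda^{\Gamma}(\lambda_{e,\mathfrak{o}}) \in \mathrm{GL}_r(\F)$ such that each relator $w_v$ maps to the identity. So the first task is: given a point of $V_r(G,\Gamma;\F)$, show that the $\Lambda^{\Gamma}$-values satisfy the vertex relations $w_v = 1$. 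This is where I would invoke the edge and thread relations together with Lemma \ref{lem:compatibility_removal}: the argument in that lemma shows that around each vertex the product $Z$ of the six matrices built from the $M^{\Gamma}$-values, $H^{\Gamma}$-values, and $\Lambda^{\Gamma}$-values equals $I$ (because $M^{\Gamma}(\nu)$ is invertible and conjugating it by $Z$ returns $M^{\Gamma}(\nu)$, tracing the loop all the way around $v$). Reading off not just the bottom-right block but the whole matrix $Z = I$, and using the compatibility relations to eliminate the $H^{\Gamma}(\tau_j)$ in favor of the $\Lambda^{\Gamma}$'s, one recovers precisely $\epsilon_\Pi(w_v) = I$; equivalently, $Z$ is literally $\epsilon_\Pi(\pm W(\gamma_v))$ and we already know $W(\gamma_v) = I$ so its evaluation is $I$. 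Thus $\epsilon_\Pi$ is a well-defined ring homomorphism, hence a representation of $\wt{\scr{B}}^+_G$ as a graded algebra (all higher generators go to $0$ by degree), and it descends to $\wt{\scr{B}}^+_G$ being a dg-representation exactly when $\sum_{v\in f}\epsilon_\Pi(H_f(v)) = 0$ for every face $f$.

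The second task is therefore to check the differential condition $\sum_{v\in f}\epsilon_\Pi(H_f(v)) = 0$. Here I would reinterpret the $M^{\Gamma}$-values geometrically: the base-point-fixing, edge, and thread relations say precisely that for any two triangles $\nu_1,\nu_2$, the product of the elementary matrices along a path of triangles from $\nu_1$ to $\nu_2$ equals $M^{\Gamma}(\nu_1)^{-1}M^{\Gamma}(\nu_2)$ — in other words $\epsilon_\Pi\big(W(\gamma)\big) = M^{\Gamma}(\nu_1)^{-1}M^{\Gamma}(\nu_2)$ for the associated word of any edge-and-thread-crossing path $\gamma$ from $\nu_1$ to $\nu_2$, independent of the path (this well-definedness is exactly what the relations guarantee, the vertex relators having already been dealt with). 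Applying this to a small loop $\gamma$ around a center $c_f$, whose matrix of words is $\left(\begin{smallmatrix}1 & \wt{\partial}^{\scr{B}^+}_{G,\Gamma}f \\ 0 & 1\end{smallmatrix}\right)$, forces $\epsilon_\Pi\big(\wt{\partial}^{\scr{B}^+}_{G,\Gamma}f\big) = \epsilon_\Pi\big(\sum_{v\in f}H_f(v)\big) = 0$, which is the condition we need. Finally, tracing through the construction of $M^{\epsilon,\Gamma}$ and $H^{\epsilon,\Gamma}$ and $\Lambda^{\epsilon,\Gamma}$ from $\epsilon$ back, one checks that $\wt{M}^{\Gamma}(\epsilon)$ is the point we started with — the $\Lambda$-component matches by construction of $\epsilon_\Pi$, the $H$-component matches via the compatibility relations, and the $M$-component matches because $M^{\epsilon,\Gamma}(\nu) := M^{\epsilon}(\nu_\Gamma,\nu) = \epsilon_\Pi\big(W(\gamma)\big)$ for a path from $\nu_\Gamma$ to $\nu$, which equals $M^{\Gamma}(\nu_\Gamma)^{-1}M^{\Gamma}(\nu) = M^{\Gamma}(\nu)$ by base-point-fixing.

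The main obstacle I anticipate is the bookkeeping in the first task — verifying that the edge and thread relations (with the compatibility relations in hand) really do force $\epsilon_\Pi(w_v) = I$ for every vertex, with all the sign conventions ($A_e$ versus $B_e$, the $(-1)^{r_v}$ factors, the right-hand-rule signs on thread crossings) matching up correctly. This is essentially a repackaging of the computation in the proof of Lemma \ref{lem:compatibility_removal} and of Lemma \ref{lem:SL_cubed}, but care is needed because there we only extracted the bottom-right block whereas now the full statement $Z = I$ is what identifies $Z$ with $\epsilon_\Pi(W(\gamma_v))$. Once that identification is clean, everything else is the formal manipulation of associated words and the observation that invertibility of the $M^{\Gamma}(\nu)$ upgrades the homotopy-invariance-modulo-centering of matrices of words to genuine path-independence after applying $\epsilon_\Pi$, exactly as was done in the first subsection for representations of the dg-algebra.
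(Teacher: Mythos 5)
Your argument is correct and follows essentially the same route as the paper's (much terser) proof: read $\epsilon_\Pi$ off from $\Lambda^{\Gamma}$, verify the dg-condition $\sum_{v\in f}\epsilon_\Pi(H_f(v))=0$ from the thread relations around each center (equivalently, your small loop around $c_f$), and conclude the bijection from the already-established injectivity of $\wt{M}^{\Gamma}$. The one place you go beyond the paper is in explicitly checking that the vertex relators $w_v$ are killed so that $\epsilon_\Pi$ descends to $\Pi$ --- a point the paper leaves implicit --- and your proposed route through the $Z=I$ computation of Lemma \ref{lem:compatibility_removal} is the right way to carry that out.
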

\begin{proof}
	As just described, the image clearly lands in this subset. Conversely, if we have an element of this subset, then we may simply read off the representation $\epsilon$ from $\Lambda^{\Gamma}$. It suffices to check that $\sum H^{\Gamma}(\tau) = 0$ where the sum is over threads around a center, but this is clear by using the thread relation around this center, and noting that for $H_1,\ldots,H_m \in \mathrm{GL}_r(\F)$, we have
	$$\begin{pmatrix}I & H_1 \\ 0 & I\end{pmatrix} \cdots \begin{pmatrix}I & H_m \\ 0 & I\end{pmatrix} = \begin{pmatrix}I & \sum_{j=1}^{m}H_m \\ 0 & I\end{pmatrix}.$$
	The last sentence follows because we have already proved that $\wt{M}^{\Gamma}$ is injective.
\end{proof}

Now let us include the tree into the story. Notice that elements in the image of $\wt{M}^{\Gamma} \circ (\pi_T)^*$ have the following further property:
\begin{itemize}
	\item \textbf{Tree triviality:} If $e \in E \setminus T$, then $\Lambda^{\Gamma}(\lambda_{e,\mathfrak{o}}) = I$ for either orientation $\mathfrak{o}$ of $e$.
\end{itemize}
Let $V_r(G,\Gamma,T;\F) \subset V_r(G,\Gamma;\F)$ be the further subset of elements with the tree triviality property. We have
\begin{align*}
	V_r(G,\Gamma,T;\F) &\subset (\mathrm{GL}_{2r}(\F))^{\mathrm{Tri}(G)} \times (\mathrm{GL}_r(\F))^{\mathrm{Thr}(G)} \times (\mathrm{GL}_r(\F))^{\wt{T}} \\
	&\subset (\mathrm{GL}_{2r}(\F))^{\mathrm{Tri}(G)} \times (\mathrm{GL}_r(\F))^{\mathrm{Thr}(G)} \times (\mathrm{GL}_r(\F))^{\wt{E}}
\end{align*}
where the second inclusion is given by setting each coordinate of $(\mathrm{GL}_r(\F))^{\wt{E\setminus T}}$ to the identity matrix $I \in \mathrm{GL}_r(\F)$.

\begin{prop}
	The image of $\wt{M}^{\Gamma} \circ (\pi_T)^*$ is $V_r(G,\Gamma,T;\F)$. Hence, there is an induced bijection
	$$\mathrm{Rep}_r(\scr{B}^+_{G,T},\partial^{\scr{B}^+}_{G,\Gamma,T};\F) \cong V_r(G,\Gamma,T;\F).$$
\end{prop}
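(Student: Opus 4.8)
The strategy mirrors the proof of the immediately preceding proposition, except that the extra ``tree triviality'' condition must be accounted for. First I would observe that the composite $\wt{M}^{\Gamma} \circ (\pi_T)^*$ is injective: this is immediate since $\wt{M}^{\Gamma}$ was just shown injective and $(\pi_T)^*$ is an injection of representation sets, the latter because $\pi_T \colon \Pi \to \Pi_T$ is surjective (indeed, every representation of $\Pi_T$ pulls back to a distinct representation of $\Pi$ which is trivial on the $\lambda_{e,\mathfrak{o}}$ for $e \notin T$). So it remains only to identify the image.

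For the ``image lands in $V_r(G,\Gamma,T;\F)$'' direction, I would note that the base-point-fixing, edge, and thread relations are inherited from membership in $V_r(G,\Gamma;\F)$ (which was established for the image of $\wt{M}^{\Gamma}$, a superset), while the tree triviality property is exactly the statement that for $e \notin T$, the projection $\pi_T$ sends $\lambda_{e,\mathfrak{o}} \mapsto 1$, so that $\Lambda^{\Gamma}((\pi_T)^*\epsilon, \lambda_{e,\mathfrak{o}}) = ((\pi_T)^*\epsilon)_\Pi(\lambda_{e,\mathfrak{o}}) = \epsilon_{\Pi_T}(\pi_T(\lambda_{e,\mathfrak{o}})) = \epsilon_{\Pi_T}(1) = I$. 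This is essentially a definition chase.

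For the surjectivity direction — which is the main content — I would take an element of $V_r(G,\Gamma,T;\F)$ and reconstruct a representation of $\Pi_T$. The data $\Lambda^{\Gamma}$, restricted to $\wt{T}$, assigns to each oriented edge of $T$ an element of $\mathrm{GL}_r(\F)$; since $\Pi_T = \langle \{\lambda_{e,\mathfrak{o}}\}_{(e,\mathfrak{o})\in\wt{T}} \mid \{\pi_T(w_v)\}\rangle$ is generated by these with the vertex relations, I must check that the assignment respects $\pi_T(w_v) = 1$ for each $v \ne v_T$. This is exactly the argument of Lemma \ref{lem:compatibility_removal} applied at the vertex $v$: the edge and thread relations around $v$, together with invertibility of the $M^{\Gamma}(\nu)$'s, force the product $Z$ of the relevant elementary and swap matrices to be the identity, and reading off the appropriate $r \times r$ block recovers precisely $\pi_T(w_v) = 1$ as an identity in $\mathrm{GL}_r(\F)$ — where one uses tree triviality to delete the non-tree generators from $w_v$, matching the combinatorial form of $\pi_T(w_v)$. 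Having produced a well-defined $\epsilon_{\Pi_T} \colon \Z[\Pi_T] \to \mathrm{Mat}_r(\F)$, I would verify it is a representation of $(\scr{B}^+_{G,T},\partial^{\scr{B}^+}_{G,\Gamma,T})$ by checking $\sum_{v \in f}\epsilon_{\Pi_T}(\pi_T(H_f(v))) = 0$ for each face $f$, which follows from the thread relation around $c_f$ exactly as in the preceding proposition (a product of upper-triangular unipotent blocks around the center telescopes, and equals the identity because a small loop around $c_f$ is null-homotopic after imposing the face relation). Finally, by construction $\wt{M}^{\Gamma}((\pi_T)^*\epsilon_{\Pi_T})$ recovers the chosen element, giving surjectivity; combined with injectivity this yields the bijection $\mathrm{Rep}_r(\scr{B}^+_{G,T},\partial^{\scr{B}^+}_{G,\Gamma,T};\F) \cong V_r(G,\Gamma,T;\F)$.

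I expect the only genuinely delicate point to be bookkeeping in the vertex-relation check: one must confirm that the $r\times r$ block identity extracted from $Z = I$, after using tree triviality to kill the non-tree $\lambda$'s, is literally the relation $\pi_T(w_v) = 1$ in the presentation of $\Pi_T$ (with correct signs, coming from the $(-1)^{r_v}$ conventions and the $\pm$ in the swap matrices), rather than merely some consequence of it. Since $\Pi_T$ is defined by exactly these relations, once this matching is pinned down the universal property of the presentation does the rest, and everything else is routine.
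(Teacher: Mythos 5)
Your proposal is correct and follows essentially the same route as the paper, whose own proof is even terser: it simply declares the argument ``essentially identical'' to the preceding proposition, with tree triviality accounting for the kernel of $\pi_T$, and deduces injectivity of the composite exactly as you do. The extra care you take in verifying that the reconstructed assignment respects the vertex relations $\pi_T(w_v)=1$ (via the $Z=I$ computation of Lemma \ref{lem:compatibility_removal}) is a point the paper leaves implicit even in the preceding proposition, and your instinct that this is the one place requiring sign/block bookkeeping is the right one.
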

\begin{proof}
	The proof is essentially identical, noting that the only requirement of including the tree is that the edges not in the tree are sent to the identity. The last sentence follows because both $(\pi_T)^*$ and $\wt{M}^{\Gamma}$ are injective, and hence their composition is injective.
\end{proof}

Our description of $V_r(G,\Gamma,T;\F)$ is a seemingly extraneous construction, in that the coordinates $H$ and $\Lambda$ are simply disregarded in the map $M^{\Gamma}$; we are interested in $M^{\Gamma}$ instead of $\wt{M^{\Gamma}}$ after all. Our goal is to show that $V_r(G,\Gamma,T;\F)$ is itself naturally in bijective correspondence with the image of $M^{\Gamma} \circ (\pi_T)^*$. To do this, we will essentially remove the tree and thread coordinates one by one. Suppose:
\begin{itemize}
	\item $T'$ a subtree of $T$ (possibly including $T' = \emptyset$)
	\item $Y \subset \mathrm{Thr}(G)$ a subset of threads
\end{itemize}
Let us consider the set
$$V_r(G,\Gamma,T' \subset T,Y;\F) \subset (\mathrm{GL}_{2r}(\F))^{\mathrm{Tri}(G)} \times (\mathrm{GL}_r(\F))^{Y} \times (\mathrm{GL}_r(\F))^{\wt{T \setminus T'}}$$
satisfying essentially the same properties as before wherever they can, but where the edge and thread relations are replaced with the following weaker version:
\begin{itemize}
	\item \textbf{Edge* relation:} We require the old edge relation to hold for edges in $E \setminus T'$ (as we don't have access to edge data for elements in $T'$). For edges in $T'$ with adjacent triangles $\nu_1$ and $\nu_2$, we require the weaker condition that the span of the first $r$ columns of $M^{\Gamma}(\nu_1)$ are equal to the span of the last $r$ columns of $M^{\Gamma}(\nu_2)$ and vice versa. Equivalently, there exist elements $\Lambda_1,\Lambda_2 \in \mathrm{GL}_{r}(\F)$ such that $M^{\Gamma}(\nu_2) = \begin{pmatrix} 0 & \Lambda_1^{-1} \\ -\Lambda_2 & 0 \end{pmatrix}M^{\Gamma}(\nu_1)$.
	\item \textbf{Thread* relation:} We require the old thread relation for elements of $Y$. For threads $\tau$ in $\mathrm{Thr}(G) \setminus Y$, we require the weaker condition that there exists $H \in \mathrm{GL}_r(\F)$ such that for the triangles $\nu_1$ and $\nu_2$ neighboring $\tau$, we have $M^{\Gamma}(\nu_2) = \begin{pmatrix} I & H \\ 0 & I \end{pmatrix}M^{\Gamma}(\nu_1)$.
\end{itemize}
That is, instead of requiring that the data of the values of $H$ and $\Lambda$ are encoded, we require that locally, such values exist (we do not require uniqueness; the point is essentially that uniqueness is automatic). Notice that when $T' = \emptyset$ and $Y = \mathrm{Thr}(G)$, we have
$$V_r(G,\Gamma,T;\F) = V_r(G,\Gamma,\emptyset \subset T,\mathrm{Thr}(G);\F).$$
We prove now that we may increase $T'$ or decrease $Y$ at will and still end up with an isomorphic set under the corresponding projection.

\begin{lem} \label{lem:star_to_non-star}
	Suppose $T',T''$ are two subtrees of $T$ with $T' \subset T''$, and suppose $Y'' \subset Y' \subset \mathrm{Thr}(G)$. Consider the projection
	$$\xymatrix{(\mathrm{GL}_{2r}(\F))^{\mathrm{Tri}(G)} \times (\mathrm{GL}_r(\F))^{Y'} \times (\mathrm{GL}_r(\F))^{\wt{T \setminus T'}} \ar[d]^{\Pi} \\(\mathrm{GL}_{2r}(\F))^{\mathrm{Tri}(G)} \times (\mathrm{GL}_r(\F))^{Y''} \times (\mathrm{GL}_r(\F))^{\wt{T \setminus T''}}}.$$
	Then it induces a bijection of subsets 
	$$V_r(G,\Gamma,T' \subset T,Y';\F) \cong V_r(G,\Gamma,T'' \subset T,Y'';\F).$$
\end{lem}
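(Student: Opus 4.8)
The plan is to prove the lemma by reducing to the case where we increase $T'$ by a single edge of $T$, or decrease $Y$ by a single thread, and then iterate; since $T$ is finite and $\mathrm{Thr}(G)$ is finite, finitely many such steps suffice, and a composition of bijections is a bijection. So it suffices to treat two elementary moves: (a) $T'' = T' \cup \{e_0\}$ for a leaf edge $e_0$ of $T$ not in $T'$ with $Y'' = Y'$; and (b) $Y'' = Y' \setminus \{\tau_0\}$ for a single thread $\tau_0$ with $T'' = T'$. (One should note in passing that $T\setminus T'$ is still a forest, so we may choose $e_0$ to be a leaf; but actually leafness is not needed here since the projection forgets the $e_0$-coordinate entirely, and the surjectivity argument reconstructs it locally.)

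For move (b), I would argue as follows. The projection $\Pi$ simply forgets the $\mathrm{GL}_r(\F)$-coordinate attached to $\tau_0$. Injectivity on the subsets: given a point of $V_r(G,\Gamma,T'\subset T,Y';\F)$, its $\tau_0$-coordinate $H$ is determined by the Thread relation at $\tau_0$, namely $M^\Gamma(\nu_2) = \begin{pmatrix} I & H \\ 0 & I\end{pmatrix}M^\Gamma(\nu_1)$, where $\nu_1,\nu_2$ are the two triangles bordering $\tau_0$; since $M^\Gamma(\nu_1)\in\mathrm{GL}_{2r}(\F)$ is invertible, $H$ is uniquely recovered, so two points with the same image agree. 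Surjectivity: given a point of $V_r(G,\Gamma,T'\subset T,Y'';\F)$, the Thread* relation at $\tau_0$ guarantees the \emph{existence} of some $H\in\mathrm{GL}_r(\F)$ with $M^\Gamma(\nu_2) = \begin{pmatrix} I & H \\ 0 & I\end{pmatrix}M^\Gamma(\nu_1)$; adjoining this $H$ as the $\tau_0$-coordinate produces a preimage, and one checks it satisfies all the required relations (the only relation newly in force is the genuine Thread relation at $\tau_0$, which holds by construction of $H$, and the compatibility relation is extraneous by Lemma \ref{lem:compatibility_removal}). The uniqueness of $H$ makes the two maps mutually inverse.

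For move (a), the argument is parallel but uses the Edge relation. We forget the pair of $\mathrm{GL}_r(\F)$-coordinates $\Lambda^\Gamma(\lambda_{e_0,\mathfrak{o}})$ and $\Lambda^\Gamma(\lambda_{e_0,\overline{\mathfrak{o}}})$ attached to the two orientations of $e_0$. Injectivity: given a point upstairs, the Edge relation at $e_0$ reads $M^\Gamma(\nu_2) = M^\Gamma(\nu_1)\begin{pmatrix} 0 & \Lambda_+^{-1} \\ -\Lambda_- & 0\end{pmatrix}$ where $\Lambda_\pm$ are the two edge coordinates; multiplying by $M^\Gamma(\nu_1)^{-1}$ on the left shows the off-diagonal $r\times r$ blocks of $M^\Gamma(\nu_1)^{-1}M^\Gamma(\nu_2)$ are exactly $\Lambda_+^{-1}$ and $-\Lambda_-$, so both coordinates are uniquely determined, and the diagonal blocks of $M^\Gamma(\nu_1)^{-1}M^\Gamma(\nu_2)$ vanish automatically (this is the content of the Edge* relation, which asserts the span of the first $r$ columns of $M^\Gamma(\nu_1)$ equals the span of the last $r$ columns of $M^\Gamma(\nu_2)$ and vice versa). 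Surjectivity: given a point downstairs satisfying the Edge* relation at $e_0$, there exist $\Lambda_1,\Lambda_2\in\mathrm{GL}_r(\F)$ with $M^\Gamma(\nu_2) = \begin{pmatrix} 0 & \Lambda_1^{-1} \\ -\Lambda_2 & 0\end{pmatrix}M^\Gamma(\nu_1)$; rearranging into the form $M^\Gamma(\nu_2) = M^\Gamma(\nu_1)\begin{pmatrix} 0 & \Lambda_+^{-1} \\ -\Lambda_- & 0\end{pmatrix}$ (possible precisely because the diagonal blocks vanish, using invertibility of $M^\Gamma(\nu_1)$) yields the required edge coordinates, and again compatibility is automatic by Lemma \ref{lem:compatibility_removal}. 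Hence $\Pi$ restricts to a bijection.

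The one point requiring a little care, and the main (mild) obstacle, is verifying that in the surjectivity half of each move, reconstructing the forgotten coordinate does not disturb any of the \emph{other} relations: the Edge/Thread relations elsewhere, Base-point fixing, Tree triviality (note $e_0\in T$, so Tree triviality is unaffected), and the weaker Edge*/Thread* relations at the remaining positions are all unchanged since $\Pi$ did not touch those coordinates. The only relation that becomes a genuine (strong) requirement after the move is the one at $e_0$ (resp. $\tau_0$), and that is satisfied by the very choice of reconstructed coordinate. With these elementary moves established, induction on $|T|-|T'|$ and on $|\mathrm{Thr}(G)\setminus Y|$ (in either order, since the two types of move are independent) gives the general statement $V_r(G,\Gamma,T'\subset T,Y';\F)\cong V_r(G,\Gamma,T''\subset T,Y'';\F)$, completing the proof.
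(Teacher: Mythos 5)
Your proof is correct and is essentially the paper's argument: both directions come down to the fact that the forgotten edge/thread coordinate exists by the starred relation and is uniquely determined by $M^{\Gamma}(\nu_1)^{-1}M^{\Gamma}(\nu_2)$ (invertibility of $M^{\Gamma}(\nu_1)$), while no other relation is affected. The only difference is organizational — you fill in the coordinates one edge or thread at a time and induct, whereas the paper fills in all of $T''\setminus T'$ and $Y'\setminus Y''$ at once — which changes nothing of substance.
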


\begin{proof}
	It is clear that an element of $V_r(G,\Gamma,T' \subset T,Y;\F)$ lands in $V_r(G,\Gamma,T'' \subset T, Y'';\F)$, since we simply require weaker relations to hold. We prove that this is a bijection by constructing an inverse. It suffices to fill in the data on $Y' \setminus Y''$ and $T'' \setminus T'$ so that the edge and thread relations hold. But notice that by the edge* and thread* relations, we have that there exist choices which make the edge and thread relation holds on the nose. It suffices to check they are unique.
	
	For the case of edges, notice that we have
	$$\begin{pmatrix}0 & \Lambda_1^{-1} \\ -\Lambda_2 & 0\end{pmatrix} = M(\nu_2) \cdot M(\nu_1)^{-1},$$
	so $\Lambda_1$ and $\Lambda_2$ are obviously determined by the values of $M(\nu_1)$ and $M(\nu_2)$. The same reasoning works for the threads.
\end{proof}

In particular, we have that
\begin{align*}
	\mathrm{Rep}_r(\scr{B}^+_{G,T},\partial^{\scr{B}^+}_{G,\Gamma,T};\F) &\cong V_r(G,\Gamma,T;\F) \\
		&\cong V_r(G,\Gamma,\emptyset \subset T,\mathrm{Thr}(G)) \\
		&\cong V_r(G,\Gamma,T\subset T,\emptyset;\F)
\end{align*}
But this last set, $V_r(G,\Gamma,T\subset T,\emptyset;\F)$, is just the image of $M^{\Gamma} \circ (\pi_T)^*$, simply because $V_r(G,\Gamma,T;\F)$ was the image of $\wt{M^{\Gamma}} \circ (\pi_T)^*$, and the bijection between the two is just the projection down to $(\mathrm{GL}_r(\F))^{\mathrm{Tri}(G)}$, the same projection which when composed with $\wt{M^{\Gamma}}$ yields $M^{\Gamma}$.

Finally, we are in a position to build an inverse map to $\Psi_{G,\Gamma,T}$. Instead of constructing the inverse with image in $\mathrm{Rep}_r(\scr{B}^+_{G,T},\partial^{\scr{B}^+}_{G,\Gamma,T};\F)$, we will instead construct it to have image in our convenient model $V_r(G,\Gamma,T \subset T, \emptyset; \F)$, where we only need to check the edge* and thread* relations.

By the functoriality of Corollary \ref{cor:reps_are_sheaves_functoriality}, we may assume that the triangle $\nu_{\Gamma}$ containing the base point is adjacent to the vertex $*_T$. The reason we do this is that $\wt{\Psi}_{G,\Gamma,T}(\epsilon) \in \mathrm{Col}_r(G,\F)$ always has that for the three faces around $*_T$ given as $f_{\Gamma}$ (containing $\nu_{\Gamma}$), $f'_{\Gamma}$, and $f''_{\Gamma}$, we always have that
$$\wt{\Psi}_{G,\Gamma,T}(\epsilon,f_{\Gamma}) = \mathrm{Span}\begin{pmatrix}I \\ 0\end{pmatrix}$$
$$\wt{\Psi}_{G,\Gamma,T}(\epsilon,f'_{\Gamma}) = \mathrm{Span}\begin{pmatrix}0 \\ I\end{pmatrix}$$
$$\wt{\Psi}_{G,\Gamma,T}(\epsilon,f''_{\Gamma}) = \mathrm{Span}\begin{pmatrix}I \\ I\end{pmatrix}.$$
This is simply because in our construction of $M^{\Gamma}$, we could have taken all of our paths to be located in a small disk around the vertex, so this is a local computation.

Suppose we are given an element $\chi \in \scr{M}_r^{\mathrm{Col}}(G;\F)$. Let us choose a representative $\wt{\chi} \in \mathrm{Col}_r(G;\F) \subset (\mathrm{Gr}(r,2r;\F))^F$, by which we mean $\chi$ is the equivalence class of $\wt{\chi}$ under the $\mathrm{PGL}_{2r}(\F)$ action, such that the following conditions are satisfied:
$$\wt{\chi}(f_{\Gamma}) = \mathrm{Span}\begin{pmatrix}I \\ 0\end{pmatrix}$$
$$\wt{\chi}(f'_{\Gamma}) = \mathrm{Span}\begin{pmatrix}0 \\ I\end{pmatrix}$$
$$\wt{\chi}(f''_{\Gamma})  = \mathrm{Span}\begin{pmatrix}I \\ I\end{pmatrix}.$$
Notice that such $\wt{\chi}$ exists, because the three faces involved are pairwise adjacent to each other, and any three pairs of transverse Grassmannian $r$-planes in $\F^{2r}$ may be arranged in this way by the action of $\mathrm{GL}_{2r}(\F)$. Note also that the elements of $\mathrm{GL}_{2r}(\F)$ which fix these three planes are of the form $\begin{pmatrix}g & 0 \\ 0 & g\end{pmatrix}$ for $g \in \mathrm{GL}_r(\F)$. We see that our choice of $\wt{\chi}$ is therefore well-defined up to action by $\mathrm{GL}_r(\F)$ on $\mathrm{Col}_r(\F)$; we recall that $\wt{\Psi}_{G,\Gamma,T}$ intertwined the conjugation action on representations with this action on colorings, which is a good sign that we have picked the right gauge fixing condition.

Suppose now we have a curve $\gamma \colon [0,1] \rightarrow S^2$ satisfying the properties:
\begin{itemize}
	\item $\gamma(0)$ is the base point of $\Gamma$
	\item $\gamma(1)$ is located in the interior of a triangle
	\item $\gamma$ intersects none of the edges of the tree $T$, nor the vertex $*_T$, nor any of the centers of the garden
	\item $\gamma$ is otherwise transverse to the edges (of $E \setminus T$) and threads that it crosses, say at a finite set of times $0 < t_1 < \cdots < t_n < 1$
\end{itemize}
Associated to $\gamma$ and $\wt{\chi}$ is a locally constant function
$$\Theta(\gamma,\wt{\chi}) \colon [0,1] \setminus \{t_1,\ldots,t_n\} \rightarrow \mathrm{GL}_{2r}(\F)$$
determined by first setting $\Theta(\gamma,\wt{\chi})(0) = I$ and providing the following rules for how this function changes as we pass through $t_i$ as follows
\begin{itemize}
	\item $\Theta(\gamma,\wt{\chi})(0) = I$ is the identity matrix
	\item If $\gamma$ crosses through an edge at time $t_i$, then for $0<\epsilon < \max(t_i-t_{i-1},t_{i+1}-t_i)$, we have
	$$\Theta(\gamma,\wt{\chi})(t_i+\epsilon) = \pm  \Theta(\gamma,\wt{\chi})(t_i-\epsilon) \cdot \begin{pmatrix}0 & I \\ -I & 0 \end{pmatrix}$$
	where the sign is determined by whether $\gamma$ crosses through the edge positively or negatively.
	\item If $\gamma$ crosses through a thread at time $t_i$ from $\nu_1$ to $\nu_2$, then similarly
	$$\Theta(\gamma,\wt{\chi})(t_i+\epsilon) = \pm  \Theta(\gamma,\wt{\chi})(t_i-\epsilon) \cdot \begin{pmatrix}I & H \\ 0 & I \end{pmatrix},$$
	where $H$ is the unique element of $\mathrm{GL}_r(\F)$ such that the resulting product $\Theta(\gamma,\wt{\chi})(t_i-\epsilon) \cdot \begin{pmatrix}I & H \\ 0 & I \end{pmatrix}$ has that the span of the last $r$ columns is the element of $\wt{\chi}$ attached to the face adjacent to $\nu_2$.
\end{itemize}
Notice that it is easy to check inductively that for all $t \neq t_i$, we have that if $\gamma(t) \in \nu \subset f$, with $f'$ the face opposite $\nu$ via an edge, then the span of the first $r$ columns of $\Theta(\gamma,\wt{\chi})(t)$ is given by $\wt{\chi}(f)$, whereas the span of the last $r$ columns is $\wt{\chi}(f')$. Indeed, this is certainly true for $t=0$, and remains true when we pass through an edge since we essentially swap the first and last $r$ columns up to a sign, and also remains true when we pass through a thread by the very definition of how $\Theta^{\Gamma}(\gamma,\wt{\chi})$ changes.

\begin{lem}
	Suppose $\nu_{\gamma}$ is the triangle containing $\gamma(1)$. We wish to claim that if $\nu_{\gamma} = \nu_{\gamma'}$, then
	$$\Theta(\gamma,\wt{\chi})(1) = \Theta(\gamma',\wt{\chi})(1).$$
\end{lem}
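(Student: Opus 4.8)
The plan is to establish the stronger assertion that $\Theta(\gamma,\wt{\chi})(1)$ depends only on the homotopy class of $\gamma$ rel endpoints in the open surface $Y := S^2 \setminus (T \cup \{v_T\} \cup C)$, where $C$ is the set of centers, and then to check that the class itself is immaterial. Granting both, since $Y$ is connected I may write $\gamma' \simeq \ell * \gamma$ rel endpoints for a loop $\ell$ based at the starting triangle $\nu_\Gamma$; and because the transport rules defining $\Theta$ are invertible at every crossing — the edge matrices lie in $\mathrm{GL}_{2r}(\F)$, and the matrix $H$ at a thread crossing is the \emph{unique} one producing the prescribed column span — transport is compatible with concatenation of curves, so it suffices to show that transport around every loop at $\nu_\Gamma$ returns $\Theta$ to its initial value $I$ (and, for conjugates, that it fixes \emph{any} matrix it starts from).

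For the first point, I would note that a generic homotopy rel endpoints through curves in $Y$ meets neither the vertices nor the centers (they have been deleted) and so consists only of isotopies through admissible curves, which change nothing, and \emph{backtracks} in which $\gamma$ develops a small bump crossing a single edge or thread twice. A backtrack contributes a matrix immediately followed by its inverse — for an edge by the sign convention on crossings, for a thread by uniqueness of $H$ — hence contributes the identity. Thus $\Theta(\gamma,\wt{\chi})(1)$ descends to homotopy classes rel endpoints in $Y$.

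Since $T$ is a tree, $S^2 \setminus T$ is an open disk, so $Y$ is a $(g+4)$-punctured disk and $\pi_1(Y,\nu_\Gamma)$ is free on the small loops around the $g+3$ centers and around $v_T$; in particular no loop around any other vertex arises, reflecting that an admissible curve crosses no edge of $T$. Transport along a small loop around a center $c_f$ is a product of matrices $\begin{pmatrix} I & * \\ 0 & I \end{pmatrix}$ (the loop crosses only the threads emanating from $c_f$); returning to its starting triangle it preserves the span of the first $r$ columns as well as the transverse span of the last $r$, and a block-unipotent upper-triangular matrix fixing a subspace transverse to $\mathrm{span}\begin{pmatrix} I \\ 0\end{pmatrix}$ is the identity, so this transport, and that of any of its conjugates, is trivial.

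The remaining generator, the loop around $v_T$, carries the real content, and I expect it to be the main obstacle. Crossing the three edges at $v_T$ (each contributing $\pm\begin{pmatrix} 0 & I \\ -I & 0 \end{pmatrix}$) and the three threads into $v_T$ (each contributing $\begin{pmatrix} I & H_i \\ 0 & I\end{pmatrix}$ with $H_i \in \mathrm{GL}_r(\F)$ forced by $\wt{\chi}$) in alternating order, its transport is, up to sign,
\[
\begin{pmatrix} I & H_1 \\ 0 & I\end{pmatrix}\begin{pmatrix} 0 & I \\ -I & 0\end{pmatrix}\begin{pmatrix} I & H_2 \\ 0 & I\end{pmatrix}\begin{pmatrix} 0 & I \\ -I & 0\end{pmatrix}\begin{pmatrix} I & H_3 \\ 0 & I\end{pmatrix}\begin{pmatrix} 0 & I \\ -I & 0\end{pmatrix}.
\]
Here I would use the normalization arranged just before the lemma — $\nu_\Gamma$ is adjacent to $v_T$, $\Theta$ starts there as the identity, and $\wt{\chi}$ takes the three standard pairwise-transverse values $\mathrm{Span}\begin{pmatrix} I \\ 0\end{pmatrix}$, $\mathrm{Span}\begin{pmatrix} 0 \\ I\end{pmatrix}$, $\mathrm{Span}\begin{pmatrix} I \\ I\end{pmatrix}$ on the three faces at $v_T$ — from which a direct computation gives $H_1 = H_2 = H_3 = I$, so the product collapses to $\left(\begin{pmatrix} -I & I \\ -I & 0\end{pmatrix}\right)^3 = I$, the matrix $\begin{pmatrix} -I & I \\ -I & 0\end{pmatrix}$ having order $3$ — the algebraic trace of the relation $(SL)^3 = 1$ underlying Lemma \ref{lem:SL_cubed}. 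Running the same computation from any of the six triangles around $v_T$ again gives the identity, because the transport depends only on the pair of column spans of the incoming matrix and that pair is always two of the three standard values. Combining the three preceding steps, transport around any loop at $\nu_\Gamma$ returns $\Theta$ to $I$, and the reduction of the first paragraph then yields $\Theta(\gamma,\wt{\chi})(1) = \Theta(\gamma',\wt{\chi})(1)$.
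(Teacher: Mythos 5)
Your argument is correct in substance and is, at bottom, the same proof as the paper's, repackaged: where the paper realizes the homotopy in the disk $S^2\setminus T$ as a sequence of local moves (isotopy, edge/thread tangencies, pushing through a center, pushing through the vertex $v_T$), you delete the centers and $v_T$, descend to homotopy classes in the punctured disk $Y$, and kill the monodromy around each puncture. The local verifications coincide item by item: your backtrack cancellation is the paper's tangency moves, your center monodromy (the unipotent product must preserve a transverse span, forcing $\sum H_i = 0$) is the paper's ``push through a center,'' and your explicit $(SL)^3$-type computation with $H_1=H_2=H_3=I$ and $M^3 = I$ is the content behind the paper's terse ``push through $*_T$ \dots\ matches a standard local model.'' Your organization buys a cleaner bookkeeping — it makes transparent exactly which monodromies must be checked and why no vertex other than $v_T$ ever enters (every other vertex is incident to an edge of $T$) — and your vertex computation is more explicit than the paper's.

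One step needs repair. You assert that transport around the $v_T$-loop fixes \emph{any} matrix it starts from because it ``depends only on the pair of column spans of the incoming matrix.'' That is not sufficient: for a general admissible input $\Theta_0 = \begin{pmatrix} a & 0 \\ 0 & b\end{pmatrix}$ at $\nu_\Gamma$ the forced thread matrices become $H_1 = H_3 = a^{-1}b$ and $H_2 = b^{-1}a$, and the monodromy works out to $\pm\begin{pmatrix} a^{-1}b & 0 \\ 0 & b^{-1}a\end{pmatrix}$, i.e.\ $\Theta_0 \mapsto \pm\begin{pmatrix} b & 0 \\ 0 & a\end{pmatrix}$, which is nontrivial unless $a = \pm b$. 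The monodromy sees where $\Theta_0^{-1}$ sends the \emph{third} color $\mathrm{Span}\begin{pmatrix} I \\ I \end{pmatrix}$, and this is not determined by the two column spans of $\Theta_0$. Fortunately the stronger statement is not needed. Since the puncture loops freely generate $\pi_1(Y,\nu_\Gamma)$ and the center-loop transports are the identity on \emph{every} admissible input, an induction on the length of a word in these generators shows that the transport of the identity matrix along any based loop is again the identity: every intermediate value equals $I$, so the $v_T$-loop is only ever evaluated at $I$, which is exactly the case you computed. With that substitution (and a resolution of the $\pm$ signs at the three edge crossings, which both you and the paper leave implicit), the proof is complete.
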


\begin{proof}
	Notice that $S^2 \setminus T$ is topologically a disk, so $\gamma$ and $\gamma'$ are homotopic, in fact via a homotopy so that the ending points remain in $\nu_{\gamma} = \nu_{\gamma'}$. It follows that we may change $\gamma$ into $\gamma'$ via a small handful of moves, each of which we check does not affect the resulting value of $\Theta(\cdot,\wt{\chi})(1)$:
	\begin{itemize}
		\item We may isotope $\gamma$ in such a way that it still intersects the same threads and edges in the same order. The resulting value clearly does not change.
		\item We may push $\gamma$ through a tangency with an edge or thread (as in Moves I and II, but where the tine is replaced by $\gamma$). When we go back and forth through an edge, we see that we end up at the same place, because we multiply by
		$$\begin{pmatrix} 0 & \pm I \\ \mp I & 0\end{pmatrix} \cdot \begin{pmatrix} 0 & \mp I \\ \pm I & 0\end{pmatrix} = I_{2r},$$
		where the signs exactly cancel because we traverse the edge with opposite orientations. On the other hand, suppose we go back and forth through a thread, from $\nu_1$ to $\nu_2$ back to $\nu_1$. Let $\Theta$ be the matrix we have at the beginning, when we are in $\nu_1$. Then we have that the matrix goes from $\Theta$ to $\Theta \cdot \begin{pmatrix}I & H \\ 0 & I\end{pmatrix}$ to $\Theta \cdot \begin{pmatrix}I & H \\ 0 & I\end{pmatrix} \cdot \begin{pmatrix} I & H' \\ 0 & I \end{pmatrix}$ for some uniquely determined $H,H' \in \mathrm{GL}_r(\F)$. But these must satisfy that the end result has that the last $r$ columns spanning the face opposite $\nu_1$ via an edge. But inductively, $\Theta$ already has this property, so by the uniqueness, we have that $H'=-H$. Hence, after going back and forth, we arrive back at $\Theta$.
		\item We may push $\gamma$ through a center. As we go around the center, starting at some triangle $\nu$ and ending up back at $\nu$, because our curve goes through threads, if we started at $\Theta$, we accumulate a product $$\Theta \cdot \begin{pmatrix}I & H_1 \\ 0 & I\end{pmatrix} \cdots \begin{pmatrix} I & H_k \\ 0 & I \end{pmatrix}$$ where $k$ is the number of threads around the center, again with the property that the last $r$ columns must span the same as the last $r$ columns of $\Theta$. We see that $H_k = -H_1 - \cdots H_{k-1}$ seems to work, and by the uniqueness, it suffices to check that this is indeed an element of $\mathrm{GL}_r(\F)$. But this is true because the last $r$ columns of $$\Theta \cdot \begin{pmatrix}I & H_1 \\ 0 & I\end{pmatrix} \cdots \begin{pmatrix} I & H_{k-1} \\ 0 & I \end{pmatrix}$$ span a subspace transverse to the span of the last $r$ columns of $\Theta$, by the fact that the last $r$ columns span the element of $\wt{\chi}$ on the opposite face.
		\item We may push $\gamma$ through the vertex $*_T$ (as in Move III). But because everything is uniquely determined as we go around $*_T$, everything matches a standard local model up to multiplication on the left by the original $\Theta$.
	\end{itemize}
\end{proof}

Therefore, given $\wt{\chi}$, we obtain an element in $\Theta(\wt{\chi}) \in (\mathrm{GL}_{2r}(\F))^{\mathrm{Tri}(G)}$, taking the value $\Theta(\gamma,\wt{\chi})(1)$ on $\nu_{\gamma}$ for any path $\gamma$ from the base-point to $\nu_{\gamma}$. In fact, by construction, we have that $\Theta(\wt{\chi})$ satisfies the edge* relations with respect to the full tree $T$ and the thread* relations with respect to the empty set of threads. That is, we have
$$\Theta(\wt{\chi}) \in V_r(G,\Gamma,T\subset T,\emptyset;\F).$$
Although $\wt{\chi}$ was only determined from $\chi$ up to the action of $\mathrm{GL}_r(\F)$, this action intertwined with the conjugation action on representations, or in turn the conjugation action on $(\mathrm{GL}_{2r}(\F))^{\mathrm{Tri}(G)}$ by the corresponding element $\begin{pmatrix} g & 0 \\ 0 & g \end{pmatrix}$. Hence, the map $\wt{\chi} \mapsto \Theta(\wt{\chi})$ descends to a map
$$\Theta_{G,\Gamma,T} \colon \scr{M}_r^{\mathrm{Col}}(G;\F) \rightarrow V_r(G,\Gamma,T \subset T,\emptyset;\F)/\mathrm{GL}_r(\F) \cong \scr{M}_r^{\mathrm{Rep}}(\scr{B}^+_{G,T},\partial^{\scr{B}^+}_{G,\Gamma,T};\F).$$
If we follow $\Theta(\wt{\chi})$ back along the construction of $\wt{\Phi}_{G,\Gamma,T}$, i.e. taking
$$\mathrm{Span}\left(\Theta(\wt{\chi}) \cdot \begin{pmatrix}I \\ 0 \end{pmatrix}\right),$$
by construction, we simply arrive back at $\wt{\chi}$, which implies that
$$\Psi_{G,\Gamma,T} \circ \Theta_{G,\Gamma,T} = \id.$$
Therefore $\Psi_{G,\Gamma,T}$ is surjective. In fact, since $\Theta(\wt{\chi})$ was essentially determined \emph{uniquely} by requiring
\begin{itemize}
	\item the edge* relations hold,
	\item the thread* relations hold, and
	\item $\wt{\Psi}_{G,\Gamma,T}(\Theta(\wt{\chi}))$ matches $\wt{\chi}$ on the three faces around $*_T$
\end{itemize}
we have that actually $\Theta(\wt{\chi})$ is the \emph{unique} element of $V_r(G,\Gamma,T\subset T,\emptyset;\F)$ such that $\wt{\Psi}_{G,\Gamma,T}(\Theta(\wt{\chi})) = \wt{\chi}$. But the specific value of $\wt{\chi}$ only depended up to the action of $\mathrm{GL}_r(\F)$ (given $\chi$), and hence $\Theta(\wt{\chi})$ only varies by the action of $\mathrm{GL}_r(\F)$ as we change our choice of $\wt{\chi}$. In particular, if we have $\Psi_{G,\Gamma,T}(\epsilon) = \chi$, then that implies that
$$\wt{\Psi}_{G,\Gamma,T}(\epsilon) = \wt{
\chi}$$
for one of the lifts of $\chi$, and in turn by this uniqueness, we have
$$\epsilon = g \cdot \Theta(\wt{\chi})$$
for some $g \in \mathrm{GL}_r(\F)$. Hence, we have that $\Psi_{G,\Gamma,T}$ is injective. Summarizing:

\begin{proof}[Proof of Proposition \ref{prop:reps_are_sheaves_inverse}]
	We have proved that $\Psi_{G,\Gamma,T}$ is both injective and surjective, which is what we needed to show.
\end{proof}

\begin{rmk}
	We have done slightly better than to prove Proposition \ref{prop:reps_are_sheaves_inverse}; we have actually constructed an explicit inverse in the form of $\Theta_{G,\Gamma,T}$.
\end{rmk}

\appendix
\section{Computational example} \label{appx:example}

We provide a sample computation of the enlarged dg-algebra $(\wt{\scr{B}}_G^+,\wt{\partial}^{\scr{B}^+}_{G,\Gamma})$ for the specific pair $(G,\Gamma)$ of Figure \ref{fig:Garden_2} (with $g=1$). We note that with commutative coefficients, this computation was performed by Casals and Murphy \cite[Section 5]{CM_DGA}, so the reader is invited to compare our computation to theirs.

\begin{figure}[h]
	\centering
	\includegraphics[width=0.6\textwidth]{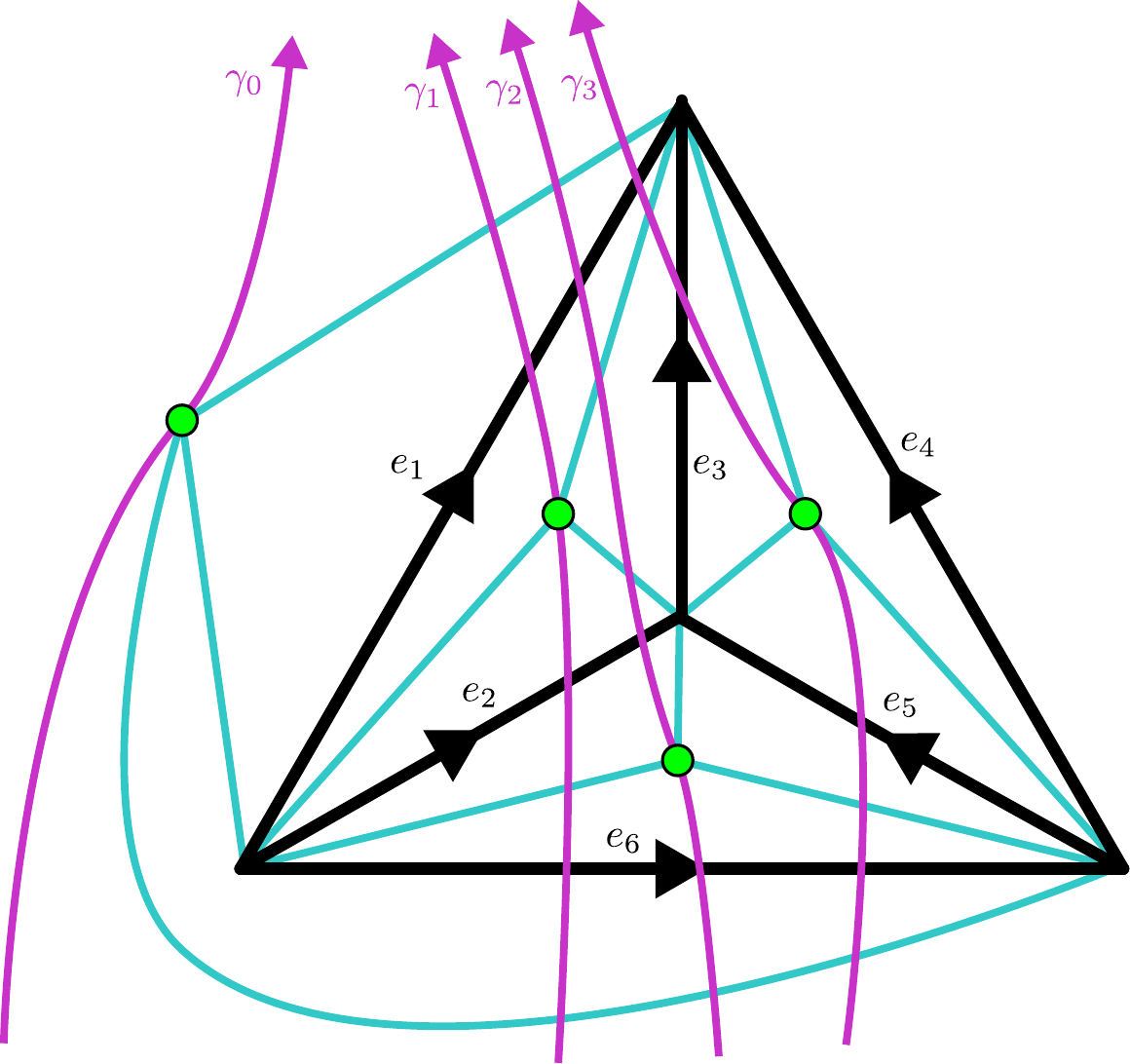}
	\caption{The graph and garden for which the enlarged dg-algebra is computed in Appendix \ref{appx:example}.}
	\label{fig:Garden_2}
\end{figure}

In the figure, the tines are labeled $\gamma_j$ for $j=0,1,2,3$. We will write $f_j$ for the generator corresponding to the center through which $\gamma_j$ passes. We have also labeled the edges and oriented all edges $e_k$ for $k=1,\ldots,6$. We will use the generators $A_k$ and $B_k$ as in Figure \ref{fig:Generator_Convention}.

The differentials of the faces are as follows:
$$\wt{\partial}f_0 = B_{1}^{-1}A_{3}B_{4}^{-1} - A_{6}^{-1}B_{2}A_{1}^{-1}+A_{4}^{-1}B_{5}B_{6}^{-1}$$
$$\wt{\partial}f_1 = B_{3}^{-1}A_{4}B_{1}^{-1}-A_{1}^{-1}B_{6}A_{2}^{-1}-B_{2}^{-1}A_{5}A_{3}^{-1}$$
$$\wt{\partial}f_2 = -B_{5}^{-1}B_{3}B_{2}^{-1}-A_{2}^{-1}B_{1}A_{6}^{-1}+B_{6}^{-1}B_{4}A_{5}^{-1}$$
$$\wt{\partial}f_3 = B_{4}^{-1}A_{1}B_{3}^{-1}- A_{3}^{-1}A_{2}B_{5}^{-1}+A_{5}^{-1}A_{6}A_{4}^{-1}$$
Meanwhile, we may write
$$\begin{pmatrix}\wt{\partial} z & \wt{\partial} w \\ \wt{\partial} y & \wt{\partial} x\end{pmatrix} = \sum_{j=0}^{3}W(\gamma_j)$$
where we compute each $W(\gamma_j)$ by breaking each tine $\gamma_j$ into elementary pieces in which it either crosses and edge, thread, or face:
\begin{eqnarray*}
	W(\gamma_0) &=& \begin{pmatrix}0 & f_0 \\ 0 & 0\end{pmatrix} \\
	W(\gamma_1) &=& \begin{pmatrix}1 & A_{4}^{-1}B_{5}B_{6}^{-1} \\ 0 & 1\end{pmatrix}\begin{pmatrix}0 & A_6^{-1} \\ -B_6 & 0 \end{pmatrix}\begin{pmatrix}1 & A_{2}^{-1}B_{1}A_{6}^{-1} \\ 0 & 1\end{pmatrix}\\
		&& \qquad\cdot\begin{pmatrix}0 & A_2^{-1} \\ -B_2 & 0 \end{pmatrix}\begin{pmatrix}0 & f_1 \\ 0 & 0\end{pmatrix}\begin{pmatrix}0 & A_1^{-1} \\ -B_1 & 0 \end{pmatrix}\begin{pmatrix}1 & B_{3}^{-1}A_{4}B_{1}^{-1} \\ 0 & 1\end{pmatrix} \\
	W(\gamma_2) &=& \begin{pmatrix}1 &  A_{4}^{-1}B_{5}B_{6}^{-1} \\ 0 & 1\end{pmatrix}\begin{pmatrix}0 & A_6^{-1} \\ -B_6 & 0 \end{pmatrix}\begin{pmatrix}0 & f_2 \\ 0 & 0\end{pmatrix}\begin{pmatrix}0 & A_2^{-1} \\ -B_2 & 0 \end{pmatrix}\\
		&& \qquad \cdot \begin{pmatrix}1 & -B_{2}^{-1}A_{5}A_{3}^{-1} \\ 0 & 1\end{pmatrix}\begin{pmatrix}1 & B_{3}^{-1}A_{4}B_{1}^{-1} \\ 0 & 1\end{pmatrix}\begin{pmatrix}0 & A_1^{-1} \\ -B_1 & 0 \end{pmatrix}\begin{pmatrix}1 & B_{1}^{-1}A_{3}B_{4}^{-1} \\ 0 & 1\end{pmatrix} \\
	W(\gamma_3) &=& \begin{pmatrix}1 & A_{4}^{-1}B_{5}B_{6}^{-1} \\ 0 & 1\end{pmatrix}\begin{pmatrix}0 & A_6^{-1} \\ -B_6 & 0 \end{pmatrix}\begin{pmatrix}1 & B_{6}^{-1}B_{4}A_{5}^{-1} \\ 0 & 1\end{pmatrix}\begin{pmatrix}0 & -B_5^{-1}\\ A_5 & 0\end{pmatrix}\\
		&& \qquad \cdot \begin{pmatrix}0 & f_3 \\ 0 & 0\end{pmatrix}\begin{pmatrix}0 & A_3^{-1} \\ -B_3 & 0\end{pmatrix}\begin{pmatrix}1 & B_{3}^{-1}A_{4}B_{1}^{-1} \\ 0 & 1\end{pmatrix}\begin{pmatrix}0 & A_1^{-1} \\ -B_1 & 0 \end{pmatrix}\begin{pmatrix}1 & B_{1}^{-1}A_{3}B_{4}^{-1} \\ 0 & 1\end{pmatrix}
\end{eqnarray*}
One may multiply these out and use the relations between geometric elements to simplify. For example, the expression for $W(\gamma_3)$ simplifies to
$$W(\gamma_3) = \begin{pmatrix}0 & 0 \\ -B_4f_3A_4 & 0 \end{pmatrix}.$$
This was expected by Lemma \ref{lem:invariance_of_word}, since we may homotope $\gamma_3$ relative to its endpoints at infinity and without passing through other faces so that it crosses $e_4$, then crosses $f_3$, then crosses $e_4$ again. We see that $f_3$ only appears in the differential of the generator $y$, and not in the differential of the generators $w$, $x$, and $z$. As a sanity check, this matches with the Casals--Murphy computation in the commutative coefficient setting: only the differential of $y$ has an $f_3$ term.

\bibliography{Bib}{}
\bibliographystyle{plain}

\end{document}